\documentclass[11pt]{amsart}

\usepackage{tikz}
\usetikzlibrary{positioning}
\usepackage{amsmath}
\usepackage{amsthm}
\usepackage{amsxtra}
\usepackage{graphicx}
\usepackage{amsfonts,amssymb}
\usepackage{latexsym}
\usepackage{bbm}

\theoremstyle{definition}

\newtheorem{theorem}{Theorem}[section]
\newtheorem{proposition}[theorem]{Proposition}
\newtheorem{lemma}[theorem]{Lemma}
\newtheorem{corollary}[theorem]{Corollary}

\theoremstyle{definition}
\newtheorem{definition}[theorem]{Definition}
\theoremstyle{remark}
\newtheorem{remark}[theorem]{Remark}
\newtheorem{example}[theorem]{Example}

\newcommand{\supp}{\mathrm{supp}}

\newcommand{\tr}{\mathrm{tr}}

\newcommand{\fix}{\mathrm{Fix}}
\newcommand{\sym}{\mathrm{Sym}}
\newcommand{\alt}{\mathrm{Alt}}

\newcommand{\id}{\mathrm{Id}}

\newcommand{\ie}{\text{i.e.\;\,}}

\numberwithin{equation}{section}

\title[Characters of Full Groups]{Characters and $\textrm{II}_1$-Factor Representations of Full Groups of Cantor Minimal Systems}

\author{Artem Dudko}
\address{Institute of Mathematics of Polish Academy of Sciences, Warsaw, Poland}
\email{adudko@impan.pl}

\author{Constantine Medynets}
\address{United States Naval Academy, Annapolis, MD, USA}
\email{medynets@usna.edu}

\keywords{Cantor minimal system, topological full group, characters, traces, II$_1$-factor representations, Bratteli diagrams, automatic continuity}

\subjclass[2020]{Primary 37B05, 22D25, 22C32; Secondary 46L36, 37A20, 37A05, 54H15}

\date{}

\begin{document}

\begin{abstract} 
Let $(X,T)$ be a Cantor minimal system, and let $\Gamma$ denote either its associated topological full group or the full group of a Bratteli diagram associated with $(X,T)$. In this paper we describe the structure of indecomposable (extreme) characters and the associated $\textrm{II}_1$-factor representations for the group $\Gamma$ and its commutator subgroup $\Gamma'$. In particular, we prove that:  
(1) for every nontrivial indecomposable character $\chi$ of $\Gamma'$, there exists a finite collection (with repetitions allowed) $\{\mu_i\}_{i\in I}$ of $T$-invariant ergodic measures on $X$ such that  
\[
\chi(\gamma) = \prod_{i\in I} \mu_i(\fix(\gamma)),
\quad \text{for every } \gamma \in \Gamma', 
\]
where $\fix(\gamma) = \{x\in X : \gamma x = x\}$; and  
(2) each indecomposable character of $\Gamma$ is the product of an indecomposable character of the form  
$\prod_{i\in I} \mu_i(\fix(\gamma))$ and a homomorphism from $\Gamma$ into the unit circle.  

As a consequence, we show that any finite-type unitary representation of $\Gamma'$ that does not contain a regular subrepresentation is automatically continuous with respect to the uniform topology on $\Gamma'$.  

We also establish a general result on automatic continuity of finite-type unitary representations of infinite groups, which we use in our proofs.
\end{abstract}

\maketitle

\tableofcontents 

\section{Introduction}

The classification of characters of a group has been a central theme in representation theory for over a century. For infinite discrete groups, a \emph{character} on a group $G$ is defined as a conjugation-invariant function $\chi : G \to \mathbb{C}$ of positive type, normalized such that $\chi(e)=1$.

The systematic study of characters on infinite discrete groups dates back to the seminal work of Elmar Thoma in 1964 \cite{Thoma-Characters-64, Thoma:MathZ-84, Thoma:MathAnn-153}. In these works, he established general topological and functional-analytical properties of the set of characters, clarified the connection with unitary representations, and obtained a complete description of the characters of the infinite symmetric group $\mathrm{S}(\infty)$.

If $W$ is a finite von Neumann algebra with a faithful normal trace $\tau$, and $\pi : G \to U(W)$ is a unitary representation, then the function $g \mapsto \tau(\pi(g))$ defines a character on $G$. Moreover, the Gelfand-Naimark-Segal (GNS) construction shows that every character arises in this way. The set of group characters forms a Choquet simplex \cite{Thoma-Characters-64}. Extreme (indecomposable) characters correspond to finite factor representations of $G$. The classification of extreme characters of $G$ is equivalent to the classification of finite factor representations of $G$ up to quasi-equivalence; see Section~\ref{Section:PreliminariesGNS} for details.

In \cite{Thoma:MathAnn-153}, Thoma showed that a group $G$ is of type~I (meaning every unitary representation of $G$ generates a von Neumann algebra of type~I) if and only if $G$ is virtually Abelian. By \cite{Glimm1961TypeIC}, the classification of the unitary dual for non-type~I groups is an intractable problem in the sense of Borel reducibility theory (see also the discussion in \cite{tonti2019shortproofthomastheorem}). Thus, the study of the set of characters, or, equivalently, finite factor representations, serves as a natural substitute for the full unitary dual for general groups. The interested reader may refer to the book \cite{BekkadeLaHarpeBook:2020} for a detailed discussion of various unitary duals for discrete groups. 

The set of group characters captures the lattice of normal subgroups of the group in question. If $N$ is a normal subgroup of $G$, then the characteristic function of $N$ is a character on $G$. More generally, if $\nu$ is a conjugation-invariant probability measure on the space of subgroups of $G$, also called an invariant random subgroup (IRS), then the functions
\begin{equation}\label{eqnCharactersIRS}
\chi'(g) = \nu(\{H < G : g\in H\}) \quad \text{and} \quad \chi''(g) = \nu(\{H < G : gHg^{-1} = H\})
\end{equation}
define (possibly distinct) characters on $G$. We refer the reader to \cite{dudko2023charactersirssbranchgroups} for a discussion of the relationship between IRSs and group characters.  

In \cite{AbertGlasnerVirag:2014}, the authors showed that for every IRS $\nu$ of $G$ one can find a probability measure-preserving action of $G$ on $(X, \mu)$ such that $\nu$ can be realized as the pushforward of the measure $\mu$ under the stabilizer map $x \mapsto \mathrm{St}_G(x)$, where $\mathrm{St}_G(x) = \{g\in G : g(x) = x\}$. This implies that  $\chi'(g) = \mu(\fix(g))$ for every $g\in G$, where $\fix(g)= \{x\in X : g(x) =x\}$. Analogously, the character $\chi''(g)$ is given by the measure of the fixed-point set under the conjugation action on the space of subgroups.

The connection between characters and measure-preserving actions goes back to the work of Vershik and Kerov in the early 1980s \cite{VershikKerov:1981, VershikKerov:1982}. They introduced what they called the \emph{ergodic method} for studying characters of locally finite groups, in which characters are approximated by those of finite subgroups. Analyzing the convergence of these approximations led to a new proof of Thoma’s classification of characters of $\mathrm{S}(\infty)$. Vershik and Kerov further showed that a large class of characters of $\mathrm{S}(\infty)$ can be realized via measure-preserving actions of $\mathrm{S}(\infty)$ on probability spaces $(X,\mu)$ through the formula
\[
\chi_\mu(g) = \mu(\fix(g)).
\]
We refer the reader to \cite{Vershik:NonFreeActions2012} and \cite{Thomas:2022} for a precise description of the relationship between characters of $\mathrm{S}(\infty)$ and its invariant random subgroups. In \cite{Vershik-Nonfree-10}, Vershik suggested that a similarly strong connection between characters and measure-preserving actions should hold for a broader class of locally finite groups. From this perspective, the results of the present paper may be viewed as establishing  Vershik's conjecture for the class of topological full groups of Cantor minimal systems.

We note several papers where characters are completely classified for specific groups: linear groups \cite{Bekka2007, Kirillov1965, Ovchinnikov1971, Skudlarek1976, LeinenPuglisi:2007, PetersonThom2016}, algebraic groups \cite{BekkaFrancini:2022}, inductive limits of symmetric groups \cite{Thoma-Characters-64, DudkoMedynets-CharactersSymmetric-13, LeinenPuglisi:2004, VershikKerov:1982, GoryachkoPetrov-Rearrangements-10,NessonovNgo:2025}, permutational wreath products \cite{DudkoNessonov-CharsWreath-07, DudkoNessonov-CharsProjective-08}, and Higman-Thompson groups \cite{DudkoMedynets2014, GardellaTanner:2024}. 

We conclude by mentioning that group characters also play an important role in the study of two-sided ideals of group algebras \cite{Zalesskii:1995}, local Hilbert-Schmidt stability \cite{Fournier_Gerasimova_Spaas:2025}, and invariant subalgebras of von Neumann algebras \cite{Jiang_Zhou:2024,Dudko_Jiang:2024,Amrutam_Dudko_Skalski:2025}.

%
%

\subsection{Statements of the Main Results}

The main result of the present paper is a complete description of characters on full groups and AF full groups associated with Cantor minimal systems.

By a \emph{Cantor minimal system}, we mean a pair $(X,T)$ where $X$ is the Cantor set and $T$ is a homeomorphism of $X$ such that every orbit of $T$ is dense; equivalently, $T$ admits no nontrivial closed invariant subsets. With such a pair, we associate the group $\mathcal{F}(\langle T \rangle)$ of homeomorphisms of $X$ that locally coincide with powers of $T$. The group $\mathcal{F}(\langle T \rangle)$ is called the (topological) \emph{full group} of $(X,T)$.

Let $(X,T)$ be a Cantor minimal system and $x_0 \in X$. We consider the subgroup $\mathcal{F}(\langle T \rangle)_{x_0}$, dubbed \emph{approximately finite (AF) full group}, of $\mathcal F(\langle T \rangle)$ consisting of homeomorphisms preserving the forward $T$-orbit of $x_0$. This group is locally finite and independent of the choice of $x_0$ (see Section~\ref{Section:FullGroups}). We note that AF full groups can be represented diagrammatically using infinite graded graphs called Bratteli diagrams (see Section~\ref{Section:FullGroups}) and include a large class of inductive limits of symmetric groups such as those discussed in \cite{LeinenPuglisi:2004,GoryachkoPetrov-Rearrangements-10}.

Full groups first appeared in \cite{Putnam:1989} in the context of crossed product $C^*$-algebras $C(X) \rtimes \mathbb{Z}$. The study of full groups has since been a central topic in topological dynamics and operator algebras \cite{GlasnerWeiss-Equivalence-95, GiordanoPutnamSkau:1999, Matui:2006, Medynets:2011, GrigorchukMedynets-AlgebraicFull-14, GrigorchukMedynets-PresentationFull-18}; see \cite{katzlinger2019topological} for a survey of the  development of the theory. Notably, full groups of Cantor minimal systems provided the first examples of simple, finitely generated, infinite amenable groups \cite{Juschenko-Monod:2013}.

In \cite{Matui:2012, Matui-FullGroups-2015}, Matui introduced a broader perspective on full groups using the concept of essentially principal \'etale groupoids. This
framework extends beyond full groups arising from transformation groupoids
(group actions) and, for example, includes full groups associated with purely-infinite groupoids (non-invertible systems), such as the class of Higman-
Thompson groups. A detailed exposition of the groupoid perspective on full
groups is provided in the monograph  \cite{Nekrashevych:Book2022}. 

In view of the reconstruction theorems in \cite{GiordanoPutnamSkau:1999}, full groups and AF full groups completely determine the underlying system up to flip conjugacy and strong orbit equivalence, respectively. Thus, it is natural to expect that the algebraic structure of these groups encodes information about the dynamical properties of the system and vice versa. The following theorem is the main result of the paper; it demonstrates that characters are indeed parametrized by invariant measures. The proof is presented in Theorem \ref{theoremClassificationCharacters}.

\begin{theorem}\label{TheoremMainIntro} 
Let $(X, T)$ be a Cantor minimal system and $\Gamma$ be either the full group $\mathcal{F}(\langle T \rangle)$, the AF full group $\mathcal{F}(\langle T \rangle)_{x_0}$, or the commutator subgroup of either. If $\chi$ is an indecomposable character of $\Gamma$, then one of the following holds:
\begin{enumerate}
    \item[(i)] $\chi = 1$,
    \item[(ii)] $\chi$ is the regular character,
    \item[(iii)] $\chi$ is of the form 
    $$\chi(\gamma) = \rho(\gamma) \prod_{i=1}^k \mu_i(\fix(\gamma)) \quad \text{for } \gamma \in \Gamma,$$ 
    where $\mu_1, \dots, \mu_k$ are $T$-invariant ergodic measures (repetitions permitted) and $\rho: \Gamma \to S^1$ is a group homomorphism.
\end{enumerate}
If $\Gamma$ is the commutator subgroup of $\mathcal{F}(\langle T \rangle)$ or $\mathcal{F}(\langle T \rangle)_{x_0}$, then $\rho(\gamma) = 1$ for every $\gamma \in \Gamma$.

Additionally, any function $\chi$ as in (i), (ii), or (iii) is an indecomposable character on $\Gamma$.
\end{theorem}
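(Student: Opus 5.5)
Our plan is to follow the Vershik--Kerov ergodic method, combined with the automatic continuity result mentioned in the abstract. First we treat the locally finite case $\Gamma=\mathcal{F}(\langle T\rangle)_{x_0}$ and its commutator subgroup. We choose a sequence of Kakutani--Rokhlin partitions refining to the topology, equivalently a Bratteli diagram. For these, $\Gamma$ is the increasing union of finite groups $G_n\cong\prod_{v\in V_n}\sym(h_v)$, and the commutator subgroup is the union of the corresponding products of alternating groups, up to the standard index-two adjustments. Given an indecomposable character $\chi$, we then apply the ergodic theorem for characters of inductive limits. It gives $\chi=\lim_n \chi_n$ pointwise, where $\chi_n$ is the normalized character of an irreducible representation of $G_n$. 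Each such irreducible is a tensor product $\bigotimes_v \pi_{\lambda^{(v)}}$ over Young diagrams $\lambda^{(v)}\vdash h_v$.

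Next we use the asymptotics of normalized characters of symmetric groups of growing rank, in the style of Vershik--Kerov and Okounkov. For a fixed element $\gamma\in G_m$ these asymptotics express $\chi_n(\gamma)$ through the frequencies of rows and columns of $\lambda^{(v)}$ relative to $h_v$. Passing to limits, the frequencies assemble into Thoma-type parameters. These parameters are measures on $X$: row frequencies attached to tower levels define finitely additive, $\Gamma$-invariant set functions, hence $T$-invariant measures. We then show that only finitely many nonzero parameters survive, and that each is an ergodic invariant measure of total mass $1$. The intended mechanism is that the character is multiplicative, $\chi(\gamma_1\gamma_2)=\chi(\gamma_1)\chi(\gamma_2)$, for $\gamma_1,\gamma_2$ with disjoint supports. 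Evaluating this on elements supported in small clopen sets, and using the Thoma formula $\prod$ over rows of $\alpha_i^{|c|}$ for cycles $c$, rules out non-integer masses and the column ("sign") parameters. The sign parameters are also excluded because the group contains long cycles inside full towers. The exception is when all mass concentrates on the identity-type limit (yielding $\chi=1$) or all frequencies vanish (yielding the regular character). Thus $\chi(\gamma)=\prod_{i=1}^k\mu_i(\fix(\gamma))$, where $k$ counts the rows of linear size. For the full group $\Gamma$, as opposed to its commutator subgroup, the one-dimensional part of the abelianization appears as the factor $\rho$, coming from the sign characters on each $\sym(h_v)$ block, and $\rho=1$ on commutators.

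For $\mathcal{F}(\langle T\rangle)$ itself, which is not locally finite, we restrict $\chi$ to the AF subgroup, which is dense in the uniform topology. If $\chi$ is not regular, then the associated $\textrm{II}_1$-representation has no regular subrepresentation, since $\Gamma'$ is simple and infinite. Our general automatic continuity theorem then gives continuity in the uniform topology, so $\chi$ is determined by its restriction to the dense AF subgroup (or its commutator). The restriction decomposes into indecomposables of the above form. We then use that the formula $\prod\mu_i(\fix\gamma)$ is uniformly continuous and that the measures are invariant under the full group. Together with indecomposability, this forces a single term, and $\rho$ extends as a homomorphism. The converse, that the listed functions are indecomposable characters, follows by realizing $\prod\mu_i(\fix\gamma)$ via the diagonal action on $(X^k,\mu_1\times\dots\times\mu_k)$. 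Positivity follows because $\mu(\fix\gamma)$ is positive definite, and indecomposability follows from ergodicity, since the action is essentially free modulo the stabilizer structure. This uses the Vershik criterion that the extremality of $\mu(\fix)$ follows from ergodicity of the action together with the fixed-point-set sigma-algebra being full.

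The main obstacle is the second step: showing that the limit parameters are genuine invariant \emph{probability} measures with finitely many terms and no column or fractional parts. We will attack this first through disjoint-support multiplicativity, applied to transpositions of small clopen pieces.
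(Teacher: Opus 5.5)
\textbf{Main gap: the second step fails as designed, and it is the core of the classification.}

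The Vershik--Kerov/Okounkov (Thoma-type) asymptotics compute normalized characters of $\sym(N)$ on a \emph{fixed, finitely supported} permutation as $N\to\infty$. A fixed $\gamma\in G_m$, viewed inside $\sym(h_v^{(n)})$, moves a positive proportion (roughly $\mu(\supp(\gamma))\,h_v^{(n)}$) of the points. So those asymptotics say nothing about $\chi_n(\gamma)$.

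In fact the relevant data are invisible to row and column frequencies. Take $\lambda=(h-k,\lambda')$ with $\lambda'\vdash k$ fixed. The only nonzero frequency is $\alpha_1=1$, so the Thoma formula predicts the trivial character. Yet the normalized character at $\gamma$ tends to the $k$-th power of the fixed-point proportion; for $k=1$ it is $(\#\fix(\gamma)-1)/(h-1)$. Consequently:
\begin{itemize}
\item $k$ counts the $O(1)$ boxes outside the first row (or first column, for the sign twist), not ``rows of linear size''.
\item The measures $\mu_i$ come from how these few boxes are distributed over the vertices, not from frequencies.
\end{itemize}

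Your mechanism for excluding fractional and column parameters is also false. The characters you want are \emph{not} multiplicative on disjoint supports. If $\chi=\mu(\fix(\cdot))$ and $\supp(\gamma_1),\supp(\gamma_2)$ are disjoint with measures $a,b$, then $\chi(\gamma_1\gamma_2)=1-a-b$, while $\chi(\gamma_1)\chi(\gamma_2)=(1-a)(1-b)$. Thoma's multiplicativity relies on conjugating finite supports off to infinity, which is impossible for supports of positive measure.

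To make an ergodic-method proof work you would need two things you have not supplied:
\begin{itemize}
\item sharp bounds showing that characters of $\sym(N)$ on permutations with linearly many moved points vanish in the limit unless all but $O(1)$ boxes lie in one row or column;
\item an analysis of how the leftover boxes distribute along the diagram.
\end{itemize}
The paper sidesteps all of this. Using Theorem~\ref{ThAutomaticContinuity}, it proves $\chi(g)=\tr(P^{\supp(g)})$ (Theorem~\ref{TheoremCharFromTrace}). The correct multiplicativity is $\phi(m_Af)=\phi(m_A)\phi(f)$, where $f$ is approximated by $G_n$-invariant sets $B_n$ whose projections $P^{X\setminus B_n}$ become central, hence scalar. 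This yields $\chi(g)=\exp\int_{\mathcal K}\log\mu(\fix(g))\,d\nu(\mu)$. Integrality and finiteness of $\nu$ then come from the alternating-projection argument on embedded symmetric groups (Proposition~\ref{PropAlphaAnInteger}).

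\textbf{Further gaps in the remaining steps.}
\begin{itemize}
\item \emph{No regular component.} That the restriction of a non-regular factor representation to the AF subgroup has no regular component does not follow from simplicity of $\Gamma'$. The paper needs the compatible-pair argument of Proposition~\ref{PropRegularReprCompatibleGroups}.
\item \emph{Uniform continuity.} Theorem~\ref{ThAutomaticContinuity} does not give $D$-continuity; it only gives scalar weak limits along sequences whose supports shrink to $Y$. $D$-continuity on $\Gamma'$ is available only after the classification, via the groupoid model.
\item \emph{The full group.} On $\mathcal F(\langle T\rangle)$ the characters are generally not $D$-continuous. With $I$ the index map and $\rho=e^{i\theta I}$, the induced maps $T_C$ satisfy $D(T_C,e)\to0$ as $C$ shrinks to a point, while $\rho(T_C)=e^{i\theta}$. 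So $\chi$ is not determined by its restriction to a $D$-dense subgroup. One must show, as the paper does using Theorem~\ref{ThAutomaticContinuity} with $Y=\{x_0\}$, that $\pi(\gamma)$ differs from the continuous extension of $\pi|_{\Gamma'}$ by a scalar $\lambda(\gamma)$.
\item \emph{The converse.} The diagonal action on $(X^k,\mu_1\times\cdots\times\mu_k)$ is far from essentially free, and the fixed-point sets $\fix(g)^k$ only generate $\sym(k)$-symmetric sets. You need ergodicity of the product measure, which follows from weak mixing of ergodic $\mathcal A(G)$-actions. You also need perfect non-freeness on the quotient $X^k/\sym(k)$, as in Proposition~\ref{propProductMeasuresIsCharacter}.
\end{itemize}
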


In \cite{zheng2020rigid}, the author completely classified ergodic IRSs of the commutator subgroups of the topological full groups and AF full groups of Cantor minimal systems. Combining the results of \cite{zheng2020rigid} with Theorem~\ref{TheoremMainIntro}, we obtain that indecomposable characters of the commutator subgroups of $\mathcal{F}(\langle T \rangle)$ and $\mathcal{F}(\langle T \rangle)_{x_0}$ are in one-to-one correspondence with ergodic IRSs.

Given a Cantor minimal system $(X,T)$, we define the metric 
$$D(g,h) = \sup_{\mu \in \mathcal{M}(X,T)} \mu(\{x \in X : g(x) \neq h(x)\})$$
on the full group $\mathcal{F}(\langle T \rangle)$, where $\mathcal{M}(X,T)$ denotes the set of all $T$-invariant probability measures. We note that the topology generated by this metric is the analogue of the {\it uniform topology} studied in ergodic theory; see \cite{BezuglyiTopologiesOnCantorSet} for details.

As a corollary of our main theorem, we obtain the following result establishing the automatic continuity of finite-type unitary representations of commutator subgroups of full groups. This result is proven in Corollaries~\ref{corollaryCommutatorAutomaticCont} and \ref{corollaryLargeCommutatorAutomaticCont}. 

\begin{corollary}
Let $(X, T)$ be a Cantor minimal system, let $\Gamma$ be the commutator subgroup of either $\mathcal{F}(\langle T \rangle)$ or $\mathcal{F}(\langle T \rangle)_{x_0}$, and let $\pi$ be a finite-type unitary representation of $\Gamma$. Suppose that the central decomposition of $\pi$ contains no regular subrepresentation. Then for any Cauchy sequence $\{g_n\}$ in $\Gamma$ with respect to the metric $D$, the sequence $\{\pi(g_n)\}$ converges in the strong operator topology.
\end{corollary}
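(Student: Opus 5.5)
We derive the corollary from the classification in Theorem~\ref{TheoremMainIntro}, specialized to the commutator subgroup, where $\rho\equiv 1$.

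\emph{Reduction to characters.} Let $\pi$ be a finite-type representation, so $\pi(\Gamma)''$ is a finite von Neumann algebra with a faithful normal trace. Decompose $\pi$ centrally into factor representations $\pi_\omega$; each has an indecomposable character $\chi_\omega$. By hypothesis no $\chi_\omega$ is the regular character. Theorem~\ref{TheoremMainIntro} then says each $\chi_\omega$ is either $1$ or of the form
\[
\chi_\omega(\gamma)=\prod_{i=1}^{k_\omega}\mu_{\omega,i}(\fix(\gamma)),
\]
with $\mu_{\omega,i}\in\mathcal M(X,T)$ ergodic.

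\emph{Key continuity estimate.} For each such $\chi$ and all $g,h\in\Gamma$,
\[
|1-\chi(g^{-1}h)|\le k\,D(g,h).
\]
To see this, note that $\fix(g^{-1}h)=\{x:g(x)=h(x)\}$, so $\mu_i(\fix(g^{-1}h))\ge 1-D(g,h)$. Since $\prod(1-a_i)\ge 1-\sum a_i$, the estimate follows. For the trivial character the left side is $0$.

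\emph{Passing to strong convergence.} In the GNS representation of $\chi$ with cyclic trace vector $\xi$ we have
\[
\|\pi(g)\xi-\pi(h)\xi\|^2=2-2\,\mathrm{Re}\,\chi(g^{-1}h).
\]
Hence $\{\pi(g_n)\xi\}$ is Cauchy. The trace vector is also cyclic for the commutant, because $J\pi(\Gamma)J$ acts on the right. So $\{\pi(g_n)\}$ is Cauchy strongly on the dense set $\pi(\Gamma)'\xi$. Uniform boundedness then gives strong convergence on the whole space. A strong limit of unitaries is an isometry, and the same argument applied to $g_n^{-1}$ shows the limit is unitary, though strong convergence is all the corollary requires.

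\emph{Main obstacle.} For general $\pi$, the number $k_\omega$ is unbounded over the direct integral, so the estimate is not uniform in $\omega$. The remedy is to truncate. For a vector $v=\int^\oplus v_\omega$, choose $N$ so that the part of $v$ supported on $\{\omega: k_\omega>N\}$ has norm less than $\varepsilon$; this is possible because $k_\omega$ is a measurable function, which needs checking. On the remainder, use the previous step fiberwise. The Cauchy estimate is then uniform, depending only on $N$ and $D(g_n,g_m)$.

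Applying this on dense vectors of the form $x\xi$ in the standard form, and controlling $v_\omega$ by dominated convergence, gives $\limsup\|\pi(g_n)v-\pi(g_m)v\|\le 2\varepsilon$. This proves strong convergence. The delicate point is the measurability and the uniformity across fibers; alternatively one can work directly with the trace $\tau(\pi(g^{-1}h))$ restricted to the central projections $\{k_\omega\le N\}$.
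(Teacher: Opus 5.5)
Your proposal is correct and is essentially the paper's argument. The paper also combines the classification of non-regular factor characters of $\Gamma$ as $\prod_i\mu_i(\fix(\cdot))$ with the bound $1-\prod_i\mu_i(\fix(g^{-1}h))\le k\,D(g,h)$, which is implicit in its passage to the product-measure pseudometric in Proposition~\ref{PropGroupRepresentationContinuity}; there the Feldman--Moore model plays the role of your GNS model. It then integrates over the central decomposition. The ``main obstacle'' you raise is not real. Fiberwise strong convergence plus dominated convergence, using $\sup_{n,m\ge N}\|\pi_\omega(g_n)v_\omega-\pi_\omega(g_m)v_\omega\|^2\le 4\|v_\omega\|^2$, already gives the result, so you need no truncation, no uniformity in $\omega$, and no measurability of $\omega\mapsto k_\omega$.
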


In \cite{DudkoMedynets-CharactersSymmetric-13}, the characters of $\mathcal{F}(\langle T \rangle)_{x_0}$ were classified under the assumptions that the system admits only finitely many ergodic measures and that the group is simple. In the present paper, we overcome these limitations and complete the classification of characters for both AF full group and the full group of all Cantor minimal systems. The details of our approach are outlined below.

First, we show that finite-type factor representations for a broad class of groups possess certain automatic continuity properties (Theorem~\ref{ThAutomaticContinuity}). We then apply this result in the proof of Theorem~\ref{TheoremCharFromTrace} to demonstrate that for any non-regular character $\chi$ on the commutator subgroup of the AF full group $\mathcal{F}(\langle T \rangle)_{x_0}$, one has:
\begin{equation}\label{eqnAuxCharTraceProj}
\chi(g) = \tr(P^{\mathrm{supp}(g)}),
\end{equation}
where $\tr$ is the trace in the corresponding GNS representation and $P^{\mathrm{supp}(g)}$ is the projection onto the subspace invariant under all elements whose support is contained in $\mathrm{supp}(g) = \{x\in X : g(x) \neq x\}$.

We note that minimal $\mathbb{Z}$-systems satisfy the comparison property \cite{GlasnerWeiss-Equivalence-95, Kerr:2022}, which implies that clopen sets having the same values for all invariant measures are ``almost'' equivalent under the action of the full group. Thus, by (\ref{eqnAuxCharTraceProj}),  character values depend, in a sense, only on the values of invariants measures of $\mathrm{supp}(g)$.

Let $\mathcal{K}$ be the closure of the $T$-invariant ergodic measures in the weak* topology. Each clopen set $A$ defines an evaluation function $m_A(\mu):=\mu(A)$ on $\mathcal{K}$. In our proof, we define $\phi(m_A) = \tr(P^{X\setminus A})$ and show that the function $\log \circ \phi\circ \exp$ extends to a continuous linear functional on $C(\mathcal K)$.  Applying the Riesz--Markov--Kakutani Representation Theorem, we show that:
$$\chi(g) = \tr(P^{X\setminus \fix(g)}) = \phi(m_{\fix(g)})  =  \exp\left(\int_{\mathcal{K}} \log(\mu(\fix(g))) \, d\nu(\mu)\right).$$
The final step consists of proving that $\nu$ is a purely atomic measure supported on finitely many points (which are necessarily ergodic measures) with integer weights, which yields the classification of characters for the commutator subgroups of AF full groups.

We then establish an automatic continuity result for finite-type representations of the commutator subgroups of full groups, which allows us to extend both the character classification and the automatic continuity properties to the commutator subgroup of the full group. The paper concludes with the classification of characters for the full groups themselves (Theorem~\ref{theoremClassificationCharacters}).

While several parts of our proof rely on structural results specific to Cantor minimal $\mathbb{Z}$-systems, other arguments are more general and apply, for instance, to any minimal system satisfying the comparison property. We conjecture that the main results of this paper should hold for full groups of minimal almost finite systems; see \cite{Suzuki:AlmostFiniteness, Kerr:2022} for a detailed study of their properties.

\subsection{Structure of the Paper}
The paper is organized as follows. In Section \ref{Section:PreliminariesGNS}, we present the necessary definitions regarding group characters, finite von Neumann algebras, and the GNS construction. We also introduce the Feldman-Moore groupoid construction, which provides a model for realizing permutation characters. In Section \ref{Section:FullGroups}, we survey the algebraic properties of topological full groups and AF full groups associated with Bratteli diagrams.

Section \ref{Section:PermutationCharacter} is devoted to the properties of permutation characters $\mu(\text{Fix}(g))$. We prove that products of these characters corresponding to ergodic measures are indecomposable and show that only integer powers of these characters result in positive-definite functions.

In Section \ref{Section:AutomaticContinuity}, we establish a general result on the automatic continuity of finite-type unitary representations of groups acting on the Cantor set.

Section \ref{SectionFullGroupsBratteliDiagrams} provides technical tools regarding the approximating properties of AF full groups, including a version of the Rokhlin Lemma and results on the asymptotic centrality of involutions.

Section \ref{Section:Projections} contains the construction of projections $\{P^A\}$ onto subspaces invariant under local subgroups. We use these to prove Theorem \ref{TheoremCharFromTrace}, establishing Formula (\ref{eqnAuxCharTraceProj}).

In Section \ref{Section:SpaceOfMeasures}, we study the space of invariant measures and prove that evaluation functions are uniformly dense in the space of continuous functions on the closure of ergodic measures, which allows for the extension of trace functionals.

In Section \ref{SectionCharactersCommutarorSub} we apply these results to classify indecomposable characters of commutator subgroups of AF full groups (Theorem \ref{thmClassificationCharactersCommutatorSubgroup}) and establish the automatic continuity of their finite-type representations (Corollary \ref{corollaryCommutatorAutomaticCont}). 

Finally, in Section \ref{Section:CharactersFullGroup}, we extend this classification to the commutator subgroups of topological full groups (Theorem \ref{thmClassificationCharactersLargeCommutatorSubgroup}) and establish the  automatic continuity of their finite-type representations (Corollary \ref{corollaryLargeCommutatorAutomaticCont}).  The main result of the paper is established in Theorem \ref{theoremClassificationCharacters}. 

\medskip

\noindent {\bf Acknowledgement.} The first-named author acknowledges support from the long-term program for Ukrainian research teams at the Polish Academy of Sciences, carried out in collaboration with the U.S. National Academy of Sciences and with financial support from external partners. A.D. was also partially supported by the National Science Centre, Poland, via Grant OPUS 21 (2021/41/B/ST1/00461), ``Holomorphic dynamics, fractals, thermodynamic formalism'', as well as by Simons Foundation grant (award no. SFI-MPS-T-Institutes-00010825). Additional support was provided by State Treasury funds through a task commissioned by the Minister of Science and Higher Education under the project ``Organization of the Simons Semesters at the Banach Center - New Energies in 2026-2028'' (agreement no. MNiSW/2025/DAP/491).  The second-named author thanks the Institute of Mathematics of the Polish Academy of Sciences for its hospitality and the USNA Kinnear Fellowship for its support. 

Both authors would like to thank David Kerr and Spyros
Petrakos for the discussions related to almost finite actions.





%
%
%

\section{Preliminaries}

 \subsection{Group Characters and Finite von Neumann Algebras}\label{Section:PreliminariesGNS}
 
 In this section, we define the notion of a group character and describe its connection with von Neumann algebras, which will serve as one of the main tools in this paper.

Classically, the character of a representation of a finite group is the function that assigns to each group element the trace of the corresponding matrix. The following definition extends this concept to a broader setting that includes infinite groups.

\begin{definition}\label{definitionCharacter}
A \emph{character} of a group $G$ is a function $\chi: G \to \mathbb{C}$ satisfying the following properties:
\begin{itemize}
\item[(1)] $\chi(g_1g_2) = \chi(g_2g_1)$ for all $g_1, g_2 \in G$;
\item[(2)] the matrix $\left\{\chi(g_i g_j^{-1})\right\}_{i,j=1}^n$ is positive semi-definite for any $n \geq 1$ and any choice of elements $g_1, \dots, g_n \in G$;
\item[(3)] $\chi(e) = 1$, where $e$ is the identity element of $G$.
\end{itemize}
\end{definition}

From this definition and basic properties of Hermitian matrices, it follows that $|\chi(g)| \leq 1$ and $\chi(g^{-1}) = \overline{\chi(g)}$ for all $g \in G$; see \cite[Appendix C]{BekkaHarpeValette-Kazdan-08}. A detailed treatment of group characters can be found in \cite{BekkadeLaHarpeBook:2020} and \cite[Appendix C]{BekkaHarpeValette-Kazdan-08}, which serve as our main references. We now explain the connection between group characters and von Neumann algebras.

Let $\mathcal{H}$ be a separable Hilbert space with inner product $(\cdot,\cdot)$, and let $B(\mathcal{H})$ denote the algebra of bounded linear operators on $\mathcal{H}$, equipped with the weak operator topology. For a subset $S \subset B(\mathcal{H})$, denote its \emph{commutant} by $S'$, i.e., the set of operators in $B(\mathcal{H})$ that commute with every element of $S$. A unital $*$-subalgebra $M \subset B(\mathcal{H})$ is called a \emph{von Neumann algebra} if it is closed in the weak operator topology or, equivalently, by the von Neumann Bicommutant Theorem, if $M = M''$.

A \emph{unitary representation} of a group $G$ is a homomorphism $\pi: G \to U(\mathcal{H})$, where $U(\mathcal{H})$ denotes the group of unitary operators on $\mathcal{H}$. For a representation $\pi$, let $\mathcal{M}_\pi$ denote the von Neumann algebra generated by $\pi(G)$ or, equivalently, $\mathcal{M}_\pi = \{\pi(G)\}''$ by the von Neumann Bicommutant Theorem. All groups and von Neumann algebras considered in this paper are assumed to be countable and separable, respectively.

A \emph{trace} on a von Neumann algebra $\mathcal{M}$ is a positive linear functional $\tau: \mathcal{M} \to \mathbb{C}$ such that $\tau(ab) = \tau(ba)$ for all $a,b \in \mathcal{M}$ and $\tau(\id) = 1$. The trace is called \emph{faithful} if $\tau(aa^*) = 0$ implies $a = 0$, and \emph{normal} if it is continuous on the unit ball of $\mathcal{M}$ in the weak operator topology.

A von Neumann algebra $\mathcal{M}$ is called \emph{finite} if every isometry in $\mathcal{M}$ is unitary; that is, if $v \in \mathcal{M}$ satisfies $v^*v = \id$, then also $vv^* = \id$. For separable von Neumann algebras, finiteness is equivalent to the existence of a faithful normal trace \cite[Remark 4.7]{Ioana:IntroVonNeumannAlgebras}. Von Neumann algebras admitting a faithful normal trace are also referred to as \emph{tracial}.

A von Neumann algebra $\mathcal{M}$ is called a \emph{factor} if $\mathcal{M} \cap \mathcal{M}' = \mathbb{C}\id$. We note that if $\mathcal M$ is a finite factor, then it admits a {\it unique} faithful normal trace \cite[Theorem 8.2.8]{KadisonRingrose:Vol2}. 

\begin{definition}
(1) A unitary representation $\pi$ of a group $G$ is said to be of \emph{finite type} if the von Neumann algebra $\mathcal{M}_\pi = \{\pi(G)\}''$ is finite.

(2)  A unitary representation $\pi$ of a group $G$ is called a \emph{factor representation or factorial} if $\mathcal{M}_\pi = \{\pi(G)\}''$ is a factor.  
\end{definition}  

If $\pi$ is a finite-type representation of $G$ and $\tau$ is a faithful normal trace on $\mathcal{M}_\pi$, then the function $\chi: G \to \mathbb{C}$ defined by $\chi(g) = \tau(\pi(g))$ is a character of $G$; see \cite[Example 11.B.3]{BekkadeLaHarpeBook:2020}.

Conversely, if $\chi: G \to \mathbb{C}$ is a character, then the Gelfand–Naimark–Segal (GNS) construction produces a triple $(\mathcal{H}, \xi, \pi)$,  where $\mathcal H$ is a Hilbert space, $\pi: G\rightarrow U(\mathcal H)$ is a unitary representation, and $\xi\in \mathcal H$ is a unit vector such that
\begin{enumerate} 
\item the vector $\xi$ is {\it cyclic}, in the sense that $\mathcal H$ is the smallest closed subset containing the span of $\{\pi(G)\xi\}$;

\item the vector $\xi$ is  {\it separating} for the von Neumann Algebra $\mathcal M_\pi$ generated by $\pi(G)$, that is, if $a\xi = 0$, for some $a\in \mathcal M_\pi$, then $a = 0$;

\item $\chi(g) = (\pi(g)\xi,\xi)$ for every $g\in G$,  where $(\cdot,\cdot)$ denotes the scalar product in $\mathcal H$;

\item the function $\tr(m) = (mx,x)$,  $m\in \mathcal M_\pi$, is a faithful normal trace on $\mathcal M_\pi$. 
\end{enumerate}

We refer the reader to \cite[Sections 1.B and 11.B]{BekkadeLaHarpeBook:2020} and \cite[Section 2.2]{DudkoMedynets-CharactersSymmetric-13} for further details on the GNS-construction for group representations. The utility of the GNS construction lies in the fact that it allows one to realize a character as a state on a von Neumann algebra, thereby enabling the use of analytic and topological methods in the study of discrete groups.

\begin{definition} (1) Two unitary representations $(\mathcal H_1, \pi_1)$ and $(\mathcal H_2, \pi_2)$  of a group $G$ are called {\it equivalent} if there exists a unitary operator $V$ from $\mathcal H_1$ onto $\mathcal H_2$ such that $\pi_2(g) =V\pi_1(g)V^*$ for every $g\in G$. 

(2) Two unitary representations $\pi_1$ and $\pi_2$  of a group $G$ are called {\it quasi-equivalent} if there is a $*$-isomorphism $\varphi $ between the von Neumann algebras  $\mathcal M_{\pi_1}$ and $\mathcal M_{\pi_1}$ such that $\varphi(\pi_1(g)) = \pi_2(g)$ for every $g\in G$. 
\end{definition}

We note that any (algebraic) $*$-isomorphism between von Neumann algebras is automatically continuous on (norm-) bounded sets with respect to the weak and strong operator topologies \cite[Proposition III.2.2.14]{Blackadar:BookOperatorAlgebras}.

Various reformulations  of quasi-equivalence can be found in Section 6.A and Proposition 6.B.5 in \cite{BekkadeLaHarpeBook:2020}.    We will need the following result from  \cite[Theorem 6.B.15]{BekkadeLaHarpeBook:2020}.  

\begin{proposition}\label{PropPropertiesQuasiEquivalence} Let $G$ be a countable group and $\pi$ be a factor representation of $G$.
(1) Any  representation of $G$ that is quasi-equivalent to $\pi$ is factorial. 
(2) Every non-trivial  sub-representation of $\pi$  is quasi-equivalent to $\pi$. 
\end{proposition}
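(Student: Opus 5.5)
The plan is to derive both parts from the structure of von Neumann algebras: the commutant of a factor and central projections. Fix a factor representation $\pi$ of $G$ on $\mathcal H$, and let $\mathcal M=\mathcal M_\pi$, so that $\mathcal M\cap\mathcal M'=\mathbb C\,\id$.

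For part (2), a sub-representation is the restriction of $\pi$ to a closed invariant subspace $\mathcal K$. Its orthogonal projection $p$ lies in $\pi(G)'=\mathcal M'$, and $p\neq 0$. We will show that the reduction map $\varphi:\mathcal M\to \mathcal M p$, $a\mapsto ap|_{\mathcal K}$, is a $*$-isomorphism onto $\mathcal M_{\pi|_{\mathcal K}}$ carrying $\pi(g)$ to $\pi|_{\mathcal K}(g)$.

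The map $\varphi$ is clearly a normal $*$-homomorphism, and its image $\mathcal Mp$ is a von Neumann algebra on $\mathcal K$. This is standard: for $p\in\mathcal M'$, $\mathcal M p$ is weakly closed, being the image of a normal homomorphism. Since $\mathcal Mp$ contains $\pi(G)p$ and is contained in the weak closure of its span, it equals $\mathcal M_{\pi|_{\mathcal K}}$.

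The key step, and the only nontrivial one, is injectivity. The kernel of $\varphi$ is a weakly closed two-sided ideal of $\mathcal M$. It therefore has the form $\mathcal M z$ for a central projection $z\in\mathcal M\cap\mathcal M'$. Concretely, $z$ is $\id$ minus the central support $c(p)$ of $p$: the projection onto $\overline{\mathcal M p\mathcal H}$, which lies in $\mathcal M''\cap\mathcal M'$. Because $\mathcal M$ is a factor, $z\in\{0,\id\}$. Since $p\neq 0$, the projection onto $\overline{\mathcal M p\mathcal H}$ is nonzero, so $z=0$. Hence $\varphi$ is injective, and $\pi|_{\mathcal K}$ is quasi-equivalent to $\pi$.

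For part (1), let $\sigma$ be quasi-equivalent to $\pi$, with $\varphi:\mathcal M_\pi\to\mathcal M_\sigma$ a $*$-isomorphism. The center of a von Neumann algebra is defined purely algebraically as the set of elements commuting with all others. A $*$-isomorphism therefore maps center onto center, and $\mathcal M_\sigma\cap\mathcal M_\sigma'=\varphi(\mathbb C\,\id)=\mathbb C\,\id$. Note that $\mathcal M_\sigma'$ contains the center, so one should check that $Z(\mathcal M_\sigma)=\mathcal M_\sigma\cap\mathcal M_\sigma'$; this is immediate.

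The main obstacle is the identification of the kernel ideal with a central cutdown in part (2). I would prove it directly. If $ap=0$, then for $u$ unitary in $\mathcal M$ we have $u^*au\,p=0$, which gives $a\,upu^*=0$. Taking the supremum over unitaries yields $a\,c(p)=0$, where $c(p)=\bigvee_u upu^*$ is invariant under unitary conjugation and hence central. Centrality together with $p\neq0$ then forces $c(p)=\id$, and so $a=0$.
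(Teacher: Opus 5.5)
Part (1) and the setup of part (2) are fine. The paper gives no proof of this proposition and simply cites Bekka--de la Harpe, Theorem 6.B.15. The problem is the injectivity step, which you yourself single out as the only nontrivial one: you compute the central support of $p$ using the wrong algebra.

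Since $p\in\mathcal M'$, every unitary $u\in\mathcal M$ satisfies $upu^*=p$, so your $c(p)=\bigvee_{u}upu^*$ over unitaries of $\mathcal M$ is just $p$. Likewise $\overline{\mathcal M p\mathcal H}=p\mathcal H$, because $ap\xi=pa\xi$. Invariance under conjugation by unitaries of $\mathcal M$ only puts a projection in $\mathcal M'$, not in $\mathcal M\cap\mathcal M'$, so ``hence central'' does not follow. In fact it is false. If it held, every invariant projection of a factor representation would be $0$ or $\id$, so every factor representation would be irreducible. The regular representation of an ICC group is a counterexample. Your concrete identification of $z$ as $\id$ minus the projection onto $\overline{\mathcal Mp\mathcal H}$ has the same defect: it would force $z=\id-p$ to be central.

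The repair is to conjugate by the commutant. Let $a\in\mathcal M$ with $ap=0$. For every unitary $v\in\mathcal M'$ we have $a\,vpv^*=v(ap)v^*=0$, since $a$ commutes with $v$. Hence $a$ vanishes on the range of $c'(p)=\bigvee_{v}vpv^*$, the supremum over unitaries $v$ of $\mathcal M'$, which is the projection onto $\overline{\mathcal M' p\mathcal H}$. This projection has three properties:
\begin{itemize}
\item it lies in $\mathcal M'$, being a supremum of projections in $\mathcal M'$;
\item it is fixed by conjugation by every unitary of $\mathcal M'$, so it lies in $\mathcal M''=\mathcal M$ and is therefore central;
\item it dominates $p\neq0$.
\end{itemize}
In a factor it therefore equals $\id$, and $a=0$.

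Alternatively, your first route already works without identifying $z$. Once you know $\ker\varphi=\mathcal Mz$ with $z$ central, factoriality gives $z\in\{0,\id\}$. The case $z=\id$ is excluded because $\varphi(\id)=\id_{\mathcal K}\neq 0$. With either fix the argument is the standard one: for $p\in\mathcal M'$, the induction $a\mapsto ap$ is injective exactly when the central carrier of $p$ is $\id$.
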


The following result appears in \cite[Theorem A]{HiraiHirai:2005}, see also \cite[Propostion 11.B.7]{BekkadeLaHarpeBook:2020}. 

\begin{proposition}\label{PropGNSQuasiEquivalence} Let $G$ be a countable group and $\pi: G\rightarrow U(\mathcal H)$ be a factor representation of finite type. Let $\tr$ be a faithful normal trace on $\mathcal M_{\pi}$, and $
\chi = \tr\circ \pi$. Then the representation $\pi$ is quasi-equivalent to the GNS representation constructed by the character $\chi$. 
\end{proposition}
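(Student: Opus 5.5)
The statement to prove is Proposition~\ref{PropGNSQuasiEquivalence}: a finite-type factor representation $\pi$ with faithful normal trace $\tr$ is quasi-equivalent to the GNS representation $\pi_\chi$ of $\chi=\tr\circ\pi$. The plan is to construct, inside a suitable amplification of $\pi$, a subrepresentation unitarily equivalent to $\pi_\chi$. Then Proposition~\ref{PropPropertiesQuasiEquivalence}(2) gives the conclusion, since quasi-equivalence is preserved under amplification and passage to nonzero subrepresentations of factor representations.

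\emph{Step 1: the standard form.} Let $L^2(\mathcal M_\pi,\tr)$ be the completion of $\mathcal M_\pi$ with respect to $\|a\|_2=\tr(a^*a)^{1/2}$. Let $\lambda(a)$ denote left multiplication, and let $\hat 1$ be the image of the identity. Since $\tr$ is faithful, this is a Hilbert space with $\hat 1$ cyclic for $\lambda(\mathcal M_\pi)$, and
\[
(\lambda(\pi(g))\hat 1,\hat 1)=\tr(\pi(g))=\chi(g).
\]
The span of $\lambda(\pi(G))\hat 1$ is dense, because the span of $\pi(G)$ is weakly dense in $\mathcal M_\pi$ and, by normality, $\tr$ is weakly continuous on bounded sets. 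By Kaplansky density, bounded nets then converge in $\|\cdot\|_2$. Uniqueness of the GNS construction therefore identifies $g\mapsto \lambda(\pi(g))$ with $\pi_\chi$ up to unitary equivalence.

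\emph{Step 2: $\lambda$ is a normal isomorphism.} The map $a\mapsto\lambda(a)$ is an injective $*$-homomorphism, by faithfulness of $\tr$. It is normal, because for bounded nets $a_i\to a$ strongly we have $\|(a_i-a)\hat b\|_2^2=\tr\big(b^*(a_i-a)^*(a_i-a)b\big)\to 0$, using that $\tr$ is normal. Hence $\lambda$ is a normal $*$-isomorphism of $\mathcal M_\pi$ onto a von Neumann algebra, and that algebra is $\lambda(\mathcal M_\pi)=\mathcal M_{\pi_\chi}$. It sends $\pi(g)$ to $\pi_\chi(g)$, so by definition $\pi$ and $\pi_\chi$ are quasi-equivalent. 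By Proposition~\ref{PropPropertiesQuasiEquivalence}(1), $\pi_\chi$ is also a factor representation.

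\emph{Main obstacle.} The delicate point is the claim that the image $\lambda(\mathcal M_\pi)$ is weakly closed and equals $\{\pi_\chi(G)\}''$. This is where normality of $\tr$ is essential, and it is also where the separability conventions enter, via the structure theorem that normal $*$-homomorphisms have weakly closed range. If one prefers to avoid that theorem, the alternative is the amplification route. Write the normal state $\tr$ as a vector state $a\mapsto(a\eta,\eta)$ of $\pi^{\oplus\infty}$, using the standard form of normal states on $\mathcal M_\pi$. The cyclic subspace generated by $\eta$ then carries a representation equivalent to $\pi_\chi$. Since $\pi^{\oplus\infty}$ is quasi-equivalent to $\pi$ and is a factor representation, Proposition~\ref{PropPropertiesQuasiEquivalence}(2) finishes the argument.
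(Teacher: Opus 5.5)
Your argument is correct, but it is not a comparison with a proof in the paper: the paper gives no proof of this proposition and simply quotes it from Hirai--Hirai (Theorem A) and Bekka--de la Harpe (Proposition 11.B.7). Citing buys brevity, and the cited sources place the statement inside the full correspondence between indecomposable characters and quasi-equivalence classes of finite factor representations.

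Your Steps 1--2 instead give a self-contained standard-form proof. You realize the GNS representation of $\chi$ as $g\mapsto\lambda(\pi(g))$ on $L^2(\mathcal M_\pi,\tr)$, and $\lambda$ itself is the required $*$-isomorphism $\mathcal M_\pi\to\mathcal M_{\pi_\chi}$. This route uses only faithfulness and normality of $\tr$, never that $\mathcal M_\pi$ is a factor. So it actually proves the statement for every finite-type representation. Factoriality is needed only in your amplification variant, where you invoke Proposition~\ref{PropPropertiesQuasiEquivalence}(2).

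You can close your ``main obstacle'' with tools already in your proof, and separability plays no role there. For fixed $b,c\in\mathcal M_\pi$, the map $x\mapsto(\lambda(x)\hat b,\hat c)=\tr(c^*xb)$ is weakly continuous on bounded sets by normality. Hence $\lambda$ is weak-operator continuous on the unit ball of $\mathcal M_\pi$, so the image of that ball is weakly compact. Because an injective $*$-homomorphism is isometric, this image is exactly the unit ball of $\lambda(\mathcal M_\pi)$. By Kaplansky density, that ball is weakly dense in the unit ball of $\lambda(\mathcal M_\pi)''$; being weakly compact, it equals it, so $\lambda(\mathcal M_\pi)=\lambda(\mathcal M_\pi)''$. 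Your bounded strong continuity of $\lambda$, together with Kaplansky density of the span of $\pi(G)$ in $\mathcal M_\pi$, then identifies this algebra with $\{\lambda(\pi(G))\}''$.

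Finally, your opening paragraph announces the amplification plan, but Steps 1--2 carry out the direct one. Either route suffices, so present one as the proof and the other as a remark.
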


If $\chi_1$ and $\chi_2$ are characters on a group $G$, then so are their pointwise product and any convex combination. 

\begin{definition}
A character $\chi$ is called \emph{indecomposable} or {\it extremal} if it cannot be written as a nontrivial convex combination $\chi = \alpha \chi_1 + (1-\alpha)\chi_2$ for $0 < \alpha < 1$ and distinct characters $\chi_1, \chi_2$.
\end{definition}

The set of  characters forms a compact convex subset of the unit ball in $\ell^\infty(G)$ with respect to the weak*-topology. In particular, this set is a Choquet simplex \cite{Thoma-Characters-64}, and thus, by Choquet theory, every character can be uniquely represented as a barycenter (i.e., an integral) of extreme characters \cite[Section 10]{Phelps:LecturesChoquetTheory}.

The following classical result characterizes indecomposable characters in terms of their GNS representations; see \cite[Proposition 11.C.3]{BekkadeLaHarpeBook:2020}.


\begin{proposition}\label{PropEquivalenceExtremeCharactersFactors} Let $G$ be a countable group, $\chi$  a character on $G$, and $\pi$ the corresponding GNS representation. Then the  von Neumann algebra $\{\pi(G)\}''$ is a factor if and only if the character $\chi$ is indecomposable. 
\end{proposition}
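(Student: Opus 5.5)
We prove Proposition~\ref{PropEquivalenceExtremeCharactersFactors} by setting up a correspondence between decompositions of $\chi$ and positive elements of the center of $\mathcal M_\pi$. Let $(\mathcal H,\xi,\pi)$ be the GNS triple of $\chi$, so $\tr(m)=(m\xi,\xi)$ is a faithful normal trace on $\mathcal M=\mathcal M_\pi$ and $\xi$ is cyclic and separating. The key step is the following description: \emph{characters $\psi$ with $\psi\le c\chi$ (pointwise in the positive-definite order, i.e.\ $c\chi-\psi$ positive definite) and conjugation invariant correspond bijectively to positive elements $z$ of the center $Z(\mathcal M)=\mathcal M\cap\mathcal M'$ via $\psi(g)=(\pi(g)z\xi,\xi)=\tr(z\pi(g))$.}

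For the direction ``not a factor $\Rightarrow$ decomposable'', take a nontrivial central projection $p\in Z(\mathcal M)$, $p\neq 0,\id$. Put $\alpha=\tr(p)\in(0,1)$; it lies strictly between $0$ and $1$ by faithfulness. Define $\chi_1(g)=\alpha^{-1}\tr(p\pi(g))$ and $\chi_2(g)=(1-\alpha)^{-1}\tr((\id-p)\pi(g))$. Each of these is conjugation invariant by the trace property. Each is positive definite, since $\tr(p\pi(\cdot))=(\pi(\cdot)p\xi,p\xi)$ because $p$ is a central projection. Each is normalized. Clearly $\chi=\alpha\chi_1+(1-\alpha)\chi_2$. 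If $\chi_1=\chi_2$, then $\tr((p-\alpha\id)\pi(g))=0$ for all $g$. By linearity and normality this gives $\tr((p-\alpha\id)m)=0$ for all $m\in\mathcal M$. Taking $m=p-\alpha\id$ and using faithfulness yields $p=\alpha\id$, a contradiction. So $\chi$ is decomposable.

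For the converse, suppose $\chi=\alpha\chi_1+(1-\alpha)\chi_2$ with $\chi_1\neq\chi_2$; we show $\mathcal M$ is not a factor. Set $\psi=\alpha\chi_1\le\chi$. The form $[\pi(g)\xi,\pi(h)\xi]:=\psi(h^{-1}g)$ on the span of $\pi(G)\xi$ is well defined and bounded by the inner product, since $\chi-\psi$ is positive definite. It therefore extends to a bounded form, giving an operator $0\le A\le\id$ on $\mathcal H$ with $(A\pi(g)\xi,\pi(h)\xi)=\psi(h^{-1}g)$. Left invariance of this form shows $A$ commutes with $\pi(G)$, so $A\in\mathcal M'$.

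The main point is showing $A$ is also in $\mathcal M$, and this is where conjugation invariance of $\psi$ enters. Let $J$ be the modular conjugation $J m\xi=m^*\xi$. Because $\tr$ is a trace, $J$ is antiunitary and $J\mathcal MJ=\mathcal M'$; the right action $\rho(g)=J\pi(g)J$ satisfies $\rho(g)\pi(h)\xi=\pi(hg^{-1})\xi$. Using $\psi(ghg^{-1})=\psi(h)$, one checks that $A$ also commutes with $\rho(G)$. Hence $A\in\rho(G)'=(J\mathcal M J)'=\mathcal M''=\mathcal M$. Together with $A\in\mathcal M'$, this gives $A\in Z(\mathcal M)$.

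If $\mathcal M$ were a factor, then $A=c\,\id$ and $\psi=c\chi$. Normalization forces $c=\alpha$, so $\chi_1=\chi$ and likewise $\chi_2=\chi$, contradicting $\chi_1\neq\chi_2$. Thus $\mathcal M$ is not a factor, which proves the proposition. The only delicate step is $A\in\mathcal M$, which uses the identification $\mathcal M'=J\mathcal MJ$ for the standard form of a tracial von Neumann algebra (see \cite[Section 11.B]{BekkadeLaHarpeBook:2020}).
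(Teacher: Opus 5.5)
Your proof is correct. It is essentially the standard argument behind \cite[Proposition 11.C.3]{BekkadeLaHarpeBook:2020}, which the paper cites rather than proves: a nontrivial central projection splits $\chi$ into two distinct characters, and conversely a dominated character $\alpha\chi_1\le\chi$ gives an operator $0\le A\le\id$ in $\mathcal M'$, which conjugation invariance places in $\rho(G)'=\mathcal M$, hence in the center. The only nontrivial input is the commutation theorem $\mathcal M'=J\mathcal M J$ for the standard form of $(\mathcal M,\tr)$, specifically the inclusion $\mathcal M'\subseteq J\mathcal M J$, which you correctly identify and cite.
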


\begin{remark}\label{remarkCharacterTimesHomomorsphism}  Let \(G\) be a group, let \(\chi\) be an indecomposable character of \(G\), 
and let \(\lambda : G \to \mathbb{T}\) be a group homomorphism into the unit circle. 
Then the function
$
\chi_{\lambda}(g) = \lambda(g)\,\chi(g)
$
is also an indecomposable character. 
Indeed, the von Neumann algebra associated with \(\chi_{\lambda}\) coincides with 
the von Neumann algebra associated with \(\chi\), which is a factor by 
Proposition~\ref{PropEquivalenceExtremeCharactersFactors}.
\end{remark}

Every finite-type unitary representation of a group admits a decomposition as a direct integral of finite-type factor representations, known as the \emph{central decomposition}. The following result describes this structure. A proof of Part (I), along with further references, can be found in \cite[Section 6.C.c; Theorems 6.C.7 and 6.C.8]{BekkadeLaHarpeBook:2020}. Part (II) is a consequence of Part (I). For a general treatment of disintegration theory in the context of von Neumann algebras, see \cite[Chapter 14]{KadisonRingrose:Vol2}.


\begin{theorem}\label{TheoremCentralDecomposition}
Let $G$ be a countable group, and let $\pi$ be a finite-type unitary representation of $G$ on a separable Hilbert space $\mathcal H$. Denote by $\mathcal Z$ the center of the von Neumann algebra generated by $\pi(G)$.

\smallskip

\noindent
\textup{(I)} There exist a standard Borel space $X$, a probability measure $\mu$ on $X$, a measurable field $x \mapsto (\mathcal H_x, \pi_x)$ of Hilbert spaces and representations, a Hilbert space isomorphism
$$U : \mathcal H \longrightarrow \int_X^\oplus \mathcal H_x\, d\mu(x),$$
and a measurable set $Y \subseteq X$ of full measure such that:
\begin{enumerate}
\item $U \pi(g) U^{-1} = \int_X^\oplus \pi_x(g) d\mu(x)$ for all $g \in G$;
\item $U \mathcal Z U^{-1}$ coincides with the algebra of diagonal operators on $\int_X^\oplus \mathcal H_x d\mu(x)$;
\item each $\pi_x$ is factorial for all $x \in Y$;
\item for distinct $x,y \in Y$, the representations $\pi_x$ and $\pi_y$ are disjoint (i.e., not quasi-equivalent).
\end{enumerate}

\smallskip

\noindent
\textup{(II)} If $\chi$ is a character of $G$ with associated GNS triple $(\mathcal H, \xi, \pi)$, then the central decomposition of $\pi$ from part~(I) satisfies
$$\chi(g) = ( \pi(g)\xi, \xi )
= \int_X^\oplus ( \pi_x(g)\xi_x, \xi_x )_x\, d\mu(x),$$
where $U\xi = \int_X^\oplus \xi_x, d\mu(x)$, and for $\mu$-almost every $x \in X$, the map
$m \longmapsto ( m \xi_x, \xi_x)_x$
defines a faithful normal trace on the von Neumann algebra generated by $\pi_x(G)$.
\end{theorem}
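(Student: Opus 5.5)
Part (I) is the standard central-decomposition (disintegration) theorem, which the paper cites, so the substantive work is Part (II). I will derive (II) from (I) applied to the GNS representation $\pi$ of $\chi$.

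\textbf{Step 1: disintegrating the state.} Write $U\xi=\int_X^\oplus \xi_x\,d\mu(x)$. By (I)(1),
\[
\chi(g)=(\pi(g)\xi,\xi)=(U\pi(g)U^{-1}U\xi,U\xi)=\int_X(\pi_x(g)\xi_x,\xi_x)_x\,d\mu(x),
\]
which is immediate from the definition of the direct-integral inner product.

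\textbf{Step 2: traciality of the fibre states.} Define $\omega_x(m)=(m\xi_x,\xi_x)_x$ on $\mathcal M_x=\{\pi_x(G)\}''$. This is a positive normal functional. The plan is as follows.
\begin{enumerate}
\item For each pair $g,h$ in the countable group $G$, use that $\tr$ is tracial, so $\chi(gh)=\chi(hg)$.
\item Combine this with the fact that the center $\mathcal Z$ is the diagonal algebra (I)(2). For every $f\in L^\infty(X,\mu)$ the functional $m\mapsto \tr(m\,U^{-1}M_fU)$ is still tracial on $\mathcal M_\pi$, since $M_f$ is central.
\item Rewrite this functional as $\int f(x)\,\omega_x(\pi_x(\cdot))\,d\mu(x)$.
\item Conclude $\int f(x)\,[\omega_x(\pi_x(gh))-\omega_x(\pi_x(hg))]\,d\mu=0$ for all $f$, hence equality for a.e. $x$.
\end{enumerate}
Countability of $G$ lets us discard a single null set that works for all $g,h$. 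Linearity and normality then extend traciality from the group elements to all of $\mathcal M_x$, because the span of $\pi_x(G)$ is weakly dense. Taking $f=\mathbbm{1}_E$ also gives $\int_E\|\xi_x\|^2\,d\mu=\tr(P_E)$. After rescaling $\mu$ (absorbing $\|\xi_x\|^2$ into the measure, allowed since $\tr$ is faithful and $P_E\neq 0$ whenever $\mu(E)>0$), we may assume $\|\xi_x\|=1$ a.e., so $\omega_x$ is normalized.

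\textbf{Step 3: faithfulness.} This is the main obstacle. I would argue that $\xi$ is cyclic for $\mathcal M_\pi'$, since it is separating for $\mathcal M_\pi$. The commutant decomposes as $\int^\oplus \mathcal M_x'\,d\mu$, because the diagonal algebra lies in both $\mathcal M_\pi$ and $\mathcal M_\pi'$. Hence $\xi_x$ is cyclic for $\mathcal M_x'$ for a.e. $x$. This needs a standard measurable-selection argument using a countable dense subset of the commutant. Cyclicity for the commutant is equivalent to $\xi_x$ being separating for $\mathcal M_x$, so $\omega_x$ is faithful. Alternatively, faithfulness is automatic here: $\mathcal M_x$ is a factor by (I)(3), so its support projection, being central, equals $\mathrm{Id}$, and a nonzero normal trace on a factor is faithful. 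I would present this shorter argument and keep the cyclicity route as a backup.
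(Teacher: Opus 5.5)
Your proposal is correct and follows the paper's route. The paper cites Bekka--de la Harpe for Part (I) and states, without further detail, that Part (II) follows from it; your Steps 1--3 supply that omitted derivation: integrate against the central multiplications $M_f$ to get traciality in almost every fibre, then deduce faithfulness from factoriality of $\pi_x$. You are also right that the fibre traces are normalized only after the decomposition is re-chosen with $\|\xi_x\|^2\,d\mu$ in place of $\mu$, a point the statement leaves implicit. This works because fibres with $\xi_x=0$ can be discarded, since the corresponding central projection annihilates the separating vector $\xi$; note that $P_E\neq 0$ for every $E$ of positive measure is stronger than what you actually need.
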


We note that every group admits at least two characters: the identity character and the regular character.

\begin{example}[Regular Character]\label{ExampleRegularRepresentation}Let $G$ be a group, and let $L(G)$ denote the (left) group von Neumann algebra generated by the left-regular representation of $G$ on $\ell^2(G)$. Let $\xi$ be the characteristic function of the identity element $e \in G$. Then the function $\chi_{\mathrm{reg}}(g) = (g\xi, \xi) = \delta_{e,g}$ defines a character on $G$, called the \emph{regular character}. By Proposition~\ref{PropEquivalenceExtremeCharactersFactors}, $\chi_{\mathrm{reg}}$ is indecomposable if and only if $L(G)$ is a factor, which holds precisely when $G$ is an ICC group.

We note that in view of Proposition~\ref{PropGNSQuasiEquivalence}, any finite-type factor representation that generates the regular character is quasi-equivalent to the regular representation.
\end{example}

The following example illustrates how measure-preserving group actions give rise to characters. This construction, explicitly presented in \cite{Vershik-Nonfree-10}, uses the notion of groupoid von Neumann algebras developed by Feldman and Moore in \cite{FeldmanMoore-ErgodicRelations-77Part2}.

%

\begin{example}[The Feldman-Moore Construction] \label{ExampleFeldmanMooreConstruction}
Let $X$ be a standard Borel space, $G$ a countable group acting on $X$ by Borel transformations, and $\mu$ a $G$-invariant probability Borel measure on $X$. Denote by $\mathcal{R} = \{(x, gx) : x \in X,\, g \in G\}$ the orbit equivalence relation of $(X, G)$. The group $G$ admits a left action on the space $\mathcal{R}$,  
defined by  
$g(x, y) = (gx, y)$, $(x, y) \in \mathcal{R}$, $g \in G$.   

 For a Borel subset $A \subset \mathcal{R}$, define  
\begin{equation*}\widetilde{\mu}(A) = \int_{X} \operatorname{card}(A_x) \, d\mu(x),  \end{equation*}
where $A_x = \{(x, y) \in A\}$. See \cite[Theorem 2]{FeldmanMoore-ErgodicRelations-77} for details. The measure $\widetilde{\mu}$ is Borel and invariant with respect to the left action of $G$.  Furthermore, the restriction of $\widetilde{\mu}$ to the diagonal  
\[ \Delta = \{(x, x) : x \in X\} \]  
coincides with the measure $\mu$, after identifying $\Delta$ with $X$.  

Consider the Hilbert space $\mathcal{H} = L^2(\mathcal{R}, \widetilde{\mu})$. Define the left  
representation of the group $G$ on $\mathcal{H}$ by  
\[
(\pi(g)f)(x,y) = f(g^{-1}x, y), \quad f \in \mathcal{H}.
\]  
The representation $\pi$ is called a {\it groupoid representation}.  The von Neumann algebra generated by the operators $\{\pi(G)\}$ will be denoted by $\mathcal{M}_\pi$.  

For each function $m \in L^\infty(X, \mu)$, define the operator $W_m: \mathcal{H} \to \mathcal{H}$ by  
\[
(W_m f)(x, y) = m(x) f(x, y).
\]

Denote by $\mathcal{M}_{\mathcal{R}}$ the von Neumann algebra generated by $\mathcal{M}_\pi$ and the operators $W_m$, where $m \in L^\infty(X, \mu)$. While technically the algebra $\mathcal{M}_{\mathcal{R}}$ is larger than $\mathcal{M}_\pi$, Theorem \ref{ThmCharacterDudkoGrigorchuk} will show that for a large class of systems, these two algebras coincide. We note that for ergodic actions, the algebra $\mathcal{M}_{\mathcal{R}}$ is a finite-type factor \cite[Proposition 2.9]{FeldmanMoore-ErgodicRelations-77Part2}. 

Let $\xi = 1_\Delta \in \mathcal H$ denote the characteristic function of the diagonal $\Delta \subset X \times X$. We observe that the vector $\xi$ is cyclic and separating for the von Neumann algebra $\mathcal{M}_{\mathcal{R}}$  \cite[Proposition 2.5]{FeldmanMoore-ErgodicRelations-77Part2}. A straightforward computation shows that
$$
(\pi(g)\xi, \xi) = \mu(\fix(g)) \quad \text{for all } g \in G,
$$
which implies that the function $\chi(g) = \mu(\fix(g))$ defines a character on $G$.
\end{example}

%
%
\begin{example}[The Product Action]\label{exampleProductAction}  Let $X$ be a standard Borel space, $G$ a countable group acting on $X$ by Borel transformations, and $\mu_1,\mu_2,\ldots,\mu_n$ be a collection (repetitions are permitted) of $G$-invariant  probability Borel measures on $X$. Consider the product acton of $G$ on $X^n$. Then the product measure $\mu = \mu_1\times \cdots \times \mu_n$ is $G$-invariant and $\chi(g) = \mu(\fix(g)_n)$, $g\in G$, is a character on $G$, where $$\fix(g)_n = \{(x_1,\ldots, x_n)\in X^n : g(x_i)=x_i\} .$$   Note that    $\chi(g) = \prod_{i=1}^n \mu_i(\textrm{Fix}(g))$ for every $g\in G$.

  Denote by $\mathcal R$ the $G$-orbit equivalence relation for the product action of $G$ on $X^n$,  by $\widetilde{\mu} $ the counting measure on $\mathbb R$, and by $\pi$ the unitary representation of $G$ in $L^2(\mathcal R, \widetilde{\mu})$ as in Example \ref{ExampleFeldmanMooreConstruction}. 
 Define $\mathcal H_\xi$ as the closure of $\textrm{Lin}(\pi( G) \xi)$, where  $\xi$ is  the indicator function of the diagonal in $X^n\times X^n$.   Denote by $\mathcal M_\pi$ the von Neumann algebra generated by $\pi(G)$ on $\mathcal H_\xi$.   

 Note that the functional $\tr(m) = (m\xi, \xi)$ defines a faithful normal trace on $\mathcal{M}_\pi$ and satisfies $\tr(\pi(g)) = \chi(g)$ for every $g \in G$. If $\mathcal{M}_\chi$ is the von Neumann algebra obtained via the GNS construction associated with the character $\chi$, then $\mathcal{M}_\pi$ and $\mathcal{M}_\chi$ are unitarily equivalent; see, for example, the argument in \cite[Proposition 1.B.8]{BekkadeLaHarpeBook:2020}.
  \end{example}

The indecomposability of permutation characters defined in Examples  \ref{ExampleFeldmanMooreConstruction} and \ref{exampleProductAction} will be discussed in Section \ref{Section:PermutationCharacter}.

The following result shows that groupoid representations are continuous with respect to the uniform topology induced by the underlying group action. For a detailed discussion of the weak and uniform topologies on the group of measure-preserving transformations, we refer the reader to the classical monograph~\cite{Halmos:LecturesErgodicTheory}.

\begin{proposition}\label{PropGroupRepresentationContinuity}
Let $G$ act on a standard probability space $(X, \mu)$ by measure-preserving transformations. Denote by $(\pi, L^2(\mathcal{R}, \widetilde\mu), \xi)$ the corresponding Feldman-Moore groupoid representation (see Example \ref{ExampleFeldmanMooreConstruction}), and let $\mathcal{H} = [\pi(G)\xi]$ be the cyclic subspace generated by $\xi$. 

Assume that a sequence $\{g_n\}$ of elements of $G$ is Cauchy  with respect to the psuedometric 
$$
d_\mu(g,h) = \mu(\{x \in X : gx \neq hx\}).
$$
Then the sequence $\{\pi(g_n)|_{\mathcal{H}}\}$ converges in the strong operator topology. In particular, if $\{g_n\}$ converges to $g\in G$ in the pseudometric $d_\mu$, then  $\{\pi(g_n)|_{\mathcal{H}}\}$ converges to $\pi(g)|_{\mathcal{H}}$  in the strong operator topology. 
\end{proposition}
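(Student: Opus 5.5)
The plan is to control $\|\pi(g_n)\eta - \pi(g_m)\eta\|$ first on the dense set of vectors $\eta = \pi(h)\xi$, $h \in G$, and then pass to all of $\mathcal{H}$ by a uniform-boundedness ($\varepsilon/3$) argument. Since all the operators $\pi(g)$ are unitaries, a sequence of them that is strongly Cauchy on a dense subset is strongly Cauchy on the whole space. The strong limit exists in $B(\mathcal{H})$ by completeness and is again a contraction. Note also that $\mathcal{H}=[\pi(G)\xi]$ is $\pi(G)$-invariant, so the restrictions make sense.

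The key computation is for the cyclic vector itself. By definition $\pi(g)\xi = 1_{\{(gx,x)\}}$, the indicator of the graph of $g^{-1}$ in $\mathcal{R}$. Using the definition of $\widetilde\mu$ (counting measure on fibres $A_x$), the function $\pi(g)\xi - \pi(h)\xi$ vanishes over every $x$ with $gx=hx$. Over the remaining points it takes the values $\pm1$ at exactly two points of the fibre. Hence
$$\|\pi(g)\xi-\pi(h)\xi\|^2 = 2\,\mu(\{x: gx\neq hx\}) = 2\,d_\mu(g,h).$$
Alternatively one can expand and use $(\pi(g)\xi,\pi(h)\xi)=\mu(\fix(h^{-1}g))$, observing that $\{x : h^{-1}gx\neq x\}=\{x: gx\neq hx\}$. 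So $\{\pi(g_n)\xi\}$ is Cauchy.

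For $\eta=\pi(h)\xi$ I need right multiplication, since $\pi(g_n)\pi(h)\xi=\pi(g_nh)\xi$. The identity above gives
$$\|\pi(g_nh)\xi-\pi(g_mh)\xi\|^2 = 2\mu(\{x: g_nhx\neq g_mhx\}) = 2\mu\bigl(h^{-1}\{y: g_ny\neq g_my\}\bigr)=2d_\mu(g_n,g_m),$$
using $G$-invariance of $\mu$. Thus the sequence is Cauchy on each $\pi(h)\xi$, and by linearity on their span, which is dense in $\mathcal{H}$. Completing the $\varepsilon/3$ argument gives a strong limit on all of $\mathcal{H}$.

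For the second assertion, if $d_\mu(g_n,g)\to 0$, the same identity with $g_m$ replaced by $g$ shows $\pi(g_n)\eta\to\pi(g)\eta$ on the dense set, hence everywhere by uniform boundedness. The only delicate point, which I expect to be the main (mild) obstacle, is justifying the norm identity rigorously: one has to be careful with the measure $\widetilde\mu$ on graphs of Borel maps, namely that $\widetilde\mu$ of the graph of a Borel injection restricted to $B$ equals $\mu(B)$. This follows directly from the counting-measure formula.
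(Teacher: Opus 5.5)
Your proposal is correct and is essentially the paper's proof. Both arguments reduce to the dense set of vectors $\pi(h)\xi$ and use the identity $\|(\pi(g_n)-\pi(g_m))\pi(h)\xi\|^2 = 2-2\mu(\fix(g_m^{-1}g_n)) = 2d_\mu(g_n,g_m)$, which relies on $G$-invariance of $\mu$. Both then finish with an $\varepsilon$-approximation argument that uses the fact that the operators are unitary.

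The differences are cosmetic. The paper gets the second assertion by applying the Cauchy statement to the interleaved sequence $g_1,g,g_2,g,\ldots$, whereas you argue directly. Also, $\widetilde\mu$ is defined by fibres over the first coordinate, so $\pi(g)\xi-\pi(h)\xi$ vanishes over those $z$ with $g^{-1}z=h^{-1}z$, not $gz=hz$. Your fibrewise count therefore also needs invariance of $\mu$; your inner-product version handles this cleanly.
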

\begin{proof}
Let $\varepsilon > 0$ and $\eta \in \mathcal{H}$. There exists a finite linear combination of the form
$$
\eta' = \sum_{i=1}^k c_i \pi(h_i)\xi
$$
such that
$
\bigl\| \eta - \eta' \bigr\| < \frac{\varepsilon}{4}.
$
Set $R = \max_{1 \le i \le k} |c_i|$. Choose $N$ such that
$$
d_\mu(g_n, g_m) < \frac{\varepsilon^2}{8R^2 k^2} \quad \text{for all } n,m \ge N.
$$
A direct computation shows that for each $1 \le i \le k$ and all $n,m \ge N$,
\begin{equation*}\begin{split}
\|\left(\pi(g_n) - \pi(g_m)\right)\pi(h_i)\xi \|^2  
& = 2 - 2 \operatorname{Re} (\pi(h_i^{-1} g_m^{-1} g_n h_i)\xi,\xi)  \\
&= 2 - 2\mu(\mathrm{Fix}(g_m^{-1} g_n))  \\ & =  2d_\mu(g_n, g_m) 
< \frac{\varepsilon^2}{4R^2 k^2}.
\end{split}
\end{equation*} 
Thus, for every $n,m \ge N$, we have that 
$$\|\pi(g_n)\eta' - \pi(g_m)\eta'\| = \| \sum_{i=1}^k \left(\pi(g_n) - \pi(g_m)\right) c_i \pi(h_i)\xi \| <\frac{\varepsilon}{2}.$$

It follows that 
$$
\|\pi(g_n)\eta - \pi(g_m)\eta\| < \varepsilon, \qquad \text{for all } n,m \ge N.
$$
Hence $\{\pi(g_n)\eta\}$ is a Cauchy sequence in $\mathcal{H}$, and therefore convergent. Since this holds for every $\eta \in \mathcal{H}$, the sequence $\{\pi(g_n)|_{\mathcal{H}}\}$ converges in the strong operator topology. 

If the sequence $\{g_n\}$ converges to a group element $g$, then, by applying the  argument above to the Cauchy sequence $\{g_1,g, g_2, g, \ldots\}$, we obtain that  $\{\pi(g_n)|_{\mathcal{H}}\}$ converges to $\pi(g)|_{\mathcal{H}}$  in the strong operator topology. 
\end{proof}

Note that in the previous result we considered the restriction of $\pi$ to the cyclic subspace $\mathcal{H}$, rather than to the entire Hilbert space $L^2(\mathcal{R}, \nu)$. In the case of a perfectly non-free ergodic action, however, this distinction disappears, since then $\mathcal{H} = L^2(\mathcal{R}, \widetilde\mu)$; see Example~\ref{ExampleFeldmanMooreConstruction} and Theorem~\ref{ThmCharacterDudkoGrigorchuk}. 

%
%
%
 %

 \subsection{Full Groups  and  Their Algebraic Properties}\label{Section:FullGroups}
 
In this section, we define full groups of dynamical systems and those arising from Bratteli diagrams, and discuss some of their algebraic properties. We cite the most general results when possible, while also referencing earlier work on full groups of Bratteli diagrams and minimal $\mathbb{Z}$-systems.

\begin{definition}\label{Definition_Bratteli_Diagram} A {\it Bratteli diagram} is an
infinite graph $\mathrm B=(V,E)$ such that the vertex set
$V=\bigcup_{i\geq 0}V_i$ and the edge set $E=\bigcup_{i\geq 1}E_i$
are partitioned into disjoint subsets $V_i$ and $E_i$ such that

(i) $V_0=\{v_0\}$ is a single point;

(ii) $V_i$ and $E_i$ are finite sets;

(iii) there exist a range map $r$ and a source map $s$ from $E$ to
$V$ such that $r(E_i)= V_i$, $s(E_i)= V_{i-1}$, and $s^{-1}(v)\neq\emptyset$, $r^{-1}(v')\neq\emptyset$ for all $v\in V$ and $v'\in V\setminus V_0$.
\end{definition}

 The pair  $(V_i,E_i)$ is called the $i$-th level of the diagram $\mathrm B$.
A finite or infinite sequence of edges $(e_i : e_i\in E_i)$ such
that $r(e_{i})=s(e_{i+1})$ is called a {\it finite} or {\it infinite path},
respectively. A graphical example of a Bratteli diagram is given in Figure (\ref{fig:BratteliDiagram1}).

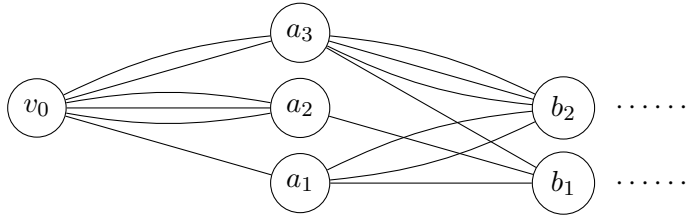
\begin{figure}[ht]
    \centering 
\begin{tikzpicture}[every node/.style={circle, draw, minimum size=0.01cm}, 
    sibling distance=1cm, level distance=3.5cm, 
    edge from parent/.style={draw}, 
    grow=east]
    
\node (root) {$v_0$}
    child {node (a1) {$a_1$}
        child {node (b1) {$b_1$}}         
    }
    child {node (a2) {$a_2$}
        child {node (b2) {$b_2$} edge from parent[draw=none]}
    }
    child {node (a3) {$a_3$}
    };

\draw[bend left=10] (root) to (a3);
\draw[bend left=10] (root) to (a2);
\draw[bend right=10] (root) to (a2);
\draw (a2) to (b1);
\draw[bend right=10] (a1) to (b2);
\draw[bend left=10] (a1) to (b2);
\draw (a3) to (b1);
\draw (a3) to (b2);
\draw[bend right=10] (a3) to (b2);
\draw[bend left=10] (a3) to (b2);

\node[right=0.1cm of b2, draw=none, fill=none] (dots1) {$\cdots\cdots$};
\node[right=0.1cm of b1, draw=none, fill=none] (dots2) {$\cdots\cdots$};
\end{tikzpicture}
\caption{A Bratteli diagram with a root node $v_0$, $V_1=\{a_1,  a_2, a_3\}$, and $V_2  = \{ b_1 , b_2 \}$.}
    \label{fig:BratteliDiagram1}
\end{figure}

 For a Bratteli diagram
$\mathrm B$, we denote by $X_\mathrm{B}$ the set of infinite paths starting at the root vertex $v_0$. We will   omit the index $\mathrm B$ in the notation $X_\mathrm{B}$ as the diagram will be  clear from the context. We endow the set $X$ with the topology generated by  cylinder sets $U(e_1,\ldots,e_n)=\{x\in X : x_i=e_i,\;i=1,\ldots,n\}$, where
$(e_1,\ldots,e_n)$ is a finite path from $\mathrm B$. Then $X$ is a 0-dimensional compact metric space with
respect to this topology. Observe that the space $X$ might have isolated points, but in the context discussed in this paper, $X$ will be perfect, i.e. $X$ will be a Cantor set.

 Denote by $h_v^{(n)}$, $v\in V_n$, the number of finite paths connecting the root vertex $v_0$ and the vertex $v$. Given a vertex $v\in V_n$, enumerate the cylinder sets $U(e_1,\ldots,e_n)$ with $r(e_n)=v$ by $\{C_{v,0}^{(n)},\ldots,C_{v,h_v^{(n)-1}}^{(n)}\}$. Then

 \begin{equation}\label{eqnXin} \Xi_n = \{C_{v,i}^{(n)} : v\in V_n,\;i=0,\ldots,h_v^{(n)}-1\}
 \end{equation}
 is a clopen partition of $X$, dubbed a {\it Kakutani-Rokhlin partition of $X$}. The collection of sets $\{C_{v,0}^{(n)},\ldots,C_{v,h_v^{(n)-1}}^{(n)}\}$ will be referred to as a {\it tower} associated with the level $n$.   Notice that  if $A$ is a clopen subset of $X$, then we can  always find a level $n\geq 1$ such that the set $A$ is a disjoint union of cylinder sets $U(e_1,\ldots,e_n)$ of depth $n$.  We will say then that the set $A$ is {\it compatible with the partition $\Xi_n$.}

Fix a Bratteli diagram $\mathrm B = (V,E)$ and a level $n\geq 1$.
Denote by $G_n$ the group of all homeomorphisms of $X: = X_B$
that changes only the first $n$-edges of each infinite path.  Namely, a homeomorphism
$g$ belongs to $G_n$ if and only if for any finite path
$(e_1,\ldots,e_n)$ there exists a finite path $(e_1',\ldots,e_n')$
with $r(e_n')=r(e_n)$ such that
$$ g(e_1,\ldots,e_n,e_{n+1},e_{n+2},\ldots)=(e_1',\ldots,e_n',e_{n+1},e_{n+2},\ldots)$$
for each infinite extension $(e_1,\ldots,e_n,e_{n+1},e_{n+2},\ldots)$ of the path $(e_1,\ldots,e_n)$.
 Observe that $G_n \cong \prod_{v\in V_n}\textrm{Sym}(h_v^{(n)})$, where $\textrm{Sym}(h_v^{(n)})$ is the symmetric group acting on the finite paths connecting the root vertex $v_0$ to the vertex $v\in V_n$.  Each group $G_n$ naturally embeds into  $G_{n+1}$ via a block-diagonal embedding,  where each factor $\textrm{Sym}(h_v^{(n)})$ embeds into $\textrm{Sym}(h_w^{(n+1)})$, $w\in V_{n+1}$, with multiplicity given by the number of edges connecting $v$ to $w$.  Denote by $G = \lim\limits_{\to}G_n$ the direct limit of groups $\{G_n\}$.  The group $G$ is countable and locally finite. 
 
 \begin{definition} (i) The group $G$ defined above is called the {\it full group of the Bratteli diagram $\mathrm B$}. 
 (ii)  The commutator (derived) subgroup of $G$ will be denoted by $G'$. 
  \end{definition}

\begin{example} (1) The finite groups $\{G_n\}$ associated with the Bratteli diagram depicted in Figure (\ref{fig:BratteliDiagram1}) are given by
 $$G_0=\textrm{Sym}(1),\; G_1 = \textrm{Sym}(1)\times \textrm{Sym}(3)\times \textrm{Sym}(2),\;G_2 = \textrm{Sym}(6)\times \textrm{Sym}(8),\ldots.$$

(2)  The finite groups associated with the Bratteli diagram in Figure (\ref{fig:BratteliDiagram:Rational}) are isomorphic to $ \textrm{Sym}(n!)$. The inductive limit $G = \lim\limits_{\to}G_n$ is isomorphic to the group of rational permutations of the interval $[0,1)$, see details in  \cite{goryachko:CharactersRational}. 

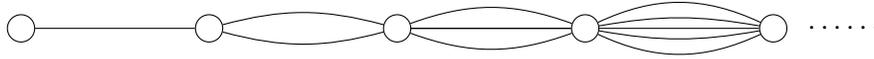
\begin{figure}[ht]
    \centering
    \begin{tikzpicture}[every node/.style={circle, draw, minimum size=0.01cm}, 
        sibling distance=0.2cm, level distance=2.5cm, 
        edge from parent/.style={draw}, 
        grow=east]
        
    \node (root) {}
        child {node (l1) {}
            child {node (l2) {} edge from parent[draw=none]
                child {node (l3) {}
                    child {node (l4) {}}
                }
            }
        };

    \draw[bend left=15] (l1) to (l2);
    \draw[bend right=15] (l1) to (l2);
    
    \draw[bend left=20] (l2) to (l3);
    \draw[bend right=20] (l2) to (l3);
    \draw (l2) to (l3);

    \draw[bend left=25] (l3) to (l4);
    \draw[bend right=25] (l3) to (l4);
    \draw[bend left=10] (l3) to (l4);
    \draw[bend right=10] (l3) to (l4);
   
     \node[right=0.1cm of l4, draw=none, fill=none] (dots) {$\cdots\cdots$};

    \end{tikzpicture}
    \caption{A Bratteli diagram with one vertex per level and the exactly $n$ edges between levels $V_{n-1}$ and $V_n$.}
    \label{fig:BratteliDiagram:Rational}
\end{figure} 
\end{example}

Given a Bratteli diagram $\mathrm{B}=(V,E)$, it is straightforward to check that the action of its full group $G$ on the path-space is minimal if and only if the diagram $\mathrm B$ is {\it simple} in the sense that   for any level $V_n$ there is a level $V_m$, $m>n$, such that every pair of vertices $(v,w)\in V_n\times V_m$ is connected by a finite path.

 \begin{definition}\label{DefFullGroups}  

Let $(X, G)$ be a Cantor dynamical system.  Throughout the paper,  the group $G$ is assumed to be countable. 

\begin{itemize}
    \item[(i)] Define $\mathcal{F}(G)$ as the set of homeomorphisms of $X$ that act locally as elements of $G$. More precisely, a homeomorphism $g \in \mathcal{F}(G)$ if there exists a clopen partition $\{A_1, \dots, A_n\}$ of $X$ and elements $\{g_1, \dots, g_n\} \subset G$ such that $g|_{A_i} = g_i|_{A_i}$ for each $i$. The group $\mathcal{F}(G)$ is called the \textit{full group} of the system $(X, G)$.  
    
    \item[(ii)] Denote by  $\mathcal D(G)$ the {\it commutator (derived) subgroup} of $\mathcal F(G)$, that is, the subgroup generated by the set of commutators $$<[g,h] : g,h\in \mathcal F(G)>.$$ 

    \item[(iii)] Define $\mathcal{A}(G)$, which, following \cite{Nekrashevych:2019} (see also \cite[Section 5.1.3]{Nekrashevych:Book2022}), we refer to as the \textit{alternating full group}, as the subgroup of $\mathcal{F}(G)$ generated by cycles of length 3. Here, an element $g \in \mathcal{F}(G)$ is called a \textit{cycle of length} $n$ if there exists a clopen set $U$ such that the sets $\{U, g(U), \dots, g^{n-1}(U)\}$ are disjoint, $g^n|_U = \mathrm{id}$, and $g$ acts trivially on the complement of $\bigcup_{i=0}^{n-1} g^i(U)$.  
    
    \item[(iv)] Given a clopen set $U$, define $\mathcal F(G|_U)$ as the set of elements $\mathcal F(G)$ with $\supp(\gamma)\subset U$, where $\supp(g) = \{x\in X : \gamma(x) \neq x\}$.  The groups $\mathcal D(G|_U)$ and $\mathcal A(G|_U)$ are defined analogously. The groups $\mathcal F(G|_U)$, $\mathcal D(G|_U)$, and $\mathcal A(G|_U)$ are called {\it local subgroups}.
\end{itemize}
\end{definition} 

In the literature, full groups are often denoted by $[[G]]$, but for consistency with the notations $\mathcal{A}(G)$ and $\mathcal{D}(G)$, we will use $\mathcal{F}(G)$. 

In the language of groupoids, the full groups associated with Bratteli diagrams are precisely the full groups of approximately finite (AF) groupoids; see, for example, \cite{Matui:2012} and \cite[Section 5.2]{Nekrashevych:Book2022}.

In \cite{grigorchuk2024maximalsubgroupsamplegroups}, the authors refer to $\mathcal{F}(G)$ as the \emph{full amplification} of $G$, and define a group $G$ to be \emph{ample} if $\mathcal{F}(G) = G$. Note that $\mathcal{F}(\mathcal{F}(G)) = \mathcal{F}(G)$.

\begin{remark} If $G$ is the full group of a Bratteli diagram, then it is ample in the sense that $
\mathcal{F}(G) = G$,
whereas its derived subgroup $G'$ may not be ample. We also note that $
\mathcal{A}(G) = G'$. 

Proposition 2.11 in \cite{DudkoMedynets-CharactersSymmetric-13} establishes that the full group $G$ of a Bratteli diagram is simple, in which case  $G = G’$, if and only if there exists an increasing sequence of levels ${n_k}$ such that, for every $k \geq 0$, the number of paths between any vertex in $V_{n_k}$ and any vertex in $V_{n_{k+1}}$ is a nonzero even number.
\end{remark}

If $(X,G)$ is a Cantor minimal system, then $
\mathcal{A}(G) \subset \mathcal{D}(G)$.  It remains an open question whether $
\mathcal{A}(G) = \mathcal{D}(G)$ for all minimal systems; cf.\ Proposition \ref{PropositionDerivedSubgroupFullGroup}.

The following result establishes the simplicity of the alternating full groups for Cantor minimal systems. Statements (i) and (ii) are proved in full generality in \cite[Theorem 4.1]{Nekrashevych:2019} (see also \cite[Theorem 5.1.10]{Nekrashevych:Book2022}). Earlier proofs for full groups of Bratteli diagrams appear in \cite[Lemma 3.4]{Matui:2006} and \cite[Theorem 5.1]{LavrenyukNekrashevych:2007}. Analogous results for full groups of Cantor minimal $\mathbb{Z}$-systems are given in \cite[Theorem 4.9]{Matui:2006} and \cite[Theorem 3.4]{BezuglyiMedynets:2008}.

Statement (iii) follows from \cite[Lemma 3.5]{Nekrashevych:2019}, with a formal proof provided in \cite[Lemma 4.8]{zheng2020rigid}. For the full groups of minimal $\mathbb{Z}$-actions, Statement (iii) was established in \cite[Lemma 3.19]{GiordanoPutnamSkau:1999}, and for full groups of simple Bratteli diagrams in the proof of \cite[Lemma 3.4]{DudkoMedynets-IRS-17}.

\begin{proposition}\label{PropositionSimplicityCommutatorSubgroup} Let  $(X,G)$ be a Cantor  minimal system. Then  
\begin{itemize}
\item[(i)] The group $\mathcal A(G)$ is simple and is contained in every non-trivial normal subgroup of the full group $\mathcal F(G)$;
\item[(ii)]  For every clopen set $U$, the local subgroup $\mathcal A(G|_U)$ is simple;
\item[(iii)]  If $U$ and $V$ are clopen sets such that $U\cap V \neq \emptyset$, then $$\left <\mathcal A(G|_U), \mathcal A(G|_V) \right> = \mathcal A(G|_{U\cup V}).$$
\end{itemize}
\end{proposition}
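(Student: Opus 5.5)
The plan is to follow the Epstein–Matui–Nekrashevych line of argument, working entirely with local subgroups. Statement (iii) is essentially combinatorial, so I will prove it first and then use it for (i) and (ii).

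\emph{Step 1: proof of (iii).} The inclusion $\left<\mathcal A(G|_U),\mathcal A(G|_V)\right>\subseteq \mathcal A(G|_{U\cup V})$ is clear. For the reverse inclusion, it suffices to show that every $3$-cycle $g$ with support in $U\cup V$ lies in $H:=\left<\mathcal A(G|_U),\mathcal A(G|_V)\right>$.

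\begin{itemize}
\item Write $g$ via a clopen set $W$ with $W,gW,g^2W$ disjoint. Refining $W$ into finitely many small clopen pieces expresses $g$ as a product of commuting $3$-cycles. Each piece $W_j$ can be chosen so that each of $W_j,gW_j,g^2W_j$ lies in $U$ or in $V$.
\item If all three sets of a piece lie in $U$, or all three lie in $V$, that $3$-cycle is already in $H$.
\item Otherwise, use minimality to choose a small clopen $Z\subset U\cap V$ together with elements of $\mathcal F(G)$ carrying pieces of $W_j$ into $Z$; this uses the fact that $U\cap V\ne\emptyset$ and that orbits are dense. Then apply the standard identity: a $3$-cycle $(a\,b\,c)$ equals a commutator/conjugate of $3$-cycles of the form $(a\,b\,z)$ and $(b\,c\,z)$, where $z$ is a point-set inside $Z$ lying in both $U$ and $V$.
\item This writes the mixed $3$-cycle as a product of $3$-cycles each supported in $U$ or in $V$, after shrinking pieces so that the needed group elements exist.
\end{itemize}

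\emph{Step 2: proof of (ii).} Let $N\trianglelefteq \mathcal A(G|_U)$ be nontrivial, and pick $1\ne h\in N$. Choose a small clopen $W\subset U$ with $hW\cap W=\emptyset$. For any $3$-cycle $s$ supported in $W$, the commutator $[h,s]=hsh^{-1}s^{-1}$ lies in $N$ and equals $s'\,s^{-1}$, where $s'=hsh^{-1}$ is supported in $hW$. Commutators of such elements with further $3$-cycles supported in $W$ then produce every $3$-cycle supported in $W$, via the double-commutator trick, since $s'$ commutes with anything supported in $W$. Hence $\mathcal A(G|_W)\subset N$.

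Next, conjugating by elements of $\mathcal A(G|_U)$ shows that $N$ contains $\mathcal A(G|_{W'})$ for every clopen $W'\subset U$ that is equivalent to a subset of $W$. By minimality, $U$ is covered by finitely many clopen sets $W_1,\dots,W_m$, each such translate, with overlapping chains between them. Applying (iii) repeatedly gives $N\supseteq \mathcal A(G|_U)$, so $\mathcal A(G|_U)$ is simple.

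\emph{Step 3: proof of (i).} Simplicity of $\mathcal A(G)$ is (ii) with $U=X$. For a nontrivial normal subgroup $N\trianglelefteq\mathcal F(G)$, repeat the commutator argument of Step 2 with $h\in N$ and $s$ ranging over $3$-cycles supported in $W$. This gives $\mathcal A(G|_W)\subseteq N$. Conjugating by $\mathcal F(G)$ and using compactness, minimality and (iii) then yields $\mathcal A(G)\subseteq N$.

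\emph{Main obstacle.} I expect the hardest step to be Step 1, specifically the mixed $3$-cycles. One has to produce the auxiliary clopen pieces inside $U\cap V$ and the partial homeomorphisms in $\mathcal F(G)$ moving parts of $W$ there. I would try to do this by decomposing $W$ finely enough that each piece is mapped into $U\cap V$ by a single group element, which is possible by minimality and compactness.
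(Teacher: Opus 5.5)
Your overall route is the one behind the paper's citations. The paper gives no argument of its own here; it defers to Nekrashevych's Theorem~4.1 and Lemma~3.5 and to Zheng's Lemma~4.8. Those proofs run as you outline: the double commutator $[[h,s],t]=[s^{-1},t]$ for $s,t$ supported in a set $W$ with $h(W)\cap W=\emptyset$, then perfectness of $\mathcal A(G|_W)$, then spreading over $U$ (or $X$) via (iii).

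Steps~2 and~3 are sound as sketches, but you should make two points explicit:
\begin{itemize}
\item Writing a $3$-cycle in $W$ as a product of commutators of $3$-cycles in $W$ needs two extra disjoint copies of each small piece inside $W$, because $\mathrm{Alt}(4)$ is not perfect.
\item The conjugations in Step~2 must be realized by elements of $\mathcal A(G|_U)$, for instance a $3$-cycle through a third disjoint copy inside $U$, since $N$ is only normal in $\mathcal A(G|_U)$.
\end{itemize}

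The step that fails as written is the single-auxiliary identity in Step~1. Suppose two of the sets $W_j, gW_j, g^2W_j$, playing the roles of points $p$ and $r$, lie in $U\setminus V$, and the third, $q$, lies in $V\setminus U$.
\begin{itemize}
\item With one auxiliary piece $z\subset U\cap V$, the only $3$-cycles on $\{p,r,q,z\}$ supported in $U$ or in $V$ are $(p\,r\,z)^{\pm1}$, since only $q$ and $z$ lie in $V$.
\item These fix $q$, so they cannot generate $(p\,r\,q)$.
\item No relabeling helps. Your identity $(a\,b\,c)=(a\,b\,z)(b\,c\,z)$ needs both pairs $\{a,b\}$ and $\{b,c\}$ to lie inside $U$ or inside $V$. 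Here only one of $\{p,r\},\{r,q\},\{q,p\}$ does.
\item Your identity does work when the third point lies in $U\cap V$; it is only this configuration that breaks.
\end{itemize}

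The repair is to use two auxiliary pieces. After refining, choose $z,z'\subset U\cap V$, disjoint from each other and from $p,r,q$, each the image of the piece under a single group element. This is possible by minimality and because $U\cap V$ has no isolated points. Then $\langle\mathrm{Alt}\{p,r,z,z'\},\mathrm{Alt}\{q,z,z'\}\rangle=\mathrm{Alt}\{p,q,r,z,z'\}$. Explicitly, composing right to left, $(p\,r\,q)=(p\,z\,z')(z\,q\,z')(p\,r\,z)$. The two outer factors are supported in $U$ and the middle one in $V$. The case with the roles of $U$ and $V$ exchanged is symmetric.
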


In \cite{Matui:2012}, Matui introduced the notion of almost finiteness for the class of \'etale groupoids with compact zero-dimensional unit spaces, which was later translated into the language of group actions in \cite[Lemma 5.2]{Suzuki:AlmostFiniteness}.

\begin{definition}[Almost Finiteness]\label{DefAlmostFiniteness} Let $(X,G)$ be a Cantor dynamical system.  

\begin{itemize} 

\item[(i)] A {\it tower} is a pair $(S,B)$, where $B$ is a clopen subset of $X$ and $S\subset G$ such that the sets $
\{s(B) : s\in S\}$ are pairwise disjoint. The set $S$ is called the {\it shape} of the tower $(S,B)$. A {\it castle} is a family of towers with disjoint supports. 

\item[(ii)] We say that a finite set $F\subset G$ is {\it $(K,\delta)$-invariant} for some finite subset $K\subset G$ and $\delta>0$ if $|KF  \triangle F | <\delta |F|$.  

\item[(iii)] The dynamical system $(X,G)$ or, equivalently, the action of $G$ on $X$ is said to be {\it almost finite} if for any finite set $K\subset G$, and $\delta>0$ there is a clopen castle whose shapes are $(K,\delta)$-invariant and whose levels partition $X$.  

\end{itemize}

\end{definition}

If $G$ is the full group of a Bratteli diagram with path-space $X$, then the system $(X,G)$ is almost finite. The open-standing conjecture in the field is that {\it free} actions of amenable groups on the Cantor set are almost finite.  In \cite{KerrNaryshkin:2021}, the authors showed that elementary amenable groups satisfy the almost finiteness conjecture. We refer the  reader to \cite{naryshkin2025finitenessgroupsdynamicalorigin} for the current status of this conjecture. 

 The following result, in its generality, is established in \cite[Theorem 4.7]{Matui-FullGroups-2015}.  Earlier proofs for the full groups of minimal $\mathbb Z$-actions appear in \cite[Theorem 4.9]{Matui:2006} and in \cite[Theorem 3.4]{BezuglyiMedynets:2008}.

\begin{proposition}\label{PropositionDerivedSubgroupFullGroup}  Let $(X,G)$ be an almost finite minimal dynamical system.  Then $\mathcal A(G)$ coincides with the commutator subgroup of $\mathcal F(G)$.   
\end{proposition}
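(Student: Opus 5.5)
The statement to prove is Proposition~\ref{PropositionDerivedSubgroupFullGroup}: for an almost finite minimal Cantor system $(X,G)$, $\mathcal A(G)$ equals the commutator subgroup $\mathcal D(G)$ of $\mathcal F(G)$.

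One inclusion is immediate. $\mathcal A(G)$ is generated by $3$-cycles, and every $3$-cycle $c$ supported on $U\sqcup g(U)\sqcup g^2(U)$ is a commutator. Indeed, by minimality we can split $U$ into two nonempty clopen pieces $U_1, U_2$. Let $t_i$ be the involution swapping $U_i$ and $g(U_i)$ via $g$. Writing $c$ in terms of transpositions inside each block, the standard identity $(1\,2\,3)=[(1\,2),(1\,3)]$ gives $c=[s,t]$, where $s,t\in\mathcal F(G)$ are the involutions transposing $U$ with $g(U)$ and $U$ with $g^2(U)$, respectively. Hence $\mathcal A(G)\subseteq\mathcal D(G)$. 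This also follows from part (i) of Proposition~\ref{PropositionSimplicityCommutatorSubgroup}, since $\mathcal D(G)$ is a nontrivial normal subgroup.

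The real content is the reverse inclusion, $[g,h]\in\mathcal A(G)$ for all $g,h\in\mathcal F(G)$. I would reduce it to the AF case by using almost finiteness to approximate elements of $\mathcal F(G)$ by elements of a locally finite subgroup.

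Given $g,h$, take the finite set $K\subset G$ of group elements used locally by $g$ and $h$, together with a small $\delta$. Choose a clopen castle whose levels partition $X$ and whose shapes are $(K,\delta)$-invariant. Inside each tower $(S,B)$, the elements $g,h$ act as partial permutations of the levels $\{s(B)\}$, except on the "boundary" levels $\{s(B): Ks\not\subset S\}$, which have small relative size. Modify $g$ on the boundary to obtain $g=g_0 g_1$, where:
\begin{itemize}
\item[(a)] $g_0$ permutes levels within each tower, so it lies in a finite group $\prod \mathrm{Sym}(S)$;
\item[(b)] $g_1$ is supported on a small clopen set $W$ near the tower boundaries.
\end{itemize}
Decompose $h=h_0h_1$ in the same way. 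Then $[g,h]$ is a product of commutators of elements from the finite symmetric-group product (essentially an AF full group) and conjugates of elements supported in $W$.

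Commutators of permutations lie in the alternating groups $\prod\mathrm{Alt}(S)$. These are generated by $3$-cycles of levels, which are $3$-cycles in $\mathcal F(G)$, so those pieces lie in $\mathcal A(G)$.

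The hard step is the remaining pieces supported on small sets. These are not commutators individually, so the bookkeeping must be arranged so that everything left over is a product of commutators of elements with small support, or a "parity-zero" element. My first attempt would be Matui's index-map argument. Show that any element of $\mathcal D(G)$ whose support is small relative to a tower can be conjugated, using minimality and the comparison available from almost finiteness, into a single tower. There it becomes a permutation of levels, and since it is a product of commutators, it is an even permutation, hence lies in $\mathcal A(G)$.

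Concretely, I would write $[g,h]=g(hg^{-1}h^{-1})$ and rearrange it as a product of terms $[g_0,h_0]$, conjugates of $g_1$, $h_1$, and their inverses, so that the small-support leftovers pair up into an element that is a product of commutators supported in a clopen set $W'$ with tiny measure for every invariant measure. Comparison then lets us embed $W'$ into the base of a tall tower, reducing to the finite alternating group case.

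I expect this last reduction to be the main obstacle: controlling parity and the small leftover pieces simultaneously. Rather than reprove it, I would cite \cite[Theorem 4.7]{Matui-FullGroups-2015} for the general groupoid statement.
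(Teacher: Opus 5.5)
Your argument matches the paper's. The inclusion $\mathcal A(G)\subseteq\mathcal D(G)$ via $(1\,2\,3)=[(1\,2),(1\,3)]$ is correct (the paper just records it for Cantor minimal systems), and for the reverse inclusion the paper gives no argument of its own but cites \cite[Theorem 4.7]{Matui-FullGroups-2015}, exactly as you end up doing.

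The heuristic sketch before that citation is not a proof, however, for two reasons. First, a level permutation lying in $\mathcal D(G)$ need not be even on a given tower: for the $2$-odometer $T$ on $\{0,1\}^{\mathbb N}$, flipping the first coordinate is a commutator in $\mathcal F(\langle T\rangle)$ but is an odd permutation of the two levels of the tower with base $\{x : x_1=0\}$. So evenness has to be manufactured by refining the castle, not read off. Second, the small-support leftover is again an element of $\mathcal D(G)$ that is no closer to being a level permutation, so the reduction as stated is circular.
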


 Let $T$  be a minimal homeomorphism of a Cantor set $X$.  Consider the full group $\mathcal F(\langle T\rangle)$ generated by the action of $T$ and for $x_0\in X$ define $\mathcal F(\langle T \rangle)_{x_0}$ the subgroup of $\mathcal F(\langle T \rangle)$ consisting of homeomorphisms preserving the forward $T$-orbit of $x_0$.  We note that  $\mathcal F(\langle T\rangle)_{x_0}$ is a maximal subgroup in $\mathcal F(\langle T\rangle)$, see  \cite[Proposition 5.2]{GrigorchukMedynets-AlgebraicFull-14}.

 In \cite{HermanPutnamSkau:1992}, the authors showed that for a given Cantor minimal system $(X,T)$ and a point $x_0 \in X$, one can construct a Bratteli diagram $B$ and a homeomorphism $\varphi_B$, known as the {\it Vershik map}, on the path space $X_B$ of $B$, such that the systems $(X,T)$ and $(X_B, \varphi_B)$ are topologically conjugate. Moreover, under this construction, the conjugacy map between the systems induces an isomorphism between the group $\mathcal F(\langle T \rangle)_{x_0}$ and the full group of the Bratteli diagram $B$. Consequently, the groups $\mathcal F(\langle T \rangle)_{x_0}$, $x_0 \in X$, are locally finite. Furthermore,  it follows from \cite[Corollary 3.6]{Krieger:1980} that the groups $\mathcal F(\langle T \rangle)_{x_0}$, $x_0 \in X$, are isomorphic to one another.

In \cite[Corollary 4.11]{GiordanoPutnamSkau:1999} the authors established that two Cantor minimal systems $(X,T)$ and $(Y,S)$ are strong orbit equivalent if and only if the groups $\mathcal F(\langle T \rangle)_{x_0}$, $x_0 \in X$, and $\mathcal F( \langle S \rangle)_{x_0}$, $y_0 \in Y$, are isomorphic as abstract groups. In the proof, they established that every abstract isomorphism between these groups is spatially generated as these groups encode a large amount of information about the underlying space. Similar reconstruction results have been established for larger classes of full groups  \cite[Corollary 4.4]{GiordanoPutnamSkau:1999}, \cite{Medynets:2011},  \cite[Theorem 3.5]{Matui-FullGroups-2015}, and \cite[Theorem 5.1.20]{Nekrashevych:Book2022}).

We will need the following result from \cite[Theorem 5.4]{GrigorchukMedynets-AlgebraicFull-14}. 

\begin{proposition}\label{PropStructureDerivedFullGroup}
Let $T$ be a minimal homeomorphism of the Cantor set $X$. For each $z \in X$, let $\Gamma_z$ denote the subgroup of $\mathcal A(\langle T \rangle)$ consisting of homeomorphisms that preserve the forward $T$-orbit of $z$. If $x,y \in X$ belong to distinct $T$-orbits, then
\[
\mathcal A(\langle T \rangle) = \Gamma_x \cdot \Gamma_y .
\]

Moreover, for every $g \in \mathcal A(\langle T \rangle)$, there exist sequences $\{p_n\} \subseteq \Gamma_x$ and $\{r_n\} \subseteq \Gamma_y$ such that
$
g = p_n r_n \quad \text{for all } n,
$
and the supports of the elements $r_n$ converge to a finite set in the following sense: there exists a finite set $Z \subset X$ such that for every clopen neighborhood $U$ of $Z$,
$
\supp(r_n) \subset U$ for all sufficiently large $n$.
\end{proposition}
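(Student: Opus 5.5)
The plan is to build, for each small clopen neighbourhood $V$ of $x$, an element $r\in\Gamma_y$ supported near a finite piece of the $T$-orbit of $x$ that "repairs" the way $g$ crosses the cut of the forward orbit of $x$. We then set $p=gr^{-1}$ and check that $p\in\Gamma_x$. Letting $V$ shrink to $x$ will give the sequences $\{p_n\},\{r_n\}$.

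\textbf{Step 1: locality of $g$.} Since $g\in\mathcal F(\langle T\rangle)$, there are a clopen partition $\{A_1,\dots,A_k\}$ and integers $n_1,\dots,n_k$ with $g|_{A_i}=T^{n_i}|_{A_i}$. Put $M=\max_i|n_i|$. Then $g$ moves every point along its orbit by at most $M$ steps. So a point $T^jx$ can be carried across the cut between $\{T^jx: j\ge 0\}$ and $\{T^jx: j<0\}$ only if $|j|\le M$. Set $Z=\{T^jx : |j|\le 2M\}$. Choose a clopen $V\ni x$ so small that:
\begin{itemize}
\item the sets $T^jV$, $|j|\le 2M$, are pairwise disjoint;
\item each of them lies in a single $A_i$;
\item none of them contains a point $T^jy$ with $|j|\le 2M$.
\end{itemize}
This is possible because $x$ and $y$ lie in distinct orbits and the finitely many points involved are distinct. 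On the tower $\{T^jV\}_{|j|\le M}$ the map $g$ acts by a permutation $\sigma$ of levels, sending $T^jV$ onto $T^{j+n(j)}V$, and this $\sigma$ matches exactly how $g$ permutes the orbit segment $\{T^jx\}_{|j|\le M}$. Let $r_V$ be the homeomorphism that realizes $\sigma$ on $\bigcup_{|j|\le 2M}T^jV$ by the appropriate powers of $T$ (extending $\sigma$ to a permutation of the levels with $|j|\le 2M$) and is the identity elsewhere.

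\textbf{Step 2: verifying membership.} First, $p=gr_V^{-1}$ fixes every $T^jx$ with $|j|\le M$. Away from this segment, $p$ acts as $g$ or as a composition of bounded orbit shifts that never cross the cut at $x$. Hence $p$ preserves the forward orbit of $x$.

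Next, $r_V$ preserves the forward orbit of $y$. Its support lies in a union of levels $T^jV$, and it moves points by at most $2M$ steps. The only points of the orbit of $y$ that could be carried across the cut at $y$ are the $T^jy$ with $|j|\le 2M$, and these lie outside the support by the choice of $V$.

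Finally we need $r_V,p\in\mathcal A(\langle T\rangle)$. Since $\mathbb Z$-actions are almost finite, Proposition~\ref{PropositionDerivedSubgroupFullGroup} gives $\mathcal A(\langle T\rangle)=\mathcal D(\langle T\rangle)$. A permutation of tower levels lies in $\mathcal A$ as soon as it is even, being a product of $3$-cycles of clopen sets. Since $g\in\mathcal A$, it then suffices to have $r_V\in\mathcal A$.

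\textbf{Step 3: the parity obstacle.} The main difficulty is that $\sigma$ may be odd. In that case fix once and for all a point $w$ not in the orbits of $x$ or $y$, and a small clopen $W\ni w$ with $W\cap TW=\emptyset$, chosen to avoid $\bigcup_{|j|\le 2M}T^jV$ and the points $T^jx$, $T^jy$ with $|j|\le 2M$. Replace $r_V$ by $r_V\tau_W$, where $\tau_W$ is the transposition swapping $W$ and $TW$. Now $r_V\tau_W$ is even, hence lies in $\mathcal A$. The transposition $\tau_W$ moves points by one step away from both cuts, so it preserves both forward orbits. Therefore $r_V\tau_W\in\Gamma_y$ and $g(r_V\tau_W)^{-1}\in\Gamma_x$.

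I expect the main work to lie in checking carefully that an even permutation of clopen levels, realized by powers of $T$, is a product of $3$-cycles in the sense of Definition~\ref{DefFullGroups}, and that the choices of $V$ and $W$ can be made consistently.

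\textbf{Step 4: the limit.} Take a decreasing sequence $V_n\downarrow\{x\}$ and $W_n\downarrow\{w\}$, and set $r_n=r_{V_n}$ or $r_{V_n}\tau_{W_n}$ according to the parity of $\sigma$, with $p_n=gr_n^{-1}$. Note that $M$ and $\sigma$ do not depend on $n$. The support of $r_n$ is contained in $\bigcup_{|j|\le 2M}T^jV_n\cup W_n\cup TW_n$. This set lies inside any prescribed clopen neighbourhood of the finite set $Z\cup\{w,Tw\}$ once $n$ is large, which gives the required convergence of supports. As a consequence, $\mathcal A(\langle T\rangle)=\Gamma_x\cdot\Gamma_y$.
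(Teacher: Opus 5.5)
Your tower-surgery strategy is workable. It is in the spirit of Matui's Lemma~4.1, which the paper applies in the proof of Proposition~\ref{PropCosetFullGroupSmallSupport} only after reducing to an element of index zero; the paper itself does not prove this proposition but imports it from Grigorchuk--Medynets. However, Step~2 has a genuine gap: you never use the index. Whether $p=gr_V^{-1}$ preserves the forward orbit of $x$ depends on how you extend the injection $\sigma(j)=j+n(j)$, $[-M,M]\to[-2M,2M]$, to a permutation $\tilde\sigma$ of $[-2M,2M]$. Suppose some extra level $j$ with $M<|j|\le 2M$ is sent to the other side of the cut. Then $r_V^{-1}$ carries $T^{\tilde\sigma(j)}x$ across the cut to $T^{j}x$, and $g$, which moves points by at most $M$ steps, cannot carry it back since $|j|>M$; so $p\notin\mathcal F(\langle T\rangle)_x$. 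A sign-respecting extension exists if and only if $\#\{|j|\le M : j+n(j)\ge 0\}=M+1$, i.e.\ if and only if the net number of points of the orbit of $x$ that $g$ carries across the cut is zero. That net flux equals the index $I(g)$, and it vanishes on $\mathcal A(\langle T\rangle)$ because this group is generated by elements of finite order. This is exactly where $g\in\mathcal A(\langle T\rangle)$ must enter, and it is not optional. Your Steps~1--2 apply verbatim to $g=T$: there $M=1$, the free source levels are $\pm 2$ and the free target levels are $-2,-1$, so level $2$ is forced across the cut. Yet $T\notin\mathcal F(\langle T\rangle)_x\,\mathcal F(\langle T\rangle)_y$, since $I$ vanishes on these locally finite groups while $I(T)=1$. (Also, for $|j|\le M$ it is the points $T^{\sigma(j)}x$ that $p$ fixes, not $T^jx$.)

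Step~3 is also wrong as stated. $r_V$ and $\tau_W$ permute levels of two unrelated towers, and there is no common parity. By Matui's computation of the abelianization of $\mathcal F(\langle T\rangle)$, a transposition of two levels of a tower with base $U$ has class $[1_U]\in K^0(X,T)\otimes\mathbb Z/2$. So $r_V\tau_W\in\mathcal A(\langle T\rangle)$ requires $[1_W]\equiv[1_V]$ modulo $2K^0(X,T)$, which you do not arrange and which changes with $n$. For the $3$-odometer, where $K^0=\mathbb Z[1/3]$ and $K^0\otimes\mathbb Z/2\cong\mathbb Z/2$, taking $V$ a cylinder and $W$ a union of two cylinders gives $r_V\tau_W\notin\mathcal A(\langle T\rangle)$. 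The easy repair is to fix parity inside the same tower. Enlarge it to $|j|\le 2M+2$ and, when $\tilde\sigma$ is odd, compose with the transposition of $T^{2M+1}V$ and $T^{2M+2}V$. Both levels lie on the positive side at distance more than $M$ from the cut at $x$, and they avoid the cut at $y$ once $V$ is small. Then $r$ becomes an even permutation of the levels of a single tower, hence a product of $3$-cycles, and both memberships persist. With the sign-respecting extension and this parity fix, your argument goes through with $Z=\{T^jx : |j|\le 2M+2\}$.
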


\begin{proposition}\label{PropCosetFullGroupSmallSupport}
Let $T$ be a minimal homeomorphism of the Cantor set $X$, let $\Gamma$ be either $\mathcal F(\langle T \rangle)$ or the full group of the associated Bratteli diagram, and let $\Gamma'$ be the commutator subgroup of $\Gamma$. Then for any $\gamma \in \Gamma$ and any nonempty clopen set $C$ there exist elements $\gamma_1 \in \Gamma'$ and $\gamma_2 \in \Gamma$ such that $\gamma = \gamma_1 \gamma_2$ and $\supp(\gamma_2) \subset C$.
\end{proposition}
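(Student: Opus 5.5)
We reduce the statement to moving all of $\gamma$ except a small piece into $\Gamma'$. Two facts drive this. First, by Proposition~\ref{PropositionDerivedSubgroupFullGroup}, $\Gamma' = \mathcal A(\Gamma)$ is generated by $3$-cycles; in the Bratteli case, $G' = \mathcal A(G)$ by the Remark. Second, the quotient $\Gamma/\Gamma'$ is abelian. Concretely, the goal is to write $\gamma \equiv \gamma_2 \pmod{\Gamma'}$ with $\supp(\gamma_2)\subset C$. If $\gamma_2 = \gamma_1'\gamma\gamma_1''$ with $\gamma_1',\gamma_1''\in\Gamma'$, then since $\Gamma'$ is normal we get $\gamma = \gamma_1\gamma_2$ for some $\gamma_1\in\Gamma'$.

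\textbf{Bratteli case.} We first treat the case where $\Gamma = G$ is the full group of a simple Bratteli diagram. Choose a level $n$ with $\gamma\in G_n \cong \prod_{v\in V_n}\sym(h_v^{(n)})$, and, after enlarging $n$, with $C$ compatible with $\Xi_n$. Since $C$ is nonempty, it contains a cylinder of depth $n$. By simplicity of the diagram and telescoping, we may further assume that $C$ contains a full cylinder $C^{(n)}_{v,i}$ for some $v$, and that at a later level $m$ every vertex of $V_m$ receives a path through each $v'\in V_n$.

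Write $\gamma = \prod_{v}\sigma_v$. Each $\sigma_v$ is congruent modulo the alternating part to either the identity or a transposition $\tau_v$ of two cylinders in the $v$-tower. Since $\mathrm{Alt}(h_v^{(n)})\subset \mathcal A(G)$, it suffices to move each such transposition into $C$.

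Embed $G_n$ into $G_m$. There the transposition $\tau_v$ becomes a product of transpositions of level-$m$ cylinders, one copy for each path $v\to w$. A transposition of two cylinders in the $w$-tower is conjugate, by an element of $G_m$, to any other transposition of two cylinders in that tower. Moreover, all transpositions in $\sym(h_w^{(m)})$ are congruent to one another modulo $\alt(h_w^{(m)})\subset G'$. Hence each transposition can be replaced, modulo $G'$, by a transposition of two level-$m$ cylinders lying inside $C$.

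The one requirement is that $C$ meet each $w$-tower in at least two cylinders. This can be arranged by choosing $m$ large, because $C$ contains a level-$n$ cylinder through which at least two paths reach every $w$ once $m$ is large (simplicity plus $X$ perfect). This yields $\gamma_2\in G_m$ supported in $C$.

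\textbf{Case $\Gamma = \mathcal F(\langle T\rangle)$.} Pick $x_0\in C$. We reduce to the AF subgroup only modulo $\Gamma'$. A convenient tool is the index map $I(\gamma)=\int n_\gamma\,d\mu$, where $n_\gamma$ is the cocycle with $\gamma x = T^{n_\gamma(x)}x$. By \cite{GiordanoPutnamSkau:1999} and \cite{Matui:2006}, $\Gamma/\Gamma'$ is generated by the image of $\mathcal F(\langle T\rangle)_{x_0}$ together with the class of $T$ or of a suitable element. More directly, we would use the following argument.

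Choose a Kakutani--Rokhlin partition whose base $B\subset C$ is small. Then $\gamma$ differs from an element $\gamma'$ of the AF group $\mathcal F(\langle T\rangle)_{x_0}$, which preserves the towers, by an element $\delta$ moving points across the top/base. Specifically, one writes $\gamma = \gamma'\delta$, where $\delta$ is a product of an element supported near $B$ and a power of the first-return-type map $T_B$ induced on $B$, extended as the identity off $B$.

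In practice, we would instead invoke Proposition~\ref{PropStructureDerivedFullGroup}-type decompositions: find $\gamma_2$ supported in $C$ with $I(\gamma_2) = I(\gamma)$. For example, take $\gamma_2$ to be a suitable power of an induced map $T_{C'}$ on a clopen $C'\subset C$; the induced map exists and has index equal to any prescribed value realized by the index map. Then $\gamma\gamma_2^{-1}\in\ker I$, and by Matui's description $\ker I$ modulo $\Gamma'$ is the image of the AF group, which is handled by the Bratteli case.

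The main obstacle is this identification of $\Gamma/\Gamma'$, namely that the index map and the AF abelianization together detect all of $\mathcal F(\langle T\rangle)/\mathcal D(\langle T\rangle)$. We would cite Matui's computation, $\mathcal F/\mathcal D \cong (K^0\otimes \mathbb Z_2)\oplus\mathbb Z$, to justify it.
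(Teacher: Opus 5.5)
Your Bratteli case is the paper's argument, done in more detail: fix the parity in each tower by odd permutations supported in $C$, after passing to a level where $C$ meets every tower in at least two cylinders. In the case $\Gamma=\mathcal F(\langle T\rangle)$, your first step is also the paper's: you kill the index with a power of an induced map $T_{C'}$, $C'\subset C$.

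The gap is the next step, the claim $\ker I\subseteq \mathcal F(\langle T\rangle)_{x_0}\cdot\Gamma'$. This says every index-zero element is congruent modulo $\Gamma'$ to an element of a single AF full group. You propose to justify it by the isomorphism type $\mathcal F(\langle T\rangle)/\mathcal D(\langle T\rangle)\cong (K^0(X,T)\otimes\mathbb Z_2)\oplus\mathbb Z$. That is not enough. An abstract isomorphism tells you only that $\ker I/\Gamma'$ is the $2$-torsion part of the quotient. It says nothing about whether the image of $\mathcal F(\langle T\rangle)_{x_0}$ fills that part: a priori the induced map is just some homomorphism between infinite-dimensional $\mathbb Z_2$-vector spaces, and need not be onto. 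To use Matui you would need the finer content of his proof. That content is that $\mathcal D(\langle T\rangle)$ is the common kernel of the index and an explicit mod-$2$ signature into $K^0(X,T)\otimes\mathbb Z_2$, and that this signature is already surjective on the AF subgroup. As written, the step is asserted, not proved.

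Your Kakutani--Rokhlin sketch does not supply it either, and should be dropped. If $|n_\gamma|\le M$, the points where $\gamma$ fails to preserve the towers lie in $\bigcup_{|j|\le M}T^jB$. That set has small measure but is not contained in $C$, so the correction $\delta$ is not supported in $C$.

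The paper avoids any abelianization computation. It uses Lemma~4.1 of \cite{Matui:2006}: for $x,y$ in distinct $T$-orbits, every index-zero element factors as $\gamma_0=\gamma_x\gamma_y$, with $\gamma_x\in\Gamma_x$ and $\gamma_y\in\Gamma_y$, the AF full groups based at $x$ and $y$. It then applies the Bratteli case to each factor separately. In your mod-$\Gamma'$ language this is one line. Write $\gamma_x=\gamma_x'\gamma_x''$ and $\gamma_y=\gamma_y'\gamma_y''$ with $\gamma_x'\in\Gamma_x'\subset\Gamma'$, $\gamma_y'\in\Gamma_y'\subset\Gamma'$, and $\gamma_x'',\gamma_y''$ supported in $C$. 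With $n=I(\gamma)$, normality of $\Gamma'$ gives
$$\gamma=\gamma_x\gamma_y T_{C'}^{\,n}\equiv \gamma_x''\gamma_y''T_{C'}^{\,n}\pmod{\Gamma'}.$$
So $\gamma_2=\gamma_x''\gamma_y''T_{C'}^{\,n}$, which is supported in $C$, works, and $\gamma_1=\gamma\gamma_2^{-1}\in\Gamma'$. Replacing your appeal to $\mathcal F/\mathcal D$ with this two-base-point factorization closes the gap.
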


\begin{proof}
Fix a nonempty clopen set $C$. Denote by $T_C$ the induced transformation on $C$, that is $T_C(x) = T^{n_C(x)}(x)$ if $x\in C$ and $T_C(x) =  x$ elsewhere, where $n_C(x) = \min\{n\geq 1 : T(x)\in C\}$.

First consider the case when $\Gamma$ is the full group of the Bratteli diagram associated with $(X,T)$. Given $\gamma \in \Gamma$, multiply $\gamma$ by an odd permutation supported in $C$ if necessary, we can obtain an element of $\Gamma'$ whose difference from $\gamma$ is supported inside $C$, establishing the claim in this case.

Now suppose $\Gamma = \mathcal F(\langle T \rangle)$. Fix a $T$-invariant measure $\mu$ and define the function $I(g) = \int_X n_g(x)\, d\mu(x)$, where the function $n_g$ is given by $g(x) = T^{n_g(x)} x$. The function $I$, called the index map, is a homomorphism from $\Gamma$ onto the group of integers $\mathbb Z$; see \cite[Section~5]{GiordanoPutnamSkau:1999} for details. Furthermore, the image of $T$ or any induced transformation is $1$; see, for example, the proof of Lemma~5.3 in \cite{GrigorchukMedynets-AlgebraicFull-14}. In particular, $I(T_C) = 1$.

Setting $n = I(\gamma)$, $\gamma_1 = T_C^n$, and $\gamma_0 = \gamma \gamma_1^{-1}$, we obtain $\gamma = \gamma_0 \gamma_1$, where $I(\gamma_0) = 0$ and $\gamma_1$ is supported in $C$.

Fix points $x$ and $y$ lying in distinct $T$-orbits. Applying Lemma~4.1 from \cite{Matui:2006} to $\gamma_0$, we can find elements $\gamma_x$ and $\gamma_y$ belonging to the full groups $\Gamma_x$ and $\Gamma_y$ of the Bratteli diagrams constructed with bases $\{x\}$ and $\{y\}$, respectively, such that $\gamma_0 = \gamma_x \gamma_y$.

Applying the argument above to $\gamma_x$ and $\gamma_y$, we can find elements $\gamma_x'$, $\gamma_x''$, $\gamma_y'$, and $\gamma_y''$ such that $\gamma_x = \gamma_x' \gamma_x''$, $\gamma_y = \gamma_y' \gamma_y''$, $\gamma_x'' \in \Gamma_x' \subset \Gamma'$, $\gamma_y'' \in \Gamma_y' \subset \Gamma'$, and $\gamma_x''$ and $\gamma_y''$ are supported in $C$.

It follows that
\[
\gamma = \gamma_0 \gamma_1 
= \gamma_x' \gamma_x'' \gamma_y' \gamma_y'' \gamma_1 
= \left( \gamma_x' \gamma_x'' \gamma_y' (\gamma_x'')^{-1} \right)
  \left( \gamma_x'' \gamma_y'' \gamma_1 \right).
\]

The first factor lies in $\Gamma'$, and the second factor is supported in $C$. This gives the desired decomposition and completes the proof.
\end{proof}

%
%
%

\section{The Permutation Character on Full Groups}\label{Section:PermutationCharacter}

In this section, we investigate properties of the character $\mu(\mathrm{Fix}(g))$ arising from measure-preserving actions of full groups. The main results are that the products of these permutation characters corresponding to ergodic measures are indecomposable, and that only integer powers of these characters are positive-definite (and hence remain characters).

The following result is a slight modification of \cite[Theorem 2.4]{ThomasTuckerDrob:2016}. We include the proof for the reader's convenience.  Recall that a measure preserving action is called {\it weakly mixing} if its Koopman representation has no nontrivial finite-dimensional subrepresentations, see, for example,  Definition 2.1 in \cite{Schmidt:1984}. 
\begin{proposition}\label{PropositionErgodicWeakMixing} Let $(X,G)$ be a Cantor minimal system. Then every ergodic action of $\mathcal A(G)$ is weakly mixing. 
\end{proposition}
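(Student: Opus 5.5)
We argue by contradiction. Suppose $\mathcal A(G)$ acts ergodically on a probability space $(Y,\nu)$ by measure-preserving transformations, but the action is not weakly mixing. Then the Koopman representation $\kappa$ on $L^2(Y,\nu)$ contains a nontrivial finite-dimensional subrepresentation $\sigma\colon \mathcal A(G)\to U(d)$. By ergodicity, the only invariant vectors are the constants. So we may choose $\sigma$ on a finite-dimensional subspace $V$ orthogonal to the constants. Then $\sigma$ has no nonzero invariant vectors, and hence $\sigma$ is a nontrivial homomorphism.

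The key algebraic input is Proposition~\ref{PropositionSimplicityCommutatorSubgroup}(i): $\mathcal A(G)$ is simple. Therefore $\ker\sigma$ is either trivial or all of $\mathcal A(G)$. It cannot be all of $\mathcal A(G)$, because $\sigma$ has no invariant vectors on $V\neq 0$. So $\sigma$ is injective, and $\mathcal A(G)$ embeds into the compact group $U(d)$.

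We now rule out such an embedding. For a Cantor minimal system, $\mathcal A(G)$ contains copies of the finite alternating groups $\alt(n)$ for every $n$. To see this, pick a point $x$ with infinite orbit; by minimality and since $X$ is Cantor, every orbit is infinite. Take a small clopen $U$ and elements $g_1,\dots,g_n\in G$ such that the sets $g_i(U)$ are pairwise disjoint. The resulting permutations of $\{g_i(U)\}$ lie in the full group, and the even ones are generated by 3-cycles, so they lie in $\mathcal A(G)$. An injective representation into $U(d)$ would then give faithful $d$-dimensional representations of $\alt(n)$ for all $n$. This is impossible, since for $n\ge 5$ the minimal degree of a nontrivial representation of $\alt(n)$ is $n-1$. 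Taking $n>d+1$ gives the contradiction.

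Alternatively, one can use Jordan's theorem (finite subgroups of $U(d)$ have abelian normal subgroups of index bounded in terms of $d$), which conflicts with the presence of large simple alternating subgroups. Either route yields the contradiction, so every nontrivial unitary representation of $\mathcal A(G)$ is infinite-dimensional. Hence there are no nontrivial finite-dimensional subrepresentations, and the ergodic action is weakly mixing.

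The main obstacle is making sure $\sigma$ is genuinely nontrivial, which requires removing the constants using ergodicity. This point needs some care, because a finite-dimensional subrepresentation could a priori contain the invariant vectors. The fix is to restrict to the orthogonal complement of the constants, which is $\mathcal A(G)$-invariant since the action preserves $\nu$. The other delicate step, the construction of $\alt(n)\le\mathcal A(G)$, is routine from minimality.
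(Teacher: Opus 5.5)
Your argument is correct and is essentially the paper's proof: simplicity of $\mathcal A(G)$ forces any nontrivial finite-dimensional subrepresentation of the Koopman representation to be faithful, and this is impossible because $\mathcal A(G)$ contains arbitrarily large finite alternating (indeed symmetric) groups. The paper cites the Jordan--Schur theorem for that last step, and your handling of the constants and your explicit embedding $\alt(n)\le\mathcal A(G)$ supply details the paper leaves implicit. One small correction: the minimal degree of a nontrivial representation of $\alt(n)$ equals $n-1$ only for $n\ge 6$, since $\alt(5)$ has $3$-dimensional irreducible representations; this is harmless because you take $n$ large.
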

\begin{proof} Consider an ergodic action of $\mathcal A(G)$  on a standard  measure space $(Z,\mu)$. Assume towards contradiction  that the Koopman representation on $L^2(Z,\mu)$ of $\mathcal A(G)$ admits a non-trivial finite-dimensional unitary subrepresentation. By Proposition \ref{PropositionSimplicityCommutatorSubgroup},  the group $\mathcal A(G)$ is simple. Therefore, the group $\mathcal A(G)$ must embed into  the general linear group $\textrm{GL}(n,\mathbb C)$ for some $n$. Since the group $\mathcal A(G)$ contains all symmetric groups $\textrm{Sym}(k)$, $k\geq 1$,  as subgroups, we obtain a contradiction.  The fact that $\textrm{GL}(n,\mathbb C)$ does not contain $\textrm{Sym}(k)$ for large values of $k$ can be derived, for example, from the Jordan-Schur Theorem \cite[Theorem 36.14]{CurtisReiner:1964}. 
 \end{proof}

The notion of \emph{perfectly non-free} actions was introduced in \cite{DudkoGrigorchuk-DiagonalActions-18}.

\begin{definition}
A measure-preserving action of a group $G$ on a standard measure space $(X, \mu)$ is called \emph{perfectly non-free} if there exists a countable collection $\mathcal A$ of measurable subsets of $X$ such that $\mathcal A$, together with the null sets, generates the $\sigma$-algebra, and for each $A \in \mathcal A$ and $\mu$-almost every $x \in A$, the set
\[
\{g x : g \in G,\; \mu(\supp(g) \setminus A) = 0\}
\]
is infinite.
\end{definition}

The full group actions considered in this paper satisfy a stronger non-freeness condition and are, in particular, perfectly non-free. The following result, established in \cite[Theorem 13]{DudkoGrigorchuk-DiagonalActions-18}, shows that ergodicity together with perfect non-freeness is sufficient for the indecomposability of the character $\chi(g) = \mu(\fix(g))$.

\begin{theorem}\label{ThmCharacterDudkoGrigorchuk} Let a countable group $G$ act on a standard measure space $(X,\mu)$ by measure-preserving transformations. Assume that the action is ergodic and perfectly non-free.   Let $\mathcal R$ be the associated orbit equivalence relation, $\pi$ the Feldman–Moore groupoid representation, and $\mathcal M_\mathcal R$ the groupoid von Neumann algebra as in Example~\ref{ExampleFeldmanMooreConstruction}. Then $\mathcal M_\pi = \mathcal M_{\mathcal R}$, $\mathcal M_\pi$ is a factor,  and the character $\chi(g) = \mu(\fix(g))$ is indecomposable.
\end{theorem}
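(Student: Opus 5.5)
The plan is to prove the three assertions in order: first the equality $\mathcal M_\pi=\mathcal M_{\mathcal R}$, then that this algebra is a factor, then that $\chi$ is indecomposable. For the equality, the inclusion $\mathcal M_\pi\subset\mathcal M_{\mathcal R}$ holds by definition. For the reverse inclusion it suffices to show that $W_{1_A}\in\mathcal M_\pi$ for every $A$ in the countable family $\mathcal A$ from the definition of perfect non-freeness. Indeed, $\mathcal A$ together with the null sets generates the $\sigma$-algebra, so the projections $W_{1_A}$, $A\in\mathcal A$, generate the von Neumann algebra $\{W_m : m\in L^\infty(X,\mu)\}$. Then $\mathcal M_{\mathcal R}$, being generated by $\mathcal M_\pi$ and these operators, equals $\mathcal M_\pi$.

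Fix $A\in\mathcal A$ and let $G_A=\{g\in G : \mu(\supp(g)\setminus A)=0\}$, a subgroup of $G$. Let $P_A$ be the orthogonal projection of $L^2(\mathcal R,\widetilde\mu)$ onto the space of $\pi(G_A)$-invariant vectors. First, $P_A\in\mathcal M_\pi$: the operators in $\mathcal M_\pi'$ commute with $\pi(G_A)$, so they leave the fixed subspace invariant, and being a $*$-algebra they commute with $P_A$. Hence $P_A\in\mathcal M_\pi''=\mathcal M_\pi$. The key step is to identify
\[
P_A=W_{1_{X\setminus A}}.
\]
One inclusion is immediate: if $f$ vanishes for $x\in A$, then $\pi(g)f=f$ for $g\in G_A$, because such $g$ acts trivially on $X\setminus A$ up to null sets. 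For the converse, let $f$ be $G_A$-invariant, so $f(g^{-1}x,y)=f(x,y)$ for all $g\in G_A$. Since the action is measure preserving, $\widetilde\mu$ also equals $\int_X\operatorname{card}(A^y)\,d\mu(y)$, where $A^y=\{(x,y)\in A\}$. Therefore, for $\mu$-a.e.\ $y$, the function $x\mapsto f(x,y)$ is square-summable over the $\mathcal R$-class of $y$. By perfect non-freeness, for a.e.\ $x\in A$ the $G_A$-orbit of $x$ is infinite. The function $x\mapsto f(x,y)$ is constant along that orbit, and square-summability then forces it to vanish. Hence $f=1_{X\setminus A}(x)f$, which gives the identification.

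The main obstacle is this measure-theoretic step. One must pass carefully between the ``left'' and ``right'' counting descriptions of $\widetilde\mu$. One must also make sure that the null sets involved (the exceptional $x\in A$ with finite $G_A$-orbits, and the points where elements of $G_A$ move points outside $A$) remain null after this transfer. Both are handled using the $G$-invariance of $\mu$ and the countability of $G$ and $\mathcal A$.

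Once $\mathcal M_\pi=\mathcal M_{\mathcal R}$ is established, the factor property follows from ergodicity, since $\mathcal M_{\mathcal R}$ is a finite factor for ergodic actions by \cite[Proposition 2.9]{FeldmanMoore-ErgodicRelations-77Part2} (Example~\ref{ExampleFeldmanMooreConstruction}). For indecomposability, recall that $\xi=1_\Delta$ is cyclic for $\mathcal M_{\mathcal R}=\mathcal M_\pi$, hence cyclic for $\pi(G)$. Therefore $(L^2(\mathcal R,\widetilde\mu),\xi,\pi)$ is a GNS triple for $\chi(g)=(\pi(g)\xi,\xi)=\mu(\fix(g))$. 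Since the GNS representation generates a factor, Proposition~\ref{PropEquivalenceExtremeCharactersFactors} yields that $\chi$ is indecomposable.
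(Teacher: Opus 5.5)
Your proof is correct. The paper does not prove this statement itself; it cites Theorem~13 of Dudko--Grigorchuk, and your argument is essentially the one used there. You identify the projection onto the $\pi(G_A)$-invariant vectors with $W_{1_{X\setminus A}}$ via infinite $G_A$-orbits and square-summability along classes, place it in $\mathcal M_\pi$ by the invariant-vector lemma (cf.\ Lemma~\ref{lemmaProjBelongsToAlgebra}), recover $L^\infty(X,\mu)$ inside $\mathcal M_\pi$, and conclude with Feldman--Moore and Proposition~\ref{PropEquivalenceExtremeCharactersFactors}.
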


 If $(X,G)$ is a Cantor minimal system with $G$ countable, then the action of the
alternating full group $\mathcal A(G)$ is perfectly non-free with respect to any
$G$-invariant measure $\mu$ (when such a measure exists). Consequently, the
character $g \mapsto \mu(\mathrm{Fix}(g))$ arising from an ergodic measure is
indecomposable. We also note that $G$ and $\mathcal A(G)$ share the same simplex
of invariant measures on $X$.

Product actions of $\mathcal A(G)$, however, are not necessarily perfectly
non-free. Nevertheless, in the following result we show that the characters of
$\mathcal A(G)$ defined using product measures with ergodic components remain
indecomposable. The proof follows the ideas of
\cite[Theorem~14]{DudkoGrigorchuk-DiagonalActions-18}. Furthermore, we show that
the characters defined in \eqref{eqnDefProductChar} are uniquely determined by
the ergodic measures used in their construction.

\begin{proposition}\label{propProductMeasuresIsCharacter}
Let $(X,G)$ be a Cantor minimal system that admits at least one invariant measure.  
Let $\mu_1, \mu_2, \ldots, \mu_n$ be a collection (repetitions allowed) of $G$-invariant ergodic measures on $X$. Then:

(i) the product measure $\mu = \mu_1 \times \mu_2 \times \cdots \times \mu_n$ is ergodic under the product action of $\mathcal A(G)$ on $X^n$;

(ii) the function
\begin{equation}\label{eqnDefProductChar}
\chi(g) = \mu(\mathrm{Fix}_{n}(g)) = \prod_{i=1}^{n} \mu_i(\mathrm{Fix}(g)), 
\qquad g \in \mathcal A(G),
\end{equation}
where $\mathrm{Fix}_{n}(g)$ denotes the set of fixed points of $g$ under the product action of $\mathcal A(G)$, is an indecomposable character of $\mathcal A(G)$.

Additionally, 

(iii) characters constructed by \eqref{eqnDefProductChar} from distinct families of ergodic measures are themselves distinct;

(iv) if any of the measures $\mu_i$ is not ergodic, then the character $\chi$ in \eqref{eqnDefProductChar} is not indecomposable.
\end{proposition}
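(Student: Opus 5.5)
Statement (i) carries the most weight, so I start there and treat the remaining parts as consequences of it.

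\textbf{Part (i): ergodicity of the product action.} I induct on $n$; the case $n=1$ is ergodicity of $\mu_1$ under $\mathcal A(G)$. This holds because $G$- and $\mathcal A(G)$-invariant sets coincide modulo null sets: by minimality, $\mathcal A(G)$ realizes the same orbit equivalence relation up to finitely many orbits. For the inductive step I use the standard fact that the product of an ergodic action with a weakly mixing action is ergodic. By Proposition~\ref{PropositionErgodicWeakMixing}, every ergodic action of $\mathcal A(G)$ is weakly mixing, so $(X,\mu_n)$ is weakly mixing. By induction, $(X^{n-1},\mu_1\times\cdots\times\mu_{n-1})$ is ergodic. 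Hence the product $(X^n,\mu)$ is ergodic.

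If I want to avoid that fact, I argue directly. Take an invariant $f\in L^2(X^{n-1}\times X)$ and view it as a Hilbert--Schmidt operator $L^2(X,\mu_n)\to L^2(X^{n-1})$ intertwining the two Koopman representations. The operator $f^*f$ is then a compact self-adjoint operator commuting with the action on $L^2(X,\mu_n)$. Its eigenspaces are finite-dimensional invariant subspaces, which by weak mixing consist only of constants. So $f$ depends only on the first $n-1$ coordinates, and the inductive hypothesis finishes.

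\textbf{Part (ii): indecomposability.} By Example~\ref{exampleProductAction}, $\chi$ is a character, realized on the cyclic subspace $\mathcal H_\xi$ of the Feldman--Moore representation for $(X^n,\mu)$. Following \cite[Theorem~14]{DudkoGrigorchuk-DiagonalActions-18}, I show that $\mathcal M_\pi$ contains the multiplication operators $W_{1_A}$ for $A=A_1\times\cdots\times A_n$ with $A_i$ clopen. The construction takes elements $g_k\in\mathcal A(G|_{X\setminus A})$ that are ``asymptotically mixing'' inside $X\setminus A$: for instance, products of many disjoint 3-cycles $\sigma_k$ on partitions refining the Kakutani--Rokhlin partitions. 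Then $\pi(\sigma_k)$ converges weakly to a projection acting as multiplication by the indicator that every coordinate lies in $A$; here one uses that $\mu_i(\fix(\sigma_k)\setminus A)\to 0$. Taking products over coordinates, and complements, I obtain multiplication by indicators of all product clopen sets, and these generate $L^\infty(X^n,\mu)$. Then $\mathcal M_\pi\supseteq L^\infty(X^n)$, and the center of $\mathcal M_\pi$ lies in the $G$-invariant functions. By (i) these are constants, so $\mathcal M_\pi$ is a factor, and Proposition~\ref{PropEquivalenceExtremeCharactersFactors} gives indecomposability.

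\textbf{Part (iii): distinct families give distinct characters.} Take $g$ with $\supp(g)=U$, arbitrary up to small error, so that $\chi(g)=\prod_i(1-\mu_i(U))$. Let $U$ range over clopen sets in a small multiple: choose $U$ as a union of $k$ disjoint translates of tiny sets. Then the values $\prod_i(1-t\,\mu_i(B))$ are polynomials in $t$, and their root multisets $\{1/\mu_i(B)\}$ are determined by $\chi$. Since distinct ergodic measures are mutually singular, they are separated by some clopen $B$, and comparing the multisets over enough $B$ recovers $\{\mu_i\}$ with multiplicities. Realizing ``$t\mu_i(B)$'' concretely requires choosing $g$ with support a union of $m$ Rokhlin-tower levels.

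\textbf{Part (iv): non-ergodic factors.} If some $\mu_1=\tfrac12(\nu+\nu')$ with $\nu\neq\nu'$ invariant, then by multilinearity
\[
\chi=\tfrac12\chi_\nu+\tfrac12\chi_{\nu'}.
\]
Here $\chi_\nu$ and $\chi_{\nu'}$ are distinct characters, distinguished by (iii)-type arguments on a set $B$ with $\nu(B)\neq\nu'(B)$. This expresses $\chi$ as a nontrivial convex combination, so $\chi$ is not indecomposable.

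I expect the main obstacle to be part (ii), specifically producing $W_{1_A}$ as weak limits inside $\mathcal M_\pi$ on the cyclic subspace for a product action that is not perfectly non-free.
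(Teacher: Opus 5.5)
Your part (i) is the paper's argument: weak mixing from Proposition \ref{PropositionErgodicWeakMixing}, plus the fact that an ergodic action times a weakly mixing one is ergodic. Your parts (iii) and (iv) are sound alternatives to the paper's. The interpolation in $t$ determines the degree $n$ and the linear coefficient $\sum_i\mu_i(B)$ for every sufficiently small clopen $B$. This gives the measure $\sum_i\mu_i$, and uniqueness of the ergodic decomposition then recovers the multiset. The same linear coefficient shows $\chi_\nu\neq\chi_{\nu'}$ in (iv) without ergodicity of $\nu,\nu'$, a point the paper leaves implicit. The real problem is part (ii).

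\textbf{The gap in (ii).} You claim that $\mathcal M_\pi$ contains $W_{1_{A_1\times\cdots\times A_n}}$ for arbitrary clopen $A_i$, and hence $L^\infty(X^n)$. This is false once the family has repetitions, which the statement allows and Theorem \ref{thmClassificationCharactersCommutatorSubgroup} actually uses.
\begin{itemize}
\item Take $n=2$, $\mu_1=\mu_2=\nu$, and the flip $q(x_1,x_2)=(x_2,x_1)$. It preserves $\nu\times\nu$ and commutes with the diagonal action.
\item So $(U_qf)(x,y)=f(qx,qy)$ is a unitary commuting with $\pi(\mathcal A(G))$ and fixing $\xi=1_\Delta$, hence $U_q=\id$ on $\mathcal H_\xi$.
\item But $U_qW_{1_{A_1\times A_2}}\xi=W_{1_{A_2\times A_1}}\xi\neq W_{1_{A_1\times A_2}}\xi$ for disjoint nonempty clopen $A_1,A_2$.
\item Therefore $W_{1_{A_1\times A_2}}$ does not even preserve $\mathcal H_\xi$. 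On the whole space it is not in $\pi(\mathcal A(G))''\subseteq\{U_q\}'$.
\end{itemize}
In fact your mechanism only yields multiplication by $1_{A^n}$ (``every coordinate lies in $A$''). A diagonal action offers no ``product over coordinates'', and Boolean combinations of the sets $A^n$ are $\mathrm{Sym}(n)$-invariant. So the step ``the center lies in the invariant functions on $X^n$'' is unsupported. What you need is that the symmetric functions form a maximal abelian subalgebra of $\mathcal M_\pi$ on $\mathcal H_\xi$, and that invariant symmetric functions are constant.

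That is exactly the idea the paper supplies, and it is the answer to the obstacle you flagged:
\begin{itemize}
\item It realizes $\chi$ as the permutation character of the quotient $X_0^n/\mathrm{Sym}(n)$ with the symmetrized measure $\widetilde\mu$. Then $\widetilde\mu(\fix(g))=\mu(\fix_n(g))$, because the measures are non-atomic.
\item It gets ergodicity of $\widetilde\mu$ from (i): pull-backs of invariant sets are $\mathrm{Sym}(n)$-invariant, so all permuted product measures agree on them.
\item It checks perfect non-freeness and applies Theorem \ref{ThmCharacterDudkoGrigorchuk}.
\end{itemize}

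\textbf{A secondary issue.} Even the symmetric operators are not obtained as you describe. The condition $\mu_i(\fix(\sigma_k)\setminus A)\to0$ does not force $\pi(\sigma_k)\to W_{1_{A^n}}$. Let $\tau$ be a fixed 3-cycle supported in $X\setminus A$, and let $\sigma_k'$ have your property for $A\cup\supp(\tau)$. Then $\tau\sigma_k'$ satisfies your hypothesis for $A$, yet
\[
(\pi(\tau\sigma_k')\xi,\pi(\tau)\xi)\to\prod_i\mu_i(A\cup\supp(\tau))\neq\mu(A^n)=(W_{1_{A^n}}\xi,\pi(\tau)\xi).
\]
You would need $\mu_i(\{w\notin A:\sigma_kw=uw\})\to0$ for every fixed $u$. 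Moreover, a general Cantor minimal $(X,G)$ has no Kakutani--Rokhlin partitions from which to build such $\sigma_k$. The clean substitute is the projection onto vectors invariant under the local subgroup $\mathcal A(G|_{X\setminus A})$. It lies in $\mathcal M_\pi$ by Lemma \ref{lemmaProjBelongsToAlgebra}, and the infinite-orbit/$\ell^2$ argument shows it equals multiplication by $1_{A^n}$.
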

\begin{proof} (i) Statement (i) follows from Proposition \ref{PropositionErgodicWeakMixing} and is an equivalent characterization of weak mixing, see  \cite[Proposition 2.2]{Schmidt:1984}. 

(ii) It follows from the Feldman-Moore construction, see Example \ref{ExampleFeldmanMooreConstruction}, that the function $\chi$ is a character of the group $\mathcal A(G)$. In what follows, we show that the character $\chi$ is indecomposable. 
 
The symmetric group $\sym(n)$ acts naturally on $X^n$ by permuting the coordinates of $(x_1, \ldots, x_n) \in X^n$. The measure 
\begin{equation}\label{eqnSumProductMeasures}\bar \mu  = \frac{1}{n!}\sum_{q\in \sym(n)}\mu_{q(1)}\times \cdots \times \mu_{q(n)}\end{equation} is both $\sym(n)$-invariant and $\mathcal A(G)$-invariant.  Notice that the actions of $ \sym(n) $ and $\mathcal A(G) $ commute. Thus, $ \mathcal A(G) $ preserves the orbits of $ \sym(n) $, and the product action naturally descends to the quotient space $ \widetilde{X} = X^n / \sym(n) $. Note that  $\widetilde{X}$ is a standard Borel space. Denote by $\varphi: X\rightarrow \widetilde{X}$  the factor map and by $\widetilde{\mu} $ the push-forward measure $ \bar{\mu} $. 

If $A\subset \widetilde{X}$ is a $\mathcal A(G)$-invariant measurable set, then $\varphi^{-1}(A)$ is both $\mathcal A(G)$- and $\sym(n)$-invariant. Hence, for $q\in \sym(n)$, we get that 
\begin{equation*}\begin{split}\mu_{q(1)}\times \cdots \times \mu_{q(n)}(\varphi^{-1}(A)) = \mu_{1}\times \cdots \times \mu_{n}(q^{-1}(\varphi^{-1}(A))) \\ = \mu_{1}\times \cdots \times \mu_{n}(\varphi^{-1}(A)).\end{split}\end{equation*}

Using (\ref{eqnSumProductMeasures}), we obtain that $\widetilde \mu(A) = \bar \mu(\varphi^{-1}(A)) = \mu_{1}\times \cdots \times \mu_{n}(\varphi^{-1}(A))$. By $\mathcal A(G)$-ergodicity of $\mu_{1}\times \cdots \times \mu_{n}$, we obtain that $\widetilde \mu(A)$ is either 1 or 0, which implies that the measure $\widetilde \mu$ is ergodic.  

Denote by $X_0^n$ the subset of $X^n$ consisting of points $(x_1, \ldots,x_n)$ such that $x_i\neq x_j$, $i\neq j$. Notice that $X_0^n$ is both $\mathcal A(G)$- and $\sym(n)$-invariant and $\bar \mu(X_0^n)=1$.  Set $\widetilde X_0 = \varphi(X_0^n)$. Our goal is to show the dynamical system $(\widetilde X_0,\widetilde \mu, \mathcal A(G))$ is perfectly non-free.

Observe that clopen subsets of $X$ separate $n$-element subsets of $X$. It follows that sets of the form $A^n$, where $A$ is clopen, separate points of $X_0^n$. Since sets of the form $A^n$ are $\sym(n)$-invariant, we obtain that the sets $\widetilde A_0 = \varphi(A^n) \cap \widetilde X_0$, where $A \subset X$ is clopen, separate points of $\widetilde X_0$.

Note that $(\widetilde X_0, \mathcal S)$, where $\mathcal S$ is the $\sigma$-algebra generated by the sets ${\widetilde A_0}$, is a standard Borel space \cite[Proposition 12.1]{Kechris:ClassicalDescriptiveSetTheory}. In fact, applying \cite[Theorem 14.12]{Kechris:ClassicalDescriptiveSetTheory}, one can show that $\mathcal S$ is the $\sigma$-algebra of all Borel subsets.

For every clopen set $A\subset X$ and for every $\bar x \in A^n$, the orbit $\{g (\bar x)\}$, where $g\in \mathcal A(G)$, $\supp(g)\subset A$, is infinite. Hence, the  dynamical system $(\widetilde X_0, \mathcal S, \widetilde \mu, \mathcal A(G))$ is perfectly non-free. Applying Theorem \ref{ThmCharacterDudkoGrigorchuk}, we obtain that the character defined by $\widetilde \mu(\fix(g))$, $g \in \mathcal A(G)$, is indecomposable. Since $\widetilde \mu(\fix(g)) =  \mu(\fix_n(g))$, we obtain that the character $\chi$ is indecomposable.

(iii) Fix two distinct families of ergodic measures $M$ and $N$.  
Grouping the measures in $M$ and $N$ according to their multiplicities, we may write
$$
M = \{\mu_{1}^{\times m_{1}},\ldots,\mu_{k}^{\times m_{k}}\}
\qquad\text{and}\qquad
N = \{\nu_{1}^{\times n_{1}},\ldots,\nu_{l}^{\times n_{l}}\},
$$
where $\mu_{i} \neq \mu_{j}$ and $\nu_{i} \neq \nu_{j}$ for $i \neq j$.  
Since the families $M$ and $N$ are distinct, we may assume without loss of generality that  
$$
\mu_{1} = \nu_{1} \quad\text{and}\quad m_{1} > n_{1},
$$
allowing $n_{1}=0$ if the family $N$ does not contain the measure $\mu_{1}$.

Using these families, construct the characters $\chi_{M}$ and $\chi_{N}$ via \eqref{eqnDefProductChar}.  
Because distinct ergodic measures are mutually singular, for any $\varepsilon > 0$ we can approximate the complement of the support of $\mu_{1}$ by a clopen set $C_{\varepsilon}$ of the form $C_{\varepsilon} = \mathrm{Fix}(g_{\varepsilon})$ for some $g_{\varepsilon} \in \mathcal A(G)$, such that
$$
\mu_{1}(C_{\varepsilon}) < \varepsilon
\qquad\text{and}\qquad
\mu_{i}(C_{\varepsilon}) > 1 - \varepsilon,\; \nu_{i}(C_{\varepsilon}) > 1 - \varepsilon
\quad\text{for all } i>1.
$$

It follows that
$$
\frac{\chi_{M}(g_{\varepsilon})}{\chi_{N}(g_{\varepsilon})}
    = \mu_{1}(C_{\varepsilon})^{\,m_{1}-n_{1}}
      \cdot
      \frac{
        \mu_{2}(C_{\varepsilon})^{m_{2}} \cdots \mu_{k}(C_{\varepsilon})^{m_{k}}
      }{
        \nu_{2}(C_{\varepsilon})^{n_{2}} \cdots \nu_{l}(C_{\varepsilon})^{n_{l}}
      }
    < 1
$$ for sufficiently small $\varepsilon$, which shows that $\chi_{M} \neq \chi_{N}$.

(iv) Finally, without loss of generality, suppose that 
$
\mu_1 = \alpha \mu_1' + (1-\alpha) \mu_1'',
$
where $\mu_1'$ and $\mu_1''$ are $G$-invariant probability measures. Then
$$
\chi(g) 
= \alpha \, \mu_1'(\mathrm{Fix}(g)) \prod_{i=2}^{n} \mu_i(\mathrm{Fix}(g))
  + (1-\alpha) \, \mu_1''(\mathrm{Fix}(g)) \prod_{i=2}^{n} \mu_i(\mathrm{Fix}(g)),
$$
which shows that the character $\chi$ is not indecomposable.
\end{proof}

 In \cite[Lemma A.3]{zheng2020rigid} Zheng showed that every ergodic measure invariant under the product action of the full group of a simple Bratteli diagram is necessarily a product measure.  It would be interesting to explore if a similar result can be obtained for more general full groups.

 
 \begin{example}\label{exampleS2Infty} Denote by $S_{2^\infty}$ the full group associated with the diagram depicted in Figure \ref{fig:BratteliDiagram:2Odometer}. The group $S_{2^\infty}$  is simple. The Vershik map on the diagram is topologically conjugate to the 2-odometer and, thus, is uniquely ergodic. Denote by $\nu$ the unique $S_{2^\infty}$-invariant  measure on the path-space of the Bratteli diagram associated with $S_{2^\infty}$. It was shown in \cite{Dudko-CharsErgodic-11} that every non-regular indecomposable character $\chi$ on $S_{2^\infty}$ is of the form $\chi(g)  = \nu(\fix(g))^n$, $g\in S_{2^\infty}$, where $n$ is a non-negative integer.

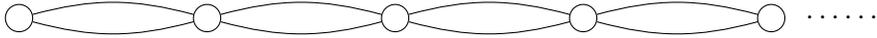
\begin{figure}[ht]
    \centering
    \begin{tikzpicture}[every node/.style={circle, draw, minimum size=0.01cm}, 
        sibling distance=0.2cm, level distance=2.5cm, 
        edge from parent/.style={draw}, 
        grow=east]
        
    \node (root) {}
        child {node (l1) {} edge from parent[draw=none]
            child {node (l2) {} edge from parent[draw=none]
                child {node (l3) {} edge from parent[draw=none]
                    child {node (l4) {} edge from parent[draw=none] }
                }
            }
        };

    \draw[bend left=15] (root) to (l1);
    \draw[bend right=15] (root) to (l1);
    
    \draw[bend left=15] (l1) to (l2);
    \draw[bend right=15] (l1) to (l2);
    
    \draw[bend left=15] (l2) to (l3);
    \draw[bend right=15] (l2) to (l3);

    \draw[bend left=15] (l3) to (l4);
    \draw[bend right=15] (l3) to (l4);
   
     \node[right=0.1cm of l4, draw=none, fill=none] (dots) {$\cdots\cdots$};

    \end{tikzpicture}
    \caption{The Bratteli diagram corresponding to the 2-odometer and the infinite symmetric group $S_{2^\infty}$.}
    \label{fig:BratteliDiagram:2Odometer}
\end{figure}
 
 \end{example}
 
 One of the key arguments in \cite{Dudko-CharsErgodic-11} is Proposition 12, 
which establishes that the class function 
$
g \mapsto \nu(\mathrm{Fix}(g))^\alpha
$, $\alpha \geq 0$, 
is positive semi-definite only if $\alpha$ is an integer. In Proposition \ref{PropAlphaAnInteger}, we generalize this fact to the setting of alternating full groups of almost-finite minimal systems. 

Our proof relies on the alternating representation of $\mathrm{Sym}(m)$ and ideas introduced in \cite[p.~3567]{Okunkov-CharactersSymmetric-97} as well as the fact that $\mathrm{Sym}(m)$ can be (almost densely) embedded into $\mathcal A(G)$.  The construction of the embedding of $\mathrm{Sym}(m)$ into $\mathcal A(G)$ used in the proof was suggested to us by Spyros Petrakos. 

It is interesting to compare Proposition \ref{PropAlphaAnInteger} with the discussion of Hadamard matrix functions in \cite[Theorem 6.3.7]{Horn_Johnson:1991}, where only integer power functions are known to preserve the cone of positive-definite matrices. A similar phenomenon occurs here, with the power function applied to a discrete set of matrices arising from characters of the group $\mathcal A(G)$.

We note that every almost-finite system admits at least one invariant measure \cite[Lemma 6.5]{Matui:2012}.

 %

  \begin{proposition}\label{PropAlphaAnInteger}  Let $(X,G)$ be an almost finite Cantor minimal system and let $\mu$ be a $G$-invariant probability measure on $X$. Then the function $\chi:  \mathcal A(G) \rightarrow \mathbb R $ given by $\chi(g) = \mu(\fix(g))^\alpha$, $\alpha\geq 0$, is a character on $\mathcal  A(G)$ if only if $\alpha$ is an integer. 
  \end{proposition}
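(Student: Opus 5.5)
For integer $\alpha = n \geq 0$ the claim is already known: $\mu(\fix(g))^n = \mu^{\times n}(\fix_n(g))$ is a character by Example~\ref{exampleProductAction} (the Feldman--Moore construction applied to the product action on $X^n$). For $n=0$ it is the trivial character. So all the work is in the converse, which we prove by contradiction.

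Suppose $\alpha \notin \mathbb Z$. We will exhibit finitely many elements $g_1,\dots,g_N\in\mathcal A(G)$ for which the matrix $\bigl(\chi(g_ig_j^{-1})\bigr)$ fails to be positive semi-definite. The test elements come from an embedded symmetric group.

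\textbf{Step 1: embed a symmetric group.} Fix $m$ large and use almost finiteness (Definition~\ref{DefAlmostFiniteness}) together with minimality. This should produce $m$ pairwise disjoint clopen sets $B_1,\dots,B_m$ with the following properties:
\begin{itemize}
\item each $B_i$ has $\mu(B_i)$ close to $1/m$, and they cover $X$ up to a remainder $R$ of measure at most $\varepsilon$;
\item they are permuted by a copy of $\sym(m)$ inside $\mathcal F(G)$, acting by fixed homeomorphisms $B_i\to B_j$.
\end{itemize}
For the construction, I would take a castle with very invariant shapes and divide each tower into $m$ equal blocks of levels, discarding a small remainder. To land inside $\mathcal A(G)$, I would use $\sym(m)\times\sym(2)$ doubling tricks, or simply restrict to $\alt(m)$. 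Alternatively, realize the permutations as products of 3-cycles of clopen sets, which lie in $\mathcal A(G)$.

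\textbf{Step 2: compute the restricted class function.} For $\sigma$ in the embedded group, $\fix(\sigma)$ contains $R$ together with the blocks $B_i$ fixed by $\sigma$. We arrange the block homeomorphisms so that a non-fixed block contributes no fixed points; this is automatic, since $\sigma$ moves $B_i$ to a disjoint $B_j$. Hence
\[
\mu(\fix(\sigma)) = \varepsilon' + \frac{(1-\varepsilon')\,f(\sigma)}{m},
\]
where $f(\sigma)$ is the number of fixed points of $\sigma$ and $\varepsilon'=\mu(R)$. Letting $\varepsilon\to 0$ along a sequence of embeddings, and using that positive semi-definiteness is preserved under pointwise limits, the function $\psi_m(\sigma) = \bigl(f(\sigma)/m\bigr)^\alpha$ must be positive definite on $\sym(m)$, or on $\alt(m)$ if we restricted there.

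\textbf{Step 3: the obstacle.} The main difficulty is showing that $\psi_m$ is not positive definite for some $m$ when $\alpha\notin\mathbb Z$. Following Okounkov's idea, I would pair $\psi_m$ with the sign character. Positive definiteness forces, for every irreducible $\lambda$,
\[
\sum_\sigma \psi_m(\sigma)\,\overline{\chi_\lambda(\sigma)} \ge 0 .
\]
Taking $\lambda=$ sign gives the requirement
\[
S_m := \sum_\sigma \operatorname{sgn}(\sigma)\, f(\sigma)^\alpha \ge 0 .
\]
We then expand $f^\alpha$ in falling-factorial or binomial terms, $f^\alpha=\sum_k c_k \binom{f}{k}$ with $c_k = \Delta^k(x^\alpha)|_{x=0}$. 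The term $\sum_\sigma \operatorname{sgn}(\sigma)\binom{f(\sigma)}{k}$ counts signed permutations with a chosen fixed $k$-set. This equals $\binom mk\sum_{\tau\in\sym(m-k)}\operatorname{sgn}\tau$, which vanishes unless $m-k\le 1$. Hence
\[
S_m = \Delta^m(x^\alpha)|_0 + m\,\Delta^{m-1}(x^\alpha)|_0 .
\]
For non-integer $\alpha$, the sign of $\Delta^k x^\alpha|_0$ alternates with $k$ for $k>\alpha$, behaving like $(-1)^{k-\lceil\alpha\rceil}$ times a positive quantity. We must check that the combination $\Delta^m+m\Delta^{m-1}$ becomes negative for a suitable parity of $m$. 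If this asymptotic is delicate, I would instead combine the sign character with the standard or other characters, or use the restriction to a product $\sym(k)\times\sym(m-k)$ to isolate a single $\Delta^k$ term. The goal is to exhibit a negative Fourier coefficient, which contradicts positive definiteness and finishes the proof.
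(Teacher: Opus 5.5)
Your reduction is correct up to $S_m=\Delta^m x^\alpha|_0+m\,\Delta^{m-1}x^\alpha|_0$. The gap is that the proof stops exactly where the contradiction has to be produced: you never show that $S_m<0$ for some $m$. Knowing that the two terms have opposite signs is not enough; you also have to compare their sizes.

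The missing estimate is elementary. Put $F(x)=x^\alpha$ and $g=\Delta^{m-1}F$. Then $g(x)=\int_{[0,1]^{m-1}}F^{(m-1)}(x+t_1+\cdots+t_{m-1})\,dt$, and the integral converges at $x=0$ because $\alpha>0$. If $m-1>\alpha$, then $F^{(m-1)}(y)=\alpha(\alpha-1)\cdots(\alpha-m+2)\,y^{\alpha-m+1}$ has constant sign and decreasing modulus on $(0,\infty)$. So $g(1)$ and $g(0)$ have the same sign, with $|g(1)|\le|g(0)|$.

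Hence $c_m=g(1)-g(0)$ satisfies $|c_m|\le|c_{m-1}|$. It follows that $S_m=c_m+mc_{m-1}$ has the sign of $c_{m-1}$, that is, of $\alpha(\alpha-1)\cdots(\alpha-m+2)$. For $\alpha\notin\mathbb Z$ this is negative when $m=\lfloor\alpha\rfloor+3$. For instance, $S_3=3^\alpha-3<0$ for $0<\alpha<1$, and $S_4=(2^\alpha-2)(2^\alpha-4)<0$ for $1<\alpha<2$. The fallbacks you list (other characters, Young subgroups) are not needed.

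Second, Step 3 constrains Step 1: the sign test only has content if odd permutations lie in the image. Two of your three options fail here. Restricting to $\alt(m)$, or building the permutations from 3-cycles of blocks, makes $\operatorname{sgn}$ trivial, so $S_m\ge 0$ holds automatically. You must use the doubling embedding $\sym(m)\hookrightarrow\alt(2m)\subset\mathcal A(G)$, acting diagonally on two families of $m$ blocks, as the paper does. This leaves your formula $\mu(\fix(\sigma))=\varepsilon'+(1-\varepsilon')f(\sigma)/m$ unchanged.

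With these two repairs your argument is a valid alternative to the paper's. The paper antisymmetrizes over the coordinate-permuting copy of $\sym(n)$ inside $\sym(k^n)$, acting on $\{0,\dots,k-1\}^n$. There $\mu(\fix(s))\approx k^{l(s)-n}$ depends on the number of cycles, and the Stirling identity collapses the test to $k^\alpha(k^\alpha-1)\cdots(k^\alpha-n+1)\ge 0$. That yields only $k^\alpha\in\mathbb N$ for every $k$, and an arithmetic fact (Putnam 1971 A6) is then needed to conclude $\alpha\in\mathbb Z$. Your use of the natural permutation action replaces that arithmetic input with the analytic estimate above, and gets the contradiction from the single group $\sym(\lfloor\alpha\rfloor+3)$.
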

 \begin{proof} The ``if'' part of the proof follows from the fact the product of characters is a character. 
 
Now, we will show that if $\alpha$ is not an integer, then the function $\chi$ is not positive-semidefinite.     Fix $k,n \in \mathbb N$ and $\delta>0$. Set $m = k^n$. Since the action of $G$ on $X$ is almost finite and the group $G$ is infinite, we can partition $X$ into a finite number of clopen towers (as in Definition \ref{DefAlmostFiniteness}) $\{S_i(B_i)\}$, $i\in I$, where $B_i$ is the base of the $i$-th tower, so that for each $i\in I$ the size of the shape $S_i$ is at least $2m/\delta $.  Note that 
\begin{equation}\label{eqnAux1AlphaAnInteger}\mu\left(\bigcup_{i\in I} B_i\right)\leq \frac{\delta}{2m}. \end{equation}

For each $i\in I$, we can remove $|S_i| \mod (2m)$ levels from the $i$-th tower $S_i(B_i)$. Thus, we  are left with  towers $\{S_i'(B_i)\}$ of height divisible by $2m$.  Thus, for each $i\in I$, we can combine levels of the tower $S_i'(B_i)$ to form a single tower   $Q_i(C_i)$ of height $2m$, where $C_i$ is a union of levels of $S_i'(B_i)$ and  the shape $Q_i$ is a subset of the topological full group $\mathcal F (G)$. 

It follows from (\ref{eqnAux1AlphaAnInteger}) that

$$\mu\left(\bigcup_{i\in I} Q_i(C_i)\right) = \mu\left(\bigcup_{i\in I} S_i'(B_i)\right)\geq 1-\delta.$$  

For each $i \in I$, fix an action of $ \sym(2m)$ on the levels $Q_i(C_i)$. By combining these actions, we obtain an embedding of $ \sym(2m) $ into $\mathcal{F}(G)$ with the property that there exists a clopen set $C = \bigcup_{i \in I} C_i$ such that the orbit \( \sym(2m)(C) \) consists of pairwise disjoint clopen sets and covers all of $X$ up to a set of $\mu$-measure less than $\delta$. Splitting the orbit \( \sym(2m)(C) \) into two suborbits of size $m$, we obtain an embedding of \( \sym(m) \) into \( \sym(2m) \) that doubles the support of each element of \( \sym(m) \). Note that $$\sym(m) \subset \alt(2m) \subset \mathcal{A}(G) .$$

Thus, we have constructed an  embedding of $\sym(k^n)$ into $\mathcal A(G)$ with the property that there is a nonempty clopen set $A$ such that  the $\sym(k^n)$-orbit of $A$ consists of pairwise disjoint clopen sets, the group   $\sym(k^n)$ acts trivially outside of its orbit  $\sym(k^n)(A)$, and $\mu(\sym(k^n)(A)) >1-\delta$.   Enumerate the disjoint clopen sets in $\sym(k^n)(A)$  by $\{A_{i,j}\}$, $0\leq i\leq n-1$, $0\leq j\leq k-1$.  This allows us to naturally identify the   $\sym(k^n)$-orbit of $A$ with the set $X_{k,n} = \{0,1,\ldots, k-1\}^n$. 
 
 Denote by $H_{n}\cong \sym(n) $ the subgroup of  $\sym(k^n)$ that acts on $X_{k,n}$ by permuting its coordinates, that is, for every $g\in H_{n}$ we have:  $g(x_i) = x_{g(i)}$ for every $0\leq i\leq n-1$.   Set $r = \mu(X\setminus \sym(k^n)(A))$.     Consider $s\in H_n$ and denote by $\{c_1,\ldots, c_q\}$ the independent cycles of $s$. Each fixed point in $s$ is accounted for as a cycle of length one.  Then 
\begin{equation}\label{eqnAuxCyclesAlphaAnIneteger}\mu(\fix(s)) = \prod_{i=1}^q k\left (\frac{1}{k}\right)^{|c_i|}\mu(A_{0,0}) + r =   \frac{k^{l(s)}}{k^n} \mu(A_{0,0}) + r, \end{equation} where $l(s)$ is the number of cycles in the permutation $s$. 
 
 Using the Feldman-Moore representation from Example \ref{ExampleFeldmanMooreConstruction}, we can find a unitary representation  $\pi$ of $\mathcal A (G)$ on a Hilbert space $\mathcal H$ and a vector $\xi$ such that $\chi(g) = (\pi(g)\xi,\xi) = \mu(\fix(g))^\alpha$ for every $g\in \mathcal A(G)$. Define the orthogonal projection $P = \frac{1}{n!}\sum_{s\in H_n} \textrm{sign}(s) \pi(s).$ Using (\ref{eqnAuxCyclesAlphaAnIneteger}), we obtain that 
 
\begin{align*}
 0 & \leq (P\xi,\xi) = \frac{1}{n!}\sum_{s\in H_n}  \textrm{sign}(s) \mu(\fix(s))^\alpha \\ 
 & = \frac{1}{n!}\sum_{s\in \sym(n)}  \textrm{sign}(s)  \left[\frac{k^{l(s)}}{k^n} \mu(A_{0,0}) + r\right]^\alpha.
 \end{align*}

  Since $r< \delta$ and $\delta$ can be chosen arbitrarily small, taking the limit as $\delta \to 0$, we obtain that $
  \mu(A_{0,0}) \to 1/k^n$ and 
  
  \begin{equation}\label{eqnAuxLastIneqAlphaAnInteger}0\leq \sum_{s\in \sym(n)}  \textrm{sign}(s) k^{\alpha l(s)}. \end{equation}
 
Using the equality $\textrm{sign}(s) = (-1)^{n+l(s)}$ for $s\in \sym(n)$ and the identity $\sum_{s\in \sym(n)} x^{l(s)} = x(x+1)\cdots (x+n-1)$, see, for example, \cite[Prop. 1.3.4]{StanleyEnumerativeCombinatoricsVol1}, in (\ref{eqnAuxLastIneqAlphaAnInteger}), we obtain that 
 \begin{align*} 0\leq \sum_{s\in \sym(n)}  (-1)^n  (-k^\alpha)^{l(s)} = k^\alpha (k^\alpha-1)\cdots (k^\alpha- n+1).
 \end{align*}
Since this inequality must be nonnegative for all $n\geq 1$, it must terminate for some $n$, which is possible only when $k^\alpha$ is an integer.  

Since $k^\alpha\in \mathbb N$ for every $k\geq 0$, we conclude that $\alpha$ must be an integer. This combinatorial fact appeared as Problem A-6 on the 1971 Putnam exam; a solution can be found, for example, in \cite{AMSMonthlyVol80}. 

 \end{proof}
 

%


  \section{Automatic Continuity of Finite-Type Unitary Representations}\label{Section:AutomaticContinuity}
  
In this section, we establish that finite-type unitary representations of groups acting on the Cantor set X, whose properties closely resemble those of full groups, automatically satisfy certain continuity properties. Specifically, Theorem \ref{ThAutomaticContinuity} shows that under appropriate technical assumptions, if a sequence of homeomorphisms converges to the identity, then any convergent subsequence of the corresponding unitary operators converges to a scalar multiple of the identity operator.

We note that the assumptions of Theorem \ref{ThAutomaticContinuity} are automatically satisfied by the full group $\mathcal F(\Gamma)$ of any free Cantor minimal system $(X, \Gamma)$ when the set $Y$ is a singleton. Moreover, if $(X, \Gamma)$ is free and almost finite, and $Y$ intersects each $\Gamma$-orbit at most once, then, by applying the comparison property \cite[Theorem 9.2]{Kerr:2022}, the theorem also holds for $\mathcal{F}(\Gamma)$.

We will need the following classical result \cite[Theorem 9.6-3]{Kreyszig:Book}.

  \begin{lemma}\label{LemmaLimitProjections} (a) If $\{P_n\}_{n=1}^\infty$ is a monotone decreasing sequence of orthogonal projections in some Hilbert space $\mathcal H$, then $\{P_n\}_{n=1}^\infty$ strongly  converges to the orthogonal projection onto $\bigcap_{n=1}^\infty P_n(\mathcal H)$.
  
  (b) If $\{P_n\}_{n=1}^\infty$ is a monotone increasing sequence of orthogonal projections in some Hilbert space $\mathcal H$, then $\{P_n\}_{n=1}^\infty$ strongly  converges to the orthogonal projection onto $\overline{\bigcup_{n=1}^\infty P_n(\mathcal H)}$.
    \end{lemma}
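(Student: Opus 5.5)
The plan is to reduce both parts to the monotone behaviour of the norms $\|P_n\eta\|$, for $\eta\in\mathcal H$, together with the elementary identity $\|(P_n-P_m)\eta\|^2=\bigl|\|P_n\eta\|^2-\|P_m\eta\|^2\bigr|$. This identity holds for comparable projections. Indeed, if $P_m\le P_n$, then $P_n-P_m$ is itself a projection, and $P_m$ is orthogonal to $P_n-P_m$. Hence
$$\|(P_n-P_m)\eta\|^2=((P_n-P_m)\eta,\eta)=\|P_n\eta\|^2-\|P_m\eta\|^2.$$

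For part (a), suppose $P_1\ge P_2\ge\cdots$. For each fixed $\eta$, the sequence $\|P_n\eta\|^2=(P_n\eta,\eta)$ is nonincreasing and bounded below by $0$, so it converges. By the identity above, $\{P_n\eta\}$ is Cauchy, and therefore it converges to some vector $P\eta$. The map $P$ is linear and bounded, since $\|P\|\le 1$. It is self-adjoint because weak limits of self-adjoint operators are self-adjoint. It is idempotent: for fixed $m$ and $n\ge m$ we have $P_mP_n=P_n$, and letting $n\to\infty$ gives $P_mP=P$. Then $(P^2\eta,\eta)=\|P\eta\|^2=\lim_m (P_mP\eta,P\eta)=\lim_m\|P_m P\eta\|^2$. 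A cleaner route to the same conclusion is to note that $P_m P\eta=P\eta$ for every $m$, so $P\eta\in\bigcap_m P_m(\mathcal H)$.

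Conversely, if $\eta\in\bigcap_m P_m(\mathcal H)$, then $P_n\eta=\eta$ for all $n$, so $P\eta=\eta$. Moreover, since $P_m P=P$ for every $m$, it follows that $P=\lim_m P_mP$. Thus $P$ is the identity on $\bigcap_m P_m(\mathcal H)$ and maps $\mathcal H$ into that subspace, so $P^2=P$. Together with self-adjointness, this shows that $P$ is the orthogonal projection onto $\bigcap_n P_n(\mathcal H)$.

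For part (b), apply part (a) to the decreasing sequence $\id-P_n$. This gives strong convergence of $P_n$ to $\id-Q$, where $Q$ is the projection onto
$$\bigcap_n(\id-P_n)(\mathcal H)=\bigcap_n P_n(\mathcal H)^\perp=\Bigl(\bigcup_n P_n(\mathcal H)\Bigr)^\perp.$$
Therefore $\id-Q$ is the projection onto $\overline{\bigcup_n P_n(\mathcal H)}$.

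The only delicate point is verifying that the strong limit is exactly the projection onto the intersection, rather than merely some contraction. The argument handles this through the relation $P_mP=P$ together with the observation that every vector of the intersection is fixed by $P$. Since the statement is classical, the paper may simply cite \cite{Kreyszig:Book}.
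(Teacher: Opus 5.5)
Your proof is correct and is essentially the standard textbook argument that the paper simply cites from \cite{Kreyszig:Book}: you obtain monotonicity of $\|P_n\eta\|^2$ from the identity $\|(P_n-P_m)\eta\|^2=\bigl|\|P_n\eta\|^2-\|P_m\eta\|^2\bigr|$, deduce strong convergence, and identify the limit as a self-adjoint idempotent with the right range; it does not matter which monotone case is proved first, since the other follows by passing to $\id-P_n$. One detail to add in (b): note that $\bigcup_n P_n(\mathcal H)$ is itself a linear subspace, being an increasing union of subspaces, so its double orthogonal complement equals its closure and not merely its closed linear span. Also, the exploratory computation of $(P^2\eta,\eta)$ in (a) is unnecessary and can be deleted.
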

  
  %
  %

\begin{theorem}\label{ThAutomaticContinuity} 

Let $X$ be a Cantor space, and let $G$ be a  countable group acting on $X$ by homeomorphisms. Assume that  the fixed-point set $\operatorname{Fix}(g)$ is clopen for every $g \in G$.  Let $Y$ be a non-empty closed subset of $X$  that intersects every $G$-orbit at most once and satisfies the following conditions:

\begin{itemize}      
    \item[(a)] The family of sets  $
    \mathcal{Y} = \{q(Y) : q \in G\}
    $
 is {\it everywhere dense in $X$} in the sense that, for every clopen set $U\subset X$, there exists an element $q\in G$ such that $q(Y)\subset U$;
 
     \item[(b)] For every $n \geq 1$, the action of $G$ on  disjoint subsets of $\mathcal Y$
    is $n$-transitive  in the sense that, if $L\subset \mathcal Y$ is a family consisting of pairwise disjoint subsets and  $\{Y_1, \ldots, Y_n\}, \{Y_1', \ldots, Y_n'\} \subset L$, then there exists $g \in G$ such that $g(Y_i) = Y_i'$ for every $i = 1, \ldots, n$.
    
\end{itemize}

 Let $\{h_n\}$ be a sequence of elements in $G$ satisfying the following conditions:  
\begin{itemize}
    \item[(c)] $h_n$ acts trivially on $Y$, i.e., $h_n|_Y = \text{id}$ for all $n \geq 1$; 
    \item[(d)] for every clopen set $U$ containing $Y$, there exists $N$ such that for all $n \geq N$, $\supp(h_n) \subset U$.
\end{itemize}

Let $\pi$ be a  non-regular finite-type factor representation of $G$.  Then all limit points of the sequence $\{\pi(h_n)\}_{n=1}^\infty$ in the weak operator topology are scalar operators.
\end{theorem}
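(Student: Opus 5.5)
The plan is to show that every weak-operator limit point of $\{\pi(h_n)\}$ lies in the center of $\mathcal M_\pi$. Since $\pi$ is a factor representation, this forces it to be scalar. Fix a subsequence (still denoted $h_n$) with $\pi(h_n)\to A$ weakly. Then $A\in\mathcal M_\pi$, because $\mathcal M_\pi$ is weakly closed. By Proposition~\ref{PropGNSQuasiEquivalence} we may pass to the GNS model of $\chi=\tr\circ\pi$, with cyclic and separating trace vector $\xi$. It then suffices to prove $\|A-\tr(A)\id\|_2=0$, where $\|x\|_2=\tr(x^*x)^{1/2}$. Quasi-equivalences are normal on bounded sets, so the conclusion transfers back to $\pi$.

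\emph{Step 1 (local commutation).} Let $G_Y$ be the set of $g\in G$ acting trivially on some clopen neighbourhood of $Y$. By (d), for $g\in G_Y$ the supports of $h_n$ and $g$ are disjoint for large $n$, so $h_n$ and $g$ commute. Hence $A\in \pi(G_Y)'\cap\mathcal M_\pi$.

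\emph{Step 2 (transport).} For $q\in G$, set $A_q=\pi(q)A\pi(q)^{-1}=\lim_n \pi(qh_nq^{-1})$. Suppose $g\in G$ satisfies $gq(Y)=q'(Y)$. Because $Y$ meets each orbit at most once, $gq$ and $q'$ agree pointwise on $Y$. Because fixed-point sets are clopen, they then agree on a clopen neighbourhood of $Y$. Using (c),(d), this gives $g qh_nq^{-1}g^{-1}=q'h_nq'^{-1}$ for large $n$, and therefore $\pi(g)A_q\pi(g)^{-1}=A_{q'}$. So $A_q$ depends only on the set $q(Y)$.

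Using (a) repeatedly inside disjoint clopen sets, I would choose $q_1,q_2,\ldots$ with the sets $q_i(Y)$ pairwise disjoint and forming a family $L$ as in (b). Write $A_i=A_{q_i}$. By Step 1 and Step 2, $A_i$ commutes with $\pi(g)$ whenever $g$ is trivial near $q_i(Y)$. In particular $A_j$ commutes with $A_i$ for $i\neq j$: $A_j$ is a weak limit of $\pi(q_jh_nq_j^{-1})$, and these elements are eventually supported away from $q_i(Y)$. By (b) and Step 2, the family $(A_i)$ is exchangeable under the trace: every joint moment $\tr(A_{i_1}^{\pm}\cdots A_{i_k}^{\pm})$ depends only on the pattern of equal indices. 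In particular $\tr(A_i^*A_j)=c$ for all $i\neq j$.

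\emph{Step 3 (averaging and the main obstacle).} Let $S_N=\frac1N\sum_{i\le N}A_i$. Exchangeability gives
\[
\|S_N-S_M\|_2^2\to \bigl(\tr(A^*A)-c\bigr)\Bigl|\tfrac1N-\tfrac1M\Bigr|\to0 ,
\]
so $S_N\to Z$ in $\|\cdot\|_2$. Any fixed $g$ is trivial near all but finitely many $q_i(Y)$, while the finitely many exceptional terms carry weight $O(1/N)$. I therefore expect to show $\|\pi(g)Z\pi(g)^{-1}-Z\|_2=0$, so $Z$ is central and hence $Z=\tr(A)\id$. Showing that $\pi(g)$ almost commutes with the tail of the average, including the $g$ that move some $q_i(Y)$ onto sets outside $L$, is the step I expect to be hardest. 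I would handle it by choosing $L$ to be $G$-saturated enough using (a),(b), and by appealing to Step 2.

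Once $Z=\tr(A)\id$ is known, $\|Z\|_2^2=c$ gives $c=|\tr(A)|^2$. The remaining and central point is to deduce $\tr(A^*A)=c$. This is where non-regularity of $\pi$ must enter, since in the regular representation distinct $h_n$ give $\pi(h_n)\to0$ trivially. My first attempt would be a Hewitt--Savage/de Finetti argument for the commuting exchangeable family $(A_i)$ in the factor, showing it is conditionally i.i.d.\ over the trivial tail, hence independent. I would then compare $\tr(A^*A)$ with $\lim_{n,m}\chi(h_m^{-1}h_n)$ and use that non-regularity makes $\chi$ non-vanishing on small-support elements (the trace $\tr(P^{\supp(g)})$ heuristic). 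If that fails, I would instead try to show directly that $\lim_{m}\lim_{n}\chi(h_m^{-1}h_n)$ splits as $|\tr(A)|^2$, by conjugating $h_m$ to a disjoint translate via (b).
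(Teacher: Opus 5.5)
\textbf{There is a genuine gap, and it sits where you locate it.} The identity $\tr(A^*A)=c$ is the whole content of the theorem, and nothing in Steps 1--3 can produce it.

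Steps 1--2 are fine; they are essentially the paper's Lemmas~\ref{LmRdependence} and \ref{LmRgCommute}. Step 3, once repaired, yields only $c=|\tr(A)|^2$, i.e.\ that distinct translates are uncorrelated. But every property you extract still holds in a model where $A$ is not scalar. Take $A_i=1\otimes\cdots\otimes a\otimes1\otimes\cdots$ ($a$ non-scalar, in slot $i$) in the infinite tensor product of copies of $(M_2(\mathbb C),\tr)$. There the $A_i$ have commuting $*$-algebras, all mixed moments are exchangeable, $\frac1N\sum A_i\to\tr(a)\id$, and the family is even fully independent. So a Hewitt--Savage/de Finetti argument only gives independence, which you already have, and says nothing about the variance of a single $A_i$.

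Your other two suggestions also fail:
\begin{itemize}
\item The formula $\chi(g)=\tr(P^{\supp(g)})$ is proved in the paper only for commutator subgroups of Bratteli full groups. Its proof also uses the present theorem, through Lemma~\ref{LemmaAugProjectionAbsorption}, so invoking it here is circular.
\item $\chi(h_m^{-1}h_n)$ is invariant only under simultaneous conjugation of $h_m$ and $h_n$. You cannot move $h_m$ to a disjoint translate on its own.
\end{itemize}

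\textbf{The missing idea} is to test non-scalarity of $A$ against $\chi(g)$ for an \emph{arbitrary} $g\neq e$. Non-regularity is then used only as the final contradiction. The paper's argument runs as follows.
\begin{enumerate}
\item Assume $A$ is not scalar. Take a nontrivial projection $R_Y$ in the von Neumann algebra generated by $A$, and transport it to commuting projections $R_Z$, $Z\in\mathcal Y$.
\item Let $L$ be an infinite disjoint family avoiding a clopen set. The products $\prod_{Z\in L}R_Z$ and $\prod_{Z\in L}(\id-R_Z)$ do not depend on $L$. Indeed, by (b), equality of traces plus monotonicity passes to infinite subfamilies, and two such families are compared through a third one inside the avoided clopen set.
\item Hence these products are central. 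Since $R_Y\neq 0,\id$, both vanish.
\end{enumerate}
Step 2 is also how the difficulty you flag in your Step 3 is resolved. Note that your claim that a fixed $g$ is trivial near all but finitely many $q_i(Y)$ is false whenever $g$ moves an accumulation point of the family.

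Now fix $g\neq e$ and continue:
\begin{enumerate}
\setcounter{enumi}{3}
\item Choose a clopen $U$ with $g(U)\cap U=\emptyset$. Take families $L_n^{(1)}\subset U$ of size $n$, set $L_n=L_n^{(1)}\cup g(L_n^{(1)})$, and expand $\id=\sum_{S\subset L_n}T_{L_n,S}$.
\item Then $\chi(g)=\sum_S\tr(T_{L_n,S}\pi(g)T_{L_n,S})$. The terms with $S$ not symmetric under $Z\leftrightarrow g(Z)$ vanish, because $T_{L_n,S}T_{g(L_n),g(S)}=0$.
\item Exchangeability and the estimate $\binom{2n}{2k}\ge(2n-1)\binom nk$ bound the remaining terms by $\tr(T_{L_n,\emptyset})+\tr(T_{L_n,L_n})+\frac{1}{2n-1}\to0$.
\end{enumerate}
Thus $\chi$ vanishes off $e$, so $\pi$ is regular, a contradiction. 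In the tensor-product picture, this says that a unitary which moves infinitely many i.i.d.\ non-trivial slots onto disjoint ones has trace $0$.

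Without an argument of this kind, your proposal does not reach the conclusion.
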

%
\noindent {\it Proof.}  The proof will be presented as a series of lemmas. Throughout, we assume that $Y$ and ${h_n}$ are as given in the statement of the theorem. 

Let $A$ be a limit point of the sequence ${\pi(h_n)}$. To simplify notation, we assume that ${\pi(h_n)}$ converges to $A$ in the weak operator topology. Suppose, for the sake of contradiction, that $A$ is not a scalar operator.

Denote by $\tr$ and $\chi$ the trace and character corresponding to the representation $\pi$. We will show that $\chi(g) = 0$ for every $g \in G \setminus {e}$, which implies that $\pi$ is quasi-equivalent to the regular representation (see Example \ref{ExampleRegularRepresentation}), leading to a contradiction.

Notice that the von Neumann algebra generated by $A$ has a nontrivial orthogonal projection \cite[14.1.3]{BhatElliottPeter:1999}. We will denote such a projection by $R_Y$.  

\begin{lemma}\label{LmRdependence}
If $g_1,g_2\in G$ are such that $g_1(Y)=g_2 (Y)$, then $\pi(g_1)R_Y\pi(g_1^{-1})=\pi(g_2)R_Y\pi(g_2^{-1})$.
\end{lemma}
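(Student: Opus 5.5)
Set $g = g_2^{-1}g_1$. Then $g(Y)=Y$, and the claim is equivalent to $\pi(g)R_Y\pi(g)^{-1}=R_Y$, i.e.\ $\pi(g)$ commutes with $R_Y$. Since $Y$ meets every $G$-orbit at most once and $g(Y)=Y$, each $y\in Y$ satisfies $g(y)\in Y\cap Gy=\{y\}$. Hence $g|_Y=\mathrm{id}$ and $Y\subset \fix(g)$. By hypothesis $\fix(g)$ is clopen, so it is a clopen neighbourhood $U$ of $Y$. By condition (d) there is $N$ with $\supp(h_n)\subset \fix(g)$ for all $n\ge N$. For such $n$, the supports of $h_n$ and $g$ are disjoint: $\supp(g)=X\setminus\fix(g)$, or its closure, which is the same set because $\fix(g)$ is clopen. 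Homeomorphisms with disjoint supports commute, since each preserves its own support set, so $gh_n=h_ng$ for all $n\ge N$.

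Applying $\pi$ gives $\pi(g)\pi(h_n)=\pi(h_n)\pi(g)$ for $n\ge N$. Multiplication by the fixed operator $\pi(g)$ on either side is continuous in the weak operator topology. Passing to the limit along the (sub)sequence with $\pi(h_n)\to A$ weakly, we get $\pi(g)A=A\pi(g)$. Taking adjoints (using $\pi(g)^*=\pi(g^{-1})$, and noting that $g^{-1}$ also fixes $Y$ pointwise) shows that $\pi(g)$ commutes with $A^*$ as well. Therefore $\pi(g)$ lies in the commutant of the von Neumann algebra $W^*(A)$ generated by $A$.

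Since $R_Y$ was chosen as a projection in $W^*(A)$, it follows that $\pi(g)R_Y=R_Y\pi(g)$. Hence $\pi(g_2^{-1}g_1)R_Y\pi(g_1^{-1}g_2)=R_Y$, and conjugating by $\pi(g_2)$ gives the lemma.

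The only step needing care is the commutation $gh_n=h_ng$. It rests on $g$ fixing $Y$ pointwise, which uses the at-most-once orbit hypothesis together with the clopenness of $\fix(g)$, so that (d) applies to the neighbourhood $\fix(g)$. The remaining steps are routine: weak-operator continuity of multiplication, and the fact that the commutant of $\{A,A^*\}$ is the commutant of $W^*(A)$.
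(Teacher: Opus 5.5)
Your proof is correct and follows the paper's argument. You set $g=g_2^{-1}g_1$, use the at-most-once orbit condition to get $g|_Y=\mathrm{id}$, use clopenness of $\fix(g)$ together with condition (d) to get $gh_n=h_ng$ for large $n$, and pass to the weak limit to obtain $\pi(g)A=A\pi(g)$. Your extra step, noting that the unitary $\pi(g)$ then also commutes with $A^*$ and so lies in $W^*(A)'$, is slightly more careful than the paper's remark that $R_Y$ is approximated by linear combinations of powers of $A$, which strictly speaking requires $A$ to be normal.
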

\begin{proof} 

Let $h = g_2^{-1} g_1$. We aim to show that $\pi(h) R_Y = R_Y \pi(h)$.  
Since the set $Y$ intersects each $G$-orbit at most once, we conclude that $h|_Y = \text{id}$. Therefore, $h$ acts as the identity on a clopen neighborhood of $Y$. Consequently, $h$ commutes with $h_n$ for all sufficiently large $n$. Thus,  
$$
\pi(h) A = \lim\limits_{n} \pi(h) \pi(h_n) = \lim\limits_{n} \pi(h_n) \pi(h) = A \pi(h),
$$ 
where the limits are taken in the weak operator topology. Since the projection $R_Y$ is (weakly) approximated by linear combinations of powers of $A$, it follows that $\pi(h)$ commutes with $R_Y$.  \end{proof}

Given $Z \in \mathcal{Y}$, define the orthogonal projection  
$$
R_Z = \pi(g) R_Y \pi(g^{-1}),
$$ 
where $g$ is any element of $G$ such that $g(Y) = Z$. By Lemma \ref{LmRdependence}, the operator $R_Z$ is well-defined.

\begin{lemma}\label{LmRgCommute} If the sets $Z_1, Z_2\in \mathcal Y$ are disjoint, then the orthogonal projections $R_{Z_1}$ and $R_{Z_2}$ commute.
\end{lemma}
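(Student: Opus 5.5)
Write $Z_1=g_1(Y)$ and $Z_2=g_2(Y)$ with $Z_1\cap Z_2=\emptyset$. The idea is to find a single conjugating element that moves $Y$ onto $Z_1$ and, at the same time, lies in the commutant of everything relevant near $Z_2$. We then use that $R_Y$ lies in the weak closure of the algebra generated by $A=\lim\pi(h_n)$, where each $h_n$ is supported in a small neighbourhood of $Y$.

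\textbf{Step 1: a convenient conjugating element.} First I would reduce to the case where one of the sets is $Y$. Conjugating by $\pi(g_2^{-1})$ sends $R_{Z_2}$ to $R_Y$ and $R_{Z_1}$ to $R_{g_2^{-1}(Z_1)}$; this follows from Lemma \ref{LmRdependence} and the definition of $R_Z$. So we may assume $Z_2=Y$, with $Z:=Z_1$ disjoint from $Y$. Since $Y$ and $Z$ are disjoint compact sets, choose disjoint clopen sets $U\supset Y$ and $V\supset Z$. Let $g\in G$ satisfy $g(Y)=Z$. Then $g^{-1}(V)$ is a clopen neighbourhood of $Y$, so we can pick a clopen $W\subset U\cap g^{-1}(V)$ containing $Y$, and $g(W)\subset V$ is a clopen neighbourhood of $Z$.

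\textbf{Step 2: commuting approximants.} By (d), for all large $n$ we have $\supp(h_n)\subset W$, so $\supp(gh_ng^{-1})\subset g(W)\subset V$. Since $V\cap W=\emptyset$, the elements $h_n$ and $gh_mg^{-1}$ have disjoint supports for all large $n,m$, and therefore commute. Hence $\pi(h_n)$ commutes with $\pi(g)\pi(h_m)\pi(g)^{-1}$. Taking weak limits first in $m$ gives that $\pi(h_n)$ commutes with $\pi(g)A\pi(g)^{-1}$; taking weak limits in $n$ then gives that $A$ commutes with $\pi(g)A\pi(g)^{-1}$. Each of these limits is taken with one factor fixed, and multiplication is separately weakly continuous, so the passage to the limit is legitimate.

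\textbf{Step 3: passing to the projections.} $R_Y$ lies in the von Neumann algebra $W^*(A)$, and $R_Z=\pi(g)R_Y\pi(g)^{-1}$ lies in $\pi(g)W^*(A)\pi(g)^{-1}=W^*(\pi(g)A\pi(g)^{-1})$. We know $A$ commutes with $B:=\pi(g)A\pi(g)^{-1}$. The main subtlety is that $A$ need not be normal, so commuting with $A$ alone does not give commuting with $A^*$. To handle this, also note that $h_n^{-1}$ and $gh_m^{-1}g^{-1}$ have disjoint supports, and $\pi(h_n)^*=\pi(h_n^{-1})$ converges weakly to $A^*$. Hence $A^*$ commutes with $B$ as well, and similarly $A$ and $A^*$ commute with $B^*$. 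The set $\{A,A^*\}$ therefore lies in the commutant of $\{B,B^*\}$, which is a von Neumann algebra. So $W^*(A)\subset \{B,B^*\}'$, which gives $W^*(A)\subset W^*(B)'$. In particular $R_Y$ commutes with $R_Z$. This adjoint bookkeeping is the only delicate point; the rest is direct.
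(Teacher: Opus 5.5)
Your proof is correct and follows the same route as the paper. In both, the conjugated approximants $h_n$ and $gh_mg^{-1}$ (after your reduction to $Z_2=Y$) have disjoint supports for large $n,m$, so they commute, and you pass to iterated weak limits. Your treatment of $A^*$ adds rigor to a point the paper glosses over: the paper says $R_Y$ is approximated by polynomials in $A$, which tacitly assumes $A$ is normal. Your inclusion $W^*(A)\subset\{B,B^*\}'$ handles a non-normal $A$ cleanly.
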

\begin{proof} Let $Z_1,Z_2\in \mathcal Y$. Fix $g_1,g_2\in G$ such that $g_i( Y)= Z_i$, $i=1,2$. Since each projection $R_{Z_i}$ is approximated by linear combinations of powers of $\pi(g_i)A\pi(g_i^{-1})$, it suffices to show that $\pi(g_1)A\pi(g_1^{-1})$ and $\pi(g_2)A\pi(g_2^{-1})$ commute.

  Using the fact that $Z_1$ and $Z_2$ are disjoint closed sets,  we can find $N>0$ such that  for every  $n,m\geq N$, we have that $$\supp(g_1h_ng_1^{-1})\cap \supp(g_2h_mg_2^{-1})=\emptyset.$$ 
  
  It implies that the group elements $g_1h_ng_1^{-1}$ and $g_2h_mg_2^{-1}$ commute. Hence, $\pi(g_1h_ng_1^{-1})$ and $\pi(g_2h_mg_2^{-1})$ also commute. Thus, for  $n\geq N$, the operators $\pi(g_1h_ng_1^{-1})$ and $\pi(g_2)A\pi(g_2^{-1})=\lim_{m\to\infty}\pi(g_2h_mg_2^{-1})$ commute. Taking the limit as $n\to\infty$, we obtain that
$\pi(g_1)A\pi(g_1^{-1})$ and $\pi(g_2)A\pi(g_2^{-1})$ commute.
\end{proof}
\noindent


We will call a subset $L\subset \mathcal Y$  {\it disjoint} if it consists of pairwise disjoint sets. 
Given two (finite or infinite) subsets $S\subset L\subset \mathcal Y$ with  $L$ being a nonempty {\it disjoint} set, introduce the operator $T_{L,S}$ by
\begin{equation}\label{EqTLS} T_{L,S}= \prod\limits_{W\in S}R_W\cdot \prod\limits_{Z\in L \setminus S}(\id - R_Z) = \prod_{\substack{W\in S \\ Z \in L\setminus S}}\left(R_W-R_W R_Z \right).
\end{equation}

In the case where one or both sets $L$ and $S$ are infinite, the definition of $T_{L,S}$ should be understood as the strong limit of finite products of projections. This limit exists and is well-defined since the operators $R_Z$, for $Z \in L$, are pairwise commuting orthogonal projections, see Lemma \ref{LemmaLimitProjections}. Since the projections in (\ref{EqTLS}) commute, the operator $T_{L,S}$ is also an orthogonal projection.  

Fix $g\in G$.  Since $\pi(g)R_Y \pi(g^{-1}) = R_{g(Y)}$, we obtain that 
 \begin{equation}\label{eqnTgSL}\pi(g)T_{L,S}\pi(g^{-1}) = T_{g(L),g(S)}, \end{equation}
 where $g(L) =\{g(Z) | Z\in L\}$. The set $g(S)$ is defined similarly. 

 We will say that a family $L\subset \mathcal Y$ {\it avoids a (non-empty)  clopen set $U$} if $Z\cap U = \emptyset$ for every $Z\in L$. To avoid trivialities, the set $U$ is assumed to be a proper clopen subset of $X$. In the following series of lemmas,  we will show that  $T_{L,L} =  T_{L,\emptyset} =0$ for every infinite disjoint family $L\subset \mathcal Y$ that avoids some non-empty clopen set.  Note that $${T_{L,L}}=\prod\limits_{Z\in L}R_Z \mbox{ and }{T_{L,\emptyset}}=\prod\limits_{Z\in L}(\id-R_Z).$$ 
 
 To simplify the notation, we will write $T_L$ for $T_{L,L}$, where $L\subset \mathcal Y$ is a disjoint family. 

\begin{lemma}\label{LemmaTL1TL2}  Let $L_1\subset \mathcal Y$ be an infinite disjoint set and $L_2\subset L_1$ be an infinite set. Then $T_{L_1} = T_{L_2}$.
\end{lemma}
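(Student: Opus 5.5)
The strategy is to combine the $n$-transitivity hypothesis (b), the conjugation formula (\ref{eqnTgSL}), and the trace: first show that $\tr(T_{L'})$ takes the same value for all disjoint families $L'$ of a fixed cardinality, then pass to the limit in the infinite case. Since $L_2\subset L_1$ and all $R_Z$, $Z\in L_1$, are commuting projections, we have $T_{L_1}=T_{L_2}T_{L_1\setminus L_2}\le T_{L_2}$. It therefore suffices to prove $\tr(T_{L_2}-T_{L_1})=0$: the trace on the factor $\mathcal M_\pi$ is faithful, and $T_{L_2}-T_{L_1}$ is a projection.

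The first step is the finite case. Let $F\subset F'\subset L_1$ be finite with $|F|=k$. By hypothesis (b) applied to the disjoint family $L_1$, for any two $k$-element subsets $F_1,F_2\subset L_1$ there is $g\in G$ with $g(F_1)=F_2$. By (\ref{eqnTgSL}), $\pi(g)T_{F_1}\pi(g^{-1})=T_{F_2}$, so $\tr(T_{F_1})=\tr(T_{F_2})$. Write $a_k$ for this common value, depending only on $k$ (and on $L_1$). The sequence $a_k$ is nonincreasing in $k$ and bounded below by $0$, so it converges to some limit $a_\infty$.

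The second step is the comparison of the infinite products. Enumerate $L_2=\{Z_1,Z_2,\dots\}$ and $L_1=\{W_1,W_2,\dots\}$. The finite products $T_{\{Z_1,\dots,Z_k\}}$ decrease strongly to $T_{L_2}$, and likewise for $L_1$ (Lemma \ref{LemmaLimitProjections}(a)). Normality of $\tr$ then gives $\tr(T_{L_2})=\lim_k a_k=a_\infty=\tr(T_{L_1})$. Consequently $\tr(T_{L_2}-T_{L_1})=0$, and faithfulness yields $T_{L_1}=T_{L_2}$.

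The main point to check is that hypothesis (b) applies with the chosen subsets. This requires the transitivity statement to be applied within the single disjoint family $L_1$, which contains both $F_1$ and $F_2$; this is exactly the form of (b). One must also confirm that the ordering inside the tuple is irrelevant, which holds because the product $T_F$ is symmetric in $F$. Normality of the trace on decreasing sequences of projections is standard, since $\tr$ is a vector state $(\cdot\,\xi,\xi)$ in the GNS picture (or via Proposition \ref{PropGNSQuasiEquivalence}). With these checks in place, there is no real obstacle.
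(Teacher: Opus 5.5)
Your proof is correct and is essentially the paper's argument: you apply hypothesis (b) inside the single disjoint family $L_1$ together with (\ref{eqnTgSL}) to equate the traces of finite subproducts of equal cardinality. You then pass to the limit using Lemma \ref{LemmaLimitProjections} and normality of $\tr$, and conclude from $T_{L_1}\le T_{L_2}$ and faithfulness of $\tr$. Your explicit use of the nonincreasing sequence $a_k$ and of faithfulness only spells out steps the paper leaves implicit.
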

\begin{proof}  
For each $i=1,2$, fix an exhausting increasing sequence of subsets $L_{i,n}\subset L_i$ with $\textrm{card} (L_{i,n}) = n$. By  Assumption (b) of  Theorem \ref{ThAutomaticContinuity}, for every $n\geq 1$, there exists an element $g\in G$   such that $g(L_{1,n})=L_{2,n}$. Using (\ref{eqnTgSL}), we obtain $$\tr(T_{L_{2,n}})=\tr(\pi(g)T_{L_{1,n}}\pi(g^{-1}))=\tr(T_{L_{1,n}}).$$ Taking the limit as $n\to\infty$ and applying  Lemma \ref{LemmaLimitProjections},  we conclude that  $\tr(T_{L_1}) = \tr(T_{L_2})$.  Combing it with the fact that $T_{L_1}<T_{L_2}$, we obtain that $T_{L_1} = T_{L_2}$. \end{proof}

Using Assumption (a) of Theorem \ref{ThAutomaticContinuity}, we can inductively construct an {\it everywhere dense disjoint family of sets $Q\subset \mathcal Y$}. 

\begin{lemma}\label{LemmaTLEqual0} Let $L\subset \mathcal Y$ be an infinite disjoint set that avoids a nonempty clopen set. Then $T_{L} = T_{Q}$. 
\end{lemma}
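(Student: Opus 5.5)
The plan is to reduce to Lemma \ref{LemmaTL1TL2}: exhibit an infinite disjoint family that is a common subset of both $L$ and some everywhere dense disjoint family, or link $L$ to $Q$ through a chain of such families. The key observation is that $T_{L}$ depends only on the "tail" of $L$, since Lemma \ref{LemmaTL1TL2} shows that passing to an infinite subfamily does not change $T_L$. We also need that $T_Q$ does not depend on the choice of everywhere dense disjoint family $Q$; this will come out of the same argument.

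Let $U$ be a nonempty clopen set avoided by $L$. Using assumption (a), I will construct inductively a disjoint family $Q_U\subset\mathcal Y$ all of whose members lie inside $U$ and which is dense in $U$. At each step, pick a clopen subset of $U$ disjoint from the finitely many already chosen members (each is closed, and $U$ minus finitely many closed sets with empty interior still contains a clopen piece), then use (a) to place some $q(Y)$ inside it.

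Set $L'=L\cup Q_U$. It is disjoint because members of $L$ avoid $U$ while members of $Q_U$ lie in $U$. It is also infinite and contains both $L$ and $Q_U$ as infinite subsets. Lemma \ref{LemmaTL1TL2} then gives $T_L=T_{L'}=T_{Q_U}$.

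Next, let $Q$ be the fixed everywhere dense disjoint family. I want to compare $Q_U$ with $Q$ via the subfamily $Q\cap\{Z: Z\subset U\}$, which is infinite by density of $Q$. The plan is to pass through a third family $Q''$ inside $U$ that is disjoint from, and interleaved with, both. This step needs care, since one cannot simply take a union of $Q_U$ and $Q$, which may overlap.

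A cleaner route is available. Choose a nonempty clopen $V\subset U$ with $U\setminus V$ still nonempty. Build $Q_U$ only inside $V$ (using the same density construction), so that $Q_U$ is infinite and lives in $V$. Then take $Q_1=\{Z\in Q: Z\subset U\setminus V\}$, which is infinite because $Q$ is everywhere dense. Now $Q_U\cup Q_1$ is disjoint, so $T_{Q_U}=T_{Q_1}$. Also $Q_1\subset Q$ is infinite, so $T_{Q_1}=T_Q$ by Lemma \ref{LemmaTL1TL2}. Chaining these equalities gives $T_L=T_{Q_U}=T_{Q_1}=T_Q$.

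The main obstacle is the inductive construction: ensuring that the sets $q(Y)$ chosen via (a) can be made pairwise disjoint and infinitely many inside a given clopen set. This requires each $q(Y)$ to have empty interior, or else a shrinking argument. I would handle it by choosing at each step a nested sequence of disjoint nonempty clopen sets $V_1, V_2, \ldots\subset V$ (possible in the Cantor set) and placing one member of $\mathcal Y$ inside each $V_i$ by (a). This gives disjointness automatically, without any density requirement on $Q_U$ itself.
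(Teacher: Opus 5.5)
Your argument is correct and is essentially the paper's: in both, Lemma~\ref{LemmaTL1TL2} is applied to the disjoint union of $L$ with an infinite family of members of $\mathcal Y$ lying inside the avoided clopen set $U$. The auxiliary family $Q_U$ and the splitting $V\subset U$ are unnecessary, though: the paper takes $Q'=\{Z\in Q : Z\subset U\}$, which is infinite because $Q$ is everywhere dense and is automatically disjoint from the members of $L$, so $T_L=T_{L\cup Q'}=T_{Q'}=T_Q$ in one step.
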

\begin{proof}  Let $U$ be a nonempty clopen set disjoint from the family $L$. Define $Q'$ as the collection of sets $Z\in Q$ that are contained in $U$.  Note that $Q'$ is an infinite set. Thus, by applying Lemma \ref{LemmaTL1TL2}, we obtain 
$$T_{L} = T_{L\cup Q'} = T_{Q'} = T_{Q}.$$
\end{proof}

\begin{lemma}\label{LemmaTLLTLEmpty=0} Let $L\subset \mathcal Y$ be a infinite disjoint set that avoids a nonempty clopen set. Then $T_{L,L} = T_{L,\emptyset} = 0$. 
\end{lemma}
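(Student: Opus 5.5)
The plan is to show that $T_Q$ is a central projection in $\mathcal M_\pi$ that lies below every $R_Z$. Since $\pi$ is factorial and $R_Y$ is a nontrivial projection, this forces $T_Q = 0$. Lemma~\ref{LemmaTLEqual0} then gives $T_{L,L}=T_L = T_Q = 0$. The case $T_{L,\emptyset}$ is handled by the same argument with each $R_Z$ replaced by $\id - R_Z$.

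\emph{Step 1: $T_Q \le R_Z$ for every $Z\in\mathcal Y$.} Fix $Z\in\mathcal Y$. Using Assumption (a), I choose a clopen set $W$ disjoint from the closed set $Z$, and a nonempty proper clopen $U\subset W$. Inside $W\setminus U$ I inductively pick infinitely many pairwise disjoint members of $\mathcal Y$; together with $Z$ they form an infinite disjoint family $L$ that avoids $U$. By Lemma~\ref{LemmaTL1TL2}, $T_L = T_{L\setminus\{Z\}}$. Since the projections commute, $T_L = R_Z T_{L\setminus\{Z\}}$, so $T_{L\setminus\{Z\}}(\id - R_Z) = 0$, i.e.\ $T_L\le R_Z$. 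Lemma~\ref{LemmaTLEqual0} gives $T_L = T_Q$, hence $T_Q\le R_Z$. In particular $T_Q\le R_Y$.

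\emph{Step 2: $T_Q$ is central.} Each $R_Z$ belongs to $\mathcal M_\pi$, because $A$ is a weak limit of the $\pi(h_n)$ and $R_Y$ lies in the von Neumann algebra generated by $A$. Therefore $T_Q$, a strong limit of finite products of the $R_Z$, also lies in $\mathcal M_\pi$. For $g\in G$, I pick a nonempty proper clopen $V$ such that the subfamily $L=\{Z\in Q: Z\subset V\}$ is infinite and avoids $X\setminus V$; by Lemma~\ref{LemmaTLEqual0}, $T_Q=T_L$. By \eqref{eqnTgSL},
$$\pi(g)T_Q\pi(g^{-1}) = T_{g(L)}.$$
Now $g(L)$ is an infinite disjoint subfamily of $\mathcal Y$ that avoids the nonempty clopen set $g(X\setminus V)$, so Lemma~\ref{LemmaTLEqual0} again gives $T_{g(L)} = T_Q$. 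Thus $T_Q$ commutes with $\pi(G)$, so $T_Q\in\mathcal M_\pi\cap\mathcal M_\pi' = \mathbb C\id$. Being a projection, $T_Q\in\{0,\id\}$. If $T_Q=\id$, then Step 1 gives $R_Y=\id$, contradicting the choice of $R_Y$ as a nontrivial projection. Hence $T_Q=0$.

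\emph{Step 3: the case $T_{L,\emptyset}$.} I rerun Lemmas~\ref{LemmaTL1TL2} and \ref{LemmaTLEqual0} with $\prod(\id-R_Z)$ in place of $\prod R_Z$. The proofs are verbatim: $n$-transitivity and trace invariance give equal traces, while the infinite products are decreasing and converge by Lemma~\ref{LemmaLimitProjections}. Steps 1 and 2 then show that $T_{Q,\emptyset}$ is a central projection satisfying $T_{Q,\emptyset}\le \id - R_Y$. So it equals $0$, since otherwise it would be $\id$ and force $R_Y=0$.

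The main obstacle is the bookkeeping in Steps 1--2. For Step 1 one must produce, for an arbitrary $Z$, an infinite disjoint family containing $Z$ that still avoids a clopen set. For Step 2 one must check that translates of the relevant families remain within the hypotheses of Lemma~\ref{LemmaTLEqual0}. Both rely on Assumption (a) and on $g$ being a homeomorphism.
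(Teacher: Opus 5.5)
Your proposal is correct and follows essentially the same route as the paper: reduce to $T_Q$ (resp.\ $T_{Q,\emptyset}$) via Lemma~\ref{LemmaTLEqual0}, show it lies in $\mathcal M_\pi$ and commutes with $\pi(G)$ so that it is $0$ or $\id$, and rule out $\id$ because that would force $R_Y=\id$ (resp.\ $R_Y=0$). The differences are minor. You get $\pi(g)$-invariance by applying Lemma~\ref{LemmaTLEqual0} to $L$ and $g(L)$, whereas the paper picks $U$ with $g(U)\cap U=\emptyset$ and uses Lemma~\ref{LemmaTL1TL2}. Your Step~1 is also more than needed: $T_Q\le R_Z$ holds trivially for $Z\in Q$, and $R_Y$ is a unitary conjugate of such an $R_Z$.
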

\begin{proof}  In view of Lemma \ref{LemmaTLEqual0}, to show that $T_{L,L} = 0$, it suffices to prove that $T_Q = 0$. Fix an arbitrary group element $g\in G$, $g
\neq e$. Find a clopen set $U$ such that $g(U) \cap U = \emptyset$.   Define $Q'$ as the collection of sets $Z\in Q$ that are contained in $U$.  Note that $Q'$ is an infinite set.  Set $Q'' = g(Q')$. Note that $Q''$ is a disjoint subfamily of $\mathcal Y$ and the families $Q'$ and $Q''$ consist of pairwise disjoint sets. Using Equation (\ref{eqnTgSL}) and Lemma \ref{LemmaTL1TL2}, we obtain that 
$$\pi(g) T_Q \pi(g^{-1}) = \pi(g) T_{Q'} \pi(g^{-1}) = T_{Q''} = T_{Q'\cup Q''}  = T_{Q'} = T_Q.$$ 
It follows that the projection $T_Q$ belongs to the center of the factor $
\mathcal M_\pi$. Therefore,    $T_Q$ is a scalar operator, which means that it is either 0 or $\id$. If $T_Q = \id$, then $R_Y = \id$, which is a contradiction. Thus, $T_Q =0$.

The proof of the fact that  that $T_{L,\emptyset} = T_{Q,\emptyset}$ follows the similar argument as outlined in Lemma \ref{LemmaTL1TL2} and Lemma \ref{LemmaTLEqual0}. Then, using the argument above, one can  show that $T_{Q,\emptyset}$ belongs to the center of $\mathcal M_\pi$. Thus,  $T_{Q,\emptyset}$ is either $0$ or $\id$. If $T_{Q,\emptyset} = \id$, then we obtain that $R_Y = 0$, which is a contradiction. It follows that $T_{Q,\emptyset} = 0$, which completes the proof. 
\end{proof}

%
%


Now we are ready to complete the proof of Theorem \ref{ThAutomaticContinuity}.  Fix a group element $g\in G$, $g\neq e$.  Find a nonempty clopen set $U$ such that ${g(U)\cap U = \emptyset}$.   We can assume that the set $U$ is chosen to be small enough so that the complement of $U \cup g(U)$ is nonempty.

Define $L^{(1)}$ as the collection of sets $Z\in Q$ such that $Z\subset U$, and let $L^{(2)} = g(L^{(1)})$. Note that both families, $L^{(1)}$ and $L^{(2)}$, are infinite, disjoint from each other, and consist of pairwise disjoint sets. Moreover, the collection $L = L^{(1)}\cup L^{(2)}$ avoids the complement of $U \cup g(U)$. By Lemma \ref{LemmaTLLTLEmpty=0}, we have that 

\begin{equation}\label{eqnAuxTLL=0} T_{L,L} = 0 \mbox{ and } T_{L,\emptyset} = 0. 
\end{equation}

Fix an increasing sequence $\{L^{(1)}_n\}$ of sets, each of cardinality $n$, such that $\bigcup_{n\geq 1}L^{(1)}_n = L^{(1)} $.  For every $n\geq 1$, define $L^{(2)}_n= g(L^{(1)}_n)$ and set $L_n = L^{(1)}_n\cup L^{(2)}_n$.  

Observe that in view of Equation (\ref{EqTLS}) for any finite disjoint family $L'\subset \mathcal Y$, we have that 
\begin{equation}\label{EqSumTLS}\id = \prod_{Z\in L'}(R_Z + (\id - R_Z)) = \sum_{S\subset L'}\left(\prod\limits_{Z\in S}R_Z\cdot\prod\limits_{Z\in L' \setminus  S}(\id - R_Z) \right) = \sum\limits_{S\subset L'}T_{L',S}.\end{equation}

Fix $n\in\mathbb N$ and a subset $S \subset L_n$. We claim that if the set $S$ contains exactly one element from the pair $\{Z,g(Z)\}$ for some $Z \in L_n^{(1)}$,  then we have $T_{L_n,S}T_{g(L_n),g(S)} = 0$.

Indeed, assume that $Z \in L_n^{(1)}$ is an element of $S$, but  $g(Z)$ is not. By definition, $T_{L_n,S}$ is a product of pairwise commuting orthogonal projections. Since $g(Z) \in L_n \setminus S$, this product includes the projection $\id - R_{g(Z)}$. On the other hand, since $g(Z) \in g(S)$, the product of projections that define $T_{g(L_n),g(S)}$ includes the projection $R_{g(Z)}$.  

While the projections $T_{L_n,S}$ and $T_{g(L_n),g(S)}$ do not necessarily commute, the term $\id - R_{g(Z)}$ in $T_{L_n,S}$ can be repositioned to appear as the final factor, while the term $R_{g(Z)}$ in $T_{g(L_n),g(S)}$ can be placed as the leading factor in the product. This rearrangement ensures that
$
T_{L_n,S}T_{g(L_n),g(S)} = 0.
$

A similar argument shows that $T_{L_n,S}T_{g(L_n),g(S)} = 0$ whenever $S$ contains $g(Z)$, for some  $Z \in L_n^{(1)}$, but does not contain $Z$. 

Let $\mathcal P_n$ be the set of subsets $S \subset L_n$ such that, for every $Z \in L_n^{(1)}$, the set $S$ either contains both $Z$ and $g(Z)$ or contains neither of them.  Note that if $S\subset L_n$ and $S\notin \mathcal P_n$, then
\begin{equation}\label{eqnAutContAuxTLN=0} T_{L_n,S}T_{g(L_n),g(S)} = 0.
\end{equation}

We note that each set $S\in \mathcal P_n$ has an even number of elements and that there are ${n \choose k}$ subsets of cardinality $2k$ in $\mathcal P_n$. 

From \eqref{eqnTgSL}, \eqref{EqSumTLS}, and \eqref{eqnAutContAuxTLN=0}, the centrality of $\tr$, and the fact that $T_{L_n,S}$ is an orthogonal projection, we obtain that 
\begin{equation}\label{EqChiSumTLS}\begin{split}\chi(g) &=\sum\limits_{S\subset L_n}\tr(\pi(g)T_{L_n,S})  =\sum\limits_{S\subset L_n}\tr(T_{L_n,S}\pi(g)T_{L_n,S}) \\ 
& =\sum\limits_{S\subset L_n}\tr(T_{L_n,S}T_{g(L_n),g(S)}\pi(g)) 
\\ 
& = \sum\limits_{S\in \mathcal P_n }\tr(\pi(g)T_{L_n,S}).
\end{split}\end{equation} 

Note that 
\begin{equation}\label{eqnTraceTLSIneq}|\tr(\pi(g)T_{L_n,S})|\leqslant \tr(T_{L_n,S})\mbox{ for any }S\subset L_n.\end{equation}

 Assumption (b) of Theorem \ref{ThAutomaticContinuity} implies that given any two sets $S_1,S_2\subset L_n$ of the same cardinality, there is a group element $h\in G$ such that $h(L_n) = L_n$ and $h(S_1)=S_2$. It follows from (\ref{eqnTgSL})  that $\pi(h) T_{L_n,S_1}\pi(h^{-1}) = T_{L_n,S_2}$ and, thus, $\tr(T_{L_n,S_1}) = \tr(T_{L_n,S_2})$.   Combing this observation with  \eqref{EqChiSumTLS} and  \eqref{eqnTraceTLSIneq}, we obtain that
\begin{equation}\label{eqnContAuxBoundChi}|\chi(g)| \leq  \sum\limits_{S\in \mathcal P_n }|\tr(\pi(g)T_{L_n,S})| \leq \sum\limits_{S\in \mathcal P_n}\tr(T_{L_n,S})  = \sum_{k=0}^n {n\choose k} \tr(T_{L_n,S_{2k}}),\end{equation}
where $S_{i}$ stands for an arbitrary subset of $L_n$  of cardinality $i$. 

Using again the fact that  $\tr(T_{L_n,S'}) = \tr(T_{L_n,S''})$ for any subsets $S',S''\subset L_n$ of the same cardinality and applying (\ref{EqSumTLS}) to $L=L_n$, we obtain that
\begin{equation}\label{eqnContAuxIneqTr} 1 = \sum_{S\subset L_n}\tr(T_{L_n,S}) = \sum_{j=0}^{2n} {2n \choose j}\tr(T_{L_n,S_j}) .\end{equation} 

Using the factorial identities  $j! = j!! (j-1)!!$ and $(2j)!! = 2^j j!$,  for every $1\leqslant k\leqslant n-1$ , we obtain the inequality 
\begin{equation}\label{eqnContAuxBinomIneq}{2n \choose 2k}={n \choose k}\frac{(2n-1)!!}{(2k-1)!!(2n-2k-1)!!}\geqslant (2n-1){n \choose k}.
\end{equation}

%

Thus, using (\ref{eqnContAuxBoundChi}), (\ref{eqnContAuxIneqTr}),  and (\ref{eqnContAuxBinomIneq}),  we obtain that
\begin{equation*}
\begin{split}
 |\chi(g)| \leq & \tr(T_{L_n,\emptyset}) + \sum_{k=1}^{n-1} {n\choose k} \tr(T_{L_n,S_{2k}}) +  \tr(T_{L_n,L_n})  \\ 
 \leq & \tr(T_{L_n,\emptyset}) + \frac{1}{2n-1}\sum_{k=1}^{n-1} {2n \choose 2k}\tr(T_{L_n,S_{2k}}) +  \tr(T_{L_n,L_n}) \\
 \leq & \tr(T_{L_n,\emptyset}) + \frac{1}{2n-1}\sum_{j=0}^{2n} {2n \choose j}\tr(T_{L_n,S_{j}}) +  \tr(T_{L_n,L_n}) \\
  = & \tr(T_{L_n,\emptyset}) + \frac{1}{2n-1}+  \tr(T_{L_n,L_n}).
 \end{split}
\end{equation*}
Note that in view of \eqref{eqnAuxTLL=0} and Lemma \ref{LemmaLimitProjections}, the right-hand side of this inequality converges to zero when $n\to \infty$. 

Thus, we have shown that $\chi(g)=0$ for every $g\in G\setminus \{e\}$. It follows that $\chi$ is the regular character and $\pi$ is the regular representation. This contradiction completes the proof.
$\hfill\square$


 %

\section{Approximating Properties of Full Groups of Bratteli Diagrams}\label{SectionFullGroupsBratteliDiagrams}

Let $(X,G)$ be a Cantor dynamical system.  Denote by $\mathcal M(X,G)$ the set of $G$-invariant probability measures on $X$.  Denote by $\mathcal E(X,G)$ the set of $G$-invariant probability ergodic measures on $X$.

We will need the following result, see,  for example, in  \cite[Proposition 2.3]{BezuglyiMedynets:2008}.

\begin{proposition}\label{PropositionContinuityMeasures} Let $(X,G)$ be a Cantor minimal system. (1) For every $\varepsilon>0$, there exists $\delta>0$ such that for any clopen set $U$ with $\textrm{diam}(U)<\delta$, we have that $\sup_{\mu\in \mathcal M(X,G)}\mu(U)<\varepsilon$. (2) For every non-empty clopen set $U$, we have that $\inf_{\mu\in \mathcal M(X,G)}\mu(U)>0$.
\end{proposition}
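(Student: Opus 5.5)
Both parts will be proved by compactness of the simplex $\mathcal M(X,G)$ in the weak$^*$ topology, combined with the fact that for a clopen set $U$ the evaluation map $\mu\mapsto\mu(U)$ is weak$^*$ continuous, since $1_U$ is continuous.

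Part (2) is the easier one, and I will do it first. Fix a nonempty clopen $U$. By minimality, the open sets $\{g^{-1}(U) : g\in G\}$ cover $X$: the orbit of every point meets $U$. By compactness, finitely many of them suffice, say $X=\bigcup_{i=1}^k g_i^{-1}(U)$. Then every invariant $\mu$ satisfies
\[
1\le \sum_{i=1}^k \mu(g_i^{-1}U)=k\,\mu(U),
\]
so $\inf_{\mu}\mu(U)\ge 1/k>0$. This argument needs no compactness of the measure space.

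Part (1) I will prove by contradiction. Suppose there are $\varepsilon>0$, clopen sets $U_n$ with $\mathrm{diam}(U_n)\to 0$, and invariant measures $\mu_n$ with $\mu_n(U_n)\ge\varepsilon$. Pass to subsequences so that $\mu_n\to\mu$ weak$^*$ with $\mu\in\mathcal M(X,G)$, and, picking points $x_n\in U_n$, so that $x_n\to x$. For any clopen neighbourhood $V$ of $x$, we eventually have $U_n\subset V$. Hence $\mu_n(V)\ge\varepsilon$ for large $n$, and since $\mu_n(V)\to\mu(V)$, we get $\mu(V)\ge\varepsilon$. Intersecting over a neighbourhood basis of clopen sets shrinking to $x$ gives $\mu(\{x\})\ge\varepsilon$.

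The contradiction then comes from atomlessness. Since $X$ is a Cantor set and the action is minimal, the orbit of $x$ is infinite: a finite orbit would be a closed invariant proper subset, impossible since $X$ is infinite. The points $gx$ are distinct and each carries mass $\mu(\{gx\})=\mu(\{x\})\ge\varepsilon$, which is impossible for a probability measure.

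The only step requiring any care is the passage from clopen neighbourhoods to the atom. Here the limit step is legitimate precisely because each $V$ is clopen, so $1_V$ is continuous and weak$^*$ convergence applies directly; I do not expect a genuine obstacle.
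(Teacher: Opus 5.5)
Your proof is correct. For (2), the finite cover of $X$ by translates $g_i^{-1}(U)$ gives $\mu(U)\ge 1/k$. For (1), weak$^*$ compactness of $\mathcal M(X,G)$, together with continuity of $\mu\mapsto\mu(V)$ for clopen $V$, produces a limit measure with an atom of mass at least $\varepsilon$. That is impossible because orbits in a minimal Cantor system are infinite.

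One cosmetic fix: from $\sup_\mu\mu(U_n)\ge\varepsilon$ you should either pick $\mu_n$ with $\mu_n(U_n)\ge\varepsilon/2$, or note that the supremum is attained by compactness.

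The paper gives no proof of this statement and simply cites Bezuglyi--Medynets (Proposition~2.3 there). Your argument is the standard proof of this fact.
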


\begin{definition} We say that a dynamical system $(X,G)$ has {\it comparison} if given any clopen sets $A$ and $B$, the condition $\mu(A)<\mu(B)$, for all $\mu\in \mathcal M(X,G)$, implies that there is a clopen partition  $\{A_1,\ldots,A_n\}$ of $A$ and a collection $\{g_1,\ldots,g_n\}$ of elements of $G$ such that the sets $\{g_i(A_i)\}_{i=1}^n$ are pairwise disjoint and contained in $B$.
\end{definition}

The following result follows from  \cite[Lemma 6.7]{Matui:2012} and  \cite[Lemma 5.5.7]{Nekrashevych:Book2022}. For the full groups of simple Bratteli diagrams and the full groups of minimal $\mathbb Z$-systems, the result almost immediately follows from Lemma 2.5 in \cite{GlasnerWeiss-Equivalence-95}. 

 \begin{proposition}\label{PropositionComparisonProperty} If a dynamical system $(X,G)$ is minimal and almost finite, then for any clopen sets $A$ and $B$  with $\mu(A)<\mu(B)$ for every $\mu\in \mathcal M(X,G)$, there is an involution $g\in \mathcal A(G)$ such that $g(A)\subset B$ and $\supp(g)\subset A\cup B$.  In particular, the dynamical system $(X,G)$  has comparison. 
\end{proposition}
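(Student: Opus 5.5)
The plan is to derive the involution from the comparison mechanism supplied by almost finiteness, and then to correct parity so that the involution lies in $\mathcal A(G)$. I would first reduce to the case $A\cap B=\emptyset$. Since $\mu(A)<\mu(B)$ for all $\mu$, we also have $\mu(A\setminus B)<\mu(B\setminus A)$ for all $\mu\in\mathcal M(X,G)$. An involution that swaps $A\setminus B$ into $B\setminus A$ and is supported in $(A\setminus B)\cup(B\setminus A)$ will send $A$ into $B$ (points of $A\cap B$ are fixed) and will have support in $A\cup B$. So from now on $A$ and $B$ are disjoint.

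Next I would produce the comparison data. Because $\mathcal M(X,G)$ is weak* compact and $\mu\mapsto\mu(B)-\mu(A)$ is continuous, there is $\varepsilon>0$ with $\mu(B)-\mu(A)>\varepsilon$ for all $\mu$. I would apply almost finiteness with a finite set $K\subset G$ large enough that $A$ and $B$ are, up to small boundary error, unions of levels, and with small $\delta$. This yields a clopen castle $\{(S_i,C_i)\}$ whose levels partition $X$. Refining each base $C_i$ by a finite clopen partition, we may assume every level $s(C_i)$ is either contained in or disjoint from $A$, and likewise for $B$.

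The key step is to show that in each tower the number of levels inside $A$ is at most the number inside $B$. This should follow from $(K,\delta)$-invariance: averaging along the shape gives an almost-invariant measure, and every weak* limit of such measures lies in $\mathcal M(X,G)$. Hence, for a fine enough castle, the level counts of each tower reflect $\mu(B)-\mu(A)>\varepsilon$ uniformly. I expect this to be the main obstacle. Making the uniform estimate work requires a compactness argument: if it failed for a sequence of castles, the normalized counting measures on bad towers would converge to an invariant measure violating the strict inequality. The cited arguments, \cite[Lemma 6.7]{Matui:2012} and \cite[Lemma 5.5.7]{Nekrashevych:Book2022}, carry out exactly this step, and I would follow them. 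I would also arrange, by choosing the castle accordingly, that each tower has at least one level in $B$ that is not used in the matching.

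Finally, in each tower I would injectively pair the $A$-levels with $B$-levels and let $g$ swap each paired level $s(C_i)$ with $s'(C_i)$ via $s's^{-1}$. The resulting $g$ is an involution in $\mathcal F(G)$ with $g(A)\subset B$ and $\supp(g)\subset A\cup B$. To place $g$ in $\mathcal A(G)$, observe that it is a product of commuting transpositions of disjoint clopen sets. I would split each base $C_i$ into two clopen halves, which is possible since $X$ has no isolated points, and use two free $B$-levels. Then each transposition becomes a product of double transpositions, and each double transposition is a product of two $3$-cycles supported in the same union. Hence $g\in\mathcal A(G)$. Comparison then follows immediately, taking the partition $\{A\cap s(C_i)\}$ and the elements given by the restrictions of $g$.
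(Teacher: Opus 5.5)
Your overall route is the standard argument behind the results the paper cites; the paper gives no proof of its own, only \cite{Matui:2012}, \cite{Nekrashevych:Book2022} and \cite{GlasnerWeiss-Equivalence-95}. Several steps are sound:
\begin{itemize}
\item the reduction to disjoint $A,B$;
\item the uniform estimate that every tower of a sufficiently invariant castle has more than $(\varepsilon/2)|S_i|$ more $B$-levels than $A$-levels, obtained from weak* limits of the empirical measures $|S|^{-1}\sum_{s\in S}\delta_{sx}$;
\item the level-matching involution in $\mathcal F(G)$.
\end{itemize}

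The step that fails is the parity correction. Splitting each base $C_i$ into two clopen halves only rewrites the \emph{same} homeomorphism $g$, so it cannot change whether $g\in\mathcal A(G)$. Your claim that $(s(C_i')\,\,s'(C_i'))(s(C_i'')\,\,s'(C_i''))$ is a product of two $3$-cycles would need an element of $\mathcal F(G)$ carrying $C_i'$ onto $C_i''$. Arbitrary halves do not provide one, and the lack of isolated points is irrelevant here.

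The matched involution really can lie outside $\mathcal A(G)$. Take the $3$-odometer diagram, so $G=\bigcup_n\mathrm{Sym}(3^n)$ with multiplicity-$3$ diagonal embeddings. The sign is then a well-defined homomorphism on $G$, and $\mathcal A(G)=G'$ is its kernel. With $A$ one level-$1$ cylinder and $B$ the other two, matching in the level-$n$ tower produces $3^{n-1}$ disjoint transpositions, which is an odd permutation.

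The repair uses what you already set up, but you must \emph{modify} $g$, not rewrite it:
\begin{itemize}
\item In a tower $(S_i,C_i)$, send a permutation $\sigma$ of $S_i$ to the homeomorphism equal to $\sigma(s)s^{-1}$ on $s(C_i)$ and trivial elsewhere. This embeds $\mathrm{Sym}(S_i)$ into $\mathcal F(G)$.
\item A $3$-cycle of levels is a cycle of length $3$ in the sense of the definition of $\mathcal A(G)$. Hence even permutations of levels lie in $\mathcal A(G)$.
\item If the number of matched pairs in the $i$-th tower is odd, adjoin the swap of two unused $B$-levels of that tower. The new $g$ is still an involution with $g(A)\subset B$ and $\supp(g)\subset A\cup B$.
\item Its restriction to each tower is now an even permutation of levels, so $g\in\mathcal A(G)$.
\end{itemize}
This needs two spare $B$-levels per tower, not one as you first say. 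Your estimate supplies them once $|K|$ is large, since $|KF\triangle F|<\delta|F|$ forces $|F|>|K|/(1+\delta)$, and the surplus of $B$-levels exceeds $(\varepsilon/2)|S_i|$.
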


\begin{definition} Let $(X,G)$  be a Cantor dynamical system. We say that two clopen sets $A$ and $B$ are {\it $G$-equivalent} if there is an element $g\in G$ such that $g(A) = B$. 
\end{definition}

The following proposition is an adaptation of the  Rokhlin lemma from \cite[Theorem 2]{BezuglyiDooleyMedynets:2005}.  

%
%

\begin{proposition}\label{PropositionRokhlinLemma} Let $G$ be the full group of a simple Bratteli diagram with path-space $X$ and let $A$ be a clopen set. 

(i)  For any $\varepsilon>0$ and any positive integer $m\geq 2$, the exists a clopen set $B\subset A$ and an element $g\in G$ such that the sets $\{B,g(B),\ldots, g^{m-1}(B)\}$ are disjoint subsets of $A$ and 
$$\mu\left(\bigcup_{i=0}^{m-1}g^i(B)\right)>\mu(A)-\varepsilon\mbox{ for every }\mu \in \mathcal M(X,G).$$ 

(ii)  In particular, there exists an increasing sequence of clopen subsets $A_n$  of $A$ such that $\mu(A\setminus A_n)\leq \frac{1}{n}$ for every $n\geq 1$ and $\mu\in \mathcal M(X,G)$ and such that $A_n = A_n'\sqcup A_n''$, where  $A_n'$ and $A_n''$ are clopen $G$-equivalent sets. 
\end{proposition}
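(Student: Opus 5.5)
The natural tool is the Kakutani--Rokhlin partitions $\Xi_n$ from \eqref{eqnXin} together with the uniform smallness of thin sets in Proposition~\ref{PropositionContinuityMeasures}.

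\textbf{Part (i).} Fix $\varepsilon>0$, $m\geq 2$, and choose a level $n$ such that $A$ is compatible with $\Xi_n$. Simplicity of the diagram will let us pass to a level $N\geq n$ at which every vertex $v\in V_N$ satisfies $h_v^{(N)}$ large compared with $m$. More precisely, we need that for each $v\in V_N$, the number $a_v$ of level-$N$ cylinders $C^{(N)}_{v,i}$ lying in $A$ is either $0$ or satisfies $(a_v \bmod m)$ small relative to the total measure involved.

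For each $v$, split the cylinders $C^{(N)}_{v,i}\subset A$ into $\lfloor a_v/m\rfloor$ consecutive blocks of size $m$ and a remainder of fewer than $m$ cylinders. Inside a block, an element of $G_N$ can cyclically permute the $m$ cylinders: they lie over the same vertex $v$, so the permutation changes only the first $N$ edges. Taking $g\in G_N$ to be the product of these $m$-cycles over all blocks and all vertices, and letting $B$ be the union of the first cylinder of each block, the sets $B,g(B),\ldots,g^{m-1}(B)$ are disjoint clopen subsets of $A$. Their union misses only the remainder cylinders, at most $m-1$ per vertex.

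The main obstacle is making the remainder small \emph{uniformly} in $\mu\in\mathcal M(X,G)$. I would handle it as follows. All cylinders over a fixed $v\in V_N$ are $G$-equivalent, so every invariant measure gives each of them the same mass $\mu(C^{(N)}_{v,0})$. The remainder over $v$ therefore has measure at most $(m-1)\mu(C^{(N)}_{v,0})\leq (m-1)/h_v^{(N)}\cdot \mu(\text{tower}_v)$. Summing over $v$, the total remainder is at most $(m-1)\max_v (h_v^{(N)})^{-1}$.

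It thus suffices that $\min_{v\in V_N} h_v^{(N)}\to\infty$. This follows from simplicity, assuming $X$ is a Cantor set, i.e.\ the diagram is not the trivial one. Alternatively, since cylinder diameters tend to $0$, Proposition~\ref{PropositionContinuityMeasures}(1) gives $\sup_\mu \mu(C^{(N)}_{v,i})<\varepsilon/(m|V_N|)$ for $N$ large. This already bounds the remainder by $(m-1)|V_N|\cdot \varepsilon/(m|V_N|)<\varepsilon$ without any counting of heights. I would use this second route.

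\textbf{Part (ii).} Apply (i) with $m=2$ and $\varepsilon = 1/n$ to get $B_n$ and $g_n$, and set $A_n' = B_n$, $A_n''=g_n(B_n)$. Monotonicity requires some care. I would build the sets inductively along increasing levels $N_1<N_2<\cdots$, taking $A_n$ to be the union of all paired cylinders at level $N_n$. Refining to level $N_{n+1}$, each paired cylinder of level $N_n$ splits into level-$N_{n+1}$ cylinders, and these can keep their old pairing: the old pair of cylinders over $v$ splits compatibly via the same permutation of initial segments. Only the remainder cylinders at level $N_n$ then need to be re-paired, as in (i), at level $N_{n+1}$. This makes $A_n\subset A_{n+1}$, keeps $\sup_\mu \mu(A\setminus A_n)\leq 1/n$, and ensures that $A_n'$ and $A_n''$ are $G$-equivalent by an involution in $G_{N_n}$.
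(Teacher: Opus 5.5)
\textbf{The gap is the route you chose for the uniform estimate.} Your ``second route'' is circular. Proposition~\ref{PropositionContinuityMeasures}(1) gives, for a \emph{fixed} target $\varepsilon'$, a level beyond which every cylinder has $\sup_\mu$-measure below $\varepsilon'$. Your target $\varepsilon/(m|V_N|)$ is not fixed: it shrinks as $N$ grows, and $|V_N|$ may grow faster than cylinder measures decay.

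In fact $|V_N|\max_v\sup_{\mu}\mu(C^{(N)}_{v,0})$ need not tend to $0$ for diagrams of unbounded rank. Take a simple diagram with a distinguished vertex $v^*_N$ at each level. Join $v^*_{N-1}$ to $v^*_N$ by many edges, and put a single edge between every other pair of consecutive-level vertices. Choose the parameters alternately: first $|V_N|\ge 2h^{(N)}_{v^*_N}$, then the number of $v^*_N\to v^*_{N+1}$ edges large enough. Then a weak$^*$ limit of measures concentrated on the $v^*_L$-tower gives an invariant $\mu$ whose $v^*_N$-tower has mass at least $1/2$ for every $N$. Hence $|V_N|\,\mu(C^{(N)}_{v^*_N,0})\ge |V_N|/(2h^{(N)}_{v^*_N})\ge 1$ at every level. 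So the level $N$ you ask for need not exist, in part (i) or in part (ii) with $\varepsilon=1/n$.

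\textbf{The fix is the first route you already wrote down, which is the paper's argument.} All level-$N$ cylinders over $v$ are $G_N$-equivalent, so each has measure $\mu(\text{tower}_v)/h^{(N)}_v$. The leftover therefore has measure at most $(m-1)/\min_v h^{(N)}_v$, uniformly in $\mu\in\mathcal M(X,G)$. Moreover, $\min_v h^{(N)}_v$ is nondecreasing in $N$ and tends to infinity. By simplicity, for any fixed $n$ it eventually dominates the number of level-$n$ cylinders, which is unbounded because $X$ is infinite. The paper does the same thing: it takes all tower heights $\ge mk$ with $1/k<\varepsilon$.

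With that bound, everything else matches the paper. In (ii), choose $N_n$ with $\min_w h^{(N_n)}_w\ge n$. Your level-by-level re-pairing of the leftover is then the same as the paper's repeated application of (i) with $m=2$ to $A\setminus A_{n-1}$. The remaining differences are cosmetic. You work inside $A$ with disjoint $m$-cycles on blocks. The paper reduces to $A=X$ and uses one long cycle per tower, with $B$ the union of every $m$-th level.
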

\begin{proof} (i) Without loss of generality, we can assume that $A=X$.  Fix an integer $k$ such that $1/k<\varepsilon.$ Choose $n$ large enough so that the towers of the Kakutani-Rokhlin partition $\Xi_n$ have heights at least $mk$, see the definition in Equation \ref{eqnXin}. It follows for every measure $\mu \in \mathcal M(X,G)$ that
 $$\mu\left(\bigcup_{v\in V_n}\bigcup_{i=h_v^{(n)}-m}^{h_v^{(n)}-1}C_{v,i}^{(n)}\right)<\frac{m}{mk}<\varepsilon.$$  
 
 Rewrite $h_v^{(n)} = m d_v+ r_v$, where $d_v\geq 0$ is the integer part and $r_v$ is the remainder of $h_v^{(n)}$ when divided by $m$. Then, the set $B$ is defined as the union of every $m$-th level in each tower taken over all towers from $V_n$. More specifically, 
  $$B = \bigcup_{v\in V_n}\bigcup_{i=0}^{d_v-1} C_{v,i\cdot m}^{(n)} . $$
  
  Let $g\in G_n$ be the permutation that cyclically permutes atoms within each tower of $\Xi_n$, that is,  $g(C_{v,i}^{(n)}) = C_{v,i+1}^{(n)}$ and $g(C_{v,h_v^{(n)}}^{(n)}) = C_{v,0}^{(n)}$  for every vertex $v$.     It follows that the sets $\{B,g(B),\ldots, g^{n-1}(B)\}$ are disjoint and their union cover the entire space up to a clopen set of measure less than $\varepsilon$ for any $G$-invariant measure. 
  
  (ii) Inductively applying Statement (i) to $m=2$, we can find a disjoint sequence  $\{B_n\}$ of clopen subsets of $A$ such that $B_n = B_n'\sqcup B_n''$ for some   $G$-equivalent clopen sets $B_n'$ and $B_n''$ and such that $\mu(A\setminus (B_1\sqcup \ldots \sqcup B_n))<1/n$  for every $\mu \in \mathcal M(X,G)$. Setting $A_n = B_1\sqcup\ldots \sqcup B_n$, we obtain the result.
\end{proof}

Let $G$ be the full group of a simple Bratteli diagram with path-space $X$.  Note that  $G = \bigcup_{n =  1}^\infty G_n$, where $G_n$ is the group of permutations of the initial $n$-segments of the paths from $X$.  For a clopen set $A$, denote by $G_A$ the (local) subgroup consisting of homeomorphisms from $G$ supported by the set $A$.    We also define $G_{A,n} = G_A\cap G_n$. 
 
 One of the defining features of full groups associated with simple Bratteli diagrams is that the conjugacy class of every element ``asymptotically  converges'' to the ``center'' of the group.  The following proposition makes this observation precise for conjugacy classes of involutions.

\begin{proposition}\label{PropositionInvolutionConvergeToCenter} Let $G$ be the  full group of a simple Bratteli diagram with path-space  $X$, $A\subset X$  a clopen subset, and $x\in A$.  For any  involution $h \in G$ with $\supp(h)\subsetneq A\setminus \{x\}$,  there exist two sequences of involutions $\{t_n\}$ and $\{h_n\}$ in $G_A$ such that $\supp(t_n)\to\{x\}$ and for every $n\geq 1$ the element $t_nh$  is conjugate to $h_n$ by an element from $G_A$, the element $h_n$  commutes with $G_{A,n}$, $\supp(h)\cap \supp(t_n) = \emptyset$, and $x\notin \supp(t_n)$.  

Additionally, if $h\in G_A'$, then the elements $\{t_n\}$ and $\{h_n\}$ can be chosen from $G_A'$ and the elements $t_nh$ and $h_n$ are conjugate inside $G_A'$. 
\end{proposition}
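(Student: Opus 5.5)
Since $x\in A$ and $\supp(h)\subsetneq A\setminus\{x\}$ is clopen, we can fix a decreasing sequence of clopen neighbourhoods $U_n$ of $x$ with $U_n\subset A\setminus\supp(h)$ and $U_n\downarrow\{x\}$; we may also assume $\supp(h)\cup U_1\subsetneq A$. The plan is to take $t_n$ to be an involution supported in $U_n\setminus\{x\}$, so that $t_nh$ is an involution with $\supp(t_nh)=\supp(h)\sqcup\supp(t_n)$. We will choose $t_n$ so that this support is $G_A$-equivalent, by an element $q_n\in G_A$, to a clopen set on which some involution $h_n$ acts ``diagonally'' with respect to a level $m\ge n$ of the diagram. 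Concretely, the candidate $h_n$ is
$$h_n=\mathrm{id}_{(e_1,\dots,e_m)}\times s,$$
that is, $h_n$ fixes the first $m$ edges and acts as a fixed-point-free involution $s$ on the edges at levels $m+1,\dots,M$ of the paths lying in a chosen set of towers at level $M$. Any such element commutes with $G_m\supset G_n$, and hence with $G_{A,n}$. To ensure $h_n\in G_A$, we take $m$ so large that $A$ is compatible with $\Xi_m$. The element $h_n$ then has support inside $A$ and swaps pairs of cylinders that share the same initial $m$-segment.

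In $G_A$, two involutions are conjugate exactly when there is an element of $G_A$ carrying $\supp$ of one to $\supp$ of the other, intertwining the pairings. This holds because an involution in a full group is determined by a clopen ``half'' $D$ together with $D\mapsto g(D)$, and local full groups act transitively on equivalent clopen sets. So it suffices to arrange that one half $D$ of $\supp(t_nh)$ is $G_A$-equivalent to a half $D'$ of $\supp(h_n)$, and that the complements $A\setminus\supp(t_nh)$ and $A\setminus\supp(h_n)$ are also $G_A$-equivalent. In a Bratteli diagram, two clopen sets compatible with $\Xi_M$ are $G$-equivalent, by an element of $G_M$, iff they contain the same number of level-$M$ cylinders through each vertex of $V_M$. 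The task therefore reduces to matching these counts, and this is the main obstacle.

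To handle it, we first choose $M$ with all of $A$, $\supp(h)$, $U_n$ compatible with $\Xi_M$; $h$ is then a product of transpositions of level-$M$ cylinders. For each vertex $v\in V_M$, let $a_v$ be the number of cylinders of $\supp(h)$ through $v$, which is even. By simplicity we can pass to a further level $M'$ where the number of paths from the level $m$ vertices to every $w\in V_{M'}$ is huge compared with the number of cylinders of $A$. At level $M'$ we then choose $t_n$ as a product of transpositions of cylinders inside $U_n\setminus\{x\}$, using as many as needed so that, through each $w\in V_{M'}$, the count of cylinders in $\supp(t_nh)$ is a prescribed even number $2b_w$. We also choose $h_n$ by pairing, inside $A$ and through each $w$, $2b_w$ cylinders that agree in the first $m$ edges. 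This is possible because $U_n$ contains many cylinders through every $w$ (by minimality), while we keep $x$'s cylinder out and leave $A\setminus(\supp(h)\cup U_n)$ nonempty. The parity and room constraints are the delicate bookkeeping. If needed, we enlarge $\supp(h_n)$ by an even extra number of cylinders in $U_n$ to absorb parity, choosing the $b_w$ with at least one free cylinder left in each tower. Equal counts then give the conjugating element $q_n\in G_{A,M'}$. Finally $\supp(t_n)\subset U_n\to\{x\}$, $x\notin\supp(t_n)$, and $\supp(t_n)\cap\supp(h)=\emptyset$ hold by construction.

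For the commutator case, suppose $h\in G_A'$. Since $h_n$ is a product of pairs of transpositions, we choose the number of swapped pairs, on each tower of a level where $A$ is a union of full tower pieces, to be even. We do the same for $t_n$, doubling the pairs if necessary, so that $t_n,h_n\in G_A'=\mathcal A(G|_A)$. If the conjugator $q_n$ is odd, we replace it by $q_n\tau$, where $\tau$ is a transposition of two cylinders in $A$ fixed pointwise by $h_n$ (such cylinders exist since we left free room); then $q_n\tau$ still conjugates and lies in $G_A'$.
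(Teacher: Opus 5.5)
Your overall plan matches the paper's. You put $t_n$ in a shrinking neighbourhood of $x$, take $h_n$ trivial on an initial segment and ``diagonal'' below it, and get conjugacy by matching, for every vertex $w$ of a deep level, the numbers of cylinders through $w$ in $\supp(t_nh)$ and in $\supp(h_n)$. The gap is in the step you yourself flag as the main obstacle. $G_{A,m}$ permutes transitively the initial segments $a\colon v_0\to v$ with $U(a)\subset A$. So an $h_n$ that fixes the first $m$ edges commutes with $G_{A,m}$ only if it acts on tails by the \emph{same} involution $s_v$ after every such $a$. (Once $h_n$ is restricted to $A$, it commutes with $G_{A,m}$, not with $G_m$.) Hence the number of cylinders of $\supp(h_n)$ through $w$ is forced to be $\sum_{v\in V_m}K_{A,v_0,v}\,c_{v,w}$, where $c_{v,w}$ is the even number of paths $v\to w$ moved by $s_v$. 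This is a divisibility constraint, not a parity constraint. For example, with a single vertex $v$ at level $m$ and $K_{A,v_0,v}=3$, every admissible count is a multiple of $6$. So an arbitrary ``prescribed even number $2b_w$'' cannot be realized. ``Pairing cylinders that agree in the first $m$ edges'' with a pairing that depends on the initial segment does not commute with $G_{A,n}$. Enlarging $\supp(h_n)$ by a few extra cylinders in $U_n$ breaks commutation for the same reason.

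The missing idea is to use the freedom in $t_n$ to land in this set of values. Because $K_{\supp(h),v_0,v}$ is even, for each $v\in V_m$ and $w$ you can choose $0\le N_{v,w}<K_{A,v_0,v}$ with $K_{\supp(h),v_0,v}K_{v,w}+2N_{v,w}\equiv 0\pmod{2K_{A,v_0,v}}$. Let $t_n$ swap $2N_{v,w}$ paths $v\to w$ appended to one fixed initial segment $a_v$ with $U(a_v)\subset U_n$ and $x\notin U(a_v)$. Let $s_v$ move $(K_{\supp(h),v_0,v}K_{v,w}+2N_{v,w})/K_{A,v_0,v}$ paths $v\to w$; there is room for this because $K_{v,w}\ge 2K_{A,v_0,v}$. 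Then the counts agree for every pair $(v,w)$, hence for every $w$. This is exactly the paper's choice of $p_{n,v}$, $N_{v,w}$ and $M_{v,w}$.

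The commutator case has the same defect. ``Doubling the pairs'' of $t_n$ changes the counts and destroys the match. The mod-$4$ conditions ($4\mid K_{\supp(h),v_0,v}$, $2\mid N_{v,w}$, $4\mid c_{v,w}$, with $K_{v,w}\ge 4K_{A,v_0,v}$) must be built into the congruence from the start, as in the paper. Your correction of the conjugator's sign, componentwise by transpositions of cylinders fixed by $h_n$, is fine once the counts match.
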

\begin{proof} Without loss of generality we may assume  that $A = X$. Since the Bratteli diagram is simple, we may telescope it, if necessary, so that any two vertices from consecutive levels  are connected by at least two edges. Denote by $v_0$ the root vertex of the Bratteli diagram. 

For a clopen set $C$ and a finite collection of vertices $v_1, \ldots, v_m$ from an increasing sequence of levels, denote by $K_{C, v_1, \ldots, v_m}$ the number of finite paths $p$ starting at $v_1$, passing through $v_2, \ldots, v_{m-1}$, and terminating at $v_m$, such that the corresponding clopen set $U(p)$ (the set of infinite paths containing $p$) is contained in $C$. We will omit the set $C$ from the subscript when $C = X$. 

For each $n\geq 1$, denote by  $q_n$ the path of length $n$ consisting of the first $n$ edges of $x = (x_0,x_1,x_2,\ldots)$. In particular, $x \in U(q_n)$ for every $n\geq 1$. Note that for all $n$ large enough, we have that $U(q_{n}) \cap \supp(h) = \emptyset$.

For each $n\geqslant 2$ and $v\in V_n$, choose a finite path $p_{n,v}$ obtained from $q_{n-1}$ by adding an edge, distinct from $x_n$, that connects  $q_{n-1}$ to the  vertex $v$.

Fix $n \in \mathbb{N}$. Without  loss of generality, we can assume that $n$  is chosen large enough so that  $h\in G_n$. In other words, $h$ acts on finite segments between the root vertex $v_0$ and vertices of level $V_n$. Choose $l>n$  so that 
$$K_{v,w}\geqslant 2K_{v_0,v}\mbox{ for every }v\in V_n\mbox{ and }w\in V_l.$$

Elements  $t_n$ and $h_n$ that appear in the statement of the result will act as permutations of finite paths lying between levels $V_n$ and $V_l$.

Note $K_{\supp(h),v_0,v}$ is even for every $v\in V_n$.   
Thus, for every $v\in V_n$ and  $w\in V_l$, we can find an integer  $0\leqslant N_{v,w}<K_{v_0,v}$  such that $K_{\supp(h),v_0,v}K_{v,w}+2N_{v,w}$ is divisible by $2K_{v_0,v}$.  

 For each $v\in V_n$ and  $w\in V_l$, fix any $2N_{v,w}$ distinct finite paths, denoted by   $z_0,\ldots,z_{2N_{v,w}-1}$, joining the vertices $v$ and $w$. Define an involution  $t_n$ as
  $$t_n(p_{n,v}z_i)=\left\{\begin{array}{ll}p_{n,v}z_{i+N_{v,w}},& i<N_{v,w},\; v\in V_n,\\
  p_{n,v}z_{i-N_{v,w}},& N_{v,w}\leqslant i< 2N_{v,w},\; v\in V_n\end{array}\right. $$ and trivially elsewhere. 
 Observe that by construction 
 \begin{equation*}\label{EqSuppTn}\supp(t_n)\subset U(q_{n-1})\setminus U(q_n),\;\;K_{\supp(t_n),v,w}=2N_{v,w}\;\;\text{for all}\;\;v\in V_n,w\in V_l.\end{equation*} In particular, $\supp(t_n)\cap\supp(h)=\emptyset$ and $x\notin\supp(t_n)$ for  large enough $n$.

Now, to construct $h_n$, for every $v\in V_n$ and $w\in V_l$, choose 
\begin{equation}\label{eqnMVW}2M_{v,w}=(K_{\supp(h),v_0,v}K_{v,w}+2N_{v,w})/K_{v_0,v}\end{equation} 
paths $r_0,\ldots,r_{2M_{v,w}-1}$ joining $v$ and $w$. Define $h_n$ as follows: for a path of the form $ar_i$, where $a$ is any path joining $v_0$ and $v\in V_n$, we set $$h_n(ar_i)=\left\{\begin{array}{ll}ar_{i+M_{v,w}},& i<M_{v,w},\\
  ar_{i-M_{v,w}},& M_{v,w}\leqslant i< 2M_{v,w},\end{array}\right.$$ and we define $h_n$ trivially eslewhere.

 Note that, by construction, $h_n$ commutes with $G_n$.  Our next goal is to establish that $t_nh$ and $h_n$ are conjugate.   Observe that two involutions  $g_1,g_2\in G $ are conjugate if and only if one can find  $m\in \mathbb N$ such that $g_1,g_2\in G_m$ and such that $K_{\supp(g_1),v_0,v}=K_{\supp(g_2),v_0,v}$ for every vertex $v\in V_m$.
 
 Since $\supp(h)\cap \supp(t_n) = \emptyset$, we have that $$K_{\supp(t_nh),v_0,w} = K_{\supp(h),v_0,w}+K_{\supp(t_n),v_0,w}\mbox{ for every }w\in V_l.$$ 
  Thus, if we obtain that 
  \begin{equation}\label{eqnAuxHnTnHCong}K_{\supp(h),v_0,w}+K_{\supp(t_n),v_0,w}=K_{\supp(h_n),v_0,w}\mbox{ for every }w\in V_l,\end{equation}  we establish that $h_n$ and $t_n h$ are conjugate.
Indeed, for every $w \in V_l$, we have:
\begin{align*} 
K_{\supp(h), v_0, w} &=  \sum_{v \in V_n} K_{\supp(h), v_0, v} \, K_{v, w}, \\
K_{\supp(t_n), v_0, w} &= \sum_{v \in V_n}  K_{\supp(t_n), v, w}  = \sum_{v \in V_n}  2N_{v,w} \\
K_{\supp(h_n), v_0, w} &= \sum_{v \in V_n} K_{v_0, v} \, K_{\supp(h_n), v, w} = \sum_{v \in V_n} 2 K_{v_0, v} M_{v,w}.
\end{align*}

Using (\ref{eqnMVW}), we obtain that $$K_{v_0, v} \, K_{\supp(h_n), v, w} = K_{\supp(h),v_0,v}K_{v,w}+2N_{v,w}\mbox{ for every }v\in V_n\mbox{ and }w\in V_l,$$ which  implies (\ref{eqnAuxHnTnHCong}).  

If $h\in G_A'$, then in the proof above we can choose $n$ large enough so that $K_{\supp(h), v_0, v}$ is divisible by 4 for every $v\in V_n$. Additionally, we can choose $l\geq n$ so that  $K_{v,w}\geqslant 4K_{v_0,v}$ for every $v\in V_n$ and $w\in V_l$. Then ensuring $N_{v,w}$ in the proof above is divisible by 2, we obtain that the involution $t_n\in G_A'$. It follows that $h_n\in G_A'$. Applying the Splitting Criterion for Conjugacy Classes in the Alternating Group, we obtain that $t_nh$ and $h_n$ are conjugate by an element from $G_A'$. 
\end{proof}

 Let $G$ be the full group of a simple Bratteli diagram with path-space $X$.   Introduce a  $G$-invariant metric $d$ on the Boolean algebra of clopen sets $\textrm{CO}(X)$ by
  \begin{equation}\label{EqDistanceOfSets}d(A,B) = \sup\{\mu(A\triangle B) : \mu\in\mathcal M(X,G)\}.\end{equation}  
 By the Ergodic Decomposition Theorem \cite[Theorem 9.5]{DoughertyJacksonKechris:1994}, every invariant measure can be represented as an integral of ergodic measures. Therefore, 
  \begin{equation}\label{EqErgodicDistanceOfSets}d(A,B) = \sup\{\mu(A\triangle B) : \mu\in\mathcal E(X,G)\}.\end{equation}  

We will need the following two results, which can be viewed as a manifestation of mixing properties of the system $(X,G)$. 

\begin{proposition}\label{Proposition_B_to_Invariant_Bn} Let $G = \bigcup_{n=1}G_n$ be the full group of a simple Bratteli diagram, where $G_n$ is the permutation group corresponding to the level $n$ of the diagram. Then for every clopen set $B$ there exist a sequence $\{B_n\}$ of clopen sets and a sequence $\{g_n\}$ of elements of $G$  such that for every $n\geq 1$ the set $B_n$ is $G_n$-invariant and such that $d(B_n,g_n(B))\to 0$ as $n\to \infty$. 
\end{proposition}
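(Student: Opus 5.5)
We construct $g_n$ and $B_n$ from the Kakutani--Rokhlin partitions $\Xi_m$ for levels $m \geq n$. Recall that a clopen set is $G_n$-invariant exactly when it is a union of cylinder sets $U(p)$ over complete families of finite paths through fixed "tails". Concretely, such a set has the form $\bigcup_{v \in V_n}\{x : r(x_n)=v,\ (x_{n+1},x_{n+2},\ldots)\in D_v\}$. So a $G_n$-invariant set is determined by a choice, for each vertex $v\in V_n$, of a set of tails, and it contains either all or none of the levels of a given $\Xi_n$-tower above each tail.

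The plan has three steps.

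First, fix $n$ and choose a level $m>n$ such that $B$ is compatible with $\Xi_m$. Telescoping the simple diagram, we may assume $K_{v,w}$ is huge compared to $K_{v_0,v}=h_v^{(n)}$ for all $v\in V_n$, $w\in V_m$.

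Second, inside each tower of $\Xi_m$ over $w\in V_m$, the $h^{(m)}_w$ levels split into $K_{v,w}$ blocks (one per path from $v$ to $w$). Each block consists of $h^{(n)}_v$ levels, namely all paths from $v_0$ to $v$ followed by a fixed path from $v$ to $w$. The set $B$ occupies some number $b_w$ of levels in the $w$-tower. Within the symmetric group $\mathrm{Sym}(h_w^{(m)})\subset G_m$ we can permute levels arbitrarily. Choose $g_n$ so that $g_n(B)$ fills, as far as possible, complete blocks: take $\lfloor b_w/\text{block size}\rfloor$-many full blocks, greedily over vertices $v$. Let $B_n$ be the union of these full blocks; then $B_n$ is $G_n$-invariant. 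The discrepancy $g_n(B)\triangle B_n$ inside the $w$-tower consists of at most one partially filled block per vertex $v\in V_n$. That is at most $\sum_{v\in V_n} h_v^{(n)}$ levels out of $h_w^{(m)}$.

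Third, we bound the measure uniformly in $\mu\in\mathcal M(X,G)$. Each level of the $w$-tower has the same $\mu$-measure $\mu(C^{(m)}_{w,0})$, so
\[
\mu(g_n(B)\triangle B_n)\le \sum_{w\in V_m}\Big(\sum_{v\in V_n}h_v^{(n)}\Big)\mu(C^{(m)}_{w,0}) \le \frac{\sum_{v}h^{(n)}_v}{\min_w h^{(m)}_w}.
\]
Choosing $m$ so large that $\min_w h_w^{(m)}\ge n\sum_{v\in V_n}h_v^{(n)}$ (possible by simplicity, since tower heights grow to infinity) makes this bound at most $1/n$. Hence $d(B_n,g_n(B))\le 1/n\to 0$.

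The main obstacle is bookkeeping: we must ensure that a single element $g_n\in G_m$ realizes the rearrangement simultaneously in all towers. This is fine because $G_m\cong\prod_w\mathrm{Sym}(h^{(m)}_w)$ acts independently on each tower. We must also make sure the estimate is uniform over all invariant measures, which the equal-measure-of-levels observation handles.
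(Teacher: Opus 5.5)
Your argument is correct, but it takes a different route from the paper.

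\textbf{The paper's route.} The paper never counts levels directly. It first uses Proposition~\ref{PropositionRokhlinLemma} to find involutions $q_m$ supported in $B$ with $d(B,\supp(q_m))\to 0$. It then applies Proposition~\ref{PropositionInvolutionConvergeToCenter} to conjugate a small perturbation $t_{m,n}q_m$, where $\supp(t_{m,n})$ shrinks to a point, to an involution $h_{m,n}$ that commutes with $G_n$. $G_n$-invariance of $B_m=\supp(h_{m,n_m})$ then comes for free, because the support of an element commuting with $G_n$ is $G_n$-invariant. A diagonal choice of $n_m$ followed by reindexing finishes the proof.

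\textbf{Your route.} You bypass both auxiliary propositions. A single level permutation in $G_m\cong\prod_{w}\mathrm{Sym}(h^{(m)}_w)$ moves $B$ onto a union of full level-$n$ blocks plus a bounded number of leftover levels per tower. The uniform estimate $d(B_n,g_n(B))\le \sum_{v\in V_n}h^{(n)}_v/\min_{w\in V_m}h^{(m)}_w$ then follows because all levels of a tower have equal measure for every invariant measure. In effect, you do directly the divisibility bookkeeping hidden in the proof of Proposition~\ref{PropositionInvolutionConvergeToCenter}. There, the correction $t_n$ is added precisely so that the support becomes a union of full blocks; you simply discard the remainder instead.

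\textbf{Comparison.} Your version is self-contained and gives an explicit rate. It yields a $G_n$-invariant $B_n$ for every $n$ with no subsequence or reindexing, and even gives $B_n\subseteq g_n(B)$. The paper's version reuses a proposition it needs anyway (in Lemma~\ref{LemmaAugProjectionAbsorption}). Its $B_n$ also carry extra structure, as supports of involutions commuting with $G_n$, but this statement does not need that.

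\textbf{Two minor remarks.} First, your telescoping step is never used; choosing $m$ large suffices. Second, $\min_w h^{(m)}_w\to\infty$ relies on the path space being infinite (Cantor) in addition to simplicity.
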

\begin{proof} (1)  Using Proposition \ref{PropositionRokhlinLemma} find a sequence of involutions $\{q_n\}$ supported by $B$ such that $d(B,\supp(q_m))\to 0$ as $m\to\infty$. Applying Proposition \ref{PropositionInvolutionConvergeToCenter}, for each $q_m$ find two sequences of involutions $\{h_{m,n}\}_{n=1}^\infty$  and $\{t_{m,n}\}_{n=1}^\infty$ such that  for every $m\geq 1$ and $n\geq 1$, $\supp(q_m)\cap \supp(t_{m,n})=\emptyset$, $h_{m,n}$ commutes with $G_n$ and $t_{m,n}q_m$ is conjugate to $h_{m,n}$ by an element $c_{m,n}\in G$, and such that for every $m\geq 1$, $\textrm{diam}(\supp(t_{m,n}))\to 0$ as $n\to\infty$. 

Choose a subsequence $\{n_m\}_{m=1}^\infty$ such that $\textrm{diam}(\supp(t_{m,n_m}))\to 0$. It follows that $d(B,\supp(t_{m,n_m}q_m))\to 0$ as $m\to \infty $. Set $B_m = \supp(h_{m,n_m})$. Then $d(c_{m,n_m}(B),B_m)\to 0$ as $m\to\infty$.

 Since $h_{m,n_m}$ commutes with $G_{n_m}$, the set $B_m$ is $G_{n_m}$-invariant.  Reindexing the sequence $\{n_m\}$ if necessary, we obtain the result. 
\end{proof}

\begin{proposition}\label{PropositionMovingSetClose}  Let $G$ be the full group of a simple Bratteli diagram with path-space $X$. 

(1) Let $A\subset X$ be a clopen set, $0 \leq \lambda \leq  1$ and $\varepsilon>0$. Then there exists a clopen set $A'\subset A$ such that 
$$\lambda \mu(A) - \varepsilon < \mu(A')\leq \lambda \mu(A)\mbox{ for every }\mu \in \mathcal M(X,G). $$

(2) Let  $\varepsilon>0$ and let $A,B$ be nonempty clopen sets such that  $$|\mu(A)-\mu(B)|<\varepsilon\mbox{ for every }\mu\in \mathcal M (X,G).$$  Then there exists $g\in G$ such that such that $d(g(A),B)<4\varepsilon$. 
\end{proposition}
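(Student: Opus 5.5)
For part (1), I will work directly with the Kakutani--Rokhlin partitions $\Xi_n$ from \eqref{eqnXin}. First choose $n$ so large that $A$ is compatible with $\Xi_n$. By Proposition~\ref{PropositionContinuityMeasures}(1), after enlarging $n$ further we may assume every cylinder of depth $n$ has measure less than $\varepsilon/N$ for all $\mu\in\mathcal M(X,G)$. Here $N$ is a bound to be chosen, and the aim is for the rounding errors to stay below $\varepsilon$. The key observation is that any two cylinders $C^{(n)}_{v,i}$ and $C^{(n)}_{v,j}$ in the same tower have equal measure for every invariant $\mu$, since $G_n$ permutes them.

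Write $a_v$ for the number of levels of the $v$-th tower contained in $A$. Define $A'$ by taking, in each tower, $\lfloor \lambda a_v\rfloor$ of those levels. Then
\[
\mu(A') = \sum_v \lfloor\lambda a_v\rfloor\,\mu(C^{(n)}_{v,0}) \le \lambda\mu(A),
\]
and the deficit is at most $\sum_{v\in V_n}\mu(C^{(n)}_{v,0})$. This sum is the measure of a set consisting of one level from each tower. The obstacle is that $|V_n|$ may grow with $n$, so this union need not be small. To handle it, I will not bound the one-level-per-tower set via small cylinders. Instead I will telescope to a level $n$ where all tower heights $h^{(n)}_v$ exceed $1/\varepsilon$. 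Then one level per tower is at most a $1/h_v$ fraction of each tower, so its total measure is less than $\varepsilon$ uniformly in $\mu$. Simplicity of the diagram guarantees that heights tend to infinity.

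For part (2), I intend to apply part (1) together with comparison. Apply (1) with $\lambda$ close to $1$ to shrink $A$ to $A_1\subset A$ and $B$ to $B_1\subset B$ with $\mu(A\setminus A_1)<\varepsilon$ and $\mu(B\setminus B_1)<\varepsilon$. The goal is to arrange $\mu(A_1)<\mu(B)$ and $\mu(B_1)<\mu(A)$ for all $\mu$. This is not immediate, because the hypothesis only gives $|\mu(A)-\mu(B)|<\varepsilon$, so I will instead remove a uniform-fraction piece. I will take $A_1\subset A$ with $\mu(A_1)>\mu(A)-2\varepsilon$ and $\mu(A_1)\le\mu(A)-\varepsilon'$, where $\varepsilon'$ is slightly larger than the strict bound $\sup_\mu|\mu(A)-\mu(B)|<\varepsilon$. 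Compactness of $\mathcal M(X,G)$ gives a uniform gap below $\varepsilon$. Part (1) produces such a set via a suitable $\lambda$, with a small extra error absorbed; since $\mu(A)\ge c>0$ uniformly by Proposition~\ref{PropositionContinuityMeasures}(2), a multiplicative $\lambda$ gives an additive gap. This yields $\mu(A_1)<\mu(B)$ for all $\mu$, and symmetrically a set $B_1$.

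By Proposition~\ref{PropositionComparisonProperty}, which applies because $(X,G)$ is almost finite, there is an involution $g_1$ with $g_1(A_1)\subset B$. Then $d(g_1(A_1),B)<3\varepsilon$, since $\mu(B)-\mu(g_1A_1)<\varepsilon+2\varepsilon$. Finally I will extend $g_1$ so that the leftover $A\setminus A_1$, of measure less than $2\varepsilon$, is handled. I can simply keep $g=g_1$: the leftover contributes at most $\mu(A\setminus A_1)<2\varepsilon$ to the symmetric difference, but this gives $5\varepsilon$. To reach $4\varepsilon$, I will choose the shrinkage fraction more tightly, making $A\setminus A_1$ and the gap each slightly above $\sup|\mu(A)-\mu(B)|$, so the total is $\mu(A\setminus A_1)+(\mu(B)-\mu(A_1))<\varepsilon+3\varepsilon$. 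The main obstacle is this bookkeeping of uniform gaps.
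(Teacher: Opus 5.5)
Your part (1) is correct and is the paper's argument: the paper also keeps $\lfloor\lambda K_{A,v}\rfloor$ of the $A$-levels (your $a_v$) in each tower, at a level $n$ where $\sum_{v\in V_n}\mu(C^{(n)}_{v,0})<\varepsilon$ uniformly. Your bound $\sum_v\mu(C^{(n)}_{v,0})\le\max_v 1/h^{(n)}_v$ is what justifies that choice of $n$.

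Part (2) has a genuine gap. It sits at the step where a single $\lambda$ in (1) is supposed to give $\mu(A)-2\varepsilon<\mu(A_1)\le\mu(A)-\varepsilon'$ for every $\mu$. Part (1) removes mass proportional to $\mu(A)$: $(1-\lambda)\mu(A)\le\mu(A\setminus A_1)<(1-\lambda)\mu(A)+\eta$. The uniform bound $\mu(A)\ge c$ controls this only from below. Write $s=\max_\nu|\nu(A)-\nu(B)|$. To force $\mu(A_1)<\mu(B)$ at a measure with small $\mu(A)$ and $\mu(A)-\mu(B)$ near $s$, you must take $1-\lambda$ of order $s/c$. Then, at measures where $\mu(A)$ is large, you remove mass of order $s\,\mu(A)/c$, which need not be $O(\varepsilon)$. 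For instance, take $\mu_1(A)=2\varepsilon$, $\mu_1(A)-\mu_1(B)=0.99\varepsilon$ and $\mu_2(A)=0.9$. Part (1) only guarantees $\mu_1(A_1)<\mu_1(B)$ if $1-\lambda>0.495$, and then $\mu_2(A\setminus A_1)>0.44$. Your estimate $\mu(g(A)\triangle B)\le 2\mu(A\setminus A_1)+\mu(B)-\mu(A)$ then gives nothing near $4\varepsilon$. The paper's own proof of (2) runs the same shrink-then-compare scheme and applies (1) with $\lambda=1-\varepsilon/\mu(A)$, which depends on $\mu$. So it passes over exactly this point; the problem is not specific to your write-up.

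What you need is an \emph{additive} shrink: a clopen $D\subset A$ with $s<\mu(D)<\varepsilon$ for all $\mu$ simultaneously. When $s<\inf_\mu\mu(A)$, your own tools give this.
\begin{enumerate}
\item Pick $s<t<\min(\varepsilon,\inf_\mu\mu(A))$ and $0<\eta<t-s$.
\item Take $\lfloor t\,h^{(n)}_v\rfloor$ levels from every tower, i.e.\ apply (1) to $X$ with $\lambda=t$. This gives $D_0$ with $t-\eta<\mu(D_0)\le t$ for all $\mu$.
\item Move $D_0$ into $A$ by Proposition~\ref{PropositionComparisonProperty}, say via $g_0$, and set $A_1=A\setminus g_0(D_0)$. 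Then $\mu(A_1)<\mu(A)-s\le\mu(B)$.
\item The comparison element $g$ with $g(A_1)\subset B$ then satisfies $d(g(A),B)\le 2t+s<3\varepsilon$.
\end{enumerate}

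Without the assumption $s<\inf_\mu\mu(A)$ you need a finer argument. For example, choose $n$ with $A$ and $B$ compatible with $\Xi_n$, and let $g\in G_n$ match $A$-levels with $B$-levels inside each tower. This gives $\mu(g(A)\triangle B)\le\max_{v\in V_n}|K_{A,v}-K_{B,v}|/h^{(n)}_v$. That quantity is $<\varepsilon$ for large $n$, because weak$^*$ limits of the normalized counting measures on the towers are $G$-invariant. Hence each pair $(K_{A,v}/h^{(n)}_v,\,K_{B,v}/h^{(n)}_v)$ is uniformly close to some $(\nu(A),\nu(B))$ with $\nu\in\mathcal M(X,G)$.
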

\begin{proof} 
(1)  For each $v\in V_n$, denote by $B_v$ the base of the corresponding Kakutani-Rokhlin tower.  For each vertex $v\in V_n$, denote by $K_{A,v}$  the number of paths $p$ connecting the root vertex $v_0$ to the vertex $v$ such that $U(p)\subset A$. Choose $n$ large enough so that $A$ is a union of cylinder sets of level $n$ and so that $$\sum\limits_{v\in V_n}\mu(B_v)<\varepsilon\mbox{ for every }\mu \in \mathcal M(X,G).$$ 

For each $v \in V_n$ choose a collection $C_v$ of $\lfloor\lambda K_{A,v}\rfloor$ (the integer part of $\lambda K_{A,v}$)  paths $p$ connecting $v_0$ to $v$ with $U(p)\subset A$.  Set
$$A' =\bigcup\limits_{v\in V_n}\bigcup\limits_{p\in C_v}U(p).$$ Then for each $\mu \in \mathcal M(X,G)$, we have that 
\begin{align*} \lambda \mu(A) & =\sum\limits_{v\in V_n}\lambda K_{A,v}\mu(B_v) \geq \sum\limits_{v\in V_n} |C_v| \mu(B_v) = \mu(A') \\
& \geq \sum\limits_{v\in V_n}(\lambda K_{A,v}-1)\mu(B_v) \geq \lambda \mu(A) - \varepsilon, 
\end{align*}
which proves the result.

(2) Let $A,B$ be clopen sets as in the statement of the proposition. Applying Statement (1) with $\lambda =  1-\frac{\varepsilon}{\mu(A)}$, we can find a clopen set $A'\subset A$ such that $$\mu(B)-3\varepsilon < \mu(A)\left[ 1-\frac{\varepsilon}{\mu(A)} \right] - \varepsilon <  \mu(A')\leq \mu(A)\left[ 1-\frac{\varepsilon}{\mu(A)} \right]<\mu(B),$$ for every $\mu \in \mathcal M(X,G)$. By Proposition \ref{PropositionComparisonProperty}, we can find an element $g\in G$ such that $g(A')\subset B$. It follows that $d(g(A),B)<4\varepsilon$.

\end{proof}

%


 %
 %

%
%

\section{Projections onto Subspaces Invariant under Local Subgroups}\label{Section:Projections}

Let $G$ be the full group associated with a simple Bratteli diagram with path-space $X$. Recall that the commutator subgroup $G’$, as well as all local subgroups $G_A’$ supported on clopen sets, are simple, Proposition \ref{PropositionSimplicityCommutatorSubgroup}.

Consider a nontrivial factor representation $(\mathcal{H}, \pi, \xi)$ of the commutator subgroup $G’$. Denote by $\mathcal{M}_\pi$ the von Neumann algebra generated by $\pi(G’)$. For every clopen set $A$, let $P^A$ denote the orthogonal projection onto the subspace of $G_A’$-invariant vectors. Note that $P^\emptyset = \id$ and, since $\mathcal{M}_\pi$ is a nontrivial factor, we have $P^X = 0$.

It follows from the following classical result that for every clopen set $A$, the projection $P^A$ belongs to $\mathcal{M}_\pi$. The proof of the lemma follows the same argument as in \cite[Lemma 20]{DudkoGrigorchuk-DiagonalActions-18}.

\begin{lemma}\label{lemmaProjBelongsToAlgebra} Let $\mathcal M$ be a von Neumann algebra realized on some Hilbert space $\mathcal H$. Let $\mathcal S\subset \mathcal M$ and $\mathcal H_1 = \{v\in \mathcal H : Qv=v\mbox{ for }Q\in \mathcal S\}$. Then the orthogonal projection onto $\mathcal H_1$ belongs to $\mathcal M$. 
\end{lemma}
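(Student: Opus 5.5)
By the von Neumann Bicommutant Theorem, $\mathcal M=\mathcal M''$. It therefore suffices to show that the orthogonal projection $P$ onto $\mathcal H_1$ commutes with every operator $B\in\mathcal M'$. The plan is to prove that every $B\in\mathcal M'$ leaves $\mathcal H_1$ invariant. Since $\mathcal M'$ is self-adjoint, the adjoint $B^*$ then leaves $\mathcal H_1$ invariant as well. A closed subspace invariant under both $B$ and $B^*$ reduces $B$, so $PB=BP$.

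First, $\mathcal H_1$ is a closed subspace. It is the intersection over $Q\in\mathcal S$ of the kernels $\ker(Q-\id)$, each of which is closed because $Q$ is bounded. Hence $P$ is well defined.

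Now fix $B\in\mathcal M'$ and $v\in\mathcal H_1$. For every $Q\in\mathcal S\subset\mathcal M$, the operators $B$ and $Q$ commute, so
\[
QBv=BQv=Bv.
\]
Thus $Bv\in\mathcal H_1$, and $B\mathcal H_1\subset\mathcal H_1$. Applying the same argument to $B^*\in\mathcal M'$ gives $B^*\mathcal H_1\subset\mathcal H_1$. Therefore both $\mathcal H_1$ and $\mathcal H_1^\perp$ are $B$-invariant: indeed, for $w\in\mathcal H_1^\perp$ and $u\in\mathcal H_1$ we have $(Bw,u)=(w,B^*u)=0$. It follows that $BP=PBP$ and $B(\id-P)=(\id-P)B(\id-P)$, which together give $BP=PB$.

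Hence $P\in\mathcal M'' =\mathcal M$. There is no real obstacle. The only point requiring care is that $\mathcal S$ need not be self-adjoint, which is why the argument goes through the commutant rather than, for instance, taking $P$ as a limit of averages over $\mathcal S$. In the intended application, $\mathcal S=\pi(G'_A)$, and the lemma yields $P^A\in\mathcal M_\pi$.
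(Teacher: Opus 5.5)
Your proof is correct. Showing that every $B\in\mathcal M'$ (and hence $B^*$) leaves $\mathcal H_1$ invariant, so that $P\in\mathcal M''=\mathcal M$, is the standard commutant argument; the paper gives no proof of its own and defers to \cite[Lemma 20]{DudkoGrigorchuk-DiagonalActions-18}, which uses this same argument. Your remark that nothing beyond $\mathcal S\subset\mathcal M$ is needed is apt: the paper also applies the lemma to the set $\mathcal L(A)$ of weak limits, which are not unitaries, to conclude that $Q^A\in\mathcal M_\pi$.
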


The following theorem is the main result of this section. It establishes that the value of an indecomposable character on a group element $g$ depends only on the support of $g$. One of the key intermediate steps in the proof involves demonstrating that the orthogonal projections ${P^A}$ form a commutative semigroup that is normalized by the unitary operators in $\pi(G)$.

%
%

\begin{theorem}\label{TheoremCharFromTrace} Let $G$ be the full group of a simple Bratteli diagram with path-space $X$. Let $\chi$ be an indecomposable character of the commutator subgroup $G’$, and let $(\mathcal{H}, \pi, \xi)$ be the associated unitary representation. Let $\mathcal{M}_\pi$ the $\text{II}_1$-factor generated by $\pi(G’)$, and  $\tr$ be the corresponding trace on $\mathcal{M}_\pi$. Then, for every $g \in G’$, we have
\[
\chi(g) = \tr(P^{\textrm{supp}(g)}).
\]

\end{theorem}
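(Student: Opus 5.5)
The plan is to prove the formula first for involutions and then pass to arbitrary elements of $G'$. Two degenerate cases can be set aside at once: for the trivial character $P^A=\id$ for every $A$, and for the regular character $P^A=0$ for every nonempty $A$. So we may assume $\chi$ is neither, and hence that Theorem \ref{ThAutomaticContinuity} applies to $\pi$.

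\emph{Step 1: algebra of the projections.} By Lemma \ref{lemmaProjBelongsToAlgebra} every $P^A$ lies in $\mathcal M_\pi$, and
$$\pi(g)P^A\pi(g^{-1})=P^{g(A)}\qquad\text{for } g\in G'.$$
If $A\cap B\neq\emptyset$, Proposition \ref{PropositionSimplicityCommutatorSubgroup}(iii) gives $\langle G'_A,G'_B\rangle=G'_{A\cup B}$. A vector is therefore invariant under $G'_{A\cup B}$ if and only if it is invariant under both $G'_A$ and $G'_B$, so $P^{A\cup B}$ is the projection onto $P^A\mathcal H\cap P^B\mathcal H$. If $A\cap B=\emptyset$, the groups $G'_A$ and $G'_B$ commute elementwise. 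Hence $P^A$ commutes with $\pi(G'_B)$, and therefore with $P^B$, which lies in the von Neumann algebra generated by $\pi(G'_B)$. Combining the two cases (and enlarging one of the sets slightly when needed), I expect to show that the $P^A$ pairwise commute and satisfy $P^AP^B=P^{A\cup B}$. Thus they form a commutative semigroup normalized by $\pi(G')$.

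\emph{Step 2: involutions.} Let $h\in G'$ be an involution with support $A$, choose $x\in A\setminus\supp(h)$, and apply Proposition \ref{PropositionInvolutionConvergeToCenter} inside $G'_A$. This gives involutions $t_n\in G'_A$ with $\supp(t_n)\to\{x\}$ and $t_nh$ conjugate in $G'_A$ to $h_n$, where $h_n$ commutes with $G'_{A,n}$. Since $\chi$ is a class function, $\chi(t_nh)=\chi(h_n)$.

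The sequence $\{t_n\}$ satisfies hypotheses (c)--(d) of Theorem \ref{ThAutomaticContinuity} with $Y=\{x\}$; hypotheses (a)--(b) for $G'$ follow from minimality and the transitivity of the finite symmetric and alternating groups $G_n$. Hence every weak limit of $\pi(t_n)$ is a scalar $c\,\id$. Because the supports of $t_n$ and $h$ are disjoint and $\pi(t_n)$ is asymptotically scalar, I expect $\chi(t_nh)\to c\,\chi(h)$.

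Now pass to a subsequence along which $\pi(h_n)$ converges weakly to some $B\in\mathcal M_\pi$. Then $B$ commutes with $\pi(G'_{A,n})$ for every $n$, hence with $\pi(G'_A)$. Also $B$ commutes with $\pi(G'_{X\setminus A})$, since every $h_n$ is supported in $A$.

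\emph{Main obstacle: identifying $B$.} The hard part is to show that $\tr(B)=\tr(P^A)$ and that $c=1$. First, $c=1$ should follow from $\tr(\pi(t_n))=\chi(t_n)$ together with the same argument applied to the $t_n$ themselves: their supports shrink, so $\chi(t_n)$ is governed by projections of tiny sets. For $B$, I would use the relative commutant of $\pi(G'_A)$, which is generated by $\pi(G'_{X\setminus A})$, and exploit that $G'_A$ is simple and acts with no finite-dimensional subrepresentations (as in Proposition \ref{PropositionErgodicWeakMixing}). The aim is to show that $B$ acts as $\id$ on $P^A\mathcal H$ and that the conditional expectation of $B$ onto $\{P^A\}'$ vanishes on $(\id-P^A)\mathcal H$. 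I would get the latter by averaging over the finite groups $G'_{A,n}$: their averaging projections converge strongly to $P^A$ by Lemma \ref{LemmaLimitProjections}, and $h_n$ is nontrivial on each orbit.

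\emph{Step 3: general elements.} For general $g\in G'$ with support $A$, use the Rokhlin lemma (Proposition \ref{PropositionRokhlinLemma}) and the comparison results (Propositions \ref{PropositionComparisonProperty} and \ref{PropositionMovingSetClose}) to write $g$ up to small-support corrections in terms of involutions. I would then run the same asymptotic-centrality argument. The alternative is to show directly that $\tr(\pi(g)(\id-P^A))=0$, by averaging $\pi(g)$ over conjugations by $G'_{A,n}$ and using the commutation relations from Step 1.
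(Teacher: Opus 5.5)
Your outline has genuine gaps, and they sit exactly where the theorem is hard.

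\textbf{Step 1 (the missing idea).} The identity $\langle G'_A,G'_B\rangle=G'_{A\cup B}$ only shows that $P^{A\cup B}$ projects onto $P^A\mathcal H\cap P^B\mathcal H$. That subspace is the range of $P^AP^B$ only if $P^A$ and $P^B$ commute. For overlapping $A,B$ you give no reason why they should, and projections onto invariant vectors of two non-commuting subgroups generally do not commute. For disjoint $A,B$ you do get commutation, but only $P^AP^B\ge P^{A\cup B}$. ``Enlarging a set slightly'' needs continuity of $A\mapsto\tr(P^A)$, which you have not established.

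What is missing is the paper's second description of $P^A$: the projection $Q^A$ onto vectors fixed by the semigroup $\mathcal L(A)$ of weak limits of involutions $h_n\in G'_A$ commuting with $G'_{A,n}$. Automatic continuity is used to prove $Q^A=P^A$, as follows.
\begin{itemize}
\item Since $ht_n$ is conjugate to $h_n$ inside $G'_A$, and $LQ^A=Q^A$ for every $L\in\mathcal L(A)$, one gets $\lambda\,\tr(\pi(h)Q^A)=\tr(Q^A)$.
\item Cauchy--Schwarz then gives either $Q^A=0$ or $\pi(h)Q^A=c_hQ^A$.
\item Simplicity of $G'_A$ forces $\pi(G'_A)$ to fix $Q^A\mathcal H$, so $Q^A=P^A$.
\end{itemize}
The $h_n$ eventually preserve every clopen $B$, so their limits commute with $P^B$. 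This yields commutation for overlapping sets. Comparison then gives continuity of $\tr(P^A)$ (Lemma~\ref{LmContinuityTraceOnPA}), and continuity handles the disjoint case.

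\textbf{Step 2 is set up incorrectly.} With $A=\supp(h)$ there is no $x\in A\setminus\supp(h)$. Proposition~\ref{PropositionInvolutionConvergeToCenter} needs $A\setminus\supp(h)$ to be a nonempty clopen set. Once $A$ is strictly larger, your target $\tr(B)=\tr(P^A)$ is false. Take $\chi=\mu(\fix(\cdot))$ in the Feldman--Moore model, where $P^A$ is multiplication by $1_{X\setminus A}$. Then $\tr(B)=\lim\mu(\fix(h_n))=1-\mu(\supp h)$, while $\tr(P^A)=1-\mu(A)$ is strictly smaller. Hence $B(\id-P^A)\neq0$, and your ``conditional expectation vanishes'' claim cannot hold. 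The correct target, $\tr(B)=\tr(P^{\supp h})$, is no easier than the theorem itself. Your argument for $c=1$ presupposes the formula for $\chi(t_n)$. The paper never computes $\tr(B)$ and never needs $c=1$.

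\textbf{Step 3 has no mechanism.} You cannot reduce general elements to involutions, because $\chi$ is not multiplicative. Averaging $\pi(g)$ over conjugation by $G'_{A,n}$ only lands in a relative commutant whose relation to $P^A$ you do not explain. The paper's device is as follows.
\begin{itemize}
\item Split $\supp(g)$ into $g$-towers of heights $2$ and $3$, whose bases are, up to small measure, unions of two $G$-equivalent halves.
\item Every approximating involution $s$ for $\mathcal L(B_n\cup g(B_n))$ (and for the other two tower pieces) eventually factors as $s=h\,ghg^{-1}$ with $h=h^{-1}$. Then $sg=hgh^{-1}$, so $\tr(Q\pi(g))=\tr(\pi(g))$ for all such $Q$.
\item Olshanski's convex-combination lemma (Proposition~\ref{PropOlshanksi}) upgrades this to $\tr(P^{W_n}\pi(g))=\tr(\pi(g))$.
\item Continuity gives $\tr(P^{\supp g}\pi(g))=\chi(g)$, and $\pi(g)P^{\supp g}=P^{\supp g}$ finishes the proof.
\end{itemize}
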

{\it Proof.} The proof will consist of a series of lemmas. Fix an increasing sequence of finite subgroups $G_n$  such that $G = \bigcup_n G_n$. For a clopen set $A$, set $G_{A,n}'=G_n\cap G_A'$.  Using the standard arguments, one can show that $$P_n^A = \frac{1}{|G_{A,n}'|}\sum_{h\in G_{A,n}'} \pi(h)$$ is the orthogonal projection onto the set of $G_{A,n}'$-invariant vectors. Furthermore, $\{P^A_n\}$ is a monotone decreasing sequence of projections.  By Lemma \ref{LemmaLimitProjections}, the strong (weak)  limit of $\{P^A_n\}$ exists and coincides with $P^A$.

If  $\chi$ is the regular character, then $\mathcal M_\pi$ is quasi-equivalent to the regular representation. Thus, without loss of generality, we may assume that   $
\mathcal H = l_2(G)$ and $\pi(g)f(h) = f(g^{-1}h)$ for $f\in l_2(G)$.  Therefore, for every  clopen set $A$, we have that $$(P^A\delta_g,\delta_g) = \lim_{n\to\infty} (P_n^A \delta_g,\delta_g) =  \lim_{n\to\infty}\frac{1}{|G_{A,n}'|}\sum_{h\in G_{A,n}'} (\pi(h)\delta_g,\delta_g)  =  \lim_{n\to\infty}\frac{1}{|G_{A,n}'|},$$ which converges to zero whenever the set $A$ is non-empty.  It follows that  $P^A\delta_g = 0$ for every $g\in G'$ and every non-empty clopen set $A$, which implies that $P^A = 0$ for every non-empty clopen set $A$. Therefore, the formula $$\chi(g) = \tr(P^{\supp(g)})$$ holds trivially for regular characters.

From now on, we will assume that the character $\chi$ and the representation $\pi$ are non-regular and nontrivial.  Our first task is to find an alternative description of projections $\{P^A\}$.  For a clopen set $A$, denote by $\mathcal S(A)$ the set of sequences $\{h_n\}\subset G_A'$ such that $h_n^2=1$,  $h_n$ commutes with $G_{A,n}'$ for every $n\geq 1$, and the sequence $\{\pi(h_n)\}$ converges in the weak operator topology.  Denote by $\mathcal L(A)$ the set of all weak limits of sequences from  $\mathcal S(A)$, that is, 
\begin{equation}\label{EqLA}\begin{split} \mathcal L(A)=\{Q \in B(\mathcal H): Q = \lim_n\pi(h_n) \mbox{ for some } \{h_n\}\in \mathcal S(A)\}.
\end{split}\end{equation}

\begin{lemma}\label{lemmaHnInvarSubsets} Let $\{h_n\} \in \mathcal S(A)$. Then for any clopen set $B$, we have that  $h_n(B) = B$ for all $n$ large enough.
\end{lemma}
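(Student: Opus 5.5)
The plan is to use only the defining property of sequences in $\mathcal S(A)$: $h_n$ commutes with $G_{A,n}'$ for every $n$. The weak convergence of $\{\pi(h_n)\}$ plays no role here. Fix a clopen set $B$. Choose $N$ so large that both $A$ and $B$ are compatible with the Kakutani--Rokhlin partition $\Xi_N$, i.e.\ unions of cylinder sets of depth $N$. Since $G_n$ increases with $n$, we have $G_{A,N}'\subset G_{A,n}'$ for all $n\geq N$. Hence it suffices to show the following: if $g\in G_A'$ is an involution commuting with $G_{A,N}'$, then $g(B)=B$.

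First, since $g$ is supported in $A$, we get $g(B\setminus A)=B\setminus A$. It therefore remains to check that $g$ preserves $B\cap A$, which is again $\Xi_N$-compatible. Write $A$ as a union of depth-$N$ cylinders $U(p)$, where $p$ runs over paths from $v_0$ to vertices $v\in V_N$. For each vertex $v\in V_N$, let $\mathcal C_v$ denote the set of such paths ending at $v$ with $U(p)\subset A$.

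The key step is to show that any element $g$ commuting with $G_{A,N}'$ has the form
\[
g(p\,e_{N+1}e_{N+2}\cdots)=p\,\sigma(e_{N+1}e_{N+2}\cdots)
\]
for $x\in U(p)\subset A$. In other words, $g$ does not change the first $N$ edges of points in $A$, and it acts on tails in a way independent of the particular $p\in\mathcal C_v$. To prove this, I would use the fact that $G_{A,N}'$ contains, for each $v$, the alternating group on $\mathcal C_v$. I would also use the fact that for $x\in A$ the orbit $G_{A,N}'x$ is the set of paths with the same tail whose prefix lies in $\mathcal C_v$ (after the usual telescoping, or restricting to vertices where $|\mathcal C_v|\geq 3$, where the alternating group is transitive). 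Since $g$ commutes with $G'_{A,N}$, it maps $G_{A,N}'$-orbits to $G_{A,N}'$-orbits, and it intertwines the stabilizers: $\mathrm{St}(gx)=\mathrm{St}(x)$ within $G_{A,N}'$. The point stabilizer of $x=p\,t$ in $\mathrm{Alt}(\mathcal C_v)$ (acting on prefixes) fixes exactly the prefix $p$ once $|\mathcal C_v|\geq 4$. So $gx$ must again have prefix $p$, which means $g$ preserves every cylinder $U(p)\subset A$. Consequently $g$ preserves any union of such cylinders, in particular $B\cap A$.

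The main obstacle is the small vertices, where $|\mathcal C_v|\le 3$ and the alternating group is too small for the stabilizer argument. I would handle these by first telescoping the diagram. After telescoping, every $\mathcal C_v$ with $v$ at a deep enough level is either empty or large, because $A$ is nonempty and the diagram is simple, so path counts into $A$ grow. I would then replace $N$ by such a deeper level: $A$ and $B$ remain compatible with it, and the argument above applies verbatim for all $n$ beyond it.
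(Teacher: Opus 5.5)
Your proposal is correct, but it reaches the conclusion by a somewhat different route from the paper.

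The paper works directly with $C=B\cap A$. It first uses that invariant measures have full support, so no element of $G'$ maps a clopen set into a proper subset, to produce a point $x\in C$ with $h_n(x)\notin C$. It then picks a single element $g\in G'_{C,n}$ with $g(x)\neq x$. Since $g$ fixes $h_n(x)$, we get $h_ng(x)\neq h_n(x)=gh_n(x)$, which contradicts commutation.

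Your argument uses the same underlying mechanism: an element commuting with $G'_{A,N}$ forces $x$ and its image to have the same stabilizer in $G'_{A,N}$. You apply it through the subgroups $\mathrm{Alt}(\mathcal C_v)$ to prove a stronger structural fact. For all $n\ge N$, $h_n$ preserves every depth-$N$ cylinder contained in $A$, and hence every $\Xi_N$-compatible clopen set at once. This removes the need for the full-support step. Cylinder preservation is shown pointwise, and equality $h_n(U(p))=U(p)$ follows because these cylinders partition $A$ and $h_n(A)=A$. The cost is the counting condition $|\mathcal C_v|\ge 4$ and a two-case stabilizer comparison.

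A few points to tighten:
\begin{itemize}
\item If $gx$ had prefix $q$ ending at a vertex $w\neq v$, say explicitly that $\mathrm{Alt}(\mathcal C_v)$ fixes $gx$ but does not fix $x$. That is what rules out jumping between vertices. Your phrase ``fixes exactly the prefix $p$'' only covers prefixes ending at $v$.
\item The growth of $\min_v|\mathcal C_v|$ at deep levels needs more than simplicity. It also needs that the path space has no isolated points, so that every $G$-orbit meets $A$ infinitely often (a K\"onig-type argument then rules out bounded counts). This is the same ingredient the paper invokes as ``minimality''.
\item No actual telescoping is needed; choosing $N$ deep enough suffices and keeps the indexing of the $G_n$ in the definition of $\mathcal S(A)$ unchanged.
\item The involution hypothesis plays no role in your argument.
\end{itemize}
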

\begin{proof} Note that elements $\{h_n\}$ pointwise stabilize the set $X\setminus A$. Thus,  in order to prove the result it is enough to show that $h_n(B\cap A) = B\cap A$ for all $n$ large enough. Set $C = B\cap A$. 

Assuming $C\neq \emptyset$ and using the minimality of $(X,G')$, we can find $n_0\geq 1$ such that for every $n\geq n_0$ and  for every $x\in C$ there exists $g\in G_{C,n}'$ with $g(x)\neq x$ and $\supp(g)\subset C$. Assume towards contradiction that $h_{n}(C)\neq C$ for some $n\geq n_0$. 
Since every $G'$-invariant measure is fully supported,  no element of $G'$ can map a clopen set into a proper subset. Thus, there exists $x\in C$ such that $h_{n}(x) \in X\setminus C$. Fix $g\in G_{C,n}'$  with $g(x) \neq x$. It follows that $h_ng(x) \neq h_n(x) = gh_n(x)$, which  contradicts the assumption that $h_n$ commutes with $G_{A,n}'$. 
\end{proof}

\begin{lemma}\label{LmLAinv} The set of operators $\mathcal L(A)$ is commutative and invariant under multiplication, \ie for any $Q_1,Q_2\in \mathcal L(A)$ one has $Q_1Q_2 = Q_2Q_1\in\mathcal L(A)$. Furthermore, every operator in $\mathcal L(A)$ commutes with $\pi(G_A')$.
\end{lemma}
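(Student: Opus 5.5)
The plan is to prove the three assertions of Lemma \ref{LmLAinv} in the order: commutation with $\pi(G_A')$, then pairwise commutativity, then closure under products. Each step follows from the defining property of sequences in $\mathcal S(A)$: $h_n$ commutes with the growing finite groups $G_{A,n}'$, whose union is $G_A'$.

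\emph{Commutation with $\pi(G_A')$.} Let $Q=\lim_n\pi(h_n)$ with $\{h_n\}\in\mathcal S(A)$, and fix $g\in G_A'$. Then $g\in G_{A,m}'$ for some $m$, so $\pi(g)\pi(h_n)=\pi(h_n)\pi(g)$ for all $n\ge m$. Multiplication by the fixed bounded operator $\pi(g)$ on either side is weakly continuous, so passing to the limit gives $\pi(g)Q=Q\pi(g)$.

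\emph{Commutativity of $\mathcal L(A)$.} Take $Q_1=\lim\pi(h_n)$ and $Q_2=\lim\pi(k_n)$ with $\{h_n\},\{k_n\}\in\mathcal S(A)$. Since every $k_m\in G_A'$, the first step gives $Q_1\pi(k_m)=\pi(k_m)Q_1$ for each $m$. Letting $m\to\infty$ (again multiplication by the fixed operator $Q_1$ is weakly continuous) yields $Q_1Q_2=Q_2Q_1$. This one-sided limit avoids any joint continuity issue for products in the weak topology.

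\emph{Closure under multiplication.} This is the main obstacle, because weak limits of products need not be products of weak limits. The plan is to interleave indices. Given $Q_1,Q_2$ as above, build an increasing sequence $n_1<n_2<\cdots$ and set $u_j=h_{n_j}k_{m_j}$, choosing $m_j$ so that $k_{m_j}$ commutes with $h_{n_j}$. This is possible because $h_{n_j}\in G_{A,N}'$ for some $N$, and $k_m$ commutes with $G_{A,N}'$ once $m\ge N$. We also take $m_j\ge j$ and $n_j\ge j$, and ask that $h_{n_j}$ and $k_{m_j}$ both commute with $G_{A,j}'$. Then $u_j$ is an involution in $G_A'$ commuting with $G_{A,j}'$.

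Convergence is handled with vectors. Fix finitely many vectors of the form $\eta,\zeta$ from a countable dense set. Choose $n_j$ first so that $|(\pi(h_{n_j})\eta,\zeta)-(Q_1\eta,\zeta)|$ is small for the relevant pairs. We need $(\pi(h_{n_j})\pi(k_{m_j})\eta,\zeta)\approx(\pi(h_{n_j})Q_2\eta,\zeta)$, which requires the convergence of $\pi(k_m)$ to hold strongly. Weak convergence alone is not enough here.

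So we first upgrade: for involution sequences, $\pi(k_m)$ is self-adjoint and unitary. Weak convergence to $Q_2$ together with $\|\pi(k_m)\eta\|=\|\eta\|$ gives strong convergence only if $Q_2$ is unitary, which may fail. Instead we exploit the freedom to pick $m_j$ after $n_j$. With $h_{n_j}$ fixed, $\pi(h_{n_j})\pi(k_m)\to\pi(h_{n_j})Q_2$ weakly as $m\to\infty$, so we choose $m_j$ with $|(\pi(h_{n_j})\pi(k_{m_j})\eta,\zeta)-(\pi(h_{n_j})Q_2\eta,\zeta)|<1/j$. Separately, $(\pi(h_{n_j})Q_2\eta,\zeta)\to(Q_1Q_2\eta,\zeta)$, since $\pi(h_n)\to Q_1$ weakly applied to the fixed vector $Q_2\eta$. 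Choosing $n_j$ to make this small, and then $m_j$, gives $(\pi(u_j)\eta,\zeta)\to(Q_1Q_2\eta,\zeta)$ on a dense countable set. Uniform boundedness then yields weak convergence, so $\{u_j\}\in\mathcal S(A)$ after reindexing (with $u_j$ placed at index $j$, commuting with $G_{A,j}'$). Hence $Q_1Q_2\in\mathcal L(A)$.
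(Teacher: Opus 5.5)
Your proposal is correct and takes essentially the paper's approach. The paper also chooses the two indices in succession, taking the second one much larger so that the two involutions commute and their product is again an involution commuting with $G_{A,n}'$. It then uses separate weak continuity of multiplication to obtain $Q_1Q_2$ as the weak limit, which your diagonal argument over a countable dense set spells out; the detour about needing strong convergence is unnecessary, as your own final argument shows.

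Your derivation of $Q_1Q_2=Q_2Q_1$ from the commutation of $Q_1$ with $\pi(G_A')$ is cleaner than the paper's. The paper reads commutativity off the commuting product sequence, and that sequence directly yields only $Q_1Q_2$ as a limit, not $Q_2Q_1$.
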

\begin{proof} Notice that if $\{h_n\}_{n\geqslant 1}\in\mathcal S(A)$, then, by definition,  $h_n$ commutes with $G_{A,n}'$ for every $n$. Since every operator  $Q\in\mathcal L(A)$ is the limit of a sequence from $\mathcal S(A)$, we obtain that  $Q$ commutes with $\pi(G_A')$. 

Fix $Q_1,Q_2\in \mathcal L(A)$. Find two sequences $\{h_n^{(i)}\}_{n\geqslant 1}\in\mathcal S(A)$, $i=1,2$, such that $Q_i = \lim \pi(h_n^{(i)})$.  Since multiplication is separately continuous in the weak operator topology, there exist  two increasing sequences $\{k_n\}_{n\geqslant 1}$ and $\{m_n\}_{n\geqslant 1}$ of positive integers such that $Q_1Q_2$  is the limit of the sequence $\{\pi(h_{k_n}^{(1)}h_{m_n}^{(2)})\}$. Additionally, we can take $m_n>k_n$ large enough to ensure that for every $n\geq 1$ the elements $h_{k_n}^{(1)}$ and $h_{m_n}^{(2)}$ commute. It follows that $Q_1Q_2 = Q_2Q_1$ and $(h_{k_n}^{(1)}h_{m_n}^{(2)})^2= (h_{k_n}^{(1)})^2 (h_{m_n}^{(2)})^2= 1$. Thus, $\{h_{k_n}^{(1)}h_{m_n}^{(2)}\} $ belongs to $\mathcal S(A)$, which implies that $Q_1Q_2\in \mathcal L(A)$.
\end{proof}

 Set  
$$\mathcal H_A' = \{ v \in \mathcal H : Qv = v \mbox{ for every }Q\in \mathcal L(A)\}.$$

Denote by $Q^A$ the orthogonal projection onto $\mathcal H_A'$. In view of Lemma \ref{lemmaProjBelongsToAlgebra}, we obtain that $Q^A\in \mathcal M_\pi$.

\begin{lemma}\label{LemmaAuxImageSubspace}  For $g\in G'$, we have that $\pi(g)Q^A  \pi(g^{-1}) = Q^{g(A)}$. In particular, $\pi(g)Q^A = Q^A\pi(g)$ for every $g\in G_A'$. 
\end{lemma}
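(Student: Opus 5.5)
The plan is to show first that conjugation by $\pi(g)$ carries the set $\mathcal L(A)$ onto $\mathcal L(g(A))$. Since $\mathcal H_A'$ is the common fixed subspace of $\mathcal L(A)$, this gives $\pi(g)\mathcal H_A' = \mathcal H_{g(A)}'$, and hence $\pi(g)Q^A\pi(g^{-1}) = Q^{g(A)}$. The ``in particular'' clause then follows at once. Indeed, if $g\in G_A'$ then $g(A)=A$, because $\supp(g)\subset A$ and $g$ is a bijection fixing $X\setminus A$ pointwise. So $\pi(g)$ commutes with $Q^A$. (Alternatively, this clause follows directly from Lemma~\ref{LmLAinv}: every $Q\in\mathcal L(A)$ commutes with $\pi(G_A')$, so $\mathcal H_A'$ is $\pi(G_A')$-invariant.)

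For the main inclusion $\pi(g)\mathcal L(A)\pi(g^{-1})\subset \mathcal L(g(A))$, fix $Q=\lim_n \pi(h_n)$ with $\{h_n\}\in\mathcal S(A)$. The conjugated sequence $\pi(gh_ng^{-1})$ converges weakly to $\pi(g)Q\pi(g^{-1})$, because conjugation by a fixed unitary is weakly continuous. Moreover, $gh_ng^{-1}$ is an involution in $G_{g(A)}'$. The only defect is that $gh_ng^{-1}$ is required to commute with $G_{g(A),n}'$, whereas it commutes with $gG_{A,n}'g^{-1}$ instead. I would fix this by reindexing. Since $g\in G_m$ for some $m$, we have $gG_{A,k}'g^{-1}\subset G_{g(A),\max(k,m)}$, and conversely $G_{g(A),n}'\subset g\,G_{A,\max(n,m)}'\,g^{-1}$. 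Hence $gh_kg^{-1}$ commutes with $G_{g(A),n}'$ as soon as $k\ge \max(n,m)$.

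Setting $h_n' = g h_{\max(n,m)} g^{-1}$ therefore gives a sequence in $\mathcal S(g(A))$ with the same weak limit $\pi(g)Q\pi(g^{-1})$. Passing to the subsequence $h_{\max(n,m)}$ preserves the limit, since it is a tail of the convergent sequence $\pi(h_n)$. This proves the inclusion. Applying the same argument to $g^{-1}$ and $g(A)$ gives the reverse inclusion, so $\pi(g)\mathcal L(A)\pi(g^{-1}) = \mathcal L(g(A))$.

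It remains to pass to fixed subspaces. We have $v\in\mathcal H_A'$ if and only if $Qv=v$ for all $Q\in\mathcal L(A)$. This holds if and only if $(\pi(g)Q\pi(g^{-1}))\pi(g)v=\pi(g)v$ for all $Q\in\mathcal L(A)$, which by the equality of sets above is equivalent to $\pi(g)v\in\mathcal H_{g(A)}'$. Thus $\pi(g)Q^A\pi(g)^{-1}$ is the projection onto $\mathcal H_{g(A)}'$, namely $Q^{g(A)}$.

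The only delicate point is the index bookkeeping. It relies on $G_{g(A),n}' = G_n\cap G_{g(A)}'$ and on $gG_ng^{-1}=G_n$ for $n\ge m$, which holds because $G_n$ is a group containing $g$ once $n\ge m$.
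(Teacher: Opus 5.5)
Your proposal is correct and follows the paper's argument: conjugation by $\pi(g)$ carries $\mathcal L(A)$ onto $\mathcal L(g(A))$, and therefore carries the common fixed space $\mathcal H_A'$ onto $\mathcal H_{g(A)}'$. The paper only asserts the equality $\pi(g)\mathcal L(A)\pi(g)^{-1}=\mathcal L(g(A))$; your reindexing $h_n'=g\,h_{\max(n,m)}\,g^{-1}$ supplies the justification it leaves implicit.
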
 
\begin{proof}  Note that $\pi(g)Q^A  \pi(g^{-1})$ is the projection onto $\pi(g)\mathcal H_A'$, which consists of the vectors invariant under the operators $\pi(g)\mathcal L(A) \pi(g)^{-1}$.  Note that $\pi(g)\mathcal L(A) \pi(g)^{-1} = \mathcal L(g(A))$. It follows that $\pi(g)\mathcal H_A' = \mathcal H_{g(A)}'$, which completes the proof. 
\end{proof}


\begin{lemma}\label{LemmaAugProjectionAbsorption}  Let $A$ be a clopen set.   Then  $\pi(g) Q^A = Q^A$ for every $g\in G_A'$.  
\end{lemma}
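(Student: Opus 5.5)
Since $G_A'$ is simple (Proposition~\ref{PropositionSimplicityCommutatorSubgroup}), it is generated by the conjugacy class of any nontrivial involution in it. It therefore suffices to show that $\pi(g)Q^A=Q^A$ for a single well-chosen involution $g\in G_A'$, or for some set of involutions whose closure under $G_A'$-conjugation generates $G_A'$. Indeed, once $\pi(g)Q^A=Q^A$ holds, Lemma~\ref{LemmaAuxImageSubspace} gives
\[
\pi(sgs^{-1})Q^A=\pi(s)\pi(g)Q^A\pi(s^{-1})=\pi(s)Q^A\pi(s^{-1})=Q^A
\]
for all $s\in G_A'$. Moreover, the set $\{u\in G_A' : \pi(u)Q^A=Q^A\}$ is a subgroup, because $Q^A$ commutes with $\pi(G_A')$. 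Hence it is a nontrivial normal subgroup of a simple group, and so equals $G_A'$.

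To get $\pi(g)Q^A=Q^A$ for an involution $g\in G_A'$, I will use Proposition~\ref{PropositionInvolutionConvergeToCenter}. Take $h$ an involution in $G_A'$ with $\supp(h)\subsetneq A\setminus\{x\}$. That proposition gives involutions $t_n,h_n\in G_A'$ with $t_nh = c_n h_n c_n^{-1}$ for some $c_n\in G_A'$, where $h_n$ commutes with $G'_{A,n}$ and $\supp(t_n)\to\{x\}$. Pass to a subsequence along which $\pi(h_n)$ converges weakly; by definition the limit lies in $\mathcal L(A)$, so $\pi(h_n)$ approaches an operator fixing $\mathcal H'_A$ pointwise. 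For the $t_n$, Theorem~\ref{ThAutomaticContinuity} applies with $Y=\{x\}$ after conjugating so that the $t_n$ fix $x$ (which they do) and have supports shrinking to $x$. We need to check hypotheses (a) and (b) for $G'$ acting on the orbit of a point, which follow from minimality and high transitivity of the full group on finite sets in an orbit. The theorem then gives that every weak limit of $\pi(t_n)$ is a scalar $c$. Actually I would rather avoid scalars: applying the same to $h$ replaced by trivial, or noting that $t_n$ themselves are of the form appearing in $\mathcal S(A)$ along a subsequence, is messier. So I will keep $c$ and determine it at the end.

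Now compute, for $v\in\mathcal H'_A$:
\[
\pi(t_n)\pi(h)v=\pi(c_n)\pi(h_n)\pi(c_n^{-1})v .
\]
Since $\pi(c_n^{-1})$ commutes with $Q^A$ (Lemma~\ref{LemmaAuxImageSubspace}), we have $\pi(c_n^{-1})v\in\mathcal H'_A$. The key step is to upgrade weak convergence $\pi(h_n)\to Q\in\mathcal L(A)$, with $Qw=w$ on $\mathcal H'_A$, to the statement $\|\pi(h_n)w-w\|\to 0$ uniformly over $w=\pi(c_n^{-1})v$. For unitaries, weak convergence $(\pi(h_n)w,w)\to\|w\|^2$ forces norm convergence, but the vector varies with $n$. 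I expect this to be the main obstacle. To handle it I would use traces: work with
\[
\tr\bigl(Q^A\pi(t_nh)Q^A\bigr)=\tr\bigl(Q^A\pi(h_n)Q^A\bigr),
\]
using $Q^A\pi(c_n)=\pi(c_n)Q^A$ and traciality. The right side tends to $\tr(Q^AQQ^A)=\tr(Q^A)$ along the subsequence. The left side tends to $c\,\tr(Q^A\pi(h)Q^A)$, since $\pi(t_n)\to c$ weakly and $\pi(h)$ commutes with $\pi(t_n)$. This gives $|c|\,|\tr(Q^A\pi(h))|=\tr(Q^A)$, and together with $|\tr(Q^A\pi(h))|\le\tr(Q^A)$ it forces equality in Cauchy--Schwarz, so $\pi(h)Q^A=\lambda Q^A$ with $|\lambda|=1$. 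Since $h$ is an involution, $\lambda=\pm1$. Because $h$ lies in a simple perfect group and $\lambda$ defines a character of the subgroup normally generated by such $h$, we get $\lambda=1$; I expect to need the case $Q^A\ne0$, while the case $Q^A=0$ is trivial. This yields the claim for one involution, and the simplicity argument above finishes the proof.
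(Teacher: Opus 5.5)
Your proof is correct and is essentially the paper's proof. You use Proposition~\ref{PropositionInvolutionConvergeToCenter} to write $t_nh$ as a $G_A'$-conjugate of an $h_n$ that, after passing to a subsequence, lies in $\mathcal S(A)$. You then use Theorem~\ref{ThAutomaticContinuity} (under the standing non-regularity assumption) to make $\pi(t_n)$ converge weakly to a scalar $c$, and the tracial identity $\tr(Q^A\pi(t_nh)Q^A)=\tr(Q^A\pi(h_n)Q^A)$ with Cauchy--Schwarz to get $\pi(h)Q^A=\lambda Q^A$. Finally, simplicity of $G_A'$ kills the resulting homomorphism into $S^1$.

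The differences are cosmetic. You merge the paper's two cases $|c|<1$ and $|c|=1$ into a single equality-in-Cauchy--Schwarz argument. That argument implicitly uses $|c|\le 1$, faithfulness of $\tr$, and that $\xi$ is separating to pass from $\pi(h)Q^A\xi=\lambda Q^A\xi$ to $\pi(h)Q^A=\lambda Q^A$. Your upfront reduction to a single involution is redundant once the homomorphism $u\mapsto c_u$ on all of $G_A'$ is shown to be trivial.
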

\begin{proof} (I)  If $Q^A = 0$, then the result holds trivially.  Consider the case when $Q^A \neq 0$.  Fix an arbitrary element $h\in G_A'$ such that $h^2 = 1$, $h\neq 1$, and $\supp(h)\subsetneq A$.  

 By Proposition \ref{PropositionInvolutionConvergeToCenter},  for every $n\geq 1$ there exists an element  $h_n$ that commutes with $G_{A,n}'$  and that is ``almost'' conjugate to $h$. More precisely, for every $x\in A\setminus\supp(h)$, there exists a sequence $\{t_n\}\subset G_A'$ such that  $\supp(t_n)\to\{x\}$  and $x\notin\supp(t_n)$ for every $n$, and there exists a sequence $\{h_n\}\subset G_A'$ such that $h_n$ commutes with $G_{A,n}'$ and $h_n$ is conjugate to $ht_n$ for every $n$.
Let $s_n\in G_A'$ be such that $ht_n=s_nh_ns_n^{-1}$.

By taking a subsequence if necessary, we may assume that both $\{\pi(t_n)\}$ and $\{\pi(h_n)\}$ are weakly convergent. By  the automatic continuity property of group representations (Theorem  \ref{ThAutomaticContinuity}), we obtain that $\pi(t_n)$ converges to some scalar operator $\lambda\cdot \id$.  In particular, $(\pi(t_n)\xi,\xi)\to \lambda (\xi,\xi).$  Applying the Cauchy-Schwartz inequality, we get that $|\lambda|\leq 1$. 


 By Lemma \ref{LemmaAuxImageSubspace}, we get that $\pi(s_n^{-1})Q^A = Q^A\pi(s_n^{-1})$.   Recall that $\tr(T) = (T\xi,\xi)$ for every $T\in \mathcal M_\pi$.   Therefore, for every $n$ we have 
  \begin{equation}\label{eqnProjectorsAux1}
  \begin{split}
  \tr(\pi(h t_n) Q^A) = \tr(\pi(s_n h_n s_n^{-1}) Q^A)  = \tr(\pi(s_n) \pi( h_n) Q^A \pi( s_n^{-1}) )  \\ 
  = \tr(\pi(h_n)Q^A) = (\pi(h_n)Q^A\xi,\xi). \end{split}\end{equation}
     
Set $L=\lim\pi(h_n)$. Then $L \in \mathcal L(A)$ and  $LQ^A=Q^A$.  Taking the limit  in (\ref{eqnProjectorsAux1}) as $n\to\infty$, we obtain:
\begin{equation*}(\lambda \pi(h)Q^A\xi,\xi)=
(LQ^A\xi,\xi) = (Q^A\xi,\xi) = (Q^A\xi,Q^A\xi) = ||Q^A\xi||^2.\end{equation*}

Lemma \ref{LemmaAuxImageSubspace} ensures that $\pi(h)$ commutes with $Q^A$. Thus, applying the Cauchy-Schwartz inequality we obtain that 
\begin{equation}\label{EqPihQA}||Q^A\xi||^2 =  (\lambda \pi(h)Q^A\xi,\xi)=
(\lambda \pi(h)Q^A\xi,Q^A\xi) \leq |\lambda|  ||Q^A\xi||^2.\end{equation}

There are two possibilities: (i) $|\lambda|<1$ and (ii) $|\lambda| = 1$.

(i) If $|\lambda|<1$, then it follows from \eqref{EqPihQA} that $Q^A\xi = 0$. Since $\xi$ is a separating vector, we obtain that $Q^A=0$, which is a contradiction.

(ii) If $|\lambda| =1$, then it follows from the Cauchy-Schwartz inequality and \eqref{EqPihQA} that $\lambda \pi(h)Q^A\xi = c Q^A\xi$ for some $c\in \mathbb C$, which implies that $$\pi(h)Q^A = c_h Q^A\mbox{ for some }c_h\in \mathbb C\setminus\{0\}. $$ 

Since the operators $\pi(G_A')$ commute with $Q^A$ (Lemma \ref{LemmaAuxImageSubspace}), we obtain that $\pi(qhq^{-1})Q^A = c_h Q^A$ for every $q\in G_A'$.  By simplicity of the group $G_A'$, see Proposition \ref{PropositionSimplicityCommutatorSubgroup} and the references preceding it, the conjugacy class of $h$ generates $G_A'$. Therefore, for every $g\in G_A'$ there exists $c_g\in \mathbb C\setminus \{0\}$ such that $\pi(g)Q^A = c_g Q^A$.  It follows that the set of operators $\pi(G_A')$ restricted onto $Q^A\mathcal H \neq \{0\}$ are scalar, whereby giving rise to a homomorphism $g\mapsto c_g$ into the multiplicative group $\mathbb C\setminus \{0\}$. Simplicity of the group $G_A'$ implies that this homomorphism is trivial, that is, $c_g = 1$ for every $g\in G_A'$, which implies that $\pi(g) Q^A = Q^A$  for every $g\in G_A'$. 
\end{proof}

\begin{lemma}\label{LmPAQA}   For any clopen set $A$ we have that $P^A=Q^A$. In particular,  for every $g\in G'$, we have that $\pi(g)P^A  \pi(g^{-1}) = P^{g(A)}$.
\end{lemma}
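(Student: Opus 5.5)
We prove $P^A = Q^A$ by two inclusions of ranges, $Q^A\mathcal H\subseteq P^A\mathcal H$ and $P^A\mathcal H\subseteq Q^A\mathcal H$.

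\emph{First inclusion.} Lemma~\ref{LemmaAugProjectionAbsorption} gives $\pi(g)Q^A = Q^A$ for every $g\in G_A'$. Hence every vector in $Q^A\mathcal H$ is $G_A'$-invariant, i.e.\ $Q^A\mathcal H\subseteq P^A\mathcal H$, and so $Q^A\le P^A$.

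\emph{Second inclusion.} Let $v\in P^A\mathcal H$, so $\pi(g)v=v$ for all $g\in G_A'$. Take any $Q\in\mathcal L(A)$, written as $Q=\lim_n \pi(h_n)$ in the weak operator topology with $\{h_n\}\in\mathcal S(A)$. By definition of $\mathcal S(A)$ each $h_n$ lies in $G_A'$, so $\pi(h_n)v=v$ for every $n$. Passing to the weak limit, $(Qv,w)=\lim_n(\pi(h_n)v,w)=(v,w)$ for all $w\in\mathcal H$, hence $Qv=v$. Since $Q\in\mathcal L(A)$ was arbitrary, $v\in\mathcal H_A'=Q^A\mathcal H$. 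Thus $P^A\le Q^A$, and together with the first inclusion $P^A=Q^A$.

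\emph{Covariance.} For $g\in G'$, combining $P^A=Q^A$ (applied to $A$ and to $g(A)$) with Lemma~\ref{LemmaAuxImageSubspace} gives
\[
\pi(g)P^A\pi(g^{-1})=\pi(g)Q^A\pi(g^{-1})=Q^{g(A)}=P^{g(A)}.
\]
Alternatively, this follows directly: $\pi(g)$ maps the $G_A'$-invariant vectors onto the $gG_A'g^{-1}$-invariant vectors, and $gG_A'g^{-1}=G_{g(A)}'$ because $G'$ preserves supports under conjugation.

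All of the substantive work is contained in Lemma~\ref{LemmaAugProjectionAbsorption}, which relies on the automatic continuity theorem. With that lemma available, this statement is a formal consequence. The only point needing care is that the elements $h_n$ in the definition of $\mathcal S(A)$ lie in $G_A'$ itself, not merely in its commutant; this holds by definition, which is what makes the second inclusion immediate.
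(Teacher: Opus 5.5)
Your proof is correct and matches the paper's. The inclusion $Q^A\mathcal H\subseteq P^A\mathcal H$ comes from Lemma~\ref{LemmaAugProjectionAbsorption}; the reverse inclusion, which the paper dismisses as ``clear'', is exactly your weak-limit argument using $h_n\in G_A'$; and the covariance is read off from Lemma~\ref{LemmaAuxImageSubspace}. Your observation that the covariance also follows directly from $gG_A'g^{-1}=G_{g(A)}'$ is a valid shortcut that bypasses $Q^A$ entirely.
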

\begin{proof}  Set $$\mathcal H_A'' = \{v\in \mathcal H : \pi(g)v = v\mbox{ for every }g\in G_A'\},$$ the range of the projection $P^A$. Clearly, $\mathcal H_A''\subset \mathcal H_A'$. 

Lemma \ref{LemmaAugProjectionAbsorption} implies that $\pi(g) Q^A = Q^A$ for every $g\in G_A'$. Thus, for any $v\in \mathcal H_A'$ and $g\in G_A'$, we have that $Q^Av = \pi(g)v = \pi(g)Q^Av = Q^Av = v$, establishing $\mathcal H_A'\subset \mathcal H_A''$. We conclude that $P^A=Q^A$.
\end{proof}

In a series of lemmas that follow we will show that the projections $\{P^A\}$ form a commutative semigroup.

\begin{lemma}\label{LmPAPBintersect} Let $A$ and $B$ be two clopen sets such that $A\cap B\neq \emptyset$. Then $P^AP^B=P^{A\cup B}$.
\end{lemma}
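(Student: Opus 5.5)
The plan is to identify the ranges of both sides. The range of $P^{A\cup B}$ is the space of vectors invariant under $G'_{A\cup B}$. By Proposition~\ref{PropositionSimplicityCommutatorSubgroup}(iii), together with $\mathcal A(G|_U)=G'_U$ for full groups of Bratteli diagrams, we have $\langle G'_A, G'_B\rangle = G'_{A\cup B}$ whenever $A\cap B\neq\emptyset$. Hence a vector is $G'_{A\cup B}$-invariant if and only if it is both $G'_A$- and $G'_B$-invariant, so
\[
\operatorname{Ran}(P^{A\cup B}) = \operatorname{Ran}(P^A)\cap \operatorname{Ran}(P^B).
\]
It therefore suffices to show that $P^A$ and $P^B$ commute. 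The product of two commuting orthogonal projections is the orthogonal projection onto the intersection of their ranges, which then gives $P^AP^B=P^{A\cup B}$.

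To prove commutation, I will use the description $P^A=Q^A$ from Lemma~\ref{LmPAQA}, with $Q^A$ the projection onto the common fixed space of $\mathcal L(A)$, and the absorption property from Lemma~\ref{LemmaAugProjectionAbsorption}. Take $g\in G'_B$. Then $\pi(g)P^B=P^B$, and taking adjoints gives $P^B\pi(g)=P^B$. Averaging over the finite groups $G'_{A\cap B,n}\subset G'_B$ gives $P^B P^{A\cap B}_n = P^B$, and hence $P^BP^{A\cap B}=P^B$ in the limit. That alone is not enough to make $P^A$ and $P^B$ commute, so I will argue instead via $\mathcal L$.

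Choose an element $Q\in\mathcal L(A)$ with $Q\xi$-fixed space equal to $\operatorname{Ran}P^A$. More concretely, I want to show $P^B$ commutes with every $L=\lim\pi(h_n)\in\mathcal L(A)$. By Lemma~\ref{lemmaHnInvarSubsets}, $h_n$ eventually preserves $B$, $A\cap B$, $A\setminus B$, and so on. Using Lemma~\ref{LemmaAuxImageSubspace} (for $P$), we get $\pi(h_n)P^B\pi(h_n)^{-1}=P^{h_n(B)}=P^B$ for large $n$, provided $h_n\in G'$, which holds. Passing to the weak limit is delicate because multiplication is not jointly continuous. Still, $\pi(h_n)P^B=P^B\pi(h_n)$ holds exactly for large $n$, and this passes to the weak limit, giving $LP^B=P^BL$.

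Consequently $P^B$ commutes with every element of $\mathcal L(A)$, so $P^B$ preserves $\mathcal H'_A=\operatorname{Ran}Q^A$. Since $P^B$ is self-adjoint, it also preserves the orthogonal complement, and therefore commutes with $Q^A=P^A$. Combined with the range computation above, this completes the argument.

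The main obstacle is the group-theoretic identity $\langle G'_A,G'_B\rangle=G'_{A\cup B}$ in the Bratteli setting. I plan to take it from Proposition~\ref{PropositionSimplicityCommutatorSubgroup}(iii) applied to the AF system, using $\mathcal A(G|_U)=G'_U$, which follows from the remark $\mathcal A(G)=G'$ applied to the restricted diagram. I would verify this last point carefully.
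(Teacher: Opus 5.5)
Your argument is correct and is essentially the paper's proof. You show that $P^B$ commutes with every $L\in\mathcal L(A)$, using Lemma~\ref{lemmaHnInvarSubsets} together with $\pi(g)P^B\pi(g^{-1})=P^{g(B)}$, deduce that $P^B$ commutes with $Q^A=P^A$, and then finish with $\langle G'_A,G'_B\rangle=G'_{A\cup B}$ from Proposition~\ref{PropositionSimplicityCommutatorSubgroup}(iii). The only cosmetic difference is in getting commutation from the invariance of $\mathcal H_A'$: you note that the self-adjoint $P^B$ also preserves $(\mathcal H_A')^\perp$, whereas the paper derives $P^AP^BP^A=P^BP^A$ and takes adjoints. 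The unfinished sentence about choosing a single $Q$ with a prescribed fixed space can simply be deleted.
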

\begin{proof}  Consider an arbitrary operator $Q\in \mathcal L(A)$. Choose $\{h_n\}\in \mathcal S(A)$ such that  $Q = \lim \pi(h_n)$.  
By Lemmas \ref{lemmaHnInvarSubsets} and \ref{LmPAQA}, we obtain that $\pi(h_n)P^B\pi(h_n^{-1})= P^{h_n(B)}  = P^B$ for all $n$ large enough. Hence,  $\pi(h_n) P^B = P^B \pi(h_n)$ for all $n$ large enough. It follows that $QP^B=P^BQ$. Since $Q$ is an arbitrary element of $\mathcal L(A)$, we obtain that $P^B\mathcal H_A'\subset \mathcal H_A'$. Similarly,  $P^A\mathcal H_B'\subset \mathcal H_B'$. 

 Recall that by Lemma \ref{LmPAQA}, $\mathcal H_A'$ is the range of $P^A$ and  $\mathcal H_B'$ is the range of $P^B$.  Thus for any $v\in \mathcal H$, we obtain that $P^BP^Av\in \mathcal H_A'$. Hence, $P^AP^BP^A v = P^BP^Av$. It follows that $P^AP^BP^A = P^BP^A$. Taking the adjoint of these operators, we obtain that $$P^A P^B = (P^B P^A)^* = (P^AP^BP^A)^* = P^A P^BP^A = P^BP^A.$$

It follows that $P^AP^B$ is the orthogonal projection onto the subspace of $<G_A',G_B'>$-invariant vectors in $\mathcal H$ \cite[Theorem 9.5-3]{Kreyszig:Book}. Since $A\cap B\neq\emptyset$, by Proposition \ref{PropositionSimplicityCommutatorSubgroup}, we have that $<G_A',G_B'>=G_{A\cup B}'$, which completes the proof. 
\end{proof}


Recall that throughout the paper $\mathcal M(X,G)$ denotes the set of $G$-invariant probability measures on $X$. Introduce the quantity:
\begin{equation*}\label{EqRA} r(A)=\inf\{\mu(A) : \mu\in \mathcal M(X,G)\}. 
\end{equation*} 

 By minimality of $(X,G)$, $r(A)>0$ for every non-empty clopen set  $A$, see Lemma 2.8 in \cite{GlasnerWeiss-Equivalence-95}.  Let $A$ and $B$ be clopen sets. Recall that in (\ref{EqDistanceOfSets}) we defined the distance between $A$ and $B$ by 
  \begin{equation*}d(A,B) = \sup\{\mu(A\triangle B) : \mu\in\mathcal M(X,G)\}.\end{equation*}  
 The following lemma shows that the function $$\textrm{CO}(X)\ni A\mapsto \tr(P^A)\in [0,1]$$ is continuous with respect to the metric $d$.   We would like to emphasize that all sets in the lemma are assumed to be non-empty.

\begin{lemma}\label{LmContinuityTraceOnPA} Let $A\subset X$ be a nonempty clopen set. Then for any $0<\varepsilon<1$ there exists $\delta>0$ such that for any nonempty clopen set $B$ with $d(A,B) < \delta$, we have  
 $|\tr(P^A) - \tr(P^B)|<\varepsilon$.
 \end{lemma}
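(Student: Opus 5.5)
The plan is to deduce continuity from three structural properties of the function $f(A)=\tr(P^A)$ on nonempty clopen sets.

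\emph{Monotonicity.} If $A\subset B$ then $G_A'\subset G_B'$, so $P^B\le P^A$ and $f(B)\le f(A)$.

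\emph{Invariance.} By Lemma~\ref{LmPAQA}, $f(g(A))=f(A)$ for every $g\in G'$. If $A$ and $g(A)$ are $G$-equivalent for some $g\in G$, we first correct $g$ by an odd permutation supported in a small region away from the sets in play, so that we may take $g\in G'$.

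\emph{A submodularity inequality.} By Lemma~\ref{LmPAPBintersect}, whenever $C\cap D\neq\emptyset$ we have $P^{C\cup D}=P^CP^D$, and all such projections commute. For commuting projections $P,Q,R$ one has
\[
\tr(PQ)-\tr(PQR)=\tr(PQ(1-R))\le \tr(Q(1-R))=\tr(Q)-\tr(QR).
\]

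The core step is a one-sided estimate. Let $C$ be a nonempty clopen set and let $D$ be a clopen set disjoint from $C$. Suppose there are pairwise disjoint clopen sets $D=D_1,D_2,\dots,D_k$, all disjoint from $C$, together with elements of $G'$ that fix $C$ pointwise and permute the $D_i$ arbitrarily. Then we claim
\[
0\le f(C)-f(C\cup D)\le 1/k .
\]
To see this, fix a tiny clopen piece $e\subset C$ and put $D_i'=D_i\cup e$. Then $P^{C\cup D_{i_1}\cup\dots\cup D_{i_j}}=P^C\,P^{D_{i_1}'}\cdots P^{D_{i_j}'}$. By the symmetry assumption, the value $a_j=f(C\cup D_1\cup\dots\cup D_j)$ depends only on $j$. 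The submodularity inequality, applied with $P=P^{D_{j}'}$, $Q=P^C P^{D_1'}\cdots P^{D_{j-1}'}$ and $R=P^{D_{j+1}'}$, together with the symmetry, shows that the increments $\Delta_j=a_{j-1}-a_j\ge0$ are nonincreasing in $j$. Since $\sum_{j=1}^k\Delta_j\le a_0\le 1$, it follows that $f(C)-f(C\cup D)=\Delta_1\le 1/k$.

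Next we explain why such families exist when $\sup_\mu\mu(D)$ is small. Given $k$ and a clopen set $W$ disjoint from $C$ with $r(W)>0$, suppose $\sup_\mu\mu(D)<r(W)/(2k)$. Using Proposition~\ref{PropositionMovingSetClose}(1) and Proposition~\ref{PropositionComparisonProperty}, we place $k$ pairwise disjoint $G$-equivalent copies of $D$ inside $W\cup D$. The transitivity needed, namely arbitrary permutations of the $D_i$ by elements supported off $C$, comes from the comparison involutions, which lie in $\mathcal A(G|_{X\setminus C})=G'_{X\setminus C}$. The main obstacle is this construction: we need room outside $C$, and we need the copies to be exactly equivalent rather than only approximately so. When the complement of $C$ is too small to provide $W$, we first replace $C$ by a slightly smaller equivalent set using Proposition~\ref{PropositionRokhlinLemma}, which frees up space.

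Finally we combine these. Let $A$ be given and let $B$ satisfy $d(A,B)<\delta$. Put $C=A\cap B$; this is nonempty once $\delta<r(A)$. We apply the one-sided estimate twice, once with $D=A\setminus C$ and once with $D=B\setminus C$. This gives $|f(A)-f(C)|\le 1/k$ and $|f(B)-f(C)|\le 1/k$, provided $\delta$ is small compared with $r(X\setminus A)/k$, or compared with the measure of the freed space in the degenerate case. Choosing $k>2/\varepsilon$ then yields $|f(A)-f(B)|<\varepsilon$. The quantity $\delta$ depends only on $A$, $\varepsilon$ and the fixed auxiliary set $W$, which is what the lemma requires.
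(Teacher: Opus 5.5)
\textbf{The core step fails.} Your submodularity inequality, applied with $P=P^{D_j'}$, $Q=P^CP^{D_1'}\cdots P^{D_{j-1}'}$ and $R=P^{D_{j+1}'}$, reads $a_j-a_{j+1}\le a_{j-1}-a_j$, i.e.\ $\Delta_{j+1}\le\Delta_j$. So $\Delta_1=f(C)-f(C\cup D)$ is the \emph{largest} increment. From $\sum_{j=1}^k\Delta_j\le 1$ you can only conclude $\Delta_k\le 1/k$; nothing bounds $\Delta_1$.

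This is not a sign slip that can be repaired. For commuting projections, $S\mapsto\tr(\prod_{i\in S}P_i)$ is supermodular: think of the trace as a measure and the projections as sets, so this is the measure of an intersection. Hence when copies of $D$ are added \emph{outside} $C$, the first step is always the biggest. The data you actually use are commutation and exchangeability of the $P^{D_i'}$ under permutations fixing $C$. These are consistent with $P^CP^{D_1'}=\cdots=P^CP^{D_k'}$. In that case $\Delta_2=\cdots=\Delta_k=0$ while $\Delta_1$ can be as large as $f(C)$, for every $k$.

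\textbf{What is needed.} The difference you want to control must be a \emph{last} increment. That is, the copies of the small piece must sit \emph{inside} the larger set, which is exactly what the paper does. Take $C\subset A$ with $\mu(A\setminus C)<\mu(C)/(2m)$ for all $\mu$. Proposition~\ref{PropositionComparisonProperty} gives involutions $s_1,\dots,s_m\in G_A'$ whose images $s_j(A\setminus C)\subset C$ are pairwise disjoint. Then $s_j(C)\cup s_l(C)=A$ for $j\neq l$. By Lemma~\ref{LmPAPBintersect}, the projections $P^{s_j(C)}-P^A$ are mutually orthogonal, each of trace $\tr(P^C)-\tr(P^A)$. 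Hence $m\bigl(\tr(P^C)-\tr(P^A)\bigr)\le 1$. In your notation this is precisely the bound $k\Delta_k\le 1$ that your argument does prove, with $A$ playing the role of the set that already contains all the copies.

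This arrangement also removes any need for room outside $A$. Your remedy for the degenerate case (e.g.\ $A=X$) does not work as stated. A clopen set cannot be $G$-equivalent to a proper clopen subset of itself, since invariant measures are fully supported. Shrinking $C$ to a genuinely smaller set would require exactly the continuity estimate being proved.

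The remaining ingredients are sound: monotonicity, the trick of adjoining $e\subset C$ so that Lemma~\ref{LmPAPBintersect} applies (rather than Lemma~\ref{LemmaUnionProjections}, which depends on this lemma), and the final two-sided reduction via $C=A\cap B$.
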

\begin{proof}  (I) Set  $\delta'=\epsilon r(A)/16$.  Let $C\subset A$ be a nonempty clopen set such that $d(A, C)<\delta'$.  We will show that  $|\tr(P^A) - \tr(P^C)|<\varepsilon/2$.  

Pick an integer $m$ such that $2/\varepsilon < m < (2/\varepsilon) +2 $. Then  $\varepsilon/2<2/m<\varepsilon$.  
For every  $\mu\in\mathcal M(X,G)$, we obtain that  
\begin{equation}\label{ineqAuxCGreateA}\mu(A\setminus C)  < \delta' \leq \frac{\varepsilon r(A)}{16}\leq \frac{\varepsilon\mu(A)}{16} < \frac{\mu(A)}{4m}\end{equation}

In particular, $\mu(A\setminus C) < \mu(A)/2$, which implies that $\mu(C) > \mu(A)/2$. Substituting this inequality back into (\ref{ineqAuxCGreateA}), we obtain that

 $$\mu(A\setminus C) \leq \frac{\mu(A)}{4m} < \frac{\mu(C)}{2m} \mbox{ for every }\mu \in \mathcal M(X,G).$$

Using Proposition \ref{PropositionComparisonProperty}, we can find involutions
 $s_j\in G_A'$, $j=1,\ldots,m$, with $s_i(A\setminus C)\subset C$ and 
$$s_j(A\setminus C)\cap s_l(A\setminus C) = \emptyset \mbox{ whenever }j\neq l.$$
In particular, since $s_k(A)=A$,  we have that  $s_j(C)\cup s_l(C) = A$ for  $j\neq l$. Thus, by Lemma \ref{LmPAPBintersect}, we obtain that ($j\neq l$)
$$\begin{array}{ll}\displaystyle \left(P^{s_j(C)} - P^A\right)\left(P^{s_l(C)} - P^A\right)  \\
= P^{s_j(C)}P^{s_l(C)} - P^{s_j(C)}P^A   - P^AP^{s_l(C)} +
 P^AP^A   \\
= P^A - P^A - P^A + P^A =0.
\end{array}$$

Therefore, $\left\{P^{s_j(C)} - P^A\right\}_{j=1}^m$ is a family of mutually orthogonal projections. It follows that
$P = \sum\limits_{j=1}^m\left(P^{s_j(C)} - P^A\right)$ is a projection. Hence, using Lemma \ref{LmPAQA}, we obtain that 
\begin{align*}1\geq  \tr(P) &= \sum\limits_{j=1}^m \tr\left(P^{s_j(C)} - P^A\right) \\ 
& = \sum\limits_{j=1}^m \tr\left(\pi(s_j)(P^{C}-P^{A})\pi(s_j^{-1})\right) \\ 
& = m\cdot \tr\left(P^{C}-P^{A}\right).\end{align*} It follows that
$0\leq \tr\left(P^C-P^{A}\right) = \tr(P^C)-\tr(P^{A})\leq 1/m<\varepsilon/2$.  

(II) Set $\delta = \delta'/2$ and let $B$ be a non-empty clopen set with $d(A,B) < \delta$. Let $C = A \cap B$. Then $d(A,C) < \delta'$, and by the argument in Part (I), we obtain
\begin{equation}\label{eqnTracePCIneq}|\tr(P^C)-\tr(P^{A})| < \varepsilon/2.\end{equation}

Fix $\mu \in \mathcal M(X,G)$. Since
$$
\mu(B) > \mu(A) - \delta'/2 \ge \mu(A) - \frac{\varepsilon}{32}\mu(A),
$$
it follows that $\mu(A) \le 2\mu(B)$. Consequently,
$$
\mu(B \setminus C)
< \frac{\delta'}{2}
\le \frac{\varepsilon}{32}\mu(A)
< \frac{\varepsilon}{16}\mu(B)
\le \frac{\mu(B)}{4m}.
$$

Applying the same argument as in Part (I) to the sets $B$ and $C$, we obtain
$$
|\tr(P^C) - \tr(P^{B})| < \varepsilon/2,
$$
which, together with $(\ref{eqnTracePCIneq})$, completes the proof.
\end{proof}



\begin{lemma}\label{LemmaUnionProjections} For any clopen sets $A$ and $B$ we have that  $P^AP^B=P^{A\cup B}$.
\end{lemma}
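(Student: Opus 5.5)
The case $A\cap B\neq\emptyset$ is Lemma~\ref{LmPAPBintersect}, and the cases $A=\emptyset$ or $B=\emptyset$ are trivial because $P^\emptyset=\id$. So we may assume $A$ and $B$ are nonempty and disjoint. The plan is to reduce to the intersecting case by approximation, and to pass to the limit using the continuity of $A\mapsto\tr(P^A)$ (Lemma~\ref{LmContinuityTraceOnPA}) together with faithfulness of the trace.

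First I would show that $P^A$ and $P^B$ commute. For $\{h_n\}\in\mathcal S(A)$, Lemma~\ref{lemmaHnInvarSubsets} gives $h_n(B)=B$ for large $n$. Then Lemma~\ref{LmPAQA} gives $\pi(h_n)P^B\pi(h_n^{-1})=P^{B}$, so every $Q\in\mathcal L(A)$ commutes with $P^B$. Hence $P^B$ leaves $\mathcal H_A'=P^A\mathcal H$ invariant. Exactly as in the proof of Lemma~\ref{LmPAPBintersect}, this yields $P^AP^B=P^BP^A$, and $P^AP^B$ is the projection onto the vectors invariant under $\langle G_A',G_B'\rangle$. Since $G_A'$ and $G_B'$ are contained in $G_{A\cup B}'$, we get $P^{A\cup B}\le P^AP^B$. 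It therefore suffices to show $\tr(P^AP^B)\le \tr(P^{A\cup B})$; faithfulness of $\tr$ then gives equality.

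For the trace inequality, I would enlarge $A$ slightly. Pick a small nonempty clopen set $D_k\subset B$ with $\sup_\mu\mu(D_k)\to0$; Proposition~\ref{PropositionContinuityMeasures} allows this by taking $\operatorname{diam}(D_k)\to0$. Set $A_k=A\cup D_k$. Then $A_k\cap B\neq\emptyset$, so Lemma~\ref{LmPAPBintersect} gives $P^{A_k}P^B=P^{A_k\cup B}=P^{A\cup B}$. Moreover $P^{A_k}\le P^A$, because $G_A'\subset G_{A_k}'$. Thus $P^{A\cup B}=P^{A_k}P^B\le P^AP^B$, which recovers the easy direction again.

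For the reverse inequality, note that $P^AP^B-P^{A_k}P^B=(P^A-P^{A_k})P^B$. Since the projections commute, this is a product of commuting projections dominated by $P^A-P^{A_k}$ (I still need to check that $P^{A_k}$ commutes with $P^B$; this holds by Lemma~\ref{LmPAPBintersect}). So
\[
\tr(P^AP^B)-\tr(P^{A\cup B})\le \tr(P^A)-\tr(P^{A_k}).
\]
Because $d(A,A_k)=\sup_\mu\mu(D_k)\to0$, Lemma~\ref{LmContinuityTraceOnPA} makes the right-hand side tend to $0$. Hence $\tr(P^AP^B-P^{A\cup B})=0$. The difference is a nonnegative operator, since it is a difference of commuting projections with $P^{A\cup B}\le P^AP^B$, so faithfulness forces it to vanish.

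The main subtle point is making sure all the projections involved commute, so that the differences are genuine projections and the order relations are valid. The commutation argument from Lemma~\ref{LmPAPBintersect} uses only Lemma~\ref{lemmaHnInvarSubsets} and Lemma~\ref{LmPAQA}, not $A\cap B\neq\emptyset$, so it applies to any pair of clopen sets. In particular it covers the disjoint pair $A,B$ and the pair $A_k,B$.
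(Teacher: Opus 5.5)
Your proof is correct and follows essentially the paper's route: approximate $A$ by $A\cup C_n$ with $C_n\subset B$ shrinking, apply Lemma~\ref{LmPAPBintersect} to the intersecting pairs, and conclude with Lemma~\ref{LmContinuityTraceOnPA} and faithfulness of $\tr$. The difference is only bookkeeping. The paper applies faithfulness to $P^A-\lim_n P^{A_n}$ to get $P^{A_n}\to P^A$ strongly and then passes to the limit in $P^{A_n}P^B=P^{A\cup B}$, so it never needs the commutation of $P^A$ and $P^B$ in advance; you establish that commutation first and apply faithfulness directly to $P^AP^B-P^{A\cup B}$.
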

\begin{proof} 
The case when $A$ or $B$ is empty is trivial. Assume that both $A$ and $B$ are non-empty. The case $A\cap B\neq \emptyset$ has been established in Lemma \ref{LmPAPBintersect}. 

Assume that $A\cap B=\emptyset$.  Choose a decreasing sequence of non-empty clopen sets $\{C_n\}$ such that $C_n\subset B$ and $\textrm{diam}(C_n)\to 0$ as $n\to\infty$.   
 
 Set $A_n=A\cup C_n$.   Denote by $P$ the limit of the  sequence of increasing projections $\{P^{A_n}\}$, which exists by Lemma \ref{LemmaLimitProjections}.  
  In view of  Proposition \ref{PropositionContinuityMeasures}, we have that $$d(A_n,A)\leq \sup\{\mu(C_n) : \mu\in \mathcal M(G)\}\to 0\mbox{ as }n\to\infty.$$  
  
Thus, Lemma \ref{LmContinuityTraceOnPA} implies that $\tr(P) = \tr(P^A)$.  Since $P^{A_n}\leq P^A$, we obtain that $P\leq P^A$. Therefore, $P^A - P$ is a projection with zero trace, which implies that $P = P^A$. 
 By Lemma \ref{LmPAPBintersect}, we have that
$P^{A_n}P^B= P^{A\cup B}$.  Hence, $P^{A\cup B} = \lim_n P^{A_n}P^B = P^AP^B$. 
\end{proof}

 We will need the following well-known fact  \cite[Section 2.5]{Olshanski-GKpairs-89}. We note that the result in \cite[Section 2.5]{Olshanski-GKpairs-89} is stated for unitary representations of groups, but its proof also covers the case of semigroups.

\begin{proposition}\label{PropOlshanksi} Let $T$ be a unitary representation of a semigroup $U$ on a Hilbert space $\mathcal H_T$.  Let  $P$ be  the orthogonal projection onto the subspace of vectors invariant with respect to $T(U)$. Then for any $v\in \mathcal H_T$ and any $\varepsilon>0$ there exist elements $u_1,\ldots,u_p\in U$ and  positive numbers $\alpha_1,\ldots,\alpha_p$ such that  $\alpha_1+\cdots + \alpha_p=1 $ and $$||\sum_i^p \alpha_i T(u_i)v - Pv||<\varepsilon.$$ 
\end{proposition}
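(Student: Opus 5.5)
The plan is to show that $Pv$ belongs to the norm-closed convex hull
$$K=\overline{\mathrm{conv}}\{T(u)v : u\in U\}\subset \mathcal H_T.$$
The statement of Proposition~\ref{PropOlshanksi} is exactly this, since points of the convex hull are finite convex combinations $\sum_i\alpha_i T(u_i)v$. I would follow the classical minimal-norm argument (Alaoglu--Birkhoff type), which uses only that each $T(u)$ is a unitary operator and that $T(uu')=T(u)T(u')$. No identity element or inverses in $U$ are needed.

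\emph{Step 1 (minimal vector).} The set $K$ is a nonempty closed convex subset of a Hilbert space. Hence it contains a unique element $w$ of minimal norm, by the standard parallelogram-law argument.

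\emph{Step 2 ($K$ is invariant).} Fix $u\in U$. Then $T(u)T(u')v=T(uu')v$ lies in the orbit, so $T(u)$ maps the orbit into itself. Since $T(u)$ is linear and norm-continuous, it maps $K$ into $K$.

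\emph{Step 3 ($w$ is invariant).} For every $u\in U$ the vector $T(u)w$ lies in $K$ by Step 2. It has $\|T(u)w\|=\|w\|$ because $T(u)$ is unitary. Uniqueness of the minimal-norm element then forces $T(u)w=w$. Thus $w$ lies in the range of $P$.

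\emph{Step 4 ($w=Pv$).} Let $\eta$ be any $T(U)$-invariant vector. Since $T(u)$ is unitary, $T(u)\eta=\eta$ gives $T(u)^*\eta=T(u)^{-1}\eta=\eta$. Hence
$$(T(u)v,\eta)=(v,T(u)^*\eta)=(v,\eta)\quad\text{for all } u\in U.$$
The functional $x\mapsto (x,\eta)$ is linear and continuous, so it takes the constant value $(v,\eta)$ on all of $K$, and in particular $(w,\eta)=(v,\eta)$. Therefore $v-w$ is orthogonal to the range of $P$. Combined with Step 3, this gives $w=Pv$.

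Since $w\in K$, for every $\varepsilon>0$ there is a finite convex combination $\sum_{i=1}^p\alpha_iT(u_i)v$ with $\alpha_i>0$ and $\sum_i\alpha_i=1$ within $\varepsilon$ of $Pv$, which is the claim.

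The only point needing care is Step 4. There one must use unitarity, not mere contractivity, to pass invariance from $T(u)$ to $T(u)^*$. In the semigroup setting this is exactly where the hypothesis that $T$ takes values in unitaries, rather than just isometries, enters.
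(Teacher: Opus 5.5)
Your proof is correct. The paper gives no argument of its own here. It cites Olshanski's statement for unitary representations of groups and asserts that the proof carries over to semigroups. Your minimal-norm (Alaoglu--Birkhoff) argument is the standard proof of this fact. By checking that Steps 1--4 use only $T(uu')=T(u)T(u')$, and never an identity or inverses in $U$, you supply the justification the paper leaves implicit.

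Your closing remark, however, is wrong, and correcting it matters for how the paper uses the proposition. Unitarity is not needed in Step 4. For an isometry $V$ with $V\eta=\eta$ one has $V^*\eta=V^*V\eta=\eta$. For a mere contraction $S$ with $S\eta=\eta$ one has $\|S^*\eta-\eta\|^2=\|S^*\eta\|^2-2\,\mathrm{Re}(\eta,S\eta)+\|\eta\|^2=\|S^*\eta\|^2-\|\eta\|^2\le 0$. Step 3 also needs only $\|T(u)\|\le 1$: since $T(u)w\in K$ and $\|T(u)w\|\le\|w\|$, minimality and uniqueness already force $T(u)w=w$.

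So your argument proves the statement verbatim for semigroups of contractions. That is the version the paper actually invokes in Lemma~\ref{lemmaOlshanskyConseq}. There the proposition is applied to the semigroup $\mathcal L(A)$ of weak-operator limits of the unitaries $\pi(h_n)$. These limits are self-adjoint contractions that need not be unitary. The proposition as literally stated therefore does not cover that application, but your proof does.
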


\begin{lemma}\label{lemmaOlshanskyConseq} Let $A$ be a clopen set and $R\in \mathcal M_\pi$. If $\tr(RQ) = \tr(R)$ for every $Q\in \mathcal L(A)$, then
 $\tr(RQ^A) = \tr(R)$. 
\end{lemma}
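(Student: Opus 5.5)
The plan is to apply Proposition~\ref{PropOlshanksi} to a suitable unitary representation of the semigroup generated by $\mathcal L(A)$, working in the GNS Hilbert space of the trace. By Lemma~\ref{LmLAinv}, $\mathcal L(A)$ is a commutative set of operators closed under multiplication. Each $Q\in\mathcal L(A)$ is a weak limit of the self-adjoint unitaries $\pi(h_n)$, so $Q$ is self-adjoint with $\|Q\|\le 1$, though it need not be unitary. To get unitaries, I would pass to the standard form: let $\mathcal K=L^2(\mathcal M_\pi,\tr)$, which we may identify with $\mathcal H$ because $\xi$ is cyclic and separating. Consider the right action $J Q^* J$, or more simply use the vector $\eta = R^*\xi$ and the functional $Q\mapsto \tr(RQ)=(Q\xi,R^*\xi)$.

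The key observation is the following. By Lemma~\ref{LmLAinv}, $\mathcal L(A)$ is closed under products, and $Q^A$ is the projection onto the common fixed space of $\mathcal L(A)$. I would then apply Proposition~\ref{PropOlshanksi} (its proof covers contractive semigroups: the minimal-norm element of the closed convex hull of the orbit $\{Qv\}$ is fixed by every $Q$, and the mean-ergodic argument goes through for contractions) to the semigroup $\mathcal L(A)\cup\{\id\}$ acting on $\mathcal H$ and to the vector $v=\xi$. This yields convex combinations $\sum_i\alpha_i Q_i\xi$ converging in norm to $Q^A\xi$. If one insists on unitarity, I would instead note that the operators $\pi(h_n)$ are unitary and that $Q^A$ is also the projection onto vectors fixed by all products of the relevant $\pi(h_n)$ in the limit. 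The cleaner route, however, is to check that the min-norm argument only uses $\|Q\|\le1$.

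Given the approximation, the conclusion is immediate:
$$\tr(RQ^A)=(RQ^A\xi,\xi)=(Q^A\xi,R^*\xi)=\lim\sum_i\alpha_i(Q_i\xi,R^*\xi)=\lim\sum_i\alpha_i\tr(RQ_i)=\tr(R),$$
since $\sum\alpha_i=1$. Here I use that $Q_i\in\mathcal M_\pi$, which holds as a weak limit of $\pi(h_n)$, so that $(Q_i\xi,R^*\xi)=\tr(RQ_i)$. Similarly $Q^A\in\mathcal M_\pi$.

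The main obstacle is justifying Proposition~\ref{PropOlshanksi} for the contractive, non-unitary semigroup $\mathcal L(A)$. I would handle this directly as follows. Let $K$ be the closed convex hull of $\{Q\xi: Q\in\mathcal L(A)\cup\{\id\}\}$ and $w$ its element of minimal norm. For each $Q$, the point $Qw$ lies in $K$ because the semigroup is closed under products. Since $\|Qw\|\le\|w\|$, uniqueness of the minimizer gives $Qw=w$, so $w\in\mathcal H_A'$. Moreover $\xi-w$ is orthogonal to $\mathcal H_A'$: each $Q$ is self-adjoint and fixes $\mathcal H_A'$, so $(Q\xi,u)=(\xi,Qu)=(\xi,u)$ for $u\in\mathcal H_A'$. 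Hence $w=Q^A\xi$, which is exactly the approximation needed.
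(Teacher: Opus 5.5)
Your proof is correct and follows the paper's own argument: you approximate $Q^A\xi$ in norm by convex combinations $\sum_i\alpha_iQ_i\xi$ with $Q_i\in\mathcal L(A)$, and then pass to the limit in $\tr(RQ_i)=(Q_i\xi,R^*\xi)=\tr(R)$. Your minimal-norm verification is a useful addition, because Proposition~\ref{PropOlshanksi} is stated for unitary representations, whereas the elements of $\mathcal L(A)$ are only self-adjoint contractions closed under products — a point the paper's proof passes over.
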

\begin{proof}   Fix an arbitrary $\varepsilon>0$. According to Lemma \ref{LmLAinv},  $\mathcal L(A)$ is a semi-group. Thus, applying  Proposition \ref{PropOlshanksi}, we can find operators $Q_i\in \mathcal L(A)$ and real numbers $\alpha_i>0$ such that $\alpha_1+\cdots + \alpha_p = 1$ and $|| Q^A\xi - \sum_{i=1}^p \alpha_i Q_i \xi||<\varepsilon $.  It follows that 

\begin{equation*}\begin{split}  |\tr(R Q^A) - \tr(R)| & = |\tr(RQ^A)  - \sum_{i=1}^p\alpha_i \tr( RQ_i)  |  \\ 
 \leq || RQ^A\xi - \sum_{i=1}^p \alpha_i RQ_i \xi|| & \leq ||R|||| Q^A\xi - \sum_{i=1}^p \alpha_i Q_i \xi|| \leq ||R|| \varepsilon. \end{split} \end{equation*}

Since $\varepsilon>0$ is arbitrary, we obtain that   $\tr(RQ^A) = \tr(R)$.
\end{proof}

Now we are ready to complete the proof of Theorem \ref{TheoremCharFromTrace}. 
\begin{proposition}   For any group element $g\in G'$, we have that $$\chi(g)=\tr(P^{\supp(g)}).$$
\end{proposition}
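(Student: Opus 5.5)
The plan is to set $A=\supp(g)$ and compare both sides of $\chi(g)=\tr(P^{A})$ with the auxiliary quantity $\tr(\pi(g)P^A)$. Since $g\in G_A'$ and, by Lemma~\ref{LmPAQA}, $P^A=Q^A$ is the projection onto the $G_A'$-invariant vectors, we have $\pi(g)P^A=P^A$. Hence $\tr(\pi(g)P^A)=\tr(P^A)$, and it remains to show $\tr(\pi(g)Q^A)=\tr(\pi(g))=\chi(g)$. We may assume $\chi$ is non-regular and nontrivial, since the regular case was already handled at the start of the proof. We will apply Lemma~\ref{lemmaOlshanskyConseq} with $R=\pi(g)$. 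This reduces the statement to checking that $\tr(\pi(g)Q)=\tr(\pi(g))$ for every $Q\in\mathcal L(A)$.

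Fix $Q=\lim\pi(h_n)$ with $\{h_n\}\in\mathcal S(A)$. Since $g\in G_{A,n}'$ for all large $n$, each $h_n$ eventually commutes with $g$, and by Lemma~\ref{lemmaHnInvarSubsets} it eventually preserves every prescribed clopen set. The goal is to prove $\chi(gh_n)\to\chi(g)$. The approach is to conjugate $gh_n$, inside $G_A'$, to an element of the form $g\,t_n$, where $\{t_n\}\subset G_A'$ are involutions satisfying $\supp(t_n)\to\{x\}$ for some point $x$, $x\notin\supp(t_n)$, and $\supp(t_n)$ disjoint from a neighbourhood of the relevant part of $A$. This is the same kind of conjugacy bookkeeping (counting $K_{\supp(\cdot),v_0,w}$ over towers) as in Proposition~\ref{PropositionInvolutionConvergeToCenter}, now run in reverse. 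Since $h_n$ commutes with $G_{A,n}$, it permutes the ``tails'' of paths uniformly over each $v\in V_n$. After conjugating by an element commuting with $g$ (an element of $G_A'$ acting only on path segments beyond level $n$), the set moved by $h_n$ can therefore be concentrated into a small cylinder around $x$, up to an involution on a region where $g$ acts only by level-$\le n$ permutations. The condition that the conjugating elements lie in the commutator subgroup is handled, as in Proposition~\ref{PropositionInvolutionConvergeToCenter}, by telescoping so that all multiplicities are divisible by $4$.

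Once this is in place, Theorem~\ref{ThAutomaticContinuity} applied to $\{t_n\}$ (with $Y=\{x\}$) shows that every weak limit of $\pi(t_n)$ is a scalar $\lambda\,\id$. This gives $\tr(\pi(g)Q)=\lim\chi(gt_n)=\lambda\chi(g)$. The same identity with $g=e$ gives $\tr(Q)=\lambda$. To force $\lambda=1$, or at least to reach the desired equality, the plan is to use that $Q$ is a weak limit of involutions commuting with $\pi(G_A')$ and reuse the argument of Lemma~\ref{LemmaAugProjectionAbsorption}. Alternatively, one can apply Lemma~\ref{lemmaOlshanskyConseq} directly to the normalized quantity $\tr(\pi(g)Q)-\tr(Q)\chi(g)$ and then take $g=e$. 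Either route yields $\tr(\pi(g)Q^A)=\chi(g)$, and therefore $\chi(g)=\tr(P^{\supp(g)})$.

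The main obstacle is the conjugacy step: producing, for each large $n$, a conjugator in $G_A'$ that fixes $g$ (or moves it only negligibly) and carries $gh_n$ to $g\,t_n$ with $\supp(t_n)$ shrinking to a point. If exact conjugation proves awkward, the fallback is to allow $g t_n r_n$, where $\supp(r_n)$ has uniformly small measure, and then to absorb $r_n$ using Proposition~\ref{PropGroupRepresentationContinuity}-type estimates together with Lemma~\ref{LmContinuityTraceOnPA}.
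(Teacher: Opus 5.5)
\paragraph{There is a genuine gap, at exactly the step you flag as the main obstacle.}
For a general $\{h_n\}\in\mathcal S(\supp g)$, the element $gh_n$ is not conjugate to any $g\,t_n$ with $\supp(t_n)$ shrinking to a point, neither exactly nor approximately.

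First, suppose the conjugator $c$ commutes with $g$, as you propose. Then $t_n=ch_nc^{-1}$, so $\mu(\supp t_n)=\mu(\supp h_n)$ for every invariant $\mu$. Nothing in the definition of $\mathcal S(A)$ forces $\mu(\supp h_n)\to 0$.

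If $c$ is arbitrary, cycle structure blocks it: for each invariant $\mu$, the $\mu$-measure of the set of points lying on $k$-cycles is a conjugacy invariant. Here is a concrete instance in the diagram of Example~\ref{exampleS2Infty}.
\begin{itemize}
\item Let $g$ cyclically permute three level-$n$ cylinders, and set $A=\supp(g)$.
\item Let $h_m$ flip the $(m+1)$-st edge of every path through $A$. This is an involution in $G'_A$ commuting with $G'_{A,m}$, with $\supp(h_m)=A$; pass to a subsequence to get weak convergence.
\item Then $gh_m$ consists entirely of $6$-cycles on $A$.
\item By contrast, $g\,t_m$ has $3$-cycles on a set of measure close to $\mu(A)$. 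The same holds for your fallback $g\,t_m r_m$ with $\mu(\supp r_m)$ small.
\end{itemize}

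The endgame is also inconsistent. Your two identities combine to $\tr(\pi(g)Q)=\tr(Q)\chi(g)$ for all $Q\in\mathcal L(A)$. Averaging via Proposition~\ref{PropOlshanksi} then yields only $\tr(P^A)=\chi(g)\tr(P^A)$. For $\chi=\mu(\fix(\cdot))$ with $0<\mu(A)<1$ this is false. Moreover, $\lambda=\tr(Q)=\lim\chi(h_m)=\mu(X\setminus A)<1$ in the example, so no variant of Lemma~\ref{LemmaAugProjectionAbsorption} can force $\lambda=1$. Your reduction to checking $\tr(\pi(g)Q)=\tr(\pi(g))$ on $\mathcal L(\supp g)$ is a legitimate sufficient condition, but nothing in the proposal proves it.

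\paragraph{The missing idea is to avoid $\mathcal L(\supp g)$ altogether.}
The paper proceeds as follows.
\begin{itemize}
\item It splits $\supp(g)$ into a height-$2$ and a height-$3$ $g$-tower.
\item It shrinks the bases to sets $B_n$, $C_n$ that each split into two $G$-equivalent halves (Proposition~\ref{PropositionRokhlinLemma}).
\item It works only with the two-level windows $W=D\sqcup g(D)$, where $D\in\{B_n,C_n,g(C_n)\}$.
\end{itemize}
For $\{s_k\}\in\mathcal S(W)$, Lemma~\ref{lemmaHnInvarSubsets} and the commutation of $s_k$ with $g$ on $W$ let one write $s_k=h_k\,g h_k g^{-1}$. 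Here $h_k=s_k|_D$ is an involution, and it lies in $G'_D$ because of the halves, since $h_k=[h_k',q]$.

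Hence $s_kg=h_kgh_k^{-1}$ is an honest conjugate of $g$, and $\chi(s_kg)=\chi(g)$ exactly. No automatic continuity and no scalar $\lambda$ are needed.

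Chaining the three windows by traciality, Lemmas~\ref{lemmaOlshanskyConseq} and~\ref{LemmaUnionProjections} give $\chi(g)=\tr(P^{W_n}\pi(g))$, with $W_n$ increasing and $d(W_n,\supp g)\to 0$. Lemma~\ref{LmContinuityTraceOnPA} then passes to $P^{\supp(g)}$.

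The windows must have only two levels: on a full $3$-level window, $s g$ acquires $6$-cycles, which is precisely the obstruction described above.
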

\begin{proof}   Fix $g\in G'$, $g\neq 1$. Using the cycle decomposition of $g$, we can always represent  the support of $g$ as the union of two disjoint clopen $g$-towers of height 2 and of height 3, $$\supp(g) = \left(B\sqcup g(B)\right) \sqcup \left(C\sqcup g(C) \sqcup g^2(C)\right).$$ 

We note that the set $B$ or $C$ might be empty.   For example, for a cycle $(1,2,\ldots,2n)$, we can choose $B = \{1,3,\ldots,2n-1\}$ and  $C = \emptyset$. For a cycle $(1,2,\ldots,2n+1)$, $n\geq 2$, we use $B = \{1,3,\ldots,2n-3\}$ and $C = \{2n-1\}$.  We notice that representing the dynamics of homeomorphisms by its actions on towers is the essence of the Rokhlin lemma which holds for a large class of homeomorphisms of the Cantor set, see, for example, \cite[Proposition 3]{BezuglyiDooleyMedynets:2005} and \cite[Theorem 3.1]{DownarowiczKarpel:2019}. 

Using Proposition \ref{PropositionRokhlinLemma}, we can find increasing sequences of clopen sets $\{B_n\}$, $B_n\subset B$ and $\{C_n\}$, $C_n\subset C$, such that $\mu(B\setminus B_n) \leq 1/n$ and $\mu(C\setminus C_n)\leq 1/n$ for every $\mu\in \mathcal M(X,G)$ and $n\geq 1$ and such that $B_n = B_n'\sqcup B_n''$ and $C_n = C_n'\sqcup C_n''$, where the sets $B_n'$ and $B_n''$ as well as  $C_n'$ and $C_n''$  are $G$-equivalent. 

Fix $n\geq 1$ and arbitrary elements  $$Q_1 \in \mathcal L(B_n\cup g(B_n)),\; {Q_2 \in \mathcal L(C_n\cup g(C_n))},\mbox{ and }{Q_3 \in \mathcal L(g(C_n)\cup g^2(C_n))}.$$ Choose sequences $\{s_k^{(1)}\}\in \mathcal S(B_n\cup g(B_n))$, ${\{s_k^{(2)}\} \in \mathcal S(C_n\cup g(C_n))}$, and ${\{s_k^{(3)}\} \in \mathcal S(g(C_n)\cup g^2(C_n))}$  such that $Q_i = \lim \pi(s_k^{(i)}).$

Note that by Lemma \ref{lemmaHnInvarSubsets} the sets $B_n'$, $B_n''$, $g(B_n')$, and $g(B_n'')$ are $s_k^{(1)}$-invariant   for all $k$ large enough.  Thus, for all $k$ large enough, the homeomorphisms  $h_k'$ and $h_k''$ defined as the restrictions of $s_k^{(1)}$ onto $B_n'$ and $B_n''$ are well-defined and belong to $G$. Note that, in general, restrictions of elements from $G'$ to clopen sets do not have to belong to $G'$. 

 By construction, $q(B_n') = B_n''$ for some $q\in G_{B_n}$. Multiplying $q$ by an odd permutation if necessary, we can assume that $q\in G_{B_n}'$.  Since $s_k^{(1)}$ commutes with  $q$ for all large $k$, we obtain that $h_k'=s_k^{(1)}|B_n'$ and $h_k''=s_k^{(1)}|B_n''$ are conjugate by $q$. It follows that the element $$h_k^{(1)} = h_k'h_k'' = h_k' q h_k' q^{-1} = h_k' q (h_k')^{-1} q^{-1} $$ is a commutator and, thus, belongs to $G_{B_n}'$ for all large $k$.  In other words, we have shown that the restriction of $s_k^{(1)}$ to the set $B_n$ belongs to $G_{B_n}'$.
 
 Since $s_k^{(1)}$ commutes with $g$, we obtain that the restriction of $s_k^{(1)}$ to $g(B_n)$ coincides with $g h_k^{(1)} g^{-1}$. Therefore,  $$s_k^{(1)} = h_k^{(1)}g h_k^{(1)} g^{-1}\mbox{ for all }k\mbox{ large enough}.$$ 
 
 
 Similarly, for all $k$ large enough we can find involutions $h_k^{(2)}\in G_{C_n}'$ and $h_k^{(3)}\in G_{g(C_n)}'$ such that $s_k^{(i)}  = h_k^{(i)}g h_k^{(i)} g^{-1}$ for $i=2,3$. Note that by construction, the support of $h_k^{(1)}$ is disjoint from the supports of $s_l^{(2)}$ and $s_m^{(3)}$ and the support of $h_l^{(2)}$ is disjoint from the support of $s_m^{(3)}$.  Thus, using the fact that $h_k^{(i)} = (h_k^{(i)})^{-1}$, for all $k,l,m$ large enough we obtain that

 \begin{equation*}\begin{split}
 \tr(\pi(s_m^{(3)}s_l^{(2)}s_k^{(1)} g)) & =   \tr(\pi(s_m^{(3)}s_l^{(2)} h_k^{(1)}g h_k^{(1)})) =  \tr(\pi( h_k^{(1)} s_m^{(3)}s_l^{(2)} g h_k^{(1)}))  \\
 & =  \tr(\pi( s_m^{(3)}s_l^{(2)} g )) = \tr(\pi(s_m^{(3)} h_l^{(2)}g h_l^{(2)}))  = \tr(\pi(h_l^{(2)} s_m^{(3)} g h_l^{(2)})) \\
 & = \tr(\pi(s_m^{(3)} g )) = \tr(\pi(h_m^{(3)}g h_m^{(3)})) = \tr(\pi(g)). 
\end{split} \end{equation*}
 
 Taking the limit as $m,l,k\to\infty$, we obtain that $$\tr(Q_3Q_2Q_1\pi(g)) = \tr(\pi(g)).$$   Since $Q_1$, $Q_2$, and $Q_3$ are arbitrary elements in $\mathcal L(B_n\cup g(B_n))$, $\mathcal L(C_n\cup g(C_n))$, and $\mathcal L(g(C_n)\cup g^2(C_n))$ respectively,  by Lemmas \ref{lemmaOlshanskyConseq} and \ref{LemmaUnionProjections}, we obtain that
\begin{equation}\label{eqnTraceProjectionLimit}\begin{split}  \tr(\pi(g)) = \tr\left(Q^{g(C_n)\cup g^2(C_n)} Q^{C_n\cup g(C_n)}Q^{B_n\cup g(B_n)} \pi(g)\right) =  \tr\left(P^{W_n}\pi(g)\right),\end{split}\end{equation}  where  $W_n = \left( B_n\sqcup g(B_n)\right) \sqcup \left(C_n\sqcup g(C_n) \sqcup g^2(C_n)\right)$. 

Notice that $\{W_n\}$ is an increasing sequence of clopen sets, and hence $\{P^{W_n}\}$ is a decreasing sequence of projections. Since ${d(W_n,\supp(g))\to 0}$, Lemma  \ref{LmContinuityTraceOnPA} implies that $\lim_n\tr(P^{W_n}) = \tr(P^{\supp(g)})$. Using the same argument as in Lemma \ref{LemmaUnionProjections}, we obtain that $\lim_n P^{W_n} = P^{\supp(g)}$. Taking the limit in (\ref{eqnTraceProjectionLimit}), we conclude that $$\tr(\pi(g)) = \tr(P^{\supp(g)}\pi(g)) = \tr(P^{\supp(g)}),$$ which establishes the result. 
\end{proof}


%

\section{The Space of Functions on Invariant Measures}\label{Section:SpaceOfMeasures} 

In this section we discuss properties of the space of invariant measures and continuous functions on invariant measures for full groups associated with simple Bratteli diagrams. 

Let $G$ be the full group of a simple Bratteli diagram with path-space $X$.  Throughout this section,  $G_n$ will stand for the permutations of the first $n$-segments of the infinite paths in $X$. Then $G = \bigcup_{n=1}^\infty G_n$. 

To  simplify the notation we will write $\mathcal M$ for  $\mathcal M(X,G)$  and $\mathcal E$ for the set of ergodic measures $\mathcal E(X,G)$.  Note that  $\mathcal M$ is a metrizable compact space  with respect to the weak*-topology and  $\mathcal E$ is $G_\delta$ in $\mathcal M$. Denote by $\mathcal K$ the closure of  the set of ergodic measures $\mathcal E$ in $\mathcal M$. 

Given a clopen set $A\subset X$, define the evaluation function $m_A:\mathcal K\to [0,1]$ by 
\begin{equation*}\label{EqMA} m_A(\mu)=\mu(A).
\end{equation*}  

Denote by  $E(\mathcal K)$ the set of evaluation functions $m_A$, where $A$ runs over all proper clopen sets. Note that $E(\mathcal K) \subset  C(\mathcal K) $.  Denote by $F(\mathcal K)$ the closure of  $E(\mathcal K)$ in $C(\mathcal K)$ with respect to the topology of uniform convergence.    In what follows we establish properties of the set   $F(\mathcal K)$. In particular, we will show that 
 
 $$F(\mathcal K) = \{f\in C(\mathcal K) : 0\leq f \leq 1\}.$$ 

Recall that in  (\ref{EqDistanceOfSets}) and (\ref{EqErgodicDistanceOfSets}) we introduced   the distance function between clopen sets $A$ and $B$ by 
\begin{equation*}
\begin{split}
d(A,B) & = \sup\{\mu(A\triangle B) : \mu\in\mathcal M\} \\
 & =  \sup\{\mu(A\triangle B) : \mu\in\mathcal E\} \\ 
& =  \sup\{\mu(A\triangle B) : \mu\in\mathcal K\}.
\end{split}
\end{equation*}

The following result is a slight modification of Lemma 5.3 from \cite{DudkoMedynets-CharactersSymmetric-13} and can be proven in the same way, just by replacing $\mu(B)$ with  $b$.

\begin{lemma}\label{LemmaMuMultiplicativity} Let $A$ be a clopen set and $\{B_n\}$ be a sequence of clopen sets such that for every $n\geq 1$, the set $B_n$ is $G_n$-invariant. Let $\mu\in \mathcal E$. Assume that the sequence $\{\mu(B_n)\}$ is convergent to some number $b\in (0,1]$. Then $$\mu(A\cap B_n)\to b\mu(A)\mbox{ as }n\to \infty.$$ 
\end{lemma}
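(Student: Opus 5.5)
We want $\mu(A\cap B_n)\to b\,\mu(A)$ for ergodic $\mu$ and $G_n$-invariant sets $B_n$ with $\mu(B_n)\to b$. The idea is an averaging, or asymptotic-independence, argument. Since $B_n$ is $G_n$-invariant, the indicator $1_{B_n}$ is fixed by the finite group $G_n$. For a fixed $A$ and large $n$, the conditional expectation of $1_A$ onto the $G_n$-invariant $\sigma$-algebra should be nearly the constant $\mu(A)$. That near-constancy is exactly what ergodicity of $\mu$ under $G=\bigcup G_n$ gives.

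First, let $\mathcal F_n$ denote the $\sigma$-algebra of $G_n$-invariant Borel sets. These decrease in $n$, and their intersection $\mathcal F_\infty$ consists of $G$-invariant sets. Because $\mu$ is $G_n$-invariant and $G_n$ is finite, the conditional expectation has the explicit form
\[
E_\mu(f\mid \mathcal F_n)=\frac{1}{|G_n|}\sum_{g\in G_n} f\circ g .
\]
This holds because the right-hand side is $\mathcal F_n$-measurable, and integrating against any $G_n$-invariant indicator gives $\int f\,d\mu$ over that set, by invariance of $\mu$.

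Second, we apply the reverse (backward) martingale convergence theorem. Since $\mathcal F_n\downarrow\mathcal F_\infty$, we get $E_\mu(1_A\mid\mathcal F_n)\to E_\mu(1_A\mid\mathcal F_\infty)$ in $L^2(\mu)$. By ergodicity, every set in $\mathcal F_\infty$ has measure $0$ or $1$, so the limit is the constant $\mu(A)$. If one wants to avoid martingales, a Hilbert-space argument works instead: the projections onto $L^2(\mathcal F_n)$ decrease and converge strongly to the projection onto $\bigcap_n L^2(\mathcal F_n)$, by Lemma~\ref{LemmaLimitProjections}(a). That intersection is the space of $G$-invariant functions, which consists of constants by ergodicity.

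Third, since $B_n\in\mathcal F_n$,
\[
\mu(A\cap B_n)=\int 1_{B_n}\,E_\mu(1_A\mid\mathcal F_n)\,d\mu ,
\]
which gives
\[
\bigl|\mu(A\cap B_n)-\mu(A)\mu(B_n)\bigr|\le \bigl\|E_\mu(1_A\mid\mathcal F_n)-\mu(A)\bigr\|_{L^2(\mu)}\to 0 .
\]
Combining this with $\mu(B_n)\to b$ yields $\mu(A\cap B_n)\to b\,\mu(A)$. The hypothesis $b>0$ is not even needed here, and clopenness of $A$ is inessential.

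The only delicate point is identifying $\bigcap_n L^2(\mathcal F_n)$ with the $G$-invariant functions. A function lying in every $L^2(\mathcal F_n)$ is $G_n$-invariant mod $\mu$ for every $n$, hence invariant under the countable group $G$ mod null sets. By ergodicity of $\mu$ with respect to $G$, such a function is a.e. constant. This is the step I would check most carefully, in particular whether "invariant" is taken up to null sets. It is standard, but it is where ergodicity enters.
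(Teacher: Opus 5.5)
Your proof is correct and complete. The $G_n$-average is $E_\mu(\,\cdot\mid\mathcal F_n)$. The subspaces $L^2(\mathcal F_n)$ decrease to the $G$-invariant functions, and your check that invariance mod null sets suffices, using countability of $G=\bigcup_n G_n$, is exactly the right point. Ergodicity then forces the limit to be the constant $\mu(A)$, and the Cauchy--Schwarz estimate finishes the argument.

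The paper gives no proof of its own. It only defers to Lemma~5.3 of the authors' earlier paper on inductive limits of symmetric groups, with $\mu(B)$ replaced by $b$. Your argument, via averaging over $G_n$ and the decreasing projections of Lemma~\ref{LemmaLimitProjections}(a) (equivalently, the reverse martingale theorem), is the standard way to prove this asymptotic independence and gives a self-contained proof. Your remarks are also correct: $b>0$ and clopenness of $A$ are not needed, and you in fact obtain the stronger uniform statement $\sup_{B\in\mathcal F_n}|\mu(A\cap B)-\mu(A)\mu(B)|\to 0$.
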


%
%

\begin{lemma}\label{lemmaPropertiesFK1} (1) If $f\in F(\mathcal K)$, then there exists a sequence of clopen sets $\{B_n\}$ such that for every $n\geq 1$ the set $B_n$ is $G_n$-invariant and such that the evaluation functions $\{m_{B_n}\}$ uniformly converge to $f$.

(2) If $f\in F(\mathcal K)$ and $\alpha\in [0,1]$, then $\alpha f\in F (\mathcal K)$.  

(3) If $f_1, f_2\in F(\mathcal K)$, then $f_1f_2\in F(\mathcal K)$. 

(4) If  $f_1,f_2\in F(\mathcal K)$ are such that $f_1+f_2<1$, then $f_1+f_2\in F(\mathcal K)$.   

(5) If  $f_1,f_2\in F(\mathcal K)$ are such that $0<f_1- f_2$, then $f_1- f_2\in F(\mathcal K)$.   

\end{lemma}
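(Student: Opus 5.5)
Part (1) is the backbone; (2)--(5) will all follow from it together with Lemma~\ref{LemmaMuMultiplicativity}, Proposition~\ref{PropositionMovingSetClose} and Proposition~\ref{Proposition_B_to_Invariant_Bn}. The basic device is that evaluation functions are $G$-invariant: $m_{g(B)}=m_B$ for $g\in G$, and $\|m_A-m_B\|_\infty\le d(A,B)$.

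For (1), take $f\in F(\mathcal K)$ and clopen sets $A_k$ with $\|m_{A_k}-f\|_\infty<1/k$. Apply Proposition~\ref{Proposition_B_to_Invariant_Bn} to each $A_k$ to get $G_n$-invariant sets $B^{(k)}_n$ and $g^{(k)}_n\in G$ with $d(B^{(k)}_n,g^{(k)}_n(A_k))\to0$ as $n\to\infty$. Then $\|m_{B^{(k)}_n}-m_{A_k}\|_\infty\to0$. A diagonal choice $n_k$ increasing with $\|m_{B^{(k)}_{n_k}}-m_{A_k}\|<1/k$ gives sets that are $G_{n_k}$-invariant, hence $G_m$-invariant for every $m\le n_k$. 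We then reindex by setting $B_m=B^{(k)}_{n_k}$ for $n_{k}\le m<n_{k+1}$, and $B_m=X$ or a trivially invariant set for small $m$. The only care needed is monotonicity, since $G_m\subset G_{n_k}$ for $m\le n_k$. With this, $m_{B_m}\to f$ uniformly.

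For (2), apply Proposition~\ref{PropositionMovingSetClose}(1) with $\lambda=\alpha$ to sets $A$ with $m_A$ close to $f$. This gives $A'\subset A$ with $\|m_{A'}-\alpha m_A\|_\infty\le\varepsilon$, so $\alpha f$ is a uniform limit of evaluation functions. The case $\alpha=0$ is handled with small sets via Proposition~\ref{PropositionContinuityMeasures}, and one should check that proper clopen sets suffice.

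For (3), the key step, we use (1) to choose $G_n$-invariant $B_n$ with $m_{B_n}\to f_2$ uniformly, and clopen $A_k$ with $m_{A_k}\to f_1$. For fixed $k$ and each ergodic $\mu$, Lemma~\ref{LemmaMuMultiplicativity} gives $\mu(A_k\cap B_n)\to f_2(\mu)\mu(A_k)$; when $f_2(\mu)=0$ this holds trivially since $\mu(A_k\cap B_n)\le\mu(B_n)$. The main obstacle is upgrading this pointwise convergence on $\mathcal E$ to uniform convergence on $\mathcal K$. My approach is to observe that for $n$ beyond the level at which $A_k$ is compatible with $\Xi_n$, the set $A_k$ is a union of level-$n$ cylinders and $B_n$ is a union of full $G_n$-orbits of tails. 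Then $\mu(A_k\cap B_n)$ can be computed exactly through the tower structure, as $\sum_v K_{A_k,v}\,\mu(B_n\cap C^{(n)}_{v,0})$. Because $B_n$ is $G_n$-invariant, this compares to $\mu(B_n)\mu(A_k)$ up to an error controlled by how far the tower measures $\mu(B_n\cap C_{v,0})/\mu(C_{v,0})$ deviate from $\mu(B_n)$. I would try to pass to a subsequence, or re-run the construction of Proposition~\ref{Proposition_B_to_Invariant_Bn} with $B_n$ chosen inside the tower structure proportionally, as in the proof of Proposition~\ref{PropositionMovingSetClose}(1). The aim is to make this deviation uniformly small, which yields $\|m_{A_k\cap B_n}-m_{A_k}m_{B_n}\|_\infty\to0$. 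If that fails, the fallback is a Dini-type argument: both sides are continuous on the compact set $\mathcal K$, and we would pass from ergodic to all measures by ergodic decomposition.

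For (4), assume $f_1+f_2<1$. By compactness $f_1+f_2\le1-\eta$. Take $A,B$ with $m_A\approx f_1$ and $m_B\approx f_2$. Since $\mu(B)<\mu(X\setminus A)$ for all $\mu$, comparison (Proposition~\ref{PropositionComparisonProperty}) moves $B$ into $X\setminus A$ by some $g$, and then $m_{A\sqcup g(B)}=m_A+m_B$. For (5), assume $f_1-f_2>0$, so $f_1-f_2\ge\eta$. Take $m_A\approx f_1$ and $m_B\approx f_2$ with $\mu(B)<\mu(A)$, and use comparison to embed $g(B)\subset A$. Then $m_{A\setminus g(B)}=m_A-m_B$. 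In both cases the approximation errors are taken smaller than $\eta/2$.
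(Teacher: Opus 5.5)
Your parts (1), (2), (4), (5) follow the paper's own arguments: diagonalizing Proposition~\ref{Proposition_B_to_Invariant_Bn}, using Proposition~\ref{PropositionMovingSetClose}(1), and using comparison to place $g(B)$ in $X\setminus A$ or inside $A$. Two small repairs are needed.

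\begin{itemize}
\item In (1) your reindexing runs the wrong way. $B^{(k)}_{n_k}$ is only $G_{n_k}$-invariant, so it can serve as $B_m$ only for $m\le n_k$. Take $B_m=B^{(k+1)}_{n_{k+1}}$ for $n_k<m\le n_{k+1}$.
\item In (4) and (5) your margin $\eta$ is known only on $\mathcal K$. You must pass to all of $\mathcal M$ by ergodic decomposition before invoking Proposition~\ref{PropositionComparisonProperty}.
\end{itemize}

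The genuine gap is (3). You correctly isolate the crux: upgrading $\mu(A\cap B_n)\to\mu(A)f(\mu)$ from pointwise convergence on $\mathcal E$ to uniform convergence on $\mathcal K$. But you only list strategies you ``would try''. Neither the proportional re-choice of $B_n$ nor a Dini argument transported to $\mathcal K$ by ergodic decomposition can work in the stated generality.

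The reason is that (3) itself fails whenever $\mathcal E$ is not closed. Each $m_A$ is affine, so $m_A(\mu)=\int_{\mathcal E}m_A\,d\beta_\mu$ for every $\mu\in\mathcal K$, where $\beta_\mu$ is the ergodic decomposition of $\mu$. This identity passes to uniform limits on $\mathcal K\supseteq\mathcal E$, so every $f\in F(\mathcal K)$ satisfies $f(\mu)=\int_{\mathcal E}f\,d\beta_\mu$.

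Now take $\mu\in\mathcal K\setminus\mathcal E$. Since $\beta_\mu$ is not a point mass and clopen sets determine measures, some clopen $A$ has $e\mapsto e(A)$ not $\beta_\mu$-a.e.\ constant. Then $m_A(\mu)^2<\int_{\mathcal E}m_A^2\,d\beta_\mu$, so $m_A\in F(\mathcal K)$ but $m_A^2\notin F(\mathcal K)$. Theorem~\ref{ThmStructureContFun}(2), which rests on (3), fails there too.

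Such systems exist. By Downarowicz's theorem every metrizable Choquet simplex, e.g.\ the Poulsen simplex (for which $\mathcal K=\mathcal M$), is the simplex of invariant measures of a Cantor minimal system. So no choice of the sets $B_n$ can fill this step.

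The paper's proof of (3) does not supply the missing idea either. It claims $\{m_{A\cap B_n}\}$ is uniformly Cauchy on $\mathcal K$, using elements $g_{n,m}$ acting only beyond level $N$ with $d(g_{n,m}(B_m),B_n)<4\varepsilon$, obtained by a ``slight modification'' of Proposition~\ref{PropositionMovingSetClose}. But such a $g_{n,m}$ preserves every level-$N$ cylinder $C$. Its existence would therefore force $\sup_{\mu\in\mathcal M}|\mu(B_m\cap C)-\mu(B_n\cap C)|<4\varepsilon$. That is precisely the uniform statement in question, and it does not follow from $|\mu(B_n)-\mu(B_m)|<\varepsilon$.

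What survives is the Bauer case $\mathcal K=\mathcal E$ (for instance, finitely many ergodic measures). There your Dini idea works once it is applied to the right monotone quantity. Set up:
\begin{itemize}
\item $\mathcal F_n$ is the $\sigma$-algebra of $G_n$-invariant Borel sets;
\item $X^{(n)}_w$ is the union of the level-$n$ tower over $w\in V_n$;
\item $K_{A,w}$ is the number of paths from $v_0$ to $w$ whose cylinder lies in $A$.
\end{itemize}
Once $A$ is compatible with $\Xi_n$, invariance gives $E_\mu[1_A\mid\mathcal F_n]=K_{A,w}/h^{(n)}_w$ on $X^{(n)}_w$ for every $\mu\in\mathcal M$. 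Hence for every $G_n$-invariant clopen $B_n$,
\[
\bigl|\mu(A\cap B_n)-\mu(A)\mu(B_n)\bigr|\le\epsilon_n(\mu)^{1/2},\qquad
\epsilon_n(\mu)=\sum_{w\in V_n}\mu\bigl(X^{(n)}_w\bigr)\Bigl(\frac{K_{A,w}}{h^{(n)}_w}\Bigr)^{2}-\mu(A)^2 .
\]
Each $\epsilon_n$ is continuous on $\mathcal M$. It is non-increasing in $n$ because $\mathcal F_n$ decreases. It tends to $0$ at every ergodic $\mu$ by reverse martingale convergence, since $\bigcap_n\mathcal F_n$ is the $\sigma$-algebra of $G$-invariant sets, which is $\mu$-trivial. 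Dini's theorem on the compact set $\mathcal E$ then yields $m_{A\cap B_n}\to m_Af$ uniformly.

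At a non-ergodic $\mu$, by contrast, $\epsilon_n(\mu)\to\int_{\mathcal E}e(A)^2\,d\beta_\mu(e)-\mu(A)^2$. This limit is positive for suitable $A$, consistent with the obstruction above.
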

\begin{proof} (1) Fix a sequence $\{m_{A_n}\}$ of evaluation functions, where $\{A_n\}$ are clopen sets, that is uniformly convergent to $f$.  By Proposition \ref{Proposition_B_to_Invariant_Bn}, we can find a sequence $\{B_n\}$ of clopen sets and a sequence of elements $\{g_n\}$ of $G$ such that for every $n\geq 1$, the set $B_n$ is $G_n$-invariant and $d(g_n(A_n),B_n)<1/n$. Then for every $\mu\in \mathcal K$, we have that   
$$ |f(\mu) - \mu(B_n)|\leq |f(\mu) - \mu(A_n)| + |\mu(g_n(A_n))- \mu(B_n)| \leq |f(\mu) - \mu(A_n)| +\frac{1}{n},
$$ which shows that the sequence $\{m_{B_n}\}$ uniformly converges to $f$. 


(2) It follows from Proposition \ref{PropositionMovingSetClose} that for every clopen $A$ and every real $\alpha\in [0,1]$, one has that $\alpha m_A\in F(\mathcal K)$. By definition, every function $f\in F(\mathcal K)$ is a uniform limit of evaluation functions $\{m_{A_n}\}$ for some clopen sets $\{A_n\}$. Hence, $\alpha f\in F(\mathcal K)$.

(3)  Let $f \in F(\mathcal{K})$, and let $m_A$ be the evaluation function for some clopen set $A$. By (1), there exists a sequence of clopen sets $\{B_n\}$ such that each $B_n$ is $G_n$-invariant and the sequence $\{m_{B_n}\}$ converges uniformly to $f$. Given $\varepsilon>0$ we can find $N>0$ such that for every $m,n\geq N$ and for every $\mu\in \mathcal K$, we have that $$|\mu(B_n)-\mu(B_m)|<\varepsilon.$$
Using Proposition \ref{PropositionMovingSetClose}, we can find elements $g_{n,m}\in G$ such that for every $n,m\geq N$,  $$d(g_{n,m}(B_m),B_n)< 4 \varepsilon .$$ Furthermore, since the sets $B_n$ and $B_m$ are $G_N$-invariant, the proof of Proposition \ref{PropositionMovingSetClose} can be slightly modified to ensure that $g_{n,m}$ acts only on the paths below level $N$. In particular, for every $n,m\geq N$, $g_{n,m}(A) = A$ and, thus, 

 $$ d(g_{n,m}(A\cap B_n),A\cap B_m)  = d(A\cap g_{n,m}( B_n),A\cap B_m) < 4\varepsilon.
 $$
Hence, for every $n,m\geq N$ and every $\mu \in \mathcal K$,

$$|\mu(A\cap B_n)-\mu(A\cap B_m)| =  |\mu(g_{n,m}(A\cap B_n))-\mu(A\cap B_m)| <4\epsilon,$$ which shows that the sequence 
$\{\mu(A\cap B_n)\}$, or, equivalently $\{m_{A\cap B_n}\}$, is uniformly Cauchy on $\mathcal K$.

Applying Lemma \ref{LemmaMuMultiplicativity}, we obtain that the functions 
$m_{A \cap B_n}$ converge pointwise on $\mathcal{E}$ to $m_A f$. Since the sequence 
$\{m_{A \cap B_n}\}$ is uniformly Cauchy on $\mathcal{K}$, the closure of $\mathcal{E}$, 
it follows that $m_{A \cap B_n}$ converges uniformly to $m_A f$ on $\mathcal{K}$. 
Hence, $m_A f \in F(\mathcal{K})$.

Since, by definition, every function in $F(\mathcal{K})$ is a uniform limit of evaluation functions, the result follows.

(4) Since evaluation functions are uniformly dense in $F(\mathcal K)$, it is sufficient to establish the result for $f_1 = m_A$ and $f_2= m_B$, where $A$ and $B$ are clopen sets. Since $\mu(A) +\mu(B)<1$ for every $\mu \in \mathcal K$, we obtain that $\mu(A)< \mu(X\setminus B)$ for every $\mu\in \mathcal K$. By compactness of $\mathcal K$, there is $\delta>0$ such that $\mu(A)<\mu(X\setminus B)-\delta$ for every $\mu\in \mathcal K$. 

Applying  The Ergodic Decomposition Theorem we obtain that  $\mu(A)\leq \mu(X\setminus B)-\delta$ for every $\mu \in \mathcal M$. Applying  Proposition \ref{PropositionComparisonProperty}, we can find  $g\in G$ such that $g(A)\subset X\setminus B$. Since $g(A)$ and $B$ are disjoint, we obtain 
$$m_A+m_B = m_{g(A)}+m_B = m_{g(A)\cup B},$$ which completes the proof. 

(5) Similarly to the proof of (4) above, it is sufficient to establish the result for $f_1 = m_A$ and $f_2= m_B$, where $A$ and $B$ are clopen sets.  Since $m_A-m_B>0$ on $\mathcal K$, by compactness of $\mathcal K$, we can find $\delta>0$ such that $\delta< \mu(A)-\mu(B)$. Using the Ergodic Decomposition Theorem, we obtain that $\mu(B)\leq \mu(A)-\delta$ for every $\mu \in \mathcal M$. 

By Proposition \ref{PropositionComparisonProperty}, we can find  $g\in G$ such that $g(B)\subset A$. It follows that 
$$m_A- m_B = m_{A}-m_{g(B)} = m_{A\setminus g(B)},$$ which completes the proof. 
\end{proof}

\begin{lemma}\label{lemmaPropertiesFK2}  Let $f_1,f_2\in F(\mathcal K)$ and $\alpha_1,\alpha_2\geq 0$. 

(1) If $\alpha_1 f_1 + \alpha_2 f_2<1$, then $\alpha_1f_1+ \alpha_2 f_2\in F(\mathcal K)$.

(2) If $0<\alpha_1 f_1 - \alpha_2 f_2<1$, then $\alpha_1f_1- \alpha_2 f_2\in F(\mathcal K)$. 

\end{lemma}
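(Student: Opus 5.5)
Everything reduces to Lemma~\ref{lemmaPropertiesFK1}. The idea is to rewrite $\alpha_i f_i$ as $N$ copies of a function already in $F(\mathcal K)$, and then use parts (4) and (5) of that lemma to add or subtract those copies one at a time.

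For (1), assume without loss of generality that not both $\alpha_i$ vanish; if either vanishes, part (2) of Lemma~\ref{lemmaPropertiesFK1} finishes when $\alpha\le 1$, and the general case is covered by the argument below. Choose an integer $N \ge \max(\alpha_1,\alpha_2,1)$ and set $g_i = (\alpha_i/N) f_i$. By Lemma~\ref{lemmaPropertiesFK1}(2), $g_i \in F(\mathcal K)$, since $\alpha_i/N \in [0,1]$. We then have
\[
\alpha_1 f_1 + \alpha_2 f_2 = \underbrace{g_1 + \cdots + g_1}_{N} + \underbrace{g_2 + \cdots + g_2}_{N}.
\]
We add the summands one at a time, forming partial sums $S_1 = g_1$, $S_{k+1} = S_k + (\text{next summand})$. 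Every partial sum is pointwise at most the full sum, since all terms are $\ge 0$, and the full sum is $<1$. So at each step the two functions being added have sum $<1$, and Lemma~\ref{lemmaPropertiesFK1}(4) keeps $S_{k+1}$ in $F(\mathcal K)$. After $2N-1$ additions we reach $\alpha_1 f_1+\alpha_2 f_2 \in F(\mathcal K)$.

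For (2), the first step is to show $\alpha_1 f_1 \in F(\mathcal K)$. This needs $\alpha_1 f_1 < 1$, which we do not yet know, because the hypothesis only bounds the difference; I expect this to be the main obstacle. The remedy is to subtract before adding. With $N$ and $g_i$ as above, write $h = g_1 - g_2 = (\alpha_1f_1-\alpha_2f_2)/N$. Then $0 < h < 1/N \le 1$ pointwise on $\mathcal K$, so Lemma~\ref{lemmaPropertiesFK1}(5) applied to $g_1,g_2$ gives $h \in F(\mathcal K)$. Next, $kh$ is nonnegative and strictly below $1$ for $k\le N$, because $Nh = \alpha_1 f_1 - \alpha_2 f_2 < 1$. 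So the same inductive addition via Lemma~\ref{lemmaPropertiesFK1}(4) gives $kh \in F(\mathcal K)$ for $k=1,\dots,N$. At $k=N$ this is $\alpha_1 f_1-\alpha_2f_2 \in F(\mathcal K)$.

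One point needs checking. Lemma~\ref{lemmaPropertiesFK1}(5) requires $f_1 - f_2 > 0$ in the same pointwise strict sense as our hypothesis. Part (4) uses strict inequality, but its proof only needs a uniform gap $\delta$. That gap comes from compactness of $\mathcal K$, since all functions involved are continuous. So the pointwise strict inequalities used above suffice.
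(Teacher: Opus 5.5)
Your proof is correct and follows essentially the paper's argument. Both scale by $1/N$ using Lemma~\ref{lemmaPropertiesFK1}(2), combine the scaled pieces via part (4) or, in (2), via part (5) applied to $\tfrac{\alpha_1}{N}f_1-\tfrac{\alpha_2}{N}f_2$, and then add $N$ copies inductively with part (4). The only cosmetic difference is in (1): you add the $2N$ scaled summands one at a time, whereas the paper first forms $\tfrac{1}{N}(\alpha_1f_1+\alpha_2f_2)$ and then adds $N$ copies of it.
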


\begin{proof}

Fix an integer $N>\max\{\alpha_1,\alpha_2\}$.  

(1)  Inductively applying Lemma \ref{lemmaPropertiesFK1}(2, 4), we obtain that $$\alpha_1f_1+ \alpha_2 f_2 = N\left(\frac{\alpha_1}{N}f_1+ \frac{\alpha_2}{N} f_2\right)\in F(\mathcal K).$$

(2) Writing  $$\alpha_1f_1- \alpha_2 f_2 = N\left(\frac{\alpha_1}{N}f_1- \frac{\alpha_2}{N} f_2\right)$$ and applying Lemma \ref{lemmaPropertiesFK1}(2, 4, 5), we obtain the result.
\end{proof}

The following result describes the structure of continuous functions from $C(\mathcal K)$. 

\begin{theorem}\label{ThmStructureContFun} (1) $C(\mathcal K) = \{\alpha_1 f_1 - \alpha_2 f_2 : f_1, f_2 \in F(\mathcal K),\; \alpha_1,\alpha_2 \geq 0\}$.

(2) $F(\mathcal K) = \{f\in C(\mathcal K) : 0\leq f \leq 1\}$.
\end{theorem}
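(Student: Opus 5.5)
The plan is to prove (2) first and then deduce (1) from it. The main tool is the Stone--Weierstrass theorem applied to the algebra generated by $F(\mathcal K)$, combined with the closure properties in Lemmas \ref{lemmaPropertiesFK1} and \ref{lemmaPropertiesFK2}.

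\emph{Step 1: build a subalgebra.} Let $\mathcal V=\{\alpha_1 f_1-\alpha_2 f_2 : f_i\in F(\mathcal K),\ \alpha_i\ge 0\}$.
\begin{itemize}
\item $\mathcal V$ is a real vector space. Closure under sums follows by writing $\alpha_1f_1+\beta_1g_1=(\alpha_1+\beta_1)h$ with $h=\frac{\alpha_1}{\alpha_1+\beta_1}f_1+\frac{\beta_1}{\alpha_1+\beta_1}g_1$. Here $h\in F(\mathcal K)$ because $h\le 1$; if $h=1$ at some points, first shrink by a factor $<1$ using Lemma \ref{lemmaPropertiesFK1}(2) and rescale, so Lemma \ref{lemmaPropertiesFK2}(1) applies.
\item $\mathcal V$ is closed under products, since $F(\mathcal K)$ is closed under products by Lemma \ref{lemmaPropertiesFK1}(3), and expanding $(\alpha_1f_1-\alpha_2f_2)(\beta_1g_1-\beta_2g_2)$ gives nonnegative multiples of products.
\item $\mathcal V$ contains constants: $m_X=1$ is a uniform limit of $m_{A}$ for proper clopen $A$ with $d(A,X)\to0$, which exist by Proposition \ref{PropositionContinuityMeasures}.
\item $\mathcal V$ separates points of $\mathcal K$, because distinct measures differ on some clopen set.
\end{itemize}
By Stone--Weierstrass, $\mathcal V$ is uniformly dense in $C(\mathcal K)$.

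\emph{Step 2: prove (2).} The inclusion $F(\mathcal K)\subset\{0\le f\le 1\}$ is clear. For the converse, take $f\in C(\mathcal K)$ with $0\le f\le 1$. Since $F(\mathcal K)$ is closed, it suffices to approximate $f_t=(1-t)f+t/2$ for small $t$; this function takes values in $[t/2,1-t/2]$.

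By density of $\mathcal V$, choose $\alpha_1g_1-\alpha_2g_2\in\mathcal V$ uniformly within $\eta<t/4$ of $f_t$. Then $0<\alpha_1g_1-\alpha_2g_2<1$ on $\mathcal K$, so Lemma \ref{lemmaPropertiesFK2}(2) puts this function in $F(\mathcal K)$. Letting $\eta\to0$ and then $t\to0$, closedness gives $f\in F(\mathcal K)$.

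\emph{Step 3: prove (1).} Given $h\in C(\mathcal K)$, write $h=h^+-h^-$ with $h^\pm\ge0$ continuous. Set $\alpha=\|h\|_\infty+1$. Then $h^\pm/\alpha\in F(\mathcal K)$ by (2), so $h=\alpha(h^+/\alpha)-\alpha(h^-/\alpha)$ has the required form. The reverse inclusion is trivial.

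The main obstacle I expect is Step 1: the closure lemmas carry strict inequality hypotheses, so closure of $\mathcal V$ under addition has to route through the scaling trick. Alternatively, one can avoid this by applying Stone--Weierstrass to the algebra generated by $F(\mathcal K)$ directly. That algebra consists of linear combinations of products, and these products lie in $F(\mathcal K)$ by Lemma \ref{lemmaPropertiesFK1}(3). The positivity-with-margin argument in Step 2 then shows that the positive and negative parts of such a combination can be absorbed into $F(\mathcal K)$ via Lemma \ref{lemmaPropertiesFK2}.
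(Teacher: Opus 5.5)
Your proof is correct and uses the same ingredients as the paper (Stone--Weierstrass for $\mathcal V$, then Lemma~\ref{lemmaPropertiesFK2}(2)), but it arranges them differently.

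The paper asserts that $\Omega=\mathcal V$ is a \emph{closed} subalgebra, so Stone--Weierstrass yields $\Omega=C(\mathcal K)$, which is part (1), outright. Part (2) is then obtained by writing any $f$ with $0<f<1$ as $\alpha_1f_1-\alpha_2f_2$, invoking Lemma~\ref{lemmaPropertiesFK2}(2), and using density of $\{0<f<1\}$ in $\{0\le f\le 1\}$.

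You claim only density of $\mathcal V$. You then prove (2) with the margin argument for $f_t$, together with the fact that $F(\mathcal K)$ is closed by definition. Finally you deduce (1) from (2) through $h=h^+-h^-$.

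The paper's route is shorter, but it states the topological closedness of $\Omega$ without proof. That closedness does hold, yet verifying it requires exactly your Step~2 device: shift and rescale into a compact subinterval of $(0,1)$, apply Lemma~\ref{lemmaPropertiesFK2}(2) to the approximants, and use closedness of $F(\mathcal K)$. So your ordering makes explicit what the paper leaves implicit.

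You also check two points the paper passes over. First, constants belong to $\mathcal V$ (via $1\in F(\mathcal K)$ and Proposition~\ref{PropositionContinuityMeasures}), which Stone--Weierstrass needs. Second, $\mathcal V$ is closed under addition despite the strict inequality in Lemma~\ref{lemmaPropertiesFK2}(1). Your closing ``alternatively'' remark is not needed.
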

\begin{proof}   Set $\Omega = \{\alpha_1 f_1 - \alpha_2 f_2 : f_1, f_2 \in F(\mathcal{K}),\; \alpha_1, \alpha_2 \geq 0\}$. By Statements (2) and (3) of Lemma \ref{lemmaPropertiesFK1} and Lemma \ref{lemmaPropertiesFK2}, $\Omega$ is a closed subalgebra of $C(\mathcal{K})$. Since evaluation functions separate points of $\mathcal{K}$, the Stone–Weierstrass Theorem implies that $C(\mathcal{K}) = \Omega$.

Note that if $f\in F(\mathcal K)$, then $0 \leq f \leq 1$. Conversely, if $f\in C(\mathcal K)$ is such that $0<f<1$, then $f = \alpha_1 f_1 - \alpha_2 f_2 $ for some $\alpha_1,\alpha_2\geq 0$ and $f_1,f_2\in F(\mathcal K)$. By Lemma \ref{lemmaPropertiesFK2}(2), we obtain that $f\in F(\mathcal K)$. To complete the proof, we notice that  functions $f\in C(\mathcal K)$ with $0<f<1$ are dense in the space of functions $g\in C(\mathcal K)$ satisfying $0\leq g\leq 1$ in the topology of uniform convergence. 

\end{proof}


\section{Indecomposable Characters of the Commutator Subgroups}\label{SectionCharactersCommutarorSub}

In this section we show that each indecomposable character on the full group $G$ of a simple Bratteli diagram gives rise a linear functional on $C(\mathcal K)$ where $\mathcal K$ is the closure of the set of $G$-invariant ergodic measures on the path-space of the associated Bratteli diagram.

In what follows we use the notation of Section \ref{Section:Projections} where  $G'$ is the commutator subgroup of $G$, $X$ is the path-space of the associated  Bratteli diagram,  $(\mathcal{H}, \pi, \xi)$ is a non-trivial factor representation of $G’$,  $\mathcal{M}_\pi$ is the von Neumann algebra generated by $\pi(G’)$, and $\tr$ is the corresponding trace on $\mathcal{M}_\pi$.  For each clopen set $B$, $P^B$ denotes the orthogonal projection onto the space $\pi(G'_B)$-invariant vectors.

For each evaluation function $m_A$, where $A$ is clopen, set \begin{equation} 
\phi(m_A) = \tr(P^{X\setminus A}).
\end{equation} 

We note that the definition of the function $\phi$ is tied to the indecomposable character $\chi$.  

\begin{lemma}\label{LemmaContituityPhi}  If the character $\chi$ is non-regular, then the function $\phi : E(\mathcal K) \rightarrow \mathbb R$ is continuous with respect to the topology of uniform convergence on $E(\mathcal K)$. 
\end{lemma}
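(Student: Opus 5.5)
The plan is to reduce continuity of $\phi$ to Lemma~\ref{LmContinuityTraceOnPA} via the comparison property. First, I will check that $\phi$ is well defined on $E(\mathcal K)$. If $m_A=m_B$ on $\mathcal K$, then by the Ergodic Decomposition Theorem $\mu(A)=\mu(B)$ for every $\mu\in\mathcal M$. Equivalently, $\mu(X\setminus A)=\mu(X\setminus B)$ for all invariant $\mu$. Proposition~\ref{PropositionMovingSetClose}(2), applied with arbitrarily small $\varepsilon$, then gives for each $\varepsilon>0$ an element $g\in G$ with $d(g(X\setminus A),X\setminus B)<4\varepsilon$. Multiplying $g$ by an odd permutation supported on a tiny clopen set if necessary, I can take $g\in G'$ at the cost of a small extra error, controlled by Proposition~\ref{PropositionContinuityMeasures}(1). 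By Lemma~\ref{LmPAQA}, $\tr(P^{g(X\setminus A)})=\tr(\pi(g)P^{X\setminus A}\pi(g^{-1}))=\tr(P^{X\setminus A})$. Lemma~\ref{LmContinuityTraceOnPA} then shows that $\tr(P^{X\setminus A})$ and $\tr(P^{X\setminus B})$ differ by an arbitrarily small amount, so they are equal.

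Second, for continuity, I fix a proper clopen $A$, so that $X\setminus A$ is nonempty, and $\varepsilon>0$. Lemma~\ref{LmContinuityTraceOnPA}, applied to the nonempty set $X\setminus A$, gives $\delta>0$. Let $B$ be a proper clopen set with $\sup_{\mu\in\mathcal K}|\mu(A)-\mu(B)|<\delta/8$. By ergodic decomposition the same bound holds on all of $\mathcal M$. The argument of the first step then yields $g\in G'$ with $d(g(X\setminus B),X\setminus A)<\delta$. By Lemma~\ref{LmContinuityTraceOnPA} and $G'$-conjugation invariance of the trace,
\[
|\phi(m_A)-\phi(m_B)|=|\tr(P^{X\setminus A})-\tr(P^{g(X\setminus B)})|<\varepsilon .
\]
Both complements must be nonempty for Lemma~\ref{LmContinuityTraceOnPA} to apply. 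This holds because only proper clopen sets are used in $E(\mathcal K)$, and $g(X\setminus B)$ is nonempty.

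Non-regularity is what makes the earlier lemmas available. Lemma~\ref{LmContinuityTraceOnPA} rests on Lemma~\ref{LmPAPBintersect} and Lemma~\ref{LmPAQA}, whose proof used the automatic continuity Theorem~\ref{ThAutomaticContinuity}, and that theorem requires $\pi$ to be non-regular. In the regular case $P^A=0$ for all nonempty $A$, so $\phi$ would take the value $1$ only at $m_X$, and uniform continuity at points near $m_X$ could fail.

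I expect the main obstacle to be upgrading the element supplied by comparison from $G$ to $G'$ while keeping the distance estimate. Proposition~\ref{PropositionComparisonProperty} already provides involutions in $\mathcal A(G)=G'$ with $g(A')\subset B$, so I would rerun the proof of Proposition~\ref{PropositionMovingSetClose}(2) using that involution directly. This gives $g\in G'$ with no parity correction needed.
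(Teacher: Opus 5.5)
Your proposal is correct and follows the paper's proof. The paper likewise moves one complement onto the other via Proposition~\ref{PropositionMovingSetClose}(2), then uses conjugation invariance of the trace (Lemma~\ref{LmPAQA}), Lemma~\ref{LmContinuityTraceOnPA}, and ergodic decomposition to pass from $\mathcal K$ to $\mathcal M$. Your additional checks make explicit two points that the paper's brief ``by the tracial property'' leaves implicit: that $\phi$ is well defined, and that the conjugating element can be taken to be the involution in $\mathcal A(G)=G'$ supplied by Proposition~\ref{PropositionComparisonProperty}, so that $\pi(g)$ is actually defined.
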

\begin{proof}
By the tracial property of $\tr$, Proposition~\ref{PropositionMovingSetClose}(2), and Lemma~\ref{LmContinuityTraceOnPA}, for every $\varepsilon>0$ and every nonempty clopen set $A$ there exists $\delta>0$ such that, for any nonempty clopen set $B$ satisfying
$$
|\mu(A)-\mu(B)|<\delta \quad \text{for all } \mu\in\mathcal M,
$$
we have
$$
|\tr(P^A)-\tr(P^B)|<\varepsilon.
$$

By the Ergodic Decomposition Theorem \cite[Theorem~9.5]{DoughertyJacksonKechris:1994}, the condition
$
|\mu(A)-\mu(B)|<\delta \quad \text{for all } \mu\in\mathcal K
$
implies the same inequality for all $\mu\in\mathcal M$. Consequently, the function $\phi$, when restricted to the set of evaluation functions $E(\mathcal K)$, is continuous.
\end{proof}

By the continuity of $\phi$ we can extend its domain to $F(\mathcal K)$, which, by Theorem \ref{ThmStructureContFun}(2), coincides with the set of functions  $f\in C(\mathcal K)$ with $0\leq f \leq 1$.

\begin{lemma}\label{LemmaPropLittlePhi}  Suppose that the character $\chi$ is non-regular. Then 

(1) For any $f_1, f_2\in F(\mathcal K)$, we have that $$\phi(f_1 f_2) = \phi(f_1)\phi(f_2).$$ 

(2) For any $f_1, f_2\in F(\mathcal K)$ with $\sup_{\mu \in \mathcal K} f_1(\mu) < \inf_{\mu \in \mathcal K}f_2(\mu)$, we have that $\phi(f_1)\leq \phi(f_2)$.

(3) For any $f \in F(\mathcal K)$ such that $f>0$, we have that $\phi(f)>0$. 
\end{lemma}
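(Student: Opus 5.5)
All three statements will first be proved for evaluation functions $f_i=m_{A_i}$ and then extended to $F(\mathcal K)$ by density, using the continuity of $\phi$ from Lemma~\ref{LemmaContituityPhi}. The two tools are Lemma~\ref{LemmaUnionProjections} ($P^AP^B=P^{A\cup B}$) and the covariance $\pi(g)P^A\pi(g^{-1})=P^{g(A)}$ from Lemma~\ref{LmPAQA}. The latter, combined with the trace property, gives $\tr(P^A)=\tr(P^{g(A)})$ for $g\in G'$.

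\emph{Multiplicativity (1).} The idea is to realise $m_{A_1}m_{A_2}$ approximately as an evaluation function of an intersection in which the two complements are ``independent''.
\begin{itemize}
\item By Lemma~\ref{lemmaPropertiesFK1}(1), replace $f_2$ by $m_{B_n}$ with $B_n$ being $G_n$-invariant and $m_{B_n}\to f_2$ uniformly.
\item As in the proof of Lemma~\ref{lemmaPropertiesFK1}(3), $m_{A\cap B_n}\to m_A f_2$ uniformly. Hence $\phi(m_Am_{B_n})$ and $\phi(m_{A\cap B_n})$ have the same limit.
\item We have $X\setminus(A\cap B_n)=(X\setminus A)\cup(X\setminus B_n)$, so Lemma~\ref{LemmaUnionProjections} gives $\phi(m_{A\cap B_n})=\tr(P^{X\setminus A}P^{X\setminus B_n})$.
\item It remains to show that $\tr(P^{X\setminus A}P^{X\setminus B_n})-\tr(P^{X\setminus A})\tr(P^{X\setminus B_n})\to0$, i.e.\ asymptotic independence.
\end{itemize}
For the last point, $X\setminus B_n$ is $G_n$-invariant, so $P^{X\setminus B_n}$ commutes with $\pi(G_n')$ by Lemma~\ref{LmPAQA}. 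Averaging over $G_n'$ replaces $P^{X\setminus A}$ (for $A$ compatible with a fixed level) by $E_n(P^{X\setminus A})$, where $E_n$ is the conditional expectation onto $\pi(G'_n)'\cap\mathcal M_\pi$. The approach is to show that these conditional expectations converge to $\tr(\cdot)\id$ in $L^2(\tr)$. This should follow from factoriality together with the relative commutants $\bigcap_n\pi(G_n')'\cap\mathcal M_\pi$ shrinking to the center $\mathbb C\id$ (by a martingale argument, since $\bigcup_nG_n'=G'$).

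\emph{Main obstacle.} This independence step is the delicate part. If the martingale argument is awkward, the fallback is a combinatorial route: use Proposition~\ref{PropositionMovingSetClose}(2) together with covariance to move $X\setminus A$ by elements of $G'$ preserving $B_n$, so that it becomes spread proportionally across the $G_n$-orbits. Then an averaging argument similar to Lemma~\ref{LmContinuityTraceOnPA}, using many mutually ``orthogonal'' translates, would give the product formula up to $\varepsilon$.

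\emph{Monotonicity (2).} Take $f_1=m_A$ and $f_2=m_B$ with $\sup\mu(A)<\inf\mu(B)$. By Proposition~\ref{PropositionComparisonProperty}, there is $g\in G'$ (an involution in $\mathcal A(G)=G'$) with $g(A)\subset B$. Then $X\setminus B\subset X\setminus g(A)$, so $P^{X\setminus g(A)}\le P^{X\setminus B}$, since invariance under a larger local group is a stronger condition. Taking traces and using covariance gives $\phi(m_A)=\tr(P^{X\setminus g(A)})\le\phi(m_B)$. For general $f_i$ the strict gap survives uniform approximation, so the inequality passes to the limit.

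\emph{Positivity (3).} Given $f>0$, choose by compactness an integer $k$ with $f\ge 1/k$. By (2), $\phi(f)\ge\phi(g)$ for any $g\in F(\mathcal K)$ with $\sup g<\inf f$. Pick a clopen $A$ small enough that $\sup_\mu\mu(A)^{1/N}\cdots$; more simply, take $g=c$ constant with $c<1/k$, which lies in $F(\mathcal K)$ by Lemma~\ref{lemmaPropertiesFK1}(2). Write $c=c_0^N$ with $c_0$ close to $1$. By (1), $\phi(c)=\phi(c_0)^N$, so it suffices to show $\phi(c_0)>0$ for $c_0$ near $1$. Now $c_0$ is uniformly approximated by $m_A$ with $X\setminus A$ of small measure, and $\phi(m_A)=\tr(P^{X\setminus A})$. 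By Lemma~\ref{LmContinuityTraceOnPA}, applied via the empty-set endpoint in the argument of Lemma~\ref{LemmaUnionProjections}, $\tr(P^C)\to\tr(P^\emptyset)=1$ as $\sup_\mu\mu(C)\to0$. Hence $\phi(c_0)>0$, and so $\phi(f)>0$.
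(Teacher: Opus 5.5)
Your parts (1) and (2) are sound, but part (3) has a genuine gap.

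\textbf{Parts (1) and (2).} Part (2) is the paper's argument. In (1) you differ from the paper only in how you prove asymptotic independence. The paper observes that $P^{X\setminus B_n}$ commutes with $\pi(G_n\cap G')$ by covariance. Hence every weak limit point of $\{P^{X\setminus B_n}\}$ lies in $\mathcal M_\pi\cap\pi(G')'=\mathbb C\,\id$, and normality of $\tr$ gives $\tr(P^{X\setminus A}P^{X\setminus B_n})\to\tr(P^{X\setminus A})\,\phi(f_2)$ along a subsequence. Your reverse-martingale statement $E_n(x)\to\tr(x)\id$ in $L^2(\tr)$ is the dual form of the same fact, and it is correct. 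A bounded sequence converging in $L^2$ whose terms eventually lie in each algebra of the decreasing family $\pi(G_m')'\cap\mathcal M_\pi$ has its limit in their intersection, which is $\mathbb C\,\id$ by factoriality. Your route avoids subsequences but is heavier than needed, and the combinatorial fallback is unnecessary.

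\textbf{The gap in (3).} You assert that $\tr(P^C)\to\tr(P^\emptyset)=1$ as $\sup_\mu\mu(C)\to0$, citing Lemma~\ref{LmContinuityTraceOnPA} ``via the empty-set endpoint'' of Lemma~\ref{LemmaUnionProjections}. Neither lemma gives this.
\begin{itemize}
\item Lemma~\ref{LmContinuityTraceOnPA} is only stated for nonempty $A$, and its $\delta=\varepsilon r(A)/32$ degenerates as $A$ shrinks.
\item In Lemma~\ref{LemmaUnionProjections} the sets $A\cup C_n$ converge to a \emph{nonempty} $A$.
\end{itemize}
More fundamentally, the claim cannot follow from the projection calculus alone, because it fails for the regular character. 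There $P^C=0$ for every nonempty clopen $C$ (see the beginning of the proof of Theorem~\ref{TheoremCharFromTrace}). Yet $P^AP^B=P^{A\cup B}$, covariance, and Lemma~\ref{LmContinuityTraceOnPA} all hold trivially in that case. Correspondingly, your proof of (3) never uses that $\chi$ is non-regular, and that hypothesis is exactly what (3) rests on. Multiplicativity and monotonicity alone only show that $\lim_{c\uparrow 1}\phi(c)\in\{0,1\}$; they cannot exclude $0$.

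\textbf{The missing step.} What is needed is the paper's contradiction argument via Theorem~\ref{TheoremCharFromTrace}. In your setup it runs as follows.
\begin{enumerate}
\item Suppose $\phi(c_0)=0$ for every constant $c_0<1$.
\item For any clopen $B\neq X$ one has $\sup_{\mathcal K}\mu(B)<1$. So (2) gives $\phi(m_B)\le\phi(c_0)=0$ for any $c_0\in(\sup_{\mathcal K}m_B,1)$, that is, $\tr(P^{X\setminus B})=0$.
\item Take $B=\fix(g)$ for $g\neq e$. Theorem~\ref{TheoremCharFromTrace} then yields $\chi(g)=\tr(P^{\supp(g)})=0$, so $\chi$ is regular, a contradiction.
\item Hence $\phi(c_0)>0$ for some $c_0<1$. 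For any $c\in(0,1)$, choose $N$ with $c_0^N<c$; then (1) and (2) give $\phi(c)\ge\phi(c_0^N)=\phi(c_0)^N>0$. This completes your reduction.
\end{enumerate}
The paper runs the same argument starting from $\phi(f)=0$. It first obtains $\phi(m_A)=0$ for a small clopen $A$ by (2). Then $\phi(m_B)^n=\phi(m_B^n)\le\phi(m_A)=0$ for every clopen $B\neq X$, which gives the same contradiction.
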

\begin{proof}  (1) Let $f_1 = m_A$, where $A$ is a proper clopen set, and $f_2\in F(\mathcal K)$.  By  Lemma \ref{lemmaPropertiesFK1}, $m_A f_2\in F(\mathcal K)$ and we can represent $f_2$ as the uniform limit of evaluation functions $\{m_{B_n}\}$ such that for each $n\geq 1$, the set $B_n$ is $G_n$-invariant.   It follows from the proof of Lemma \ref{lemmaPropertiesFK1}(3) that the functions $\{m_{A\cap B_n}\}$ converge to $m_A f_2$ uniformly on $\mathcal K$.  

Taking a subsequence if needed, without loss of generality, we can assume that the sequence of projections $\{P^{X\setminus B_n}\}$ is convergent. Since each set $B_n$ is invariant under the action of $G_n'$, by Lemma \ref{LmPAQA}, we obtain that the orthogonal projections $\{P^{X\setminus B_n}\}$ asymptotically commute with $\pi(G')$ and, thus, converge to a scalar operator $\lambda \cdot \id$. Note that $\phi(f_2) = \lim_n \tr(P^{X\setminus B_n}) = \lambda$.  Using Lemma \ref{LemmaUnionProjections}, we conclude that 
\begin{multline*}
\phi(m_Af_1) = \lim_{n} \phi (m_{A\cap B_n}) = \lim_n \tr(P^{X\setminus (A\cap B_n)}) = \lim_n \tr(P^{X\setminus A} P^{X\setminus B_n}) \\ = \tr(P^{X\setminus A}) \phi(f_2) = \phi(m_A)\phi(f_2). 
\end{multline*}

Since evaluation functions are uniformly dense in $F(\mathcal K)$, we obtain that $\phi(f_1f_2) = \phi(f_1) \phi(f_2)$ for every $f_1,f_2\in F(\mathcal K)$. 

(2)  By definition of $F(\mathcal K)$, we can find two sequences of nonempty clopen sets $\{A_n\}$ and $\{B_n\}$ such that the sequence $\{m_{A_n}\}$ uniformly converges to $f_1$ and $\{m_{B_n}\}$ uniformly converges to $f_2$. By compactness of $\mathcal K$, we can find $N>0$ such that $m_{A_n}<m_{B_n}$  on $\mathcal K$ for all $n\geq N$. Thus, for every $n\geq N$, we have that $\mu(A_n)<\mu(B_n)$ for every $\mu \in \mathcal K$, and, by the Ergodic Decomposition Theorem, for every $\mu \in \mathcal M$. 

By the Comparison Property, Proposition \ref{PropositionComparisonProperty}, and the fact that every measure $\mu$ is $G$-invariant, we can assume that $A_n\subset B_n$ for every $n\geq N$. Thus, for the orthogonal projections $P^{X\setminus A_n}$ and $P^{X\setminus B_n}$, see the definition in Section \ref{Section:Projections}, we obtain that $P^{X\setminus A_n}\leq P^{X\setminus B_n}$. Hence, 
$$\phi(m_{A_n}) = \tr(P^{X\setminus A_n})\leq \tr(P^{X\setminus B_n}) = \phi(m_{B_n})\mbox{ for every }n\geq N.$$

By the continuity of $\phi$, Lemma \ref{LemmaContituityPhi}, we obtain that $\phi(f_1)\leq \phi(f_2)$. 

(3) Assume that $f\in F(\mathcal K)$ is such that $f>0$ and $\phi(f) = 0$. By Proposition \ref{PropositionContinuityMeasures}, we can find a nonempty clopen set $A$ such that $\sup_{\mu\in \mathcal K} m_A(\mu)<\inf_{\mu\in \mathcal K} f(\mu)$. Then by Statement (2) above, we obtain that $\phi(m_A) = 0$.  

Fix an arbitrary clopen set $B\neq X$. Note that $\mu(B)<1$  for every $G$-invariant measure $\mu$. Hence, by compactness of $\mathcal K$,  we can find an integer $n>0$ such that $\sup_{\mu\in \mathcal K} \mu(B)^n <\inf_{\mu\in \mathcal K} \mu(A)$. Applying Statements (1) and (2), we obtain $$\phi(m_B)^n = \phi(m_B^n)\leq \phi(m_A) = 0,$$  which implies that $$\phi(m_B) = \tr(P^{X\setminus B}) = 0. $$

Hence, by Theorem \ref{TheoremCharFromTrace}, for any $g\in G'$ such that $\fix(g)\neq X$, we obtain that $$\chi(g) = \tr(P^{X\setminus \fix(g)}) = 0,$$ which contradicts our assumption that the character $ \chi$ is non-regular. 
\end{proof}

For $f\in C(\mathcal K)$ such that $f> 0$, we can rewrite it as $f = f_1/f_2$ for some $f_i\in C(\mathcal K)$, $0<f_i\leq 1$,  $i=1,2$. For example, we can use $f_1 = f/L$ and $f_2 = 1/L$, where $L = \sup_{\mu \in \mathcal K}f(\mu)+1$.  Note that by Lemma \ref{LemmaPropLittlePhi}, $\phi(f_2)>0$. Thus, we can define 
\begin{equation}\label{eqnDefPhi}\Phi(f) = \frac{\phi(f_1)}{\phi(f_2)}.\end{equation}

\begin{lemma}\label{lemmaPropBigPhi} Suppose that the character $\chi$ is non-regular.  Then 

(1) The function $\Phi$ in (\ref{eqnDefPhi}) is well-defined and continuous on the set of strictly positive functions in $C(\mathcal K)$. 

(2) The function $\Phi$ coincides with $\phi$ on the set of functions $f\in C(\mathcal K)$, $0 < f\leq 1$.

(3)  For every $f\in C(\mathcal K)$ such that $f>0$, we have that $\Phi(f)>0$. 

(4) For every $f_1,f_2\in C(\mathcal K)$ such that $f_i>0$, $i=1,2$, we have that $\Phi(f_1f_2) = \Phi(f_1)\Phi(f_2)$. 
\end{lemma}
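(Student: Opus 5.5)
The whole argument rests on the multiplicativity of $\phi$ on $F(\mathcal K)$ (Lemma \ref{LemmaPropLittlePhi}(1)), its positivity (Lemma \ref{LemmaPropLittlePhi}(3)), and its continuity on $F(\mathcal K)$, which is inherited from Lemma \ref{LemmaContituityPhi} through the extension. We take statements (1)–(4) in order.

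\textbf{(1) Well-definedness and continuity.} Suppose $f=f_1/f_2=g_1/g_2$ with all of $f_1,f_2,g_1,g_2\in C(\mathcal K)$ taking values in $(0,1]$. Then $f_1g_2=g_1f_2$, and this product again lies in $F(\mathcal K)$. Applying Lemma \ref{LemmaPropLittlePhi}(1) gives
$$\phi(f_1)\phi(g_2)=\phi(g_1)\phi(f_2).$$
Since $\phi(f_2),\phi(g_2)>0$ by Lemma \ref{LemmaPropLittlePhi}(3), dividing shows the two ratios agree, so $\Phi$ is well defined.

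For continuity at a strictly positive $f$, fix $L=\sup f+2$ and use the representation $f_1=f/L$, $f_2=1/L$. For $g$ uniformly close to $f$ we have $g<L$, so the same $L$ works for $g$. Then $\Phi(g)=\phi(g/L)/\phi(1/L)$, and $g/L\to f/L$ uniformly inside $F(\mathcal K)$. Continuity of $\phi$ on $F(\mathcal K)$ then gives $\Phi(g)\to\Phi(f)$.

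\textbf{(2) Agreement with $\phi$.} If $0<f\le 1$, write $f=f/1$. We need $\phi(1)=1$. The constant $1$ is $m_X$, and $\phi(m_X)=\tr(P^{\emptyset})=\tr(\id)=1$; if one insists that $E(\mathcal K)$ use only proper sets, $1$ is still a uniform limit of $m_{A_n}$ with $d(A_n,X)\to0$, so $\phi(1)=1$ by continuity. Alternatively, $\phi(1)^2=\phi(1)$ by multiplicativity and $\phi(1)>0$, which forces $\phi(1)=1$. Hence $\Phi(f)=\phi(f)/\phi(1)=\phi(f)$.

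\textbf{(3) Positivity.} We have $\Phi(f)=\phi(f/L)/\phi(1/L)$ with $f/L>0$. By Lemma \ref{LemmaPropLittlePhi}(3) both numerator and denominator are positive, so $\Phi(f)>0$.

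\textbf{(4) Multiplicativity.} Write $f_i=a_i/b_i$ with $a_i,b_i\in(0,1]$. Then $f_1f_2=(a_1a_2)/(b_1b_2)$, and all four products lie in $(0,1]$, hence in $F(\mathcal K)$. By well-definedness and Lemma \ref{LemmaPropLittlePhi}(1),
$$\Phi(f_1f_2)=\frac{\phi(a_1a_2)}{\phi(b_1b_2)}=\frac{\phi(a_1)\phi(a_2)}{\phi(b_1)\phi(b_2)}=\Phi(f_1)\Phi(f_2).$$

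The only delicate point is confirming that the continuous extension of $\phi$ from $E(\mathcal K)$ to $F(\mathcal K)$ is uniformly continuous, so that the extension is itself continuous. I expect this to follow from the $\delta$ in Lemma \ref{LmContinuityTraceOnPA} depending on $r(A)$, which is harmless away from sets of small measure. Failing that, continuity of $\Phi$ can be argued directly from the multiplicative structure: for $g$ near $f$, squeeze $(1-\eta)f\le g\le(1+\eta)f$ and combine monotonicity (Lemma \ref{LemmaPropLittlePhi}(2)) with multiplicativity.
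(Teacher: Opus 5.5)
Your proof is correct and follows essentially the paper's route: well-definedness comes from cross-multiplying two representations and using the multiplicativity and positivity of $\phi$ (Lemma~\ref{LemmaPropLittlePhi}), continuity comes from $\phi$ via a common representation $f/L,\ 1/L$, and (3)--(4) follow from the definition. The differences are minor: in (2) you write $f=f/1$ and use $\phi(1)=1$ (of your three justifications, the idempotence argument $\phi(1)^2=\phi(1)>0$ is the one that fully holds up), where the paper writes $f=f^2/f$; and you make explicit a point the paper passes over, namely that extending $\phi$ to $F(\mathcal K)$ needs uniform continuity, which Lemma~\ref{LmContinuityTraceOnPA} does supply on functions whose supremum stays below $1$ --- exactly the functions your representation uses.
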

\begin{proof}  Assume that $f= f_1/f_2 = f_1'/f_2'$ for some strictly positive $f_1, f_1',f_2, f_2'\in F(\mathcal K)$. Then $f_1 f_2' = f_2f_1'$. By Lemma \ref{LemmaPropLittlePhi}, we conclude that $$\phi(f_1)\phi(f_2') = \phi(f_2)\phi(f_1').$$ Note that $\phi(f_2)>0$ and $\phi(f_2')>0$. Therefore,  $$\Phi(f) = \frac{\phi(f_1)}{\phi(f_2)} = \frac{\phi(f_1')}{\phi(f_2')},$$ which establishes that the function $\Phi$ is well-defined.  The continuity of $\Phi$ follows from the continuity of $\phi$. 

If $f\in C(\mathcal K)$ such that $0<f\leq 1$, then by (\ref{eqnDefPhi}) and multiplicativity of $\phi$, Lemma \ref{LemmaPropLittlePhi}, we obtain that $$\Phi(f) = \phi(f^2)/\phi(f) =\phi(f).$$ 

Statements (3) and (4) follow from the definition of $\Phi$ and Lemma \ref{LemmaPropLittlePhi}.
\end{proof}

Define the function $l: C(\mathcal K) \rightarrow \mathbb R$ by 
\begin{equation} l(f) = \log \Phi(\exp(f)) \mbox{ for every }f\in C(\mathcal K).  \end{equation}

\begin{proposition} $l$ is a continuous positive linear functional on $C(\mathcal{K})$.
\end{proposition}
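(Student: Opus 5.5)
The plan is to derive linearity, positivity and continuity of $l$ directly from the multiplicativity, monotonicity and continuity of $\Phi$ established in Lemma \ref{lemmaPropBigPhi} and Lemma \ref{LemmaPropLittlePhi}.

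\emph{Well-definedness and additivity.} For $f\in C(\mathcal K)$ the function $\exp(f)$ is strictly positive and continuous, so $\Phi(\exp(f))$ is defined. By Lemma \ref{lemmaPropBigPhi}(3) it is strictly positive, hence $l(f)$ is a real number. For $f,g\in C(\mathcal K)$, Lemma \ref{lemmaPropBigPhi}(4) gives
$$\Phi(\exp(f+g))=\Phi(\exp(f)\exp(g))=\Phi(\exp(f))\Phi(\exp(g)),$$
so $l(f+g)=l(f)+l(g)$. In particular $l(0)=0$ and $l(-f)=-l(f)$.

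\emph{Positivity.} Let $f\geq 0$ and fix $\varepsilon>0$. Choose $L>\|f\|_\infty+1$ and write
$$\exp(f)\,e^{\varepsilon}=\frac{e^{f+\varepsilon-L}}{e^{-L}},$$
with both numerator and denominator in $F(\mathcal K)$ and strictly positive. Since $\sup e^{-L}<\inf e^{f+\varepsilon-L}$, Lemma \ref{LemmaPropLittlePhi}(2) gives $\phi(e^{-L})\le\phi(e^{f+\varepsilon-L})$, hence $\Phi(\exp(f+\varepsilon))\ge 1$. Therefore $l(f)+l(\varepsilon)\ge 0$.

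It remains to show $l(c)\to 0$ as $c\to 0$ for constants $c$. This follows from continuity of $\Phi$ at the constant function $1$, where $\Phi(1)=1$. Alternatively, additivity gives $l(c)=c\,l(1)$ for rational $c$, and continuity then handles real $c$. Letting $\varepsilon\to 0$ yields $l(f)\ge 0$.

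\emph{Homogeneity and continuity.} Additivity gives $l(qf)=q\,l(f)$ for rational $q$. Continuity of $\Phi$ (Lemma \ref{lemmaPropBigPhi}(1)) together with continuity of $\exp$ and $\log$ shows that $l$ is continuous in the sup norm, which extends homogeneity to all real scalars. Continuity also follows independently from positivity, since a positive additive functional satisfies $|l(f)|\le \|f\|_\infty\, l(1)$; this gives boundedness without separately invoking continuity of $\Phi$.

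\emph{Main obstacle.} The delicate point is checking that the continuity of $\Phi$ claimed in Lemma \ref{lemmaPropBigPhi}(1) really holds uniformly. Concretely, if $f_n\to f$ uniformly with $f>0$, one needs representations $f_n=f_{1,n}/f_2$ with a common denominator $f_2$, so that the continuity of $\phi$ on $F(\mathcal K)$ (Lemma \ref{LemmaContituityPhi}) applies. The fixed choice $f_2=1/L$ works for all large $n$.

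If that step proved troublesome, I would avoid it entirely by relying on positivity plus additivity. Monotonicity gives $l(f)\le l(g)$ whenever $f\le g$, which forces $|l(f)-l(g)|\le \|f-g\|_\infty\, l(1)$. This yields both continuity and real homogeneity from rational homogeneity.
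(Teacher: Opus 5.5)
Your proof is correct and follows the paper's argument. You get additivity from the multiplicativity of $\Phi$ (Lemma \ref{lemmaPropBigPhi}) and real homogeneity from rational homogeneity plus continuity. For positivity you apply Lemma \ref{LemmaPropLittlePhi}(2) to a fraction representing $\exp(f)$ with separated numerator and denominator, then pass from strictly positive to nonnegative $f$ (the paper uses $e^{f}/(1+e^{f})$ over $1/(1+e^{f})$, you use $e^{f+\varepsilon-L}$ over $e^{-L}$).

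Your fallback is also sound and slightly more economical. If you take $\varepsilon$ rational, so that $l(\varepsilon)=\varepsilon\, l(1)$, positivity needs no continuity of $\Phi$. The bound $|l(f)-l(g)|\le \|f-g\|_\infty\, l(1)$ then gives continuity and real homogeneity without the continuity claim in Lemma \ref{lemmaPropBigPhi}(1).
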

\begin{proof}
By Lemma \ref{lemmaPropBigPhi}, $l$ is a continuous additive function on $C(\mathcal{K})$. Let $f \in C(\mathcal{K})$. The additivity of $l$ implies that for any $p, q \in \mathbb{N}$, we have
$$l\left(\frac{p}{2^q}f\right) = \frac{p}{2^q}l(f).$$
By continuity, it follows that $l(\lambda f) = \lambda l(f)$ for any $\lambda > 0$. Finally, 
$$l(0) = \log \Phi(1) = \log\left(\frac{\phi(1)}{\phi(1)}\right) = 0.$$ Here by $1$ we mean the function that is equal to 1 on $\mathcal K$.
It follows that $l$ is a continuous linear functional on $C(\mathcal K)$.  

Consider an arbitrary $f\in C(\mathcal K)$ with $f>0$.  Let $\delta = \inf_{\mu \in \mathcal K}f(\mu)>0$. Consider the functions $$f_1(\mu) = \frac{\exp(f(\mu))}{1+\exp(f(\mu))}\mbox{ and }f_2(\mu)=\frac{1}{1+\exp(f(\mu))}.$$ 
Note that $$0<  f_2(\mu) \leq \frac{1}{1+\exp(\delta)} <  \frac{\exp(\delta)}{1+\exp(\delta)} \leq  f_1(\mu) < 1  \mbox{ for every }\mu\in \mathcal K. $$

Applying Lemma \ref{LemmaPropLittlePhi}, we get that $\phi(f_1)\geq \phi(f_2)$. It follows that $\Phi(f) = \phi(f_1)/\phi(f_2)\geq 1$.  Hence, $l(f)\geq 0$. 

Since every non-negative continuous function can be approximated by strictly positive functions in the topology of the uniform convergence, we conclude that $l(f)\geq 0$ for every $f\in C(\mathcal K)$ with $f\geq 0$.
\end{proof}

By the Reisz-Markov-Kakutani Representation Theorem,  there exists a unique Radon measure $\nu$ on $\mathcal K$ such that $$l(f) =  \log\left (\Phi \left(\exp(f)\right)\right) = \int_{\mathcal K} f(\mu) d\nu(\mu) \mbox{ for every }f\in C(\mathcal K).$$ 

Thus, for every $f\in C(\mathcal K)$, $f>0$, we obtain that  
\begin{equation} 
 \Phi(f) = \exp\left(\int_{\mathcal K} \log(f(\mu)) d\nu(\mu) \right)
\end{equation}

Applying Theorem \ref{TheoremCharFromTrace}, for every $g\in G'$, we obtain that 

\begin{equation}\label{eqnChig=Integral} \chi(g) = \tr(P^{X\setminus \fix(g)}) = \Phi(m_{\fix(g)}) = \exp\left(\int_{\mathcal K} \log(\mu(\fix(g))) d\nu(\mu) \right) 
\end{equation}

\begin{remark}\label{remarkCharactersFiniteErgodic} If the system $(X,G)$ admits only a finite number of ergodic measures $\mathcal E = \{\mu_1,\ldots, \mu_n\}$, then $\mathcal K = \mathcal E$. Setting $\alpha_i = \nu(\{\mu_i\}) \geq 0$ and applying (\ref{eqnChig=Integral}), we obtain that $$\chi(g) = \prod_{i=1}^n \mu_i(\fix(g))^{\alpha_i} \mbox{ for every }g\in G'.$$  It follows from the proof of Proposition \ref{PropAlphaAnInteger} that the numbers $\{\alpha_i\}$ are  integers. 
\end{remark}

The following theorem, together with Proposition~\ref{propProductMeasuresIsCharacter}, completely classifies the indecomposable characters of the commutator subgroups of full groups of simple Bratteli diagrams.

%
%
\begin{theorem}\label{thmClassificationCharactersCommutatorSubgroup}  Let $G$ be the full group of a simple Bratteli diagram with path-space $X$. Let $\chi$ be a non-regular  indecomposable character of the commutator subgroup $G’$. Then there exists a finite number of $G$-invariant ergodic measures $\{\mu_1,\ldots,\mu_n\}$ and non-negative integers $\{\alpha_1,\ldots,\alpha_n\}$ such that  
$$
\chi(g) = \prod_{i=1}^n \mu_i(\fix(g))^{\alpha_i}\mbox{ for every }g\in G'.
$$
\end{theorem}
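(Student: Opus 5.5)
Starting from formula (\ref{eqnChig=Integral}), the plan is to show that the Radon measure $\nu$ on $\mathcal K$ is a finite sum of point masses with nonnegative integer weights, each sitting at an ergodic measure. The trivial character corresponds to $\nu=0$. The main tool is the positivity argument from the proof of Proposition~\ref{PropAlphaAnInteger}, carried out in a localized form.

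\textbf{Step 1: the weights are integers.} Fix a point $\mu_0\in\mathcal K$ and a small weak$^*$ neighbourhood $W$ of $\mu_0$. I want to build clopen sets $D$ with $\mu(D)$ close to $1$ for $\mu$ far from $W$, and $\mu(D)$ small for $\mu$ in $W$. This is done via Theorem~\ref{ThmStructureContFun}(2), which says every continuous $0\le f\le1$ is approximately an evaluation function.

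Inside such a $D$ I embed $\mathrm{Sym}(k^n)$ into $G'$ exactly as in Proposition~\ref{PropAlphaAnInteger}, using the Rokhlin lemma (Proposition~\ref{PropositionRokhlinLemma}). The embedded copy should act on a $k^n$-tower covering $D$ up to small measure, and I take the coordinate-permuting subgroup $H_n$. For $s\in H_n$ one has $\mu(\fix(s))\approx (1-\mu(D))+\mu(D)k^{l(s)-n}$. Substituting this into (\ref{eqnChig=Integral}) and into $0\le\frac1{n!}\sum_{s\in H_n}\mathrm{sign}(s)\chi(s)$, then letting $W$ shrink and $D$ become close to $1$ on a neighbourhood of $\mu_0$, the sum should reduce to $\sum_s\mathrm{sign}(s)k^{\nu(\{\mu_0\})l(s)}$. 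This is legitimate provided the contribution of $W\setminus\{\mu_0\}$ to the integral vanishes in the limit, which holds because $\nu(W\setminus\{\mu_0\})\to0$ and $\log\mu(\fix(s))$ stays bounded below by $\log(k^{-n})$ on $W$.

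The Putnam-type argument then forces $k^{\nu(\{\mu_0\})}\in\mathbb N$ for all $k$, so every atom of $\nu$ has integer mass.

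\textbf{Step 2: $\nu$ has no continuous part.} This is the main obstacle. I would pick a set $W\subset\mathcal K$ with $\nu(W)=\beta>0$ and $\nu(\partial W)=0$, take $D$ approximating $1_W$, and repeat the computation above. For $s\in H_n$ the integral becomes roughly $\int_W\log\bigl(\mu(D)k^{l(s)-n}\bigr)\,d\nu$, so the positivity condition turns into $\prod_{j<n}(k^\beta-j)\ge0$ for every $n$. This forces $k^{\beta}\in\mathbb N$, hence $\nu(W)\in\mathbb Z_{\ge0}$ for every such $W$.

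Now split off the atoms. The measure $\nu$ minus its atomic part still assigns integer mass to all $\nu$-continuity sets. A nonzero non-atomic measure would give some continuity set a mass in $(0,1)$, because continuity sets can be chosen to split the measure finely. Hence the non-atomic part is zero. Since the total mass $\nu(\mathcal K)$ is finite, there are finitely many atoms, each with integer weight.

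\textbf{Step 3: atoms lie in $\mathcal E$.} Let $\mu_1,\dots,\mu_n$ be the atoms with multiplicities $\alpha_i$. Then $\chi(g)=\prod_i\mu_i(\fix(g))^{\alpha_i}$, and each $\mu_i$ is an invariant measure in $\mathcal K\subset\mathcal M$. If some $\mu_i$ were not ergodic, Proposition~\ref{propProductMeasuresIsCharacter}(iv) would show that $\chi$ is decomposable, contradicting indecomposability. Therefore every $\mu_i$ is ergodic.
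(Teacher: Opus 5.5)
Your proposal is correct and follows essentially the paper's route. You localize with clopen sets whose measures approximate indicator functions on $\mathcal K$ (Theorem~\ref{ThmStructureContFun}), feed embedded symmetric groups into (\ref{eqnChig=Integral}), use the integer-power positivity argument of Proposition~\ref{PropAlphaAnInteger} to show that $\nu$ takes integer values, and finish with Proposition~\ref{propProductMeasuresIsCharacter}(iv). The paper differs only technically: it works with closed sets $K$ and the characters $\chi_m(s)^{\nu(K)}$ of $\mathrm{Sym}(m)$, citing the $S_{2^\infty}$ integrality result, whereas you use $\nu$-continuity sets and inline the sign-projection computation. One small slip: your first description of $D$ in Step~1 (small on $W$, close to $1$ away from $W$) is the reverse of what your computation actually uses, namely $\mu(D)\approx 1$ near $\mu_0$ and $\mu(D)\approx 0$ off $W$.
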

\begin{proof} Fix a non-empty clopen set $B$, $B\neq X$, an integer $m\geq 1$, and $\delta >0$.  Using Proposition \ref{PropositionRokhlinLemma}(2), we can find pairwise disjoint clopen sets $\{B_0,\ldots, B_{m-1} \}$ contained in $B$ and a subgroup $H_m<G_B'$ isomorphic to $\sym(m)$  that permutes the sets $\{B_i\}$ and acts trivially elsewhere and such that $$\mu\left(B\setminus B_\delta'\right ) < \delta \mbox{ for every }\mu \in \mathcal M,\mbox{ where }B_\delta'=\bigcup_{i=0}^{m-1}B_i.$$  

Denote by $\rho_{m,B_\delta'} : \sym(m) \rightarrow H_m$ the isomorphism between the groups.  Note that the function $\chi_{m,B,\delta}(s) = \chi(\rho_{m,B_\delta'}(s))$ defines a character on $
 \sym(m)$.

For $s\in \sym(m)$, denote by $$\chi_m(s) = \frac{|\{1\leq i \leq m: s(i) = i \}|}{m}, $$ the normalized permutation character.   It follows from (\ref{eqnChig=Integral}) that for $s\in \sym(m)$: 
\begin{align*}\chi_{m,B,\delta}(s) &= \exp\left (\int_\mathcal K \log(\mu(\fix(s))) d\nu(\mu) \right)  \\
&= \exp\left (\int_\mathcal K \log\left( \mu(X\setminus B_\delta')  +  \chi_m(s) \mu(B_\delta') \right) d\nu(\mu) \right) \\
&= \exp\left (\int_\mathcal K \log\left( \mu(X\setminus B)  +  \chi_m(s) \mu(B) +(1-\chi_m(s))\mu(B\setminus B_\delta') \right) d\nu(\mu) \right).
\end{align*} 
Recall that  the space of characters is closed under point-wise limits. Applying Lebesgue's Dominated Convergence Theorem as $\delta \to 0$, we obtain that the function 

\begin{equation}\label{eqnClassChiMB}\begin{split}\chi_{m,B}(s) & =  \exp\left (\int_\mathcal K \log\left[ \mu(X\setminus B)  +  \mu(B)\chi_m(s) \right] d\nu(\mu) \right)  \\
& =  \exp\left (\int_\mathcal K \log\left[ 1-m_{B}(\mu)  +  \chi_m(s) m_B(\mu) \right] d\nu(\mu) \right)\end{split}\end{equation}
defines a character on $\sym(m)$. 
 
 Fix $f\in C(\mathcal K)$, $0\leq f< 1$. Using Theorem \ref{ThmStructureContFun}, find a sequence of evaluation functions $\{m_{B_n}\}$ that uniformly converges to $f$. Substituting $m_{B_n}$ into  (\ref{eqnClassChiMB}) and applying Lebesgue’s Dominated Convergence Theorem  as $n\to \infty$, we can conclude that 
 
 \begin{equation}\label{eqnClassChiFun} \chi_{m,f}(s) =  \exp\left (\int_\mathcal K \log\left[ 1-f(\mu)  +  \chi_m(s) f(\mu) \right] d\nu(\mu) \right)
 \end{equation}
  defines a character on $\sym(s)$.

Consider an arbitrary non-empty  closed (hence compact) set $K\subset \mathcal K$. Using that $\mathcal K$ is a Normal Hausdorff space,  we can inductively construct  a decreasing sequence of open sets $\{U_n\}$ in $\mathcal K$ such that $$U_n\supset \overline{U_{n+1}}\text{ for each }n\in\mathbb N\text{ and }\cap_{n\in\mathbb N}U_n=K.$$

 Moreover, by Urysohn's Lemma for each $n\in\mathbb N$ there is a function $f_n\in C(\mathcal K)$, $0\leq f_n\leq 1$, such that $f_n(\mu)=0$ for all $\mu\in \mathcal K\setminus U_n$ and $f_n(\mu)=1$ for each $\mu\in U_{n+1}$. Then the sequence of functions $\{f_n\}$ converges monotonically to the characteristic function $\mathbbm{1}_K$. Fix $\tfrac{1}{2}>\epsilon>0$.  Substituting $g_{n,\epsilon}=\epsilon+(1-2\epsilon)f_n$ into (\ref{eqnClassChiFun}) with $s\neq e$ and applying Lebesgue’s Dominated Convergence Theorem as $n\to \infty$, we conclude that the function 
\begin{equation*}
 \begin{split}\phi_{K,\epsilon}(s) =  \lim\limits_{n\to\infty}\exp\left (\int_\mathcal K \log\left[ 1-g_{n,\epsilon}(\mu)  +  \chi_m(s) g_{n,\epsilon}(\mu) \right] d\nu(\mu) \right)  \\
 = \exp\left (\int_\mathcal K \log\left[ 1- \epsilon-(1-2\epsilon)\mathbbm{1}_K(\mu)  +  \chi_m(s) (\epsilon+(1-2\epsilon)\mathbbm{1}_K(\mu)) \right] d\nu(\mu) \right) 
  \\
  = \left((1-\epsilon + \epsilon \chi_m(s))^{\nu(\mathcal K \setminus K)}\right) \cdot \left( (\epsilon + (1-\epsilon)\chi_m(s))^{\nu(K)}\right)
  \end{split}
 \end{equation*}
 defines a character on $\sym(m)$.  Taking the limit as $\epsilon\to 0$, we obtain that for every closed set $K\subset \mathcal K$, the function \begin{equation}\label{eqnPhiKChar}\phi_K(s) = \chi_m(s)^{\nu(K)}\end{equation} defines a character on $\sym(m)$. 
 
Consider the full group $S_{2^\infty}$ corresponding to the Bratteli diagram of the 2-odometer defined in Example \ref{exampleS2Infty}. The group $S_{2^\infty}$ is the inductive limit of groups $\sym(2^n)$ and the characters in (\ref{eqnPhiKChar}) converge to the character $\mu(\fix(g))^{\nu(K)}$, where $\mu$ is the unique $S_{2^\infty}$-invariant measure on its Bratteli diagram. Thus, by \cite[Proposition 12]{Dudko-CharsErgodic-11}, see also Proposition \ref{PropAlphaAnInteger}, we conclude that $\nu(K)$ must be an integer. 

 Thus, we have established that  $\nu(K)$ is a non-negative integer for  every closed set $K\subset \mathcal K$. Therefore, $\nu$ must be a  purely atomic integer-valued measure with finitely many atoms.  Denoting the atoms of $\nu$ by $\{\mu_1,\ldots,\mu_n\}\subset \mathcal K$ and  applying (\ref{eqnChig=Integral}), we obtain that
 \begin{equation}\label{eqnClassCharaFinalForm}\chi(g) = \prod_{i=1}^n \mu_i(\fix(g))^{\nu(\{\mu_i\})} \mbox{ for every }g\in G'.\end{equation}  

Applying Proposition~\ref{propProductMeasuresIsCharacter} we conclude that each measure $\mu_i$ is ergodic, which completes the proof. 
\end{proof}

 In view of the simplicity of the commutator subgroup $G' $ of a simple Bratteli diagram and the fact that $G'$ contains arbitrarily large symmetric groups, all finite-type factor representations are of type $\mathrm{II}_1$.  The following result is an immediate corollary of Theorem \ref{thmClassificationCharactersCommutatorSubgroup}, Proposition \ref{propProductMeasuresIsCharacter}, and Proposition \ref{PropGNSQuasiEquivalence}. 
 
 \begin{corollary}\label{CorollaryQuasiEquivalenceCommutatorSubgroup} Let $\pi$ be a non-trivial (non-identity, non-regular) factor representation of the commutator subgroup $G'$ of the full group of a simple Bratteli diagrtam with path-space $X$. Then there exist $G$-invariant ergodic measures $\mu_1,\ldots, \mu_n$ (repetitions permitted) such that $\pi$ is quasi-equivalent to the Feldman-Moore groupoid representation  (Example \ref{exampleProductAction}) associated with the product action of $G'$ on $(X^n,\mu_1\times \cdots \times \mu_n)$. 
  \end{corollary}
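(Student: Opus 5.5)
The plan is to read off the statement from the three cited results in sequence, after a preliminary reduction to the relevant character. Let $\pi$ be a non-trivial, non-regular factor representation of $G'$. Since $\pi$ is of finite type, we fix the unique faithful normal trace $\tr$ on the finite factor $\mathcal M_\pi$ and set $\chi=\tr\circ\pi$. By Proposition~\ref{PropGNSQuasiEquivalence}, $\pi$ is quasi-equivalent to the GNS representation $\pi_\chi$ of $\chi$. By Proposition~\ref{PropPropertiesQuasiEquivalence}, that GNS representation is factorial, so Proposition~\ref{PropEquivalenceExtremeCharactersFactors} shows that $\chi$ is indecomposable.

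Next we rule out the two degenerate cases. If $\chi$ were regular, then by Example~\ref{ExampleRegularRepresentation} the representation $\pi$ would be quasi-equivalent to the regular representation, which is excluded. If $\chi\equiv 1$, then $\pi_\chi$ is trivial, and so $\pi$ is trivial, since quasi-equivalence transports $\pi(g)=\id$. Hence Theorem~\ref{thmClassificationCharactersCommutatorSubgroup} applies and gives
\[
\chi(g)=\prod_{i}\mu_i(\fix(g))^{\alpha_i},\qquad g\in G',
\]
with ergodic $\mu_i$ and non-negative integers $\alpha_i$. Listing each $\mu_i$ with multiplicity $\alpha_i$ gives a family $\mu_1,\ldots,\mu_n$ with repetitions. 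Because $\chi\neq 1$, the list is nonempty.

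We then build the model representation. Let $\rho$ be the Feldman--Moore representation of Example~\ref{exampleProductAction} for the product action of $G'$ on $(X^n,\mu_1\times\cdots\times\mu_n)$, restricted to the cyclic subspace $\mathcal H_\xi$. By Proposition~\ref{propProductMeasuresIsCharacter} its character $\prod_i\mu_i(\fix(g))$ is indecomposable; here we use that $G'=\mathcal A(G)$ for Bratteli diagrams. This character equals $\chi$. Example~\ref{exampleProductAction} identifies $\mathcal M_\rho$ on $\mathcal H_\xi$ with the GNS algebra of $\chi$, and this identification intertwines the group unitaries. Thus $\rho$ is unitarily equivalent, hence quasi-equivalent, to $\pi_\chi$. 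By transitivity, $\pi$ is quasi-equivalent to $\rho$.

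The only delicate point is the representation space of the Feldman--Moore representation. Quasi-equivalence is established for the cyclic subrepresentation on $\mathcal H_\xi$, not necessarily on all of $L^2(\mathcal R,\widetilde\mu)$. I would interpret the corollary in that sense, matching Example~\ref{exampleProductAction}. Alternatively, one can pass to the quotient $X^n/\sym(n)$ as in the proof of Proposition~\ref{propProductMeasuresIsCharacter}, where the action is perfectly non-free, so that Theorem~\ref{ThmCharacterDudkoGrigorchuk} gives cyclicity of $\xi$.
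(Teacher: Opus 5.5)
Your proof is correct and follows the paper's route: the paper obtains this corollary directly from Proposition~\ref{PropGNSQuasiEquivalence}, Theorem~\ref{thmClassificationCharactersCommutatorSubgroup}, and Proposition~\ref{propProductMeasuresIsCharacter}, with the model living on $\mathcal H_\xi$ as in Example~\ref{exampleProductAction}, and your handling of the trivial and regular cases spells out what the paper leaves implicit. You can drop your closing caveat rather than patch it via the quotient, which gives cyclicity only in the quotient's Feldman--Moore space; over $X^n$, cyclicity of $\xi$ genuinely fails when a measure repeats, because swapping two such coordinates gives a non-identity unitary that commutes with the Feldman--Moore unitaries and fixes $\xi$. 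What resolves the issue is that $\xi=1_\Delta$ is separating for $\mathcal M_{\mathcal R}$, which contains the algebra generated by the Feldman--Moore unitaries on all of $L^2(\mathcal R,\widetilde\mu)$, so restriction to $\mathcal H_\xi$ is a $*$-isomorphism onto $\mathcal M_\rho$ intertwining the unitaries, and the full representation is quasi-equivalent to $\pi$ as well.
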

  
  In the next result, we establish the  automatic continuity of finite-type unitary representations of the group $G'$.

  \begin{corollary}\label{corollaryCommutatorAutomaticCont}
Let $G'$ be the commutator subgroup of the full group associated with a simple Bratteli diagram, and let $\pi$ be a finite-type unitary representation of $G'$. Suppose that the \emph{central decomposition} (Theorem~\ref{TheoremCentralDecomposition}) of the representation $\pi|_{G'}$ contains no regular subrepresentation of $G'$.

If $\{g_n\}$ is a Cauchy sequence in $G'$ with respect to the metric
\begin{equation}\label{EqUniformMetric}
D(g',g'') = \sup_{\mu \in \mathcal{M}} \mu(\{x \in X : g'x \neq g''x\}),
\end{equation}
then the sequence $\{\pi(g_n)\}$ converges in the strong operator topology.

Moreover, if  the sequence $g_n$ converges to $h\in G'$ with respect to the metric $D$, then the sequence $\{\pi(g_n)\}$ converges to $\pi(h)$ in the strong-operator topology. 
\end{corollary}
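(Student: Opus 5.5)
The plan is to reduce to factor representations via the central decomposition, use the classification of Theorem~\ref{thmClassificationCharactersCommutatorSubgroup} on each factor component, and then pass to the direct integral.

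\textbf{Step 1: factor components.} By Theorem~\ref{TheoremCentralDecomposition}, write $\pi \cong \int_Z^\oplus \pi_z\, d\sigma(z)$ with almost every $\pi_z$ a finite-type factor representation. By hypothesis, for $\sigma$-a.e.\ $z$ the representation $\pi_z$ is not quasi-equivalent to the regular one, so its character $\chi_z$ (given by the trace) is non-regular. Each $\chi_z$ is then either trivial or, by Theorem~\ref{thmClassificationCharactersCommutatorSubgroup}, equal to $\prod_{i=1}^n \mu_i(\fix(g))$ for ergodic measures $\mu_i$ (repetitions allowed).

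\textbf{Step 2: continuity for each component.} Fix such a $z$. By Proposition~\ref{PropGNSQuasiEquivalence}, $\pi_z$ is quasi-equivalent to the GNS representation of $\chi_z$. By Example~\ref{exampleProductAction}, that GNS representation is unitarily equivalent to the cyclic Feldman--Moore representation for the product action on $(X^n, \mu_1\times\cdots\times\mu_n)$.

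For the product measure $\mu^{(n)}$ we have
\[
\mu^{(n)}(\{\bar x : g\bar x\neq h\bar x\}) \le \sum_i \mu_i(\{x: gx\neq hx\}) \le n D(g,h).
\]
Hence a $D$-Cauchy sequence is $d_{\mu^{(n)}}$-Cauchy, and Proposition~\ref{PropGroupRepresentationContinuity} gives strong convergence of the GNS operators; the trivial character is handled the same way.

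Strong convergence of the GNS operators transfers to $\pi_z$. Indeed, a quasi-equivalence is a normal $*$-isomorphism, hence strongly continuous on bounded sets. Alternatively, and more concretely, for a trace vector $\xi_z$ we have
\[
\|(\pi_z(g_n)-\pi_z(g_m))\pi_z(h)\xi_z\|^2 = 2-2\,\mathrm{Re}\,\chi_z(g_m^{-1}g_n)\to 0.
\]

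\textbf{Step 3: passing to the direct integral.} Take $\eta=\int^\oplus \eta_z$ in the Hilbert space of $\pi$. Then
\[
\|(\pi(g_n)-\pi(g_m))\eta\|^2=\int \|(\pi_z(g_n)-\pi_z(g_m))\eta_z\|^2\, d\sigma(z).
\]
The integrand tends to $0$ pointwise a.e.\ by Step 2 and is bounded by $4\|\eta_z\|^2$, which is integrable. By dominated convergence (applied along arbitrary pairs $n,m\to\infty$, e.g.\ via subsequences), $\{\pi(g_n)\eta\}$ is Cauchy, so $\pi(g_n)$ converges strongly.

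For the "moreover" part, apply this to the interleaved sequence $g_1,h,g_2,h,\dots$, which is $D$-Cauchy; its strong limit must then be $\pi(h)$.

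\textbf{Main obstacle.} The delicate point is the measurability of $z\mapsto \chi_z$ and the uniformity needed in Step 2. The number $n=n(z)$ of measures varies with $z$, but a.e.\ pointwise convergence suffices for dominated convergence, so no uniform bound on $n$ is needed.

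A second point needs care. The hypothesis excludes a regular subrepresentation, which must mean that $\sigma$-almost no component is regular. I will interpret it this way using Theorem~\ref{TheoremCentralDecomposition}(I)(4): the set of regular components carries positive measure only if $\pi$ contains a regular subrepresentation.
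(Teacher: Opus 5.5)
Your proposal is correct and follows the paper's proof essentially step for step: central decomposition, identification of each non-regular factor component with a cyclic Feldman--Moore representation of a product action (via the classification theorem and Proposition~\ref{PropGNSQuasiEquivalence}), Proposition~\ref{PropGroupRepresentationContinuity}, transfer of strong convergence through the normal $*$-isomorphism, and dominated convergence over the direct integral; your bound $\mu^{(n)}(\cdot)\le nD(g,h)$ and the interleaving argument for the ``moreover'' part fill in details the paper leaves implicit. One caveat: your ``concrete'' trace-vector alternative cannot replace the normality argument, because it only controls the cyclic subspace generated by $\xi_z$, and a factor component $\pi_z$ need not have any vector implementing the trace (for instance, if the coupling constant of $\mathcal M_{\pi_z}$ on $\mathcal H_z$ is less than $1$, the trace is not a vector state), so keep the normal $*$-isomorphism as your main route.
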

    \begin{proof}   Let $\mathcal M_\pi = \{\pi(G')\}''$.  By Theorem~\ref{TheoremCentralDecomposition}, there exists a standard probability measure space $(\Omega, \nu)$ and a measurable field $\omega \mapsto (\mathcal{H}_\omega, \pi_\omega)$ of Hilbert spaces and representations such that the Hilbert space $\mathcal{H}$ is isomorphic to the direct integral 
$
\mathcal{H}' = \int_{\Omega}^{\oplus} \mathcal{H}_\omega \, d\nu(\omega).
$
Under this identification, the representation $\pi$ decomposes as
\begin{equation}\label{eqnAutContDirectIntegral}
\pi(g) = \int_{\Omega}^{\oplus} \pi_\omega(g) \, d\nu(\omega), \quad \text{for all } g \in G'.
\end{equation}
    
 Additionally, we have that  for every $\omega \in \Omega$, the von Neumann algebra $\mathcal M_\omega$ generated by $\pi_\omega(G')$ is a factor. 

By our assumptions, none of the representations $\pi_\omega$ is quasi-equivalent to the regular representation of $G'$. Hence, by Corollary~\ref{CorollaryQuasiEquivalenceCommutatorSubgroup}, each $\pi_\omega$ is quasi-equivalent to a groupoid representation $\widetilde{\pi}_\omega$ of $G'$, arising from the product action of $G'$ on the space $(X^n,  \widetilde \mu)$, where $\widetilde \mu = \mu_1 \times \cdots \times \mu_n$ and the measures $\mu_i$ are $G'$-invariant and ergodic (repetitions permitted); see Example~\ref{exampleProductAction} for details.  

It follows from our assumptions that $\{g_n\}$ is a Cauchy sequence with respect to the metric $d(g',g'') = \widetilde \mu \{x\in X^n : g'(x) \neq g''(x)\}$.  Thus, by Proposition \ref{PropGroupRepresentationContinuity}, we obtain that the  sequence of operators $\{\widetilde{\pi}_\omega(g_n)\}$ converges in the  strong operator topology.  

By  \cite[Proposition III.2.2.14]{Blackadar:BookOperatorAlgebras}, the quasi-equivalence preserves convergence of unitary operators in the strong operator topology. Thus, the  sequence  $\pi_\omega(g_n)$ converges in the  strong operator topology.  

Applying Lebesgue's dominated convergence theorem to the direct integral (\ref{eqnAutContDirectIntegral}), we obtain that the sequence $\{\pi(g_n)\}$ converges in the strong operator topology, which completes the proof.     
    \end{proof}

%

\section{Indecomposable Characters of Full Groups}\label{Section:CharactersFullGroup}

In this section, we classify characters of various groups that contain, as subgroups, the commutator subgroups of full groups associated with simple Bratteli diagrams.

Recall that for a minimal homeomorphism $T$ of the Cantor set $X$, the full group is denoted by $\mathcal{F}(\langle T \rangle)$, and its commutator subgroup is denoted by $\mathcal{A}(\langle T \rangle)$. Throughout this section, $G$ and $G'$ will denote, respectively, the full group and its commutator subgroup associated with the Bratteli diagram corresponding to $(X, T)$. Accordingly, we have the inclusions
$$
G' \subset G\subset \mathcal{F}(\langle T \rangle) \mbox{ and }
G' \subset \mathcal{A}(\langle T \rangle) \subset \mathcal{F}(\langle T \rangle).
$$

  %
  
  \begin{definition}\label{DefinitionCompatibleSubgroups}
We say that a pair of groups $(\Gamma, H)$ is \emph{compatible} if $\Gamma \subset H$ and, for every $h \in H\setminus \{e\}$, there exists a sequence of distinct elements $\{\gamma_n\} \subset \Gamma$ such that, setting $h_n = \gamma_n h \gamma_n^{-1}$, we have
\begin{equation}\label{eqnCompClassConjugacy}
h_n \neq h_m \quad \text{and} \quad h_n h_m^{-1} \in \Gamma 
\quad \text{for all } n \neq m.
\end{equation}
\end{definition}

Recall that the structure of the central decomposition of group representations is described in Theorem~\ref{TheoremCentralDecomposition}.

\begin{proposition}\label{PropRegularReprCompatibleGroups}
Let $(\Gamma, H)$ be a pair of compatible groups, and let $\pi$ be a factor representation of $H$ on a Hilbert space $\mathcal{H}$. Suppose that $\Gamma$ is an ICC group. If the central decomposition of $\pi|_{\Gamma}$ on $\mathcal{H}$ contains a subrepresentation quasi-equivalent to the regular representation of $\Gamma$, then the representation $\pi$ of $H$ is quasi-equivalent to the regular representation of $H$.
\end{proposition}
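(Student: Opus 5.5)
Let $\tr$ be the unique normal trace on the finite factor $\mathcal M_\pi=\{\pi(H)\}''$ and $\chi=\tr\circ\pi$. The goal is $\chi(h)=0$ for every $h\in H\setminus\{e\}$. Then $\chi$ is the regular character of $H$, and by Proposition~\ref{PropGNSQuasiEquivalence} (see Example~\ref{ExampleRegularRepresentation}) $\pi$ is quasi-equivalent to the regular representation of $H$.

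First I would use the hypothesis on $\pi|_\Gamma$ to produce a nonzero central projection of $\{\pi(\Gamma)\}''$. Let $\mathcal N=\{\pi(\Gamma)\}''\subset\mathcal M_\pi$, with center $\mathcal Z(\mathcal N)$; $\tr$ restricts to a faithful normal trace on $\mathcal N$. By Theorem~\ref{TheoremCentralDecomposition}, the components of the central decomposition of $\pi|_\Gamma$ that are quasi-equivalent to the regular representation of $\Gamma$ correspond to a central projection $z\in\mathcal Z(\mathcal N)$, and $z\neq0$ by hypothesis. The subrepresentation $\pi|_\Gamma$ on $z\mathcal H$ is then a finite-type representation quasi-equivalent to a multiple of $\lambda_\Gamma$. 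Since $\Gamma$ is ICC, $L(\Gamma)$ is a $\mathrm{II}_1$ factor with unique trace, so $\gamma\mapsto\tr(z\pi(\gamma))/\tr(z)$ is a normal trace on $\mathcal N z\cong L(\Gamma)$ and equals $\delta_{e,\gamma}$. Hence
$$\tr(z\pi(\gamma))=0\quad\text{for all }\gamma\in\Gamma\setminus\{e\}.$$

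Next I would show $z=\id$; this is the main obstacle. The plan is that $\pi(h)z\pi(h^{-1})$ is again a central projection of $\mathcal N$ for $h\in H$ normalizing $\Gamma$, but $H$ need not normalize $\Gamma$. Instead I would use the factoriality of $\mathcal M_\pi$ together with a Hilbert-space argument. Consider the closed subspace $\mathcal K\subset\mathcal H$ on which $\pi|_\Gamma$ is quasi-equivalent to a multiple of $\lambda_\Gamma$, i.e., the range of the largest such central projection; it is the range of $z$. For $h\in H$, the representation $\gamma\mapsto\pi(\gamma)$ on $\pi(h)\mathcal K$ is the conjugate by $h$, but only on $h\Gamma h^{-1}$. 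To handle this, I would avoid proving $z=\id$ directly and instead aim at the vanishing of $\chi$ through compatibility.

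So fix $h\neq e$, and let $\gamma_n$, $h_n=\gamma_nh\gamma_n^{-1}$ be as in Definition~\ref{DefinitionCompatibleSubgroups}. Since $\chi$ is a class function, $\chi(h)=\chi(h_n)=\tr(\pi(h_n))$. Put $v=\pi(h_1)\xi$ in the GNS picture, or better the vectors $v_n=\pi(h_n)^{*}\xi$. Then
$$(v_n,v_m)=\tr(\pi(h_nh_m^{-1})),$$
and $h_nh_m^{-1}\in\Gamma\setminus\{e\}$ for $n\ne m$. Compressing by $z$ (after reducing to the case $z=\id$ via factoriality), the vectors $z\pi(h_n^{-1})\xi$ become mutually orthogonal, because their inner products are $\tr(z\pi(h_nh_m^{-1}))=0$. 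Bessel's inequality then gives $\sum_n|(z\pi(h_n^{-1})\xi,\eta)|^2\le\|\eta\|^2$, so $(z\pi(h_n^{-1})\xi,\eta)\to0$ for every $\eta$. Taking $\eta=\xi$, this forces $\tr(z\pi(h_n))\to0$, but $\tr(z\pi(h_n))$ need not equal $\chi(h)$.

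The real step is therefore to show $z=\id$. I would argue as follows. The map $p\mapsto\tr(p)$ on $\mathcal Z(\mathcal N)$, combined with the factoriality of $\mathcal M_\pi$, should make the central support of $z$ in $\mathcal M_\pi$ equal to $\id$. If $z\ne\id$, the two pieces $z$ and $\id-z$ carry disjoint $\Gamma$-representations. Because $\mathcal M_\pi$ is a factor, there is a nonzero partial isometry $w\in\mathcal M_\pi$ from a subprojection of $z$ to a subprojection of $\id-z$. Approximating $w$ by finite combinations $\sum c_i\pi(k_i)$ and using the compatibility-type mixing (conjugating by the $\gamma_n$ and averaging), I would try to show that the part of $\pi|_\Gamma$ under $\id-z$ contains a piece weakly contained in the regular one, contradicting disjointness. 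I expect this step to need the most care. Once $z=\id$, the Bessel argument above yields $\chi(h)=\lim_n\tr(\pi(h_n))=0$, which completes the proof.
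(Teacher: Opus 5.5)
Your first step is sound and is how the paper begins. The regular part of the central decomposition of $\pi|_\Gamma$ is cut out by a nonzero projection $z$ in the center of $\mathcal N=\{\pi(\Gamma)\}''$, and uniqueness of the trace on $L(\Gamma)$ gives $\tr(z\pi(\gamma))=0$ for $\gamma\in\Gamma\setminus\{e\}$.

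\textbf{The gap.} Everything is then routed through $z=\id$, and that step is only an intention. The mechanism you sketch for it also cannot work as stated. Showing that the part of $\pi|_\Gamma$ under $\id-z$ contains a piece \emph{weakly} contained in $\lambda_\Gamma$ does not contradict its disjointness from $\lambda_\Gamma$, because a representation can be weakly contained in $\lambda_\Gamma$ while being disjoint from it. In the applications $\Gamma$ (e.g.\ the locally finite group $G'$) is amenable, so every unitary representation of $\Gamma$ is weakly contained in $\lambda_\Gamma$. In addition, the phrase ``after reducing to the case $z=\id$'' in your Bessel paragraph is circular. As written, the proposal does not prove the statement.

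\textbf{The missing observation.} You do not need $z=\id$, and your Bessel computation is one line from the conclusion. Because $z$ is central in $\mathcal N$, it commutes with $\pi(\gamma_n)$, so traciality on $\mathcal M_\pi$ gives
\[
\tr(z\pi(h_n))=\tr\bigl(\pi(\gamma_n)^{-1}z\,\pi(\gamma_n)\pi(h)\bigr)=\tr(z\pi(h))\qquad\text{for all } n .
\]
The vectors $z\pi(h_n)\xi$ have equal norm and are pairwise orthogonal, since $(z\pi(h_n)\xi,z\pi(h_m)\xi)=\tr(z\pi(h_nh_m^{-1}))=0$. Use $h_n$ here rather than $h_n^{-1}$: with $h_n^{-1}$ the inner product is $\tr(z\pi(h_n^{-1}h_m))$, which does not match the compatibility condition. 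Hence the vectors tend weakly to $0$, and $\tr(z\pi(h))=\lim_n(z\pi(h_n)\xi,\xi)=0$ for every $h\neq e$. Therefore $(\pi(h)z\xi,z\xi)=\tr(z\pi(h))=\tr(z)\,\delta_{h,e}$ with $\tr(z)>0$. So the cyclic subspace generated by $z\xi$ carries a copy of the regular representation of $H$. Since $\pi$ is factorial, Proposition~\ref{PropPropertiesQuasiEquivalence}(2) shows that $\pi$ is quasi-equivalent to it.

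This is exactly the paper's argument, with $P=z$ and $\xi_0=P\xi$. The facts $z=\id$ and $\chi=\delta_{e}$ then come out as consequences rather than being needed as input.
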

\begin{proof}  
We note that if $\pi_1$ and $\pi_2$ are quasi-equivalent finite-type unitary representations of $\Gamma$, and if the central decomposition of $\pi_1$ contains a subrepresentation quasi-equivalent to the regular representation, then the central decomposition of $\pi_2$ also contains a subrepresentation quasi-equivalent to the regular representation. Consequently, in our proof, we may work with a representation of $\pi|_H$ that is quasi-equivalent to the original one.

By Proposition~\ref{PropGNSQuasiEquivalence}, we may assume without loss of generality that $\pi$ arises from a GNS construction $(\mathcal{H}, \pi, \xi)$, where $\operatorname{tr}(m) = (m\xi, \xi)$ defines a trace on $\mathcal{M}_{\pi(H)} = \pi(H)''$. Assume that the central decomposition (Theorem~\ref{TheoremCentralDecomposition}) of the restriction $\pi|_{\Gamma}$ contains a component $\mathcal{H}_0$ such that $\pi(\Gamma)|_{\mathcal{H}_0}$ is quasi-equivalent to the regular representation of $\Gamma$. Denote by $P$ the orthogonal projection onto $\mathcal{H}_0$.

Then $P$ belongs to the von Neumann algebra $\mathcal{M}_{\pi(\Gamma)} \cap \mathcal{M}_{\pi(\Gamma)}'$, where $\mathcal{M}_{\pi(\Gamma)} = \pi(\Gamma)''$. In particular, $P \in \mathcal{M}_{\pi(H)}$. Define $\xi_0 = P\xi$. By Part~II of Theorem~\ref{TheoremCentralDecomposition}, the functional $(m\xi_0, \xi_0)$ is a trace on $P\mathcal{M}_{\pi(\Gamma)}P$. 

Since $P\mathcal{M}_{\pi(\Gamma)}P$ is a factor quasi-equivalent to the regular representation of $\Gamma$, the uniqueness of the trace implies that $(\pi(\gamma)\xi_0, \xi_0) = 0$ for every $\gamma \in \Gamma \setminus \{e\}$.

Fix $h \in H \setminus \{e\}$, and choose a sequence of distinct elements $\{\gamma_n\} \subset \Gamma$ satisfying~\eqref{eqnCompClassConjugacy}. Set $h_n = \gamma_n h \gamma_n^{-1}$. Then
$$
(\pi(h_n)\xi_0, \pi(h_m)\xi_0) = (\pi(h_m^{-1} h_n)\xi_0, \xi_0) = 0
\quad \text{whenever } n \neq m.
$$
Hence $\{\pi(h_n)\xi_0\}$ is an orthonormal sequence and therefore converges weakly to $0$. Moreover,
\begin{equation*}
\begin{split}
(\pi(h_n)\xi_0, \xi_0) 
&= (\pi(h_n)P\xi, P\xi)
= (P\pi(h_n)P\xi, \xi) \\
&= (\pi(\gamma_n) P\pi(h)P \pi(\gamma_n^{-1})\xi, \xi)
= (P\pi(h)P\xi, \xi)
= (\pi(h)\xi_0, \xi_0) \to 0.
\end{split}
\end{equation*}
Thus $(\pi(h)\xi_0, \xi_0) = 0$ for every $h \in H \setminus \{e\}$.

Let $\mathcal H_1 = \overline{\textrm{Lin}(\pi(H)\xi_0)}$. Then $\pi|_{\mathcal{H}_1}$ is unitarily equivalent to the regular representation. Since $\pi|_{\mathcal{H}}$ is a factor representation, Proposition~\ref{PropPropertiesQuasiEquivalence} implies that $\pi|_{\mathcal{H}_1}$ and $\pi|_{\mathcal{H}}$ are quasi-equivalent, which completes the proof.
\end{proof}

%

\begin{proposition}\label{PropCompatibleGroups}
Let $T$ be a minimal homeomorphism of the Cantor set $X$, $\mathcal{F}(\langle T \rangle)$ its full group, $\mathcal{A}(\langle T \rangle)$ the commutator subgroup of its full group, $G$ the full group of a Bratteli diagram associated with $T$, and $G'$ the commutator subgroup of $G$. Then the following pairs of groups are compatible:  
\begin{equation}\label{eqnListCompatibleGroups}
(G', G), \quad (G', \mathcal{A}(\langle T \rangle)), \quad (\mathcal{A}(\langle T \rangle), \mathcal{F}(\langle T \rangle)).
\end{equation}
\end{proposition}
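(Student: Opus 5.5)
To verify compatibility for each pair in \eqref{eqnListCompatibleGroups}, fix $h\in H\setminus\{e\}$. The plan is to conjugate $h$ by elements $\gamma_n\in\Gamma$ that are supported in a small region. The conjugates $h_n=\gamma_n h\gamma_n^{-1}$ should then differ from $h$ only by a controlled, locally supported element lying in $\Gamma$.

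Since $h\neq e$ and $\fix(h)$ is clopen, we can pick a nonempty clopen set $U$ with $h(U)\cap U=\emptyset$. Shrinking $U$ further, we also arrange that $U\cup h(U)\neq X$.

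Inside $U$ we take a sequence of elements $\gamma_n\in\Gamma$, each supported in $U$, with $\gamma_n\neq\gamma_m$. The cheapest choice is distinct $3$-cycles (or distinct products of two disjoint transpositions) of clopen pieces of $U$, taken from the finite groups $G_k'$ of the Bratteli diagram, so that $\gamma_n\in G'\subset\Gamma$ in all three cases. For $x\in U$ we have $h_n(x)=\gamma_n h(x)$, because $h(x)\notin U$. The remaining values are computed in the same way: $h_n$ agrees with $h\gamma_n^{-1}$ on $h^{-1}(U)$, and with $h$ off $U\cup h^{-1}(U)$.

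It follows that
\[
h_nh_m^{-1}=\gamma_n\,(h\gamma_n^{-1}\gamma_m h^{-1})\,\gamma_m^{-1}.
\]
This is a product of $\gamma_n$, $\gamma_m^{-1}$, and the conjugate $h(\gamma_n^{-1}\gamma_m)h^{-1}$, which is supported in $h(U)$.

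\textbf{Main obstacle.} The point to check is that this conjugate lies in $\Gamma$. For $\Gamma=G'$ with $H=G$ this holds since $G'$ is normal in $G$. For $(\mathcal A(\langle T\rangle),\mathcal F(\langle T\rangle))$ it holds since $\mathcal A(\langle T\rangle)$ is normal in $\mathcal F(\langle T\rangle)$.

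The delicate case is $(G',\mathcal A(\langle T\rangle))$, since $G'$ is not normal in $\mathcal A(\langle T\rangle)$. Here I would use that $h$ acts on $U$ as a finite piecewise power of $T$. Refining $U$ into finitely many clopen pieces $U_j$ with $h|_{U_j}=T^{k_j}$, I would choose each $\gamma_n$ supported in a single piece $U_1$ and ask that $T^{k_1}$ carry the relevant Kakutani--Rokhlin cylinders of $U_1$ onto cylinders at some deeper level. By minimality and the tower structure, the set where a fixed power $T^{k}$ fails to be given by a finite-level path permutation shrinks to the forward orbit boundary near $x_0$. So, taking $U_1$ away from $\{T^{i}x_0: |i|\le |k_1|\}$ and of small diameter, $T^{k_1}$ restricted to $U_1$ coincides with an element of $G$. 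Then $h\sigma h^{-1}\in G$ for permutations $\sigma$ supported in $U_1$. Since such a conjugate of an even permutation of cylinders is again an even permutation of cylinders (it is a $3$-cycle if $\sigma$ is), it lies in $G'$.

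Finally, distinctness: $h_n\neq h_m$ because on $U$ they equal $\gamma_n h$ and $\gamma_m h$, and these differ since $\gamma_n\neq\gamma_m$. This yields \eqref{eqnCompClassConjugacy} for all three pairs.
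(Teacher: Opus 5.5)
Your argument is correct. For $(G',G)$ and $(\mathcal A(\langle T\rangle),\mathcal F(\langle T\rangle))$ it is essentially the paper's: normality of $\Gamma$ in $H$ applied to $h\gamma_n^{-1}\gamma_m h^{-1}$. For $(G',\mathcal A(\langle T\rangle))$ you take a genuinely different route.

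The paper invokes the Grigorchuk--Medynets factorization (Proposition~\ref{PropStructureDerivedFullGroup}) to write $h=pr$ with $p\in G'$. It then conjugates by $\gamma_n$ supported in $\supp(p)\setminus\supp(r)$, so that $\gamma_n$ commutes with $r$ and $h_nh_m^{-1}=\gamma_np\gamma_n^{-1}\gamma_mp^{-1}\gamma_m^{-1}\in G'$.

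You argue locally instead. On a clopen $U_1\subset U$ where $h=T^{k}$ and which avoids $\{T^ix_0 : |i|\le|k|\}$, conjugation by $h$ of anything supported in $U_1$ agrees with conjugation by an element of $G$, and normality of $G'$ in $G$ finishes. This is more elementary and self-contained: it uses only $G=\mathcal F(\langle T\rangle)_{x_0}$ and the fact that $h$ is piecewise a power of $T$. It also handles all three pairs by one mechanism. The paper's version is shorter only because the work is outsourced to the factorization theorem.

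Your last step is cleaner without the parity talk about ``even permutations of cylinders'', since evenness in $G_n=\prod_v\sym(h_v^{(n)})$ is factorwise and needs care. Let $g$ equal $T^{k}$ on $U_1$, $T^{-k}$ on $T^{k}(U_1)$, and the identity elsewhere; $T^k(U_1)$ is disjoint from $U_1$ because $h(U)\cap U=\emptyset$. Because $U_1$ avoids those orbit points, $g$ preserves the forward orbit of $x_0$, so $g\in G$. Moreover $h\sigma h^{-1}=g\sigma g^{-1}$ for every $\sigma$ supported in $U_1$.

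One slip: your two local formulas for $h_n$ are interchanged. For $x\in U$ one has $h_n(x)=h(\gamma_n^{-1}x)$, while $h_n=\gamma_nh$ on $h^{-1}(U)$. As you wrote it, $\gamma_nh=h$ on $U$, so your distinctness sentence literally compares $h$ with itself. Distinctness still follows, either from the corrected formula or directly: $h_n=h_m$ would force $h$ to commute with the nontrivial element $\gamma_m^{-1}\gamma_n$ supported in $U$. That is impossible, because $h$ moves this support into $h(U)$, which is disjoint from $U$.
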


\begin{proof}
First, consider the pair $(G', G)$. Fix $g\in G\setminus \{e\}$. Let $\{\gamma_n\}$ be an arbitrary sequence in $G'$ such that $\gamma_n  g \gamma_n \neq \gamma_m  g \gamma_m$ whenever $n\neq m$. Notice that 
\[
(\gamma_n g \gamma_n^{-1})(\gamma_m g^{-1} \gamma_m^{-1}) = \gamma_n [g, \gamma_n^{-1} \gamma_m] \gamma_n^{-1} \in G'.
\]
It follows that the pair $(G', G)$ is compatible. A similar argument applies to $(\mathcal{A}(\langle T \rangle), \mathcal{F}(\langle T \rangle))$.

Now consider the pair $(G', \mathcal{A}(\langle T \rangle))$. Fix $g \in \mathcal{A}(\langle T \rangle)$ with $g \neq e$. By Proposition~\ref{PropStructureDerivedFullGroup}, there exist elements $p \in G'$ and $r \in \mathcal{A}(\langle T \rangle)$ such that $g = pr$ and $\supp(p) \setminus \supp(r) \neq \emptyset$. Choose a sequence $\{\gamma_n\}$ of elements in $G'$, each supported on $\supp(p) \setminus \supp(r)$, such that $g_n \neq g_m$ whenever $n \neq m$, where $g_n = \gamma_n g \gamma_n^{-1}$. Then
\[
(\gamma_n g \gamma_n^{-1})(\gamma_m g^{-1} \gamma_m^{-1}) = \gamma_n p \gamma_n^{-1} \, \gamma_m p^{-1} \gamma_m^{-1} \in G',
\]
which completes the proof.
\end{proof}

In the following result we classify indecomposable characters of $\mathcal A(\langle T\rangle )$.

\begin{theorem}\label{thmClassificationCharactersLargeCommutatorSubgroup}  Let $(X,T)$ be a Cantor minimal homeomorphism. Let $\chi$ be a non-regular  indecomposable character of $\mathcal A(\langle T\rangle )$. Then there exists a finite number of $T$-invariant ergodic measures $\{\mu_1,\ldots,\mu_n\}$ and non-negative integers $\{\alpha_1,\ldots,\alpha_n\}$ such that  
\[
\chi(g) = \prod_{i=1}^n \mu_i(\fix(g))^{\alpha_i}\mbox{ for every }g\in \mathcal A(\langle T \rangle).
\]
\end{theorem}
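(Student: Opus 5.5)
Let $\pi$ be the GNS factor representation of $H=\mathcal A(\langle T\rangle)$ associated with $\chi$. The plan is to reduce to the classification for the AF commutator subgroups $\Gamma_x=G'$ (Theorem \ref{thmClassificationCharactersCommutatorSubgroup}) and then glue using the decomposition $H=\Gamma_x\cdot\Gamma_y$ of Proposition \ref{PropStructureDerivedFullGroup} together with automatic continuity (Corollary \ref{corollaryCommutatorAutomaticCont}).

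\emph{Step 1: no regular part on $\Gamma_x$.} Since $(G',\mathcal A(\langle T\rangle))$ is a compatible pair (Proposition \ref{PropCompatibleGroups}) and $G'$ is ICC (it is simple and infinite), Proposition \ref{PropRegularReprCompatibleGroups} shows that the central decomposition of $\pi|_{\Gamma_x}$ contains no regular component, because $\chi$ is non-regular. The same holds for $\Gamma_y$. Hence Theorem \ref{thmClassificationCharactersCommutatorSubgroup} together with the central decomposition gives
\[
\chi|_{\Gamma_x}(g)=\int \prod_i \mu_i^\omega(\fix(g))\,d\nu(\omega),
\]
a mixture of product characters built from $T$-invariant ergodic measures. (Invariant measures for the Bratteli diagram full group coincide with $T$-invariant measures.) Corollary \ref{corollaryCommutatorAutomaticCont} then gives that $\pi|_{\Gamma_x}$ and $\pi|_{\Gamma_y}$ are continuous for the metric $D$.

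\emph{Step 2: continuity on $H$ and extension.} Let $g\in H$ and write $g=p_nr_n$ as in Proposition \ref{PropStructureDerivedFullGroup}. The supports of $r_n$ shrink to a finite set $Z$, so $D(r_n,e)\to0$ by Proposition \ref{PropositionContinuityMeasures}; hence $\pi(r_n)\to\id$ strongly. Consequently $\pi(g)=\lim\pi(p_n)$ with $p_n\in\Gamma_x$ and $D(p_n,g)\to0$. Thus $\chi(g)=\lim\chi(p_n)$. The point is that the map $h\mapsto\prod\mu_i(\fix(h))$ is $D$-continuous on all of $\mathcal F(\langle T\rangle)$, since $|\mu(\fix(h))-\mu(\fix(h'))|\le 2D(h,h')$. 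Therefore the formula for $\chi|_{\Gamma_x}$ extends verbatim to $H$:
\[
\chi(g)=\int\prod_i\mu^\omega_i(\fix(g))\,d\nu(\omega)\quad\text{for all } g\in H.
\]
The same continuity argument shows that the measures $\nu$ obtained from $\Gamma_x$ and from $\Gamma_y$ yield the same function on $H$.

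\emph{Step 3: indecomposability forces $\nu$ to be a point mass.} Each function $g\mapsto\prod_i\mu_i(\fix(g))$ is a character of $H$ by Example \ref{exampleProductAction}. The displayed integral therefore expresses $\chi$ as a barycenter of characters of $H$. Because $\chi$ is extreme and the characters are distinct for distinct families (Proposition \ref{propProductMeasuresIsCharacter}(iii), applied to $H=\mathcal A(\langle T\rangle)$), $\nu$ must be a Dirac measure. This yields the claimed formula with non-negative integer exponents.

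The main obstacle is Step 2. There one must verify that the sequence $r_n$ really tends to $e$ in $D$, i.e.\ that shrinking supports give vanishing measure uniformly over invariant measures. Proposition \ref{PropositionContinuityMeasures} handles this, since every invariant measure of a minimal system is non-atomic. One must also check that the Cauchy property needed in Corollary \ref{corollaryCommutatorAutomaticCont} holds for $\{p_n\}\subset\Gamma_x$, which follows from $D(p_n,p_m)\le D(r_n,r_m)$ and invariance of $D$ under multiplication.

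A possible subtlety in Step 3 is measurability of the mixture, i.e.\ that the barycenter map is injective on measures. If necessary, I would replace this with the direct observation that the restriction to $\Gamma_x$ of a decomposition of $\chi$ on $H$ determines it by the density argument.
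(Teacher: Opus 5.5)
Your proposal is correct and follows essentially the same route as the paper. Both arguments rule out regular components of $\pi|_{\Gamma_x}$ and $\pi|_{\Gamma_y}$ via compatibility, write $\chi|_{\Gamma_x}$ as an integral of product characters through the central decomposition, extend that formula to $\mathcal A(\langle T\rangle)$ using $g=p_nr_n$ with $\pi(r_n)\to \id$ strongly plus dominated convergence, and conclude from extremality of $\chi$ (the paper phrases this last step as uniqueness of representing measures). Your appeal to Proposition~\ref{propProductMeasuresIsCharacter}(iii) is unnecessary: an extreme point admits only its Dirac mass as a representing measure, so the extended components already equal $\chi$ almost everywhere.
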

\begin{proof}  Fix an indecomposable character $\chi$ and the associated GNS representation $(\mathcal H, \pi, \xi)$.  Fix points $x,y\in X$ from distinct $T$-orbits.  To simplify the notation, we will  denote  the commutator subgroup $\mathcal A(\langle T\rangle)$  by $\Gamma$ and the commutator subgroups of associated Bratteli diagrams with base points $\{x\}$ and $\{y\}$ by $\Gamma_x$ and $\Gamma_y$, respectively.  Recall that the groups $\Gamma_x$ and $\Gamma_y$ are isomorphic and satisfy the assumptions of Theorem \ref{thmClassificationCharactersCommutatorSubgroup} and Corollary \ref{corollaryCommutatorAutomaticCont}. 

By Proposition \ref{PropCompatibleGroups}, the pairs $(\Gamma_x,\Gamma)$ and $(\Gamma_y,\Gamma)$ are compatible. Therefore, by Proposition \ref{PropRegularReprCompatibleGroups}, the central decompositions of  $\pi|_{\Gamma_x}$ and $\pi|_{\Gamma_y}$ do not contain subrepresentations that are quasi-equivalent to the regular representation.  Therefore, by Corollary \ref{corollaryCommutatorAutomaticCont}, the maps $\pi : \Gamma_x\rightarrow U(\mathcal H)$ and $\pi : \Gamma_y\rightarrow U(\mathcal H)$ are continuous with respect to the metric $D$ defined in Equation (\ref{EqUniformMetric}) and the strong operator topology on $U(\mathcal H)$.   

Consider the restriction of $\pi$ and $\chi$ onto $\Gamma_x$. Applying Theorem \ref{TheoremCentralDecomposition}, we can find a standard probability measure space $(\Omega,\nu)$ and  a measurable field $\omega \mapsto \chi_\omega$ of indecomposable characters on $\Gamma_x$ such that 
\begin{equation}\label{eqnClassCharLargeCommAux1}\chi(\gamma)  = \int_\Omega \chi_\omega(\gamma)\, d\nu(\omega)\mbox{ for every }\gamma\in \Gamma_x.\end{equation} 
  Note that by the classification of characters established in  Theorem \ref{thmClassificationCharactersCommutatorSubgroup}, each character $\chi_\omega$ naturally extends to a character on $\Gamma$. 

Let $g \in \Gamma$. By Proposition~\ref{PropStructureDerivedFullGroup}, there exist 
sequences $\{p_n\} \subset \Gamma_x$ and $\{r_n\} \subset \Gamma_y$ such that
$    g = p_n r_n$, for all $n \ge 1$,
where $\{r_n\}$ converges to the group identity and 
$\{p_n\}$ is a Cauchy sequence with respect to the metric $D$.

Note that the sequence of operators $\{\pi(r_n)\}$ converges to the identity operator in the strong operator topology. Therefore, 
 $$\lim_{n\to\infty} \chi(p_n)  = \lim_{n\to\infty} (\pi(p_n)\xi,\xi) =  \lim\limits_{n\to\infty} (\pi(gr_n^{-1})\xi,\xi)  = (\pi(g)\xi,\xi) = \chi(g). $$ 
 
 Since  the sequence $\{\chi_\omega(p_n)\}$ converges to $\chi_\omega(g)$ for every $\omega \in \Omega$, 
applying Lebesgue’s dominated convergence theorem to (\ref{eqnClassCharLargeCommAux1}) yields  
  $$\chi(g) = \lim_{n\to\infty} \chi(p_n) = \lim_{n\to\infty}  \int_\Omega \chi_\omega(p_n)d\nu(\omega) = \int_\Omega \chi_\omega(g) d\nu(\omega).$$
  
  Hence, we have shown that the character $\chi$, which is an extreme point in the (Choquet) simplex of characters of the group $\Gamma$, can be represented by two integrals with representing measures $\delta_\chi$ and $\nu$. By the uniqueness of representing measures in Choquet theory, it follows that $\chi = \chi_\omega$ for some $\omega$. Applying the classification of characters $\{\chi_\omega\}$ established in Theorem \ref{thmClassificationCharactersCommutatorSubgroup}, we complete the proof.
   \end{proof}

The proof of the following result is analogous to that of Corollary \ref{corollaryCommutatorAutomaticCont}. We leave the details to the reader. 

\begin{corollary}\label{corollaryLargeCommutatorAutomaticCont}
Let $(X,T)$ be a Cantor minimal system, and let $\pi$ be a finite-type unitary representation of $\mathcal A(\langle T \rangle)$. Suppose that the \emph{central decomposition} of $\pi$ (Theorem~\ref{TheoremCentralDecomposition}) contains no regular subrepresentation of $\mathcal A(\langle T \rangle)$.

Then, for any Cauchy sequence $\{g_n\}$ in $\mathcal A(\langle T \rangle)$ with respect to the metric $D$ defined in \eqref{EqUniformMetric}, the sequence $\{\pi(g_n)\}$ converges in the strong operator topology.
\end{corollary}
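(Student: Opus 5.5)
We follow the proof of Corollary~\ref{corollaryCommutatorAutomaticCont}, with Theorem~\ref{thmClassificationCharactersLargeCommutatorSubgroup} in place of Theorem~\ref{thmClassificationCharactersCommutatorSubgroup}. The plan is to reduce to factor components and then realize each component as a Feldman--Moore representation.

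First, set $\Gamma=\mathcal A(\langle T\rangle)$ and apply Theorem~\ref{TheoremCentralDecomposition} to $\pi$. This gives a standard probability space $(\Omega,\nu)$ and a measurable field of factor representations $\pi_\omega$ with $\pi=\int_\Omega^\oplus\pi_\omega\,d\nu(\omega)$. By hypothesis, for $\nu$-a.e. $\omega$ the representation $\pi_\omega$ is not quasi-equivalent to the regular representation of $\Gamma$. Components on which $\pi_\omega$ is trivial are handled trivially, since there $\pi_\omega(g_n)=\id$ for all $n$.

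For each remaining $\omega$, let $\chi_\omega$ be the character obtained from the trace on the factor $\pi_\omega(\Gamma)''$. By Proposition~\ref{PropEquivalenceExtremeCharactersFactors}, $\chi_\omega$ is indecomposable, and it is non-regular. Theorem~\ref{thmClassificationCharactersLargeCommutatorSubgroup} then gives $T$-invariant ergodic measures $\mu_1,\dots,\mu_k$ with
\[
\chi_\omega(g)=\prod_{i=1}^k\mu_i(\fix(g)).
\]
By Example~\ref{exampleProductAction}, the GNS representation of $\chi_\omega$ is unitarily equivalent to the cyclic Feldman--Moore representation $\widetilde\pi_\omega$ for the product action of $\Gamma$ on $(X^k,\widetilde\mu)$, where $\widetilde\mu=\mu_1\times\cdots\times\mu_k$. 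Proposition~\ref{PropGNSQuasiEquivalence} then shows that $\pi_\omega$ is quasi-equivalent to $\widetilde\pi_\omega$.

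Next, we transfer the Cauchy property to $X^k$. For $g,h\in\Gamma$,
\[
\widetilde\mu\{\bar x: g\bar x\neq h\bar x\}\le\sum_{i=1}^k\mu_i\{x:gx\neq hx\}\le k\,D(g,h).
\]
Hence $\{g_n\}$ is Cauchy for $d_{\widetilde\mu}$. Proposition~\ref{PropGroupRepresentationContinuity} gives strong convergence of $\widetilde\pi_\omega(g_n)$ on the cyclic subspace, which is the whole space of $\widetilde\pi_\omega$. The $*$-isomorphism implementing the quasi-equivalence is strongly continuous on bounded sets \cite[Proposition III.2.2.14]{Blackadar:BookOperatorAlgebras}. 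Since $\{\widetilde\pi_\omega(g_n)\}$ is bounded and strongly Cauchy, its image $\{\pi_\omega(g_n)\}$ is strongly Cauchy and therefore converges strongly.

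Finally, for $\eta=\int^\oplus\eta_\omega\,d\nu(\omega)$ we estimate
\[
\|\pi(g_n)\eta-\pi(g_m)\eta\|^2=\int_\Omega\|\pi_\omega(g_n)\eta_\omega-\pi_\omega(g_m)\eta_\omega\|^2\,d\nu(\omega).
\]
Each integrand tends to $0$ and is dominated by $4\|\eta_\omega\|^2$, so dominated convergence shows that $\{\pi(g_n)\eta\}$ is Cauchy. This gives strong convergence of $\{\pi(g_n)\}$.

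The main obstacle is a measure-theoretic point: Theorem~\ref{TheoremCentralDecomposition} only guarantees that $\pi_\omega$ is factorial for a.e. $\omega$. We must check that ``no regular subrepresentation'' means that the set of $\omega$ with $\pi_\omega$ quasi-equivalent to the regular representation is $\nu$-null. This holds because that set is measurable and its projection would otherwise define a central regular summand. Passing through Proposition~\ref{PropGNSQuasiEquivalence} also requires that the trace on each component be the one defining $\chi_\omega$, which Part~(II) of Theorem~\ref{TheoremCentralDecomposition} supplies.
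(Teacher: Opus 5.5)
Your proposal is correct and follows the route the paper intends. The paper's proof is only declared analogous to that of Corollary~\ref{corollaryCommutatorAutomaticCont}: central decomposition, quasi-equivalence of each factor component to a Feldman--Moore product representation, Proposition~\ref{PropGroupRepresentationContinuity}, transfer through the normal $*$-isomorphism, and dominated convergence, with Theorem~\ref{thmClassificationCharactersLargeCommutatorSubgroup} in place of Theorem~\ref{thmClassificationCharactersCommutatorSubgroup}, exactly as you do; your bound $\widetilde\mu\{g\bar x\neq h\bar x\}\le k\,D(g,h)$ and your handling of trivial components and of the null set of regular components supply details the paper leaves implicit.
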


Finally, we are ready to establish the main result of the paper.

%

\begin{theorem}\label{theoremClassificationCharacters} Let $(X,T)$ be a Cantor minimal system and $\Gamma$ be either the full group  $\mathcal F(\langle T \rangle)$ or the full group of an associated Bratteli diagram. If $\chi$ is a non-identity indecomposable character of $\Gamma$, then either $\chi$ is the regular character or $\chi$ is of the form 
\begin{equation}\label{eqnMainClassification}\chi(\gamma)=\rho(\gamma) \mu_1(\fix(\gamma))
\cdots \mu_k(Fix(\gamma)) \mbox{ for }\gamma\in \Gamma, \end{equation} where $\mu_1,\ldots,\mu_k$ are $T$-invariant ergodic measures, repetitions permitted, and  $\rho: \Gamma \rightarrow S^1=\{z\in\mathbb C:|z|=1\}$ is a group homomorphism.

Additionally, the characters of the form (\ref{eqnMainClassification}) are indecomposable. 
\end{theorem}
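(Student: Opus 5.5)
Let $\Gamma'$ denote the commutator subgroup of $\Gamma$. It equals $G'$ when $\Gamma=G$, and $\mathcal A(\langle T\rangle)$ when $\Gamma=\mathcal F(\langle T\rangle)$. The plan is to restrict $\chi$ to $\Gamma'$, use the classifications already established there (Theorems \ref{thmClassificationCharactersCommutatorSubgroup} and \ref{thmClassificationCharactersLargeCommutatorSubgroup}), and then extend back to $\Gamma$ using Proposition \ref{PropCosetFullGroupSmallSupport} together with automatic continuity.

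\textbf{Step 1: the regular case.} Let $(\mathcal H,\pi,\xi)$ be the GNS triple of $\chi$. The pair $(\Gamma',\Gamma)$ is compatible by Proposition \ref{PropCompatibleGroups}, and $\Gamma'$ is ICC (it is simple and infinite). Suppose the central decomposition of $\pi|_{\Gamma'}$ contains a regular component. Then Proposition \ref{PropRegularReprCompatibleGroups} makes $\chi$ regular.

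\textbf{Step 2: the non-regular case on $\Gamma'$.} Otherwise, write $\chi|_{\Gamma'}=\int_\Omega \chi_\omega\,d\nu(\omega)$ by Theorem \ref{TheoremCentralDecomposition}. Each $\chi_\omega$ is indecomposable and non-regular. By the classification theorems each $\chi_\omega$ is either $1$ or of the form $\prod_i\mu_i(\fix(\cdot))$. Corollaries \ref{corollaryCommutatorAutomaticCont} and \ref{corollaryLargeCommutatorAutomaticCont} then make $\pi|_{\Gamma'}$ $D$-continuous.

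\textbf{Step 3: extension to $\Gamma$.} Fix $\gamma\in\Gamma$ and a decreasing sequence of clopen sets $C_n$ with $\mathrm{diam}(C_n)\to0$. Proposition \ref{PropCosetFullGroupSmallSupport} gives $\gamma=\gamma_1^{(n)}\gamma_2^{(n)}$ with $\gamma_1^{(n)}\in\Gamma'$ and $\supp(\gamma_2^{(n)})\subset C_n$. Since $D(\gamma_2^{(n)},e)\to0$ by Proposition \ref{PropositionContinuityMeasures}, we want $\pi(\gamma_2^{(n)})$ to approach a scalar. Since $\gamma_2^{(n)}\notin\Gamma'$, continuity on $\Gamma'$ does not apply directly. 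Instead, apply Theorem \ref{ThAutomaticContinuity} to $\Gamma$ with $Y$ a singleton (or a point of $\bigcap C_n$), passing to weakly convergent subsequences. This shows every weak limit point of $\pi(\gamma_2^{(n)})$ is a scalar $\lambda\,\id$, and $|\lambda|=1$ is forced, so the convergence is in fact strong. Define $\rho(\gamma)$ as the resulting scalar, normalized against the $\Gamma'$-factor. Concretely, set
$$\rho(\gamma)=\lim_n \chi(\gamma)/\chi(\gamma_1^{(n)})$$
whenever this is meaningful. Equivalently, show that $\pi(\gamma)\pi(\gamma_1^{(n)})^{-1}\to\rho(\gamma)\id$ strongly. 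Independence of the decomposition and multiplicativity follow because differences of two decompositions lie in $\Gamma'$ with small support, hence converge to $\id$ by $D$-continuity on $\Gamma'$.

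\textbf{Step 4: identify $\chi$ and check indecomposability.} Since $\fix(\gamma_1^{(n)})\to\fix(\gamma)$ in $d$, and $\pi(\gamma_1^{(n)})$ converges strongly by $D$-Cauchyness, we get $\chi(\gamma)=\rho(\gamma)\int\chi_\omega(\gamma)\,d\nu$, where $\chi_\omega$ is extended to $\Gamma$ by the same fixed-point formula. Extremality of $\chi$ and Choquet uniqueness, exactly as in Theorem \ref{thmClassificationCharactersLargeCommutatorSubgroup}, force $\nu$ to be a point mass. Finally, indecomposability of the form \eqref{eqnMainClassification} follows from Proposition \ref{propProductMeasuresIsCharacter}: its restriction to $\mathcal A(G)\subset\Gamma$ already generates a factor. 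Then apply Remark \ref{remarkCharacterTimesHomomorsphism}.

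\textbf{Main obstacle.} The delicate point is Step 3: proving that $\pi(\gamma_2^{(n)})$ converges to a scalar of modulus one, and that $\rho$ is a homomorphism. If Theorem \ref{ThAutomaticContinuity} only gives a scalar $\lambda$ with $|\lambda|<1$, I would use that $\pi(\gamma_2^{(n)})$ is unitary and commutes asymptotically with $\pi(\Gamma')$ to upgrade this to $|\lambda|=1$. As a fallback I would use the index map $I$: take $\gamma_2^{(n)}=T_{C_n}^{I(\gamma)}$ and set $\rho(\gamma)=c^{I(\gamma)}$ with $c=\lim\pi(T_{C_n})$. In the Bratteli case I would replace $I$ by the sign-type homomorphisms onto the abelianization, on which $\rho$ must factor.
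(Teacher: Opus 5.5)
Your outline is correct, and up to the construction of $\rho$ it is the paper's proof. The shared steps are: compatibility of $(\Gamma',\Gamma)$ and Proposition~\ref{PropRegularReprCompatibleGroups} for the regular case; $D$-continuity of $\pi|_{\Gamma'}$; the splitting $\gamma=\gamma_1^{(n)}\gamma_2^{(n)}$ from Proposition~\ref{PropCosetFullGroupSmallSupport}; and Theorem~\ref{ThAutomaticContinuity} with $Y$ a singleton for the small-support factors. The paper streamlines your independence/multiplicativity discussion. It lets $\pi_0$ be the extension of $\pi|_{\Gamma'}$ to $\Gamma$ by $D$-continuity. Then $\pi(\gamma)=\rho(\gamma)\pi_0(\gamma)$ with $\rho(\gamma)$ scalar, and $\rho$ is a homomorphism because $\pi$ and $\pi_0$ are.

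You differ in the last step. The paper notes that $\pi=\rho\,\pi_0$ gives $\{\pi(\Gamma)\}''=\{\pi_0(\Gamma)\}''=\{\pi(\Gamma')\}''$. So $\pi|_{\Gamma'}$ is itself a factor representation, $\chi|_{\Gamma'}$ is indecomposable, and the classification on $\Gamma'$ applies directly with no integral. You instead keep the central decomposition of $\chi|_{\Gamma'}$ and extend each $\chi_\omega$ to $\Gamma$ by the fixed-point formula. You then use that an extreme point of the character simplex admits only the Dirac representing measure, as in Theorem~\ref{thmClassificationCharactersLargeCommutatorSubgroup}. This is valid: each $\rho\chi_\omega$ is a character of $\Gamma$ (Example~\ref{exampleProductAction} for the action of $\Gamma$, times a one-dimensional character) and depends measurably on $\omega$. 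The paper's ending is shorter and gives the extra fact that restriction to $\Gamma'$ preserves indecomposability. Yours avoids comparing von Neumann algebras but needs those routine checks.

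Several details need repair.

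\begin{itemize}
\item The reason $|\lambda|=1$ is not unitarity plus asymptotic commutation with $\pi(\Gamma')$. That alone does not exclude $|\lambda|<1$: asymptotically central trace-zero unitaries tend weakly to $0$, and the paper must treat $|\lambda|<1$ separately in Lemma~\ref{LemmaAugProjectionAbsorption}. The right reason is already in your Step 4. Write $\pi(\gamma_2^{(n)})=\pi\bigl((\gamma_1^{(n)})^{-1}\bigr)\pi(\gamma)$; since $(\gamma_1^{(n)})^{-1}$ is $D$-Cauchy in $\Gamma'$, the whole sequence $\pi(\gamma_2^{(n)})$ converges strongly to a unitary. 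That unitary must equal the scalar weak limit, so $|\lambda|=1$ and the index-map fallback is unnecessary.
\item If the $C_n$ decrease to $x_0$, then $x_0\in C_n$ and hypothesis (c) of Theorem~\ref{ThAutomaticContinuity} can fail. Take $C_n$ accumulating at $x_0$ with $x_0\notin C_n$, as the paper does.
\item The formula $\rho(\gamma)=\lim_n\chi(\gamma)/\chi(\gamma_1^{(n)})$ is meaningless whenever $\prod_i\mu_i(\fix(\gamma))=0$, e.g.\ $\gamma=T\in\mathcal F(\langle T\rangle)$ with $k\ge 1$. Define $\rho$ only through the operator limit.
\item For the converse direction, ``the restriction to $\mathcal A(G)$ generates a factor'' gives a factor for $\Gamma$ only once you know $\{\pi(\Gamma)\}''=\{\pi(\Gamma')\}''$ in that GNS space. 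This follows from Proposition~\ref{PropGroupRepresentationContinuity} and the $D$-density of $\Gamma'$ in $\Gamma$.
\end{itemize}
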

\begin{proof}  We first note that the indecomposability of characters defined in (\ref{eqnMainClassification}) follows from  Proposition \ref{propProductMeasuresIsCharacter} and Remark \ref{remarkCharacterTimesHomomorsphism}. 

Let  $\chi$ be a non-regular, non-identity indecomposable character. Fix the associated GNS representation $(\mathcal H, \pi,\xi)$ of $\Gamma$. Note that $\pi$ is not quasi-equivalent to the regular representation of $\Gamma$.  

By Propositions \ref{PropRegularReprCompatibleGroups} and \ref{PropCompatibleGroups}, the central decomposition of the representation $\pi|_{\Gamma'}$ contains no regular sub-representations.  It follows from Corollaries \ref{corollaryCommutatorAutomaticCont} and 
\ref{corollaryLargeCommutatorAutomaticCont} that the restriction of the representation $\pi$ onto the commutator subgroup $\Gamma'$ is automatically continuous with respect to the strong operator topology and the topology of uniform convergence defined by the metric $D$ in (\ref{EqUniformMetric}).

By Propositions \ref{PropCosetFullGroupSmallSupport} and \ref{PropositionContinuityMeasures}, every element of $\Gamma$ can be approximated by elements from $\Gamma'$ with respect to the metric $D$.  It follows from Corollaries \ref{corollaryCommutatorAutomaticCont} and  \ref{corollaryLargeCommutatorAutomaticCont} that the representation $\pi|_{\Gamma'}$ can be extended by continuity to a unique representation $\pi_0$ of $\Gamma$.  Note that, in general, $\pi_0\neq \pi$.  In what follows we show that $\gamma \mapsto \pi(\gamma)\pi_0(\gamma^{-1})$ defines a homomorphism into $S^1$.   

Fix $x_0 \in X$, and a sequence of clopen sets $\{C_n\}$ converging to $x_0$, but not containing $x_0$. Given $\gamma\in \Gamma$, using Proposition \ref{PropCosetFullGroupSmallSupport}, we can find a sequence of elements $\{\gamma_n'\}$ from $\Gamma'$ and  a sequence of elements $\{\gamma_n''\}$ from $\Gamma$ such that $\{\gamma_n\}$ converges to $\gamma$ with respect to the metric $D$, and for every $n \neq 1$, $\gamma = \gamma_n'\gamma_n'' $ and $\supp(\gamma_n)\subset C_n$. 

 Note that the sequence $\{\gamma_n''\}$ satisfies the assumptions of Theorem \ref{ThAutomaticContinuity} for $Y = \{x_0\}$. Therefore, every convergent subsequence of the sequence $\{\pi(\gamma_n'')\}$ converges to a scalar operator in the weak operator topology. Without loss of generality, we can assume that $\{\pi(\gamma_n'')\}$ converges to a scalar operator.

Since the strong and weak operator topologies coincide on the set of unitaries and  multiplication is jointly continuous on bounded sets \cite[Proposition III.2.2.14]{Blackadar:BookOperatorAlgebras}, we obtain that 
\begin{equation}\label{eqnMainTheoremEqualityPi} \pi(\gamma) = \lim_{n\to\infty}\pi(\gamma_n')\pi(\gamma_n'') = \lambda(\gamma) \pi_0(\gamma)\mbox{ and }\chi(\gamma) = \lambda(\gamma) (\pi_0(\gamma)\xi,\xi),\end{equation}
where $\lambda(\gamma)\in S^1$ depends only on $\gamma$. Since $\pi$ and $\pi_0$ are representations of $\Gamma$, the map $\lambda : \Gamma\rightarrow S^1$ must be a group homomorphism.  

It follows from (\ref{eqnMainTheoremEqualityPi}) that the von Neumann algebras  generated by $\{\pi(\Gamma)\}$ and by $\{\pi_0(\Gamma)\}$ coincide.  Since the von Neumann algebras generated by $\{\pi(\Gamma')\}$ and $\{\pi_0(\Gamma')\}$ also coincide, we conclude that $\pi_{\Gamma'}$ is a factor representation. Therefore, the character $\chi|_{\Gamma'}$ is indecomposable. 

Applying the classification of characters  established in Theorems \ref{thmClassificationCharactersCommutatorSubgroup} and \ref{thmClassificationCharactersLargeCommutatorSubgroup}, we complete the proof.
\end{proof}

%
%
%

\bibliographystyle{amsalpha}

{\it \small Disclaimer: For the second-named author, the views expressed in this paper  are those of the author and do not necessarily represent  the views of the U.S. Naval Academy, the Department of the Navy, the Department of War, or the
U.S. Government.}

\end{document}